\documentclass[a4paper, 11pt]{amsart}

\usepackage[a4paper,top=2cm,bottom=2cm,left=2cm,right=2cm,marginparwidth=1.75cm]{geometry}

\usepackage[english]{babel}

\usepackage{amsmath}
\usepackage{graphicx}
\usepackage[colorlinks=true, allcolors=blue]{hyperref}
\usepackage[T1]{fontenc}
\usepackage[utf8]{inputenc}
\usepackage{amssymb}
\usepackage{enumitem}
\usepackage{stackrel}
\usepackage{amsmath}
\usepackage{amsthm}
\usepackage{enumitem}%
\usepackage{mathrsfs}
\usepackage{blindtext}
\usepackage{graphicx}
\usepackage{url}
\usepackage{wrapfig,lipsum}
\usepackage{mathdots}
\usepackage{tikz-cd}
\usepackage{ amssymb }
\usepackage{wrapfig}
\usepackage{caption}
\usepackage{subcaption}
\usepackage{cite}



\newcommand{\numberset}{\mathbb}
\newcommand{\N}{\numberset{N}}
\newcommand{\Z}{\numberset{Z}}
\newcommand{\R}{\numberset{R}}
\newcommand{\Q}{\numberset{Q}}
\newcommand{\C}{\numberset{C}}

\newcommand{\Ss}{\mathcal{S}}

\newcommand{\Co}{\mathscr{C}}
\newcommand{\bslash}{\backslash}
\newcommand{\Stab}{S^2\times S^2}
\newcommand{\StabTwist}{\C P^2\#\overline{\C P^2}}

\makeatletter\@namedef{subjclassname@2020}{%
\textup{2020} Mathematics Subject Classification}\makeatother

\title{Internal and external stabilization of exotic surfaces in 4-manifolds}
\author{Oliviero Malech}
\address{Scuola Internazionale Superiore di Studi Avanzati (SISSA)\\ Via Bonomea 265\\34136\\Trieste\\Italy}
\email{omalech@sissa.it}
\subjclass[2020]{Primary 57K45; Secondary 57K40, 57R40, 57R52, 57R55}
\date{}

\begin{document}
\maketitle
\newtheorem{thm}{Theorem}[subsection]
\newtheorem{mainthm}{Theorem} \renewcommand{\themainthm}{\Alph{mainthm}}
\newtheorem{question}{Question}\renewcommand{\thequestion}{\Alph{question}}
\newtheorem{lem}[thm]{Lemma}
\newtheorem{cor}[thm]{Corollary}
\newtheorem{prop}[thm]{Proposition}
\theoremstyle{definition}
\newtheorem{definition}[thm]{Definition}
\newtheorem{oss}[thm]{Remark}
\newtheorem{constr}[thm]{Construction}

\begin{abstract}
We show that many explicit examples of exotic pairs of surfaces in a smooth 4-manifold become smoothly isotopic after one external stabilization with $\Stab$ or $\StabTwist$. 
  Our results cover surfaces produced by rim-surgery, twist-rim-surgery, annulus rim-surgery, as well as infinite families of nullhomologus surfaces and examples with non-cyclic fundamental group of the complement.  A special attention is given to the identification of the stabilizing manifold and its dependence on the choices in the construction of the surface. The main idea of this note is given by  relating internal and external stabilization, and most of the results, but not all, are proved using this relation. 
  Moreover, we show that the 2-links in the exotic family from the recent work of Bais, Benyahia, Malech and Torres are brunnian, under some additional assumptions about the construction.
\end{abstract}
{\bf Keywords:} {Surfaces in 4-manifolds; Isotopy; Stabilization}
\tableofcontents

\section{Introduction \label{sec: intro}}
For the sake of simplicity, we use the following definition in this note.
    Given a smooth 4-manifold $X$ (possibly with non-empty boundary) a \emph{surface} $\Sigma\subseteq X$ is a compact and properly embedded smooth submanifold of $X$ of real dimension $2$. Given a pair, or a collection, of smoothly embedded surfaces inside a smooth 4-manifold, it is interesting to ask if they are  isotopic or not. This is the 4-dimensional analogous of knot theory, but in this case the existence of such isotopy can depend on the chosen category, namely topological or smooth.
    We say that a collection of surfaces in a smooth 4-manifold is \emph{exotic} if any distinct pair of elements are topologically ambient isotopic but not smoothly isotopic. If two surfaces are not smoothly isotopic, but they still share similar topological proprieties, it is natural to ask if it is possible to perform any small change to make them smoothly isotopic.  
In \cite{Boyle_1988, InternalStabilizationBaykurSunukian} the notion of \emph{internal stabilization of a surface} is studied, this is the operation of adding a 3-dimensional 1-handle to an embedded surface. If this 1-handle is attached in a small neighborhood of a point of the surface and preserve a local orientation of the surface, then the stabilization is called standard and untwisted.
\begin{definition}Let $X$ be a 4-manifold and $n\in\N$. Two surfaces $\Sigma,\Sigma'\subseteq X$ are \emph{internally $n$-stably isotopic} if they become isotopic after $n$ internal stabilizations each.  They are \emph{trivially} internally $n$-stably isotopic if all the stabilizations are standard and untwisted.\end{definition}
It was proved that any two homologous closed surfaces are internally $n$-stably isotopic for some $n\in\N$ \cite[Theorem 1]{InternalStabilizationBaykurSunukian} and in many cases one standard untwisted stabilization is enough to make exotic surfaces smoothly isotopic \cite[Theorem 2]{InternalStabilizationBaykurSunukian}.
In this note we will study the behavior of embedded surfaces in a smooth 4-manifold $X$ under \emph{external stabilization}, namely the operation of enlarging the ambient space by taking the connected sum with copies of $\Stab$ or $\StabTwist$.
\begin{definition}
Let $B$ be a simply connected 4-manifold. Given two 4-manifolds $X$ and $X'$,
    two surfaces $\Sigma\subseteq X$ and $\Sigma'\subseteq X'$ are (smoothly) \emph{$B$-stably equivalent} if there exists a diffeomorphism of pairs
    \begin{equation}
        \phi:(X\# B,\Sigma)\to (X'\# B,\Sigma'),
    \end{equation}
    where the connected sums are performed away from $\Sigma$ and $\Sigma'$.
    When $X=X'$, we say that $\Sigma,\Sigma'\subseteq X$ are (smoothly) \emph{$B$-stably isotopic} if they are $B$-stably equivalent, the connected sum $X\# B$ is performed away from $\Sigma\cup\Sigma '$ and the map $\phi$ is smoothly isotopic to the identity. In this case we call $B$ \emph{the stabilizing manifold}, or \emph{the stabilizing block}, for the surfaces $\Sigma,\Sigma'\subseteq X$.
    If $B=n(\Stab)$ for a natural number $n\in\N$, then  the notions of $B$-stable equivalence/isotopy coincide with the ones of (not strict) $n$-stable equivalence/isotopy from \cite[Introduction]{Auckly_Kim_Melvin_Ruberman_2015}.
\end{definition}

In \cite{Auckly_Kim_Melvin_Ruberman_2015} and in \cite{akbulut2014isotoping} the authors find  pairs of embedded surfaces in some 4-manifold that are not smoothly isotopic, but they are $(\Stab)$-stably isotopic. In \cite{Auchly1stb} it is shown that any two smoothly embedded closed oriented surfaces with simply connected complements, with the same homology class $\alpha$ and genus, are $B$-stably isotopic, where $B$ is $\StabTwist$ for $\alpha$ characteristic or $\Stab$ for $\alpha$ ordinary (i.e. not characteristic). In this note, we will produce examples of this phenomenon not covered by the mentioned results. In particular, we will focus on explicit examples of exotic surfaces from the literature:  produced via rim-surgery \cite{Fintushel1997SurfacesI4} or twist rim-surgery \cite{Kim_2006,Kim_Ruberman_2008} or annulus rim-surgery \cite{finashin2002knotting}, the nullhomologus 2-tori from \cite{Hoffman_Sunukjian_2020}, the knotted 2-spheres from \cite{torres2023topologically}, the nullhomologus 2-spheres and 2-tori from \cite{torres2020Unknotted} and the 2-links from \cite{(Un)knotted}. We will show that all those constructions are $B$-stable isotopic, see Table \ref{table: surfaces and stabilization}. Moreover, the stabilizing manifold, namely $\Stab$ or $\StabTwist$, depends on the construction, e.g. on an identification of some tubular neighborhood or on the parametrization of a surface. 
\begin{table}[h]
\centering
\begin{tabular}{|l|c|c|c|l}
\cline{1-4}
\begin{tabular}[c]{@{}l@{}}Exotic surfaces\\ \end{tabular}                  & \begin{tabular}[c]{@{}c@{}}Defined \\ in\end{tabular} & \begin{tabular}[c]{@{}c@{}}internally\\ 1-stably isotopic\end{tabular}                                          & \begin{tabular}[c]{@{}c@{}}$B$-stably isotopic,\\ where $B$ is\\ $\Stab$ or $\StabTwist$\end{tabular}                   &  \\ \cline{1-4}
Rim-surgery                                                                &  \cite{Fintushel1997SurfacesI4}                                                     & \begin{tabular}[c]{@{}c@{}}\cite[Section 3.1]{InternalStabilizationBaykurSunukian} 
\end{tabular} & \begin{tabular}[c]{@{}c@{}}\cite{Auchly1stb} \&\\ Theorem \ref{thm: ExtStab RimSurgery}\end{tabular} &  \\ \cline{1-4}
Twist rim-surgery                                                        &     \cite{Kim_2006, Kim_Ruberman_2008}                                               & \cite[Section 3.2]{InternalStabilizationBaykurSunukian}                                                                                & Theorem  \ref{thm: External Twist Rim Surgery}                                                                      &  \\ \cline{1-4}
Annulus rim-surgery                                                        &     \cite{finashin2002knotting}                                               & \cite[Section 3.3]{InternalStabilizationBaykurSunukian}                                                                                & Theorem  \ref{thm: external annulus Surgery}                                                                      &  \\ \cline{1-4}

Nullhomologus 2-tori                                                       &    \cite{Hoffman_Sunukjian_2020}                                                   & \cite[Section 3.6]{InternalStabilizationBaykurSunukian}                                                                                   & Theorem   \ref{thm: ExtStab nullhomologus 2-tori}                                                                       &  \\ \cline{1-4}
Knotted 2-spheres                                                 &   \cite{Auckly_Kim_Melvin_Ruberman_2015}                                                    & \cite[Section 3.7]{InternalStabilizationBaykurSunukian}                                                                    & \cite[Theorem A]{Auckly_Kim_Melvin_Ruberman_2015} 
&  \\ \cline{1-4}

\begin{tabular}[c]{@{}l@{}}Knotted 2-spheres $\Z/k$
\end{tabular} & \cite{torres2023topologically}                                                       & Unknown                                                                                                     & \begin{tabular}[c]{@{}l@{}}Theorem \ref{thm: Stable isotopy for TORRES Z_2}
\end{tabular}                                                                          &  \\ \cline{1-4}
Nullhomologus 2-sphere                                                    &   \cite{fintushel1994fake}                                                   & Unknown                                                  & Theorem   \ref{thm: External fintushel sphere}
 \\ \cline{1-4}
Nullhomologus 2-spheres                                                   &   \cite{torres2020Unknotted}                                                   & Theorem    \ref{thm: intern unknotte 2spheres torres}                                                                   & Theorem   \ref{thm: externa stab 2-sphers (un)knotted torres}
 \\ \cline{1-4}
Brunnianly exotic 2-links                                                    &   \cite{(Un)knotted}                                                    & Theorem      \ref{thm: (un)linked stabilization and bruniannity}                                                                    & Theorem   \ref{thm: Int unknotted 1} 
&  \\ \cline{1-4}

\end{tabular}
   
\caption{\label{table: surfaces and stabilization} Surfaces and the corresponding internal and external stabilization results. }
\end{table}
\subsection{Main results}
The main results of this note appear in Table \ref{table: surfaces and stabilization}, and we refer to the body of this work for more precise statements.
Table \ref{table: surfaces and stabilization} contains the results on internal/external stable isotopy from this note and the best  corresponding internal/external results from the literature. All the theorems in the first four lines in the right column are direct applications of the corresponding results named in the third column in combination with Theorem \ref{mainthm: int to ext}, which we now present.

Notice that internal stabilization only changes the topology of the surface in the original ambient space, while external stabilization only changes the topology of the ambient space.
One of the main contents of this note is given by a relation between internal and external stabilization.
In particular, we will introduce the notion of \emph{ internally $\gamma$-stably isotopic}  (Definition \ref{def: internal stab across}) for a pair of surfaces $\Sigma_1,\Sigma_2\subseteq X$ that coincide on a tubular neighborhood of a loop $\gamma\subseteq X$, surfaces which are called $\nu\gamma$-standards (Definition \ref{def: gamma-standarad}). 
This is  a stronger notion than the usual notions of internal and external stabilization, as asserted in the following theorem.
\begin{mainthm}\label{mainthm: int to ext}
    Let $X$ be a smooth 4-manifold, $\gamma$ a simple loop in $X$ with a chosen framing. Assume that $\Sigma_1,\Sigma_2\subseteq X$ are $ \nu\gamma$-standard surfaces.
If the surfaces $\Sigma_1,\Sigma_2\subseteq X$ are internally $\gamma$-stably isotopic,
then 
\begin{itemize}
\item $\Sigma_1,\Sigma_2\subseteq X$ are trivially internally 1-stably isotopic 
    \item $\Sigma_1,\Sigma_2\subseteq X$ are
$B$-stably isotopic for any $B\in \overline{\Ss}(\Sigma_1,\Sigma_2,\gamma)\subseteq\{\Stab,\StabTwist\}$, where $\overline{\Ss}(\Sigma_1,\Sigma_2,\gamma)$ is the extended stabilization set (Definition \ref{def: Ss}).
\end{itemize}
 Moreover, the statement still holds if we require isotopies to be relative to the boundary.\\
 The set $\overline{\Ss}(\Sigma_1,\Sigma_2,\gamma)$ is not empty if and only if there exists a nullhomotopic push off of $\gamma$ in $X\bslash(\Sigma_1\cup\Sigma_2)$.
\end{mainthm}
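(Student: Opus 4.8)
The plan is to unwind the definitions of \emph{internally $\gamma$-stably isotopic}, $\nu\gamma$-standard, and the (extended) stabilization set $\overline{\Ss}(\Sigma_1,\Sigma_2,\gamma)$, and then trace through the geometric model of a standard untwisted internal stabilization performed along a band that follows $\gamma$. The two bullets should both be consequences of the following picture: an internal stabilization of $\Sigma_i$ along a tube parallel to $\gamma$ can be "cancelled" either by sliding the handle off into a neighborhood of a point (giving a trivial untwisted internal stabilization, hence the first bullet, by isotoping the tube back once $\Sigma_1$ and $\Sigma_2$ have been made to agree) or by absorbing the tube into a connected-sum summand supported in a neighborhood of $\gamma$ together with a meridian disk; the resulting summand is $\Stab$ or $\StabTwist$ depending on the framing data recorded exactly by $\overline{\Ss}$. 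So the first two bullets, including the relative-to-boundary refinement, I would get by keeping all isotopies and handle slides supported in the relevant tubular neighborhoods, away from $\partial X$.

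The last sentence — that $\overline{\Ss}(\Sigma_1,\Sigma_2,\gamma)\neq\emptyset$ iff $\gamma$ admits a nullhomotopic push-off in $X\bslash(\Sigma_1\cup\Sigma_2)$ — is where the real content sits, so I would prove the two implications separately. For the "only if" direction: if $\overline{\Ss}$ is nonempty, then by construction some framed push-off of $\gamma$ (the one used to build the stabilizing summand) bounds a disk, or more precisely becomes nullhomotopic in the complement; I would extract from an element $B\in\overline{\Ss}$, and the way $B$ is glued in, an explicit push-off $\gamma'\subseteq X\bslash(\Sigma_1\cup\Sigma_2)$ together with a nullhomotopy, essentially the core of a $2$-handle/meridian disk that the $S^2\times\{pt\}$ or $\StabTwist$-sphere provides after the connected sum. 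For the "if" direction: given a nullhomotopic push-off $\gamma'$ of $\gamma$ in the complement, I would use the nullhomotopy to find an immersed disk $D$ bounded by $\gamma'$ and disjoint from $\Sigma_1\cup\Sigma_2$; a neighborhood of $D$ is a piece of $\R^4$, and doing the internal stabilization along $\gamma$ inside this piece and then using $D$ to push the resulting tube across produces exactly a connected sum with $\Stab$ or $\StabTwist$ — the twist being governed by the framing of $\gamma$ versus the framing induced by $D$, which is precisely the parity that determines which of the two manifolds lands in $\overline{\Ss}$. Thus nonemptiness of $\overline{\Ss}$ is equivalent to being able to find such a $D$, i.e. to nullhomotopy of the push-off.

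The main obstacle I anticipate is bookkeeping the framings carefully enough to be sure that the stabilizing block is correctly identified — i.e., that the element of $\{\Stab,\StabTwist\}$ one reads off from the nullhomotopic push-off matches the definition of $\overline{\Ss}(\Sigma_1,\Sigma_2,\gamma)$ and does not secretly depend on the chosen nullhomotopy or on the chosen framing of $\gamma$ in a way that breaks the "iff." Concretely: an immersed nullhomotopy may have double points and wrong framing, and one must check that finger moves / boundary twists used to correct it change the relative Euler number only by even amounts (or in the controlled way encoded by the "extended" set, as opposed to the non-extended $\Ss$), so that membership in $\overline{\Ss}$ is genuinely detected. I would handle this by reducing to the $4$-ball: a neighborhood of $\gamma\cup D$ sits inside a $4$-ball, and there the classification of how a tube along $\gamma$ can be swallowed by a connected summand reduces to a $1$-parameter framing computation, which pins down $\Stab$ versus $\StabTwist$ and shows the two implications are tight.
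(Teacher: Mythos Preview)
Your proposal has the right flavor but misses the actual mechanism for the second bullet, which is where the content of the theorem lies (not the last sentence, as you suggest).

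First, the easy part: the first bullet is immediate from Definition~\ref{def: internal stab across}. ``Internally $\gamma$-stably isotopic'' literally means that the surfaces become isotopic (rel $\nu_{1/2}\gamma$) after a trivial internal stabilization across $\gamma$; such a stabilization is in particular a standard untwisted one, so there is nothing to ``slide off.'' You are overcomplicating this.

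The genuine gap is in your treatment of the second bullet. Your phrase ``absorbing the tube into a connected-sum summand supported in a neighborhood of $\gamma$ together with a meridian disk'' does not describe a working mechanism. The paper's argument (Proposition~\ref{Prop Extern by internal} and Theorem~\ref{thm Extern by internal}) runs the other way: one performs loop surgery on the push-off $\overline\gamma$ to produce $X^*$, and in $X^*$ the curve $\gamma\subseteq\Sigma_i^*$ now bounds an explicit compressing $2$-disk $D_\gamma$ (Remark~\ref{oss: Compressing disk}). Compressing along $D_\gamma$ yields surfaces $S_1,S_2\subseteq X^*$ that agree on $(\nu_{1/2}\gamma)^*$ and agree with $\Sigma_i$ on $X\setminus\nu_{1/2}\gamma$. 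The entire point of the ``rel $\nu_{1/2}\gamma$'' clause in the definition of internally $\gamma$-stably isotopic is that the given isotopy between $\Sigma_1',\Sigma_2'$ is supported away from where the surgery happened, so it transports verbatim to an isotopy $S_1'\sim S_2'$ in $X^*$; matching the stabilization arcs then gives $\Sigma_1^*\sim\Sigma_2^*$. Finally, when $\overline\gamma$ is nullhomotopic in the complement, $X^*\cong X\#B$ by a diffeomorphism carrying $\Sigma_i^*$ to $\Sigma_i$ (Remark~\ref{rem: loop is connected sum}), which is exactly what membership in $\overline\Ss$ records. Your sketch never invokes the rel $\nu_{1/2}\gamma$ condition, and without it there is no reason the isotopy between the internally stabilized surfaces should survive the passage to $X\#B$.

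For the last sentence, your instinct to use immersed disks is correct and matches Proposition~\ref{prop: equivalent definition} and Lemma~\ref{lem: proprieties Ss}, but note that $\overline\Ss$ is \emph{defined} via automorphisms moving $\overline\gamma$ to an unknot in an auxiliary $S^4$ summand (Definition~\ref{def: Ss}), not via ``how $B$ is glued in''; the equivalence with the disk criterion is a separate (easy) proposition, and this part is not where the difficulty lies.
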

The proof of Theorem \ref{mainthm: int to ext} is given in Section \ref{sec: thm A}.
  The results and techniques developed in \cite{InternalStabilizationBaykurSunukian} can be adapted to prove that surfaces in many examples are internally $\gamma$-stably isotopic. So, Theorem \ref{mainthm: int to ext}  will allow us to translate results on internal stabilizations into the external stabilization realm.  
 The study of the extended stabilization set, its dependence on the surfaces and on the curve $\gamma$, will be the main focus of Section \ref{sec: StabilizationSet}.
To the author's knowledge, the only other relation between internal and external stabilization is presented in \cite[Lemma 3.2]{hayden2023stabilization}, while in \cite{InternalStabilizationBaykurSunukian} a relation between internal stabilization of surfaces and external stabilization of the manifolds obtained as branched double cover of the surfaces is described.


    In Section \ref{sec: Rim Surgery}, we will consider  surfaces $\Sigma_{K,\beta}\subseteq X$ obtained by rim-surgery \cite{Fintushel1997SurfacesI4} from a closed, connected, oriented surface $\Sigma\subseteq X$ with simply connected complement and defined by a knot $K\subseteq S^3$ and $\beta$ a simple loop in $\Sigma$ (see Construction \ref{const: Rim surgery} for more details).
    By \cite[Theorem A]{Kim_2019} the surfaces $\Sigma,\Sigma_{K,\beta}\subseteq X$  are $(\StabTwist)$-stably \emph{equivalent}.
    By a direct application of  \cite{Auchly1stb} the surfaces $\Sigma,\Sigma_{K,\beta}\subseteq X$ are $B_\Sigma$-stably isotopic, where the manifold $B_\Sigma$ depends only on the homology class of the surface $\Sigma$. Explicitly 
    \[B_\Sigma=\begin{cases}\Stab, &\text{ if $[\Sigma]$ is ordinary}\\
    \StabTwist, &\text{ if $[\Sigma]$ is characteristic}
    \end{cases}.\] 
    On the other side, our Theorem \ref{thm: ExtStab RimSurgery}
    shows that any manifold $B$ in \emph{the extended stabilization set} $ \overline{\Ss}(\Sigma,\beta)\subseteq\{\Stab,\StabTwist\}$ (defined in Section \ref{sec: StabilizationSet}) is a stabilizing manifold for the surfaces $\Sigma,\Sigma_{K,\beta}\subseteq X$. In this case $\overline{\Ss}(\Sigma,\beta)$ can be expressed as follows:

    \[\overline{\Ss}(\Sigma,\beta)=\begin{cases}\{\Stab,\StabTwist\}, &\text{ if $[\Sigma]$ is ordinary}\\
    \{\Stab\}\text{ or }\{\StabTwist\}, &\text{ if $[\Sigma]$ is characteristic}
    \end{cases}\] 
    Moreover, Proposition \ref{prop: Propriety of S} (points \ref{prop: SS item 4} and \ref{prop: SS item 5}) states that different curves $\beta\subseteq\Sigma$ can give different stabilizing sets $\overline{\Ss}(\Sigma,\beta)$ if the homology class $[\Sigma]$ is characteristic.

In Section \ref{sec: twistRim}, we present a similar result for surfaces obtained by twist rim-surgery \cite{Kim_2006} (Theorem \ref{thm: external Twist Rim Surgery}) and two explicit application are presented in Section \ref{sec: twist rim application}: this includes the study of an exotic collection of surfaces in $B^4$ from \cite{MillerZenke2021transverse} and one with non-cyclic fundamental group of the complement from \cite[Theorem B]{hayden2021exotically}.

In Section \ref{sec: AnnulusRim}, we show that also surfaces produced by annulus rim-surgery \cite{finashin2002knotting} are $\Stab$-stably or $\StabTwist$-stably isotopic (Theorem \ref{thm: external annulus Surgery}).

In Section \ref{sec: nullhom 2tori} we study the nullhomologus 2-tori $T_{K}\subseteq X$ from \cite{Hoffman_Sunukjian_2020}, which   are surfaces topologically unknotted and smoothly knotted, i.e. they bound a topologically flat embedded, but not a smoothly embedded, $S^1\times D^2$. For these surfaces the subscript $K$ denotes a knot in $S^3$ used in their constructions.
One external stabilization  with $\Stab$ or $\StabTwist$ makes the nullhomologus surfaces \emph{smoothly} unknotted (Theorem \ref{thm: ExtStab nullhomologus 2-tori}). It is an open question if the stabilizing manifold can always be taken to be $\Stab$.
\begin{question}\label{question: external nullhomo with S2xS2}
    Is $T_{K}\subseteq X$, defined as in Construction \ref{const: nullhomologus tori}, smoothly unknotted in $X\#(\Stab)$?
\end{question}

In Section \ref{sec: externalPrescribedGroup}, we study collection of surfaces produced by \emph{mixing} (Definition \ref{def: mixing}), motivated by the construction of an exotic set of 2-spheres presented in the proof of \cite[Theorem B]{torres2023topologically}. 
In general, for those collections we are able to prove only that they are $(\Stab)$-stably \emph{equivalent} (see Proposition \ref{prop: stable equivalence}).
Here the problem lies in the fact that very little is known about the internal stabilization behavior of these collections
, and so we are not able to apply Theorem \ref{mainthm: int to ext}.
\begin{question}\label{question: internal stabilization of mixing}
    Given a pair of surfaces $\Sigma,\Sigma'\subseteq X\#(\Stab)$, where $\Sigma'$ is a mixing of $\Sigma$, what is the minimum number of internal or external stabilization needed to make them smoothly isotopic?\end{question}
Instead, we base our work on \cite{Auchly1stb} and we show that surfaces produced by a \emph{satellite operation} (Section \ref{sec: satelliteoperations}) around 2-spheres with simply connected complements are externally $(\Stab)$-stably isotopic or  $(\StabTwist)$-stably isotopic (Theorem \ref{thm: patterns}). 
As an application, some collection from \cite[Theorem B]{torres2023topologically}, the ones with fundamental group $\Z/k\Z$ (constructed from the $\Q$-homology 4-sphere $\Gamma_{k,0}$), are $(\Stab)$-stably \emph{isotopic} (Theorem \ref{thm: Stable isotopy for TORRES Z_2}).
As a consequence, we construct an exotic infinite family of 2-spheres in a simply connected 4-manifold such that the collections produced by satellite operations, for an infinite family of patterns, are also exotic collections (Corollary \ref{cor: torres patterns}). 
\begin{question}\label{question: pattern}
    Does performing non-trivial satellite operations on a given collection of exotic 2-spheres preserve exoticness?
More precisely, given an exotic collection $\Co$ of 2-spheres in a 4-manifold $X$, a 2-sphere pattern $P$ (not unknotted), and define the collection 
\[\Co(P):=\{S(P): S(P)\text{ is the satellite surface of $S\in\Co$ with pattern }P\}.\]
Is $\Co(P)$ an exotic collection?
\end{question}
Corollary \ref{cor: torres patterns} shows that the preceding question has positive answer for a specific exotic collection $\Co$ and patterns $P(k)$ for $k\in\N$.
Notice that we have excluded patterns $P$ with positive genus, since the work of \cite{InternalStabilizationBaykurSunukian} will give a negative answer to our question in many cases.

Section \ref{sec: nullhomologus surfaces} is dedicated to the study of other topologically unknotted surfaces. In Section \ref{sec: stabilization FintStern} we study the 2-sphere implicitly described by \cite{fintushel1994fake} as pointed out in \cite[end of Section 2]{ray2017four} (see Construction \ref{Constr fs: 2-sphere}) and prove that it is smoothly unknotted after one stabilization with $\Stab$ (Theorem \ref{thm: External fintushel sphere}), while it remains unknown if it smoothly unknotted after one internal stabilization. This result can be considered as a refinement and improvement of Matumoto's work \cite[Theorem 1]{OnWeaklyUnknotted2Sphere}.
In Section \ref{sec: Stab Torres unknotted} we show that the 2-spheres and 2-tori from \cite[Theorem A]{torres2020Unknotted} become smoothly unknotted after internal or external stabilization, see Theorems \ref{thm: externa stab 2-sphers (un)knotted torres}, \ref{thm: identification Torres 2-tori}, \ref{thm: intern unknotte 2spheres torres}.
For proving the results of Section \ref{sec: nullhomologus surfaces}, we will use  techniques which have a more general application in the context of 2-links, which are studied in  Section \ref{sec: unlinked}.

In Section \ref{sec: unlinked}, we study the 2-links $\Gamma_K$ from \cite[Theorem A]{(Un)knotted}, the one component case of those 2-links gives topologically unknotted 2-spheres $S_K$, where $K\subseteq S^3$ is a knot.   A consequence of Section \ref{sec: nullhom 2tori} is that the 2-sphere $S_K$ after one internal stabilization is smoothly unknotted in many cases (Theorem \ref{thm: Int unknotted 1}). Indeed, we show that, after the attachment a 3-dimensional 1-handle, the 2-sphere $S_K$ becomes a nullhomologus 2-torus of the form $T_{3_1}\subseteq X\bslash D^4\subseteq X\#(\Stab)$, where $3_1$ denotes the trefoil knot. This is the only instance in this note in which we translate a property of external stabilization to another one of internal stabilization. Section \ref{sec: strategy int unknotted} presents the strategy for the proof for this result based on following and identifying surfaces through a sequence of diffeormorphisms.
\begin{definition}
A $g$-component 2-link $\Gamma$ in a smooth 4-manifold  $X$ \[\Gamma=S_1\sqcup\dots\sqcup S_g\subseteq X\] is the disjoint union of $g$ smoothly embedded 2-spheres. It is \emph{brunnian} if for any $j=1,\dots,g$ the $g-1$ component 2-link $\Gamma\bslash S_j\subseteq X$ is smoothly unlinked, i.e. it bounds  $g-1$ disjoint and smoothly embedded copies of $D^3$ in $X$.     
\end{definition}
This is the generalization of the notion introduced by Brunn  \cite{brunn1892ueber} for links in 3-dimensional space.
In Theorem \ref{thm: (un)linked stabilization and bruniannity} and Corollary \ref{cor: unliked family bruniannity}, we show that the 2-links $\Gamma_K$, under some additional assumptions, are brunnian, and they become smoothly unlinked after one external stabilization with $\Stab$.  This partially answers \cite[Question D]{(Un)knotted} and it should be compared with previous works of \cite[Theorem B]{Auckly_Kim_Melvin_Ruberman_2015} and \cite{hayden2021brunnian}. Both articles produce examples of brunnian exotic families, where different notions of brunnianity are used. In the first one \cite{Auckly_Kim_Melvin_Ruberman_2015} they have a family of \emph{ordered} 2-links, pairwise non-isotopic, becoming isotopic after removing the \emph{same} component, in particular, this is a propriety of the family and not of the surface. In the second one \cite{hayden2021brunnian} they use a notion of brunnianity for surfaces with boundary. Our result, restricted to the case of the 2-sphere $S_K$, has to be compared with \cite[Theorem 1]{OnWeaklyUnknotted2Sphere}.
The strategy for obtaining this result is explained in details in Section \ref{sec: strategy external unlinked}. It is based on a cobordism argument, presented in Section \ref{Cobordism argument} and inspired by \cite{DissolvingKnotSurgery}, which allows us to identify the diffeomorphism type of 4-manifold which is the main obstruction preventing the 2-link $\Gamma_K$ from being smoothly unlinked.

In (\ref{eq: logical dependence of the sections})  we present a logical dependency scheme of the sections of this manuscript, we hope that the reader interested in studying a specific example can benefit from it.
\begin{equation}\label{eq: logical dependence of the sections}    \begin{tikzcd}\S\ref{sec: intro}\arrow{d}\arrow{r} & \S\ref{sec: Internal->external}\arrow{r}\arrow[swap]{d} & \S\ref{sec: application}  \\\S\ref{sec: externalPrescribedGroup} & \S\ref{sec: unlinked}\arrow{r} & \S\ref{sec: nullhomologus surfaces}\end{tikzcd}\end{equation} 

\subsection{Choice of hypotheses in the main theorems}
In any construction of an exotic pair $\Sigma,\Sigma'\subseteq X$ of surfaces in a smooth 4-manifold $X$, we have to take care of the following hypothesis and choices:
\begin{enumerate}
    \item Minimal ingredients needed for the construction of the embedded surface $\Sigma'\subseteq X$; usually these are mainly hypothesis on the ambient space $X$ and an initial surface $\Sigma\subseteq X$.
    \item  Hypothesis needed to make the homology classes $[\Sigma],[\Sigma']\in H_2(X;\Z)$ coincide and  to have an isomorphism of fundamental groups of the complements $X\bslash \Sigma$ and $X\bslash\Sigma'$. 
    \item Hypothesis needed to prove that the surfaces $\Sigma$ and $\Sigma'$ are not smoothly isotopic.
\end{enumerate}
In all the construction that we study in this note, we omit the third set of hypothesis, since those are not needed in our discussion on (not strict) stabilizations. Instead, those represent obstacles that often can shade the results.
So, we do not exclude the case in which the surfaces $\Sigma,\Sigma'\subseteq X$ are already smoothly isotopic, even if our results will be trivial in this case. 
In this way we avoid confusion, and we get a much general statement that works also for non-isotopic surfaces possibly constructed without satisfying the third set of hypothesis. 
On the other hand, we can not omit the first two sets, since the second one contains the minimal hypothesis to not obstruct the existence of an isotopy between $\Sigma $ and $\Sigma'$ in $X\# B$ for a simply connected $B$.
Unfortunately (?) sometimes our result will  need extra assumptions to work, we have done the best of our possibility to reduce them and state them clearly in the corresponding theorems. Notice that there is no known example of a pair of closed connected surfaces that does not become smoothly isotopic after one $\Stab$ stabilization in a simply connected and \emph{closed} 4-manifold. In \cite{hayden2023stabilization}  a counterexample in an open 4-manifold is presented.

\subsection{Open questions about surfaces stabilization}
There are many other constructions of exotic surfaces that this work has not analyzed:
\begin{itemize}
    \item a multitude of exotic surfaces with boundary in $D^4$, some of which are not isotopic after one or many internal stabilizations (see for example \cite{guth2022exotic});
    \item a multitude of exotic non orientable surfaces, including infinite collections of $k\R P^2$ embedded in $S^4$ for different values of $k$ (see for example \cite{FinashinKreckViro} for $k=10$, or \cite{finashin6Rp2} for $k=6$,
    or \cite{matic2024exotic5rp24sphere}
    for $k=5$, or \cite{miyazawa2023gauge} for $k=1$);
    \item exotic closed surfaces with more complicated fundamental group of the complement (see for example \cite[Theorem B]{torres2023topologically} or \cite[Theorem 1.4]{benyahia2024exotically}).
\end{itemize}  
Are these exotic surfaces $\Stab$ or $(\StabTwist)$-stably isotopic?
    In \cite{auckly2023smoothlyknottedsurfacesremain} the author produces pairs of \emph{embeddings} of surfaces that are not smoothly isotopic even after many internal stabilizations, but they become isotopic after one external stabilization with $\Stab$. Is it possible to compute the number of internal stabilizations needed to make them smoothly isotopic?


\subsection{Acknowledgements}
I would like to thank Rafael Torres for suggesting this project and supporting its development with numerous bibliographical suggestions. 
I would like to thank him as well as Valentina Bais and Younes Benyahia for useful discussions and comments.
Remark \ref{rem: sigma is smoothly knotted} and its proof are due to Rafael Torres.
I thank Dave Auckly for pointing out Remark \ref{rem: Rocklin}.
This work was supported by the "National Group for Algebraic and Geometric Structures, and their Applications" (GNSAGA - INdAM).

\subsection{Notation and conventions}
The standard basis of the Euclidean space $\R^n$ will be denoted as $e_1,\dots,e_n$, the n-sphere $S^n$ and the n-disk $D^n$ are subsets of the Euclidean space.
We can write $D^n\subseteq D^{n+k}$ (and similarly for the $S^n\subseteq S^{n+k}$) for all $n,k\in\N$ using the standard identification of $\R^{n}$ with  $\R^{n}\times \{0\}^k\subseteq\R^{n+k}$. In particular, $\pm 1=\pm e_1\in S^n$ for all $n$. The n-torus $T^n$ is the product of n copies of $S^1$.\\
All the 4-manifolds in this note are supposed to be smooth, connected, oriented and they can have boundary.  All submanifolds are smoothly embedded. 
Every homeomorphism or diffeomorphism between oriented manifolds (or submanifolds) is supposed to be orientation preserving if not specified, except for gluing maps. 

\subsubsection{Tubular neighborhoods and framings}
We denote as $\nu Y\subseteq X$ the tubular neighborhood of $Y$ in $X$, where $Y$ is a compact and properly embedded submanifold. Moreover, each submanifold will have a chosen tubular neighborhood with closure defining a disk bundle over $Y$ and, if possible, in the interior of $X$. Sometimes we will abuse of notation denote $\nu Y$ also the closure of the tubular neighborhood. 
Similarly, we define  $\nu_\epsilon Y\subseteq \nu Y$ for  $0<\epsilon<1$, where $\nu_\epsilon Y$ is the restriction to the vectors of norm less than $\epsilon$ (after choosing a disk bundle structure).\\
If $Y$ is a submanifold of codimension $k$ with trivial tubular neighborhood, then we say that a framing for $\nu Y$ is a diffeomorphism 
\[f_Y:\nu Y\to Y\times D^k,\] where  the restriction of $f$ to $Y$ is $f|_Y=id_Y\times \{0\}$.
If the ambient space $X$ is oriented, then we are assuming the $f_Y$ is orientation preserving.
We say that  $Y$ is a \emph{framed submanifold} of $X$, if it comes equipped  with a chosen tubular neighborhood and a framing $(\nu Y,f_Y)$. 
Two framed submanifold $Y\subseteq X$ and $Y'\subseteq X'$ are (smoothly) equivalent if there exists a diffeomorphism $\phi:X\to X'$ such that \[\phi(Y)=Y', \quad\phi(\nu Y)=\nu Y'\quad\text{ and }\quad(\phi|_Y\times id_{D^k})\circ f_Y=f_{Y'}\circ\phi|_{\nu Y}.\]
If $X'=X$ we say that the framed submanifold $Y$ and $Y'$ are smoothly isotopic if $\phi$ is also ambient isotopic to the identity. If $\phi:(X,Y)\to(X',Y')$ is a diffeomorphism of pair and $Y$ is a framed submanifold, then $Y'$ has a \emph{framing induced by the map} $\phi$, namely this is given by $\nu Y':=\phi(\nu Y)$ and \[f_{Y'}:=(\phi|_Y\times id_{D^k})\circ f_Y\circ\phi^{-1}|_{\nu Y'}:\nu Y'\to Y'\times D^k.\]

If $l\subseteq X$ is a simple loop, then usually it comes equipped with an orientation, so a framing for an oriented loop is a diffeomorphism 
\[f_l:\nu l\to l\times D^k\cong S^1\times D^k\]
which respects the orientation of the loop. Notice that a simple loop can be framed if and only if it has an orientable normal bundle, i.e. if it is not an orientation reversing loop of the manifold.

\section{\label{sec: Internal->external}From internal to external stabilizations}
In this section we will uncover a simple relationship between internal and external stabilization of a surface in a 4-manifold. We will be able to find conditions under which an internal stabilization between two surfaces implies that the surfaces become smoothly equivalent after an external stabilization. The main result is Theorem \ref{mainthm: int to ext}.
In section \ref{sec: main idea}  the main idea and the construction are presented. In section \ref{sec: StabilizationSet} we will define the stabilization sets and prove Theorem \ref{mainthm: int to ext}. The following subsections examine the stabilization set in detail.

\subsection{The main idea \label{sec: main idea}}
We begin this section by giving an informal idea of what we are going to do.

Let $X$ be a 4-manifold and $\Sigma\subseteq X$ a surface containing a simple loop $\gamma\subseteq\Sigma$. Suppose that $\gamma$ bounds a \emph{compressing 2-disk} $D$ with interior disjoint from $\Sigma$, see Figure \ref{fig: main idea} in the margin of this page.
\begin{wrapfigure}{r}{5cm}
\centering
\includegraphics[width=5cm]{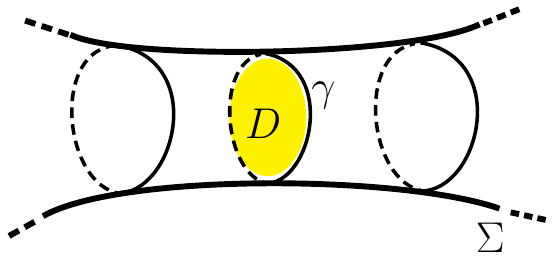}\\
\includegraphics[width=5cm]{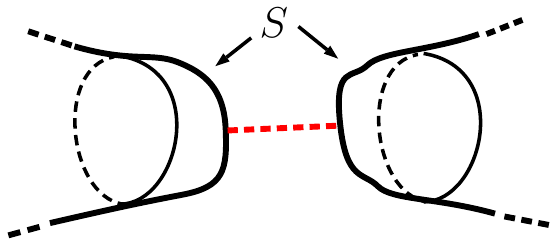}
\caption{\label{fig: main idea}}
\end{wrapfigure} The formal notion of \emph{compressing 2-disk} is given in \cite[Definition 9.1]{4DLightBulb}. We can define a new surface $S$ by removing the annulus $\nu \gamma\cap \Sigma$ and capping off the two new boundary components with two 2-disks, which are parallel copies of $D$. We say that $S$ is obtained from $\Sigma$ by compressing along $D$, see \cite[Definition 9.4]{4DLightBulb} and the bottom figure on the right.
Furthermore, we can see that $\Sigma$ can be obtained from $S$ as result of an internal stabilization along an arc connecting the added two 2-disks; this arc is shown dashed in the figure.  The arc that defines an internal stabilization is sometimes called \emph{cord}. We use \cite{InternalStabilizationBaykurSunukian} and \cite{Boyle_1988} as main references for the notion of \emph{internal stabilization}.
In short, if a surface $\Sigma$ has a compressing 2-disk, then it can be considered as the result of an internal stabilization of another surface $S$. The existence of an embedded compressing 2-disk in $X$ bounding a curve $\gamma\subseteq\Sigma$, in general, is not guaranteed. To solve this issue, we will consider a curve $\bar\gamma$ \emph{a small push off} of the curve $\gamma$ into the complement of $\Sigma$ and we will perform loop surgery on this curve. This operation will produce a new ambient manifold $X^*$ in which $\Sigma$ still defines an embedded surface that we will call $\Sigma^*$ to avoid confusion. The framing of $\bar\gamma$ is chosen in such a way that $\Sigma^*$ has a compressing 2-disk in $X^*$. 
Proposition \ref{Prop: Relation External-Internal} will encode this information.\\ 
Moreover, note that $X^*$ is diffeomorphic to $X\# B$ for $B\in\{\Stab,\StabTwist\}$ if $\bar\gamma$ is nullhomotopic in $X$, and we can control the image of $\Sigma^*$ to be $\Sigma$ under the diffeomorphism $X^*\to X\# B$ if $\bar\gamma$ is nullhomotopic in $X\bslash\Sigma$ (Remark \ref{rem: loop is connected sum}).
We would like to use what discussed until this point to derive properties of a pair  $\Sigma_1,\Sigma_2$ of surfaces in $X$. To do this, we have to suppose that they agree at least on a neighborhood of a loop $\gamma\subseteq\Sigma_1\cap\Sigma_2$. This justifies Definition \ref{def: gamma-standarad}, and it allows us to define both $\Sigma_1^*$ and $\Sigma_2^*$ in $X^*$.
In Proposition \ref{Prop Extern by internal} will be shown that $\Sigma_1^*$ and $\Sigma_2^*$ are smoothly isotopic in $X^*$ under the assumption that $\Sigma_1$ and $\Sigma_2$ become smoothly isotopic in $X$ after one internal stabilization and the natural condition that the isotopy has to preserve a small tubular neighborhood $\nu_{1/2}\gamma$ of $\gamma$. The proof of Proposition \ref{Prop Extern by internal} will be based on the fact that $\Sigma_1^*$ and $\Sigma_2^*$ are internal stabilizations of surfaces $S_1$ and $S_2$, which coincide with $\Sigma_1$ and $\Sigma_2$ in $X\bslash(\nu_{1/2}\gamma)\subseteq X^*$ and coincide with each other on the remaining $(\nu_{1/2}\gamma)^*\subseteq X^*$.

\begin{definition}\label{def: gamma-standarad}
    Let $\gamma\subseteq X$ be a  simple loop in an oriented 4-manifold $X$. Fix for $\gamma$ a tubular neighborhood $\nu\gamma\subseteq int(X)\subseteq X$ and a framing $f:\nu\gamma\to S^1\times D^3$.\\
    We say that a surface $\Sigma\subseteq X$ is $\nu\gamma$\emph{-standard} if $f$ induces a diffeomorphism of couples
\[f:(\nu \gamma,\nu \gamma\cap\Sigma)\to (S^1\times D^3, S^1\times D^1).\]
The curve $\overline{\gamma}=f^{-1}(S^1\times \frac{1}{4}e_2)\subseteq\nu_{1/2}\gamma\subseteq\nu\gamma$ is called \emph{a small push off} of $\gamma$, it comes equipped with a tubular neighborhood 
\[\nu\overline{\gamma}:=f^{-1}\left(S^1\times (\frac{1}{8} D^3+\frac{1}{4}e_2)\right)\subseteq\nu_{1/2}\gamma\subseteq\nu\gamma\subseteq X\]
and a framing $\overline{f}$ induced by a framing $f$ of $\gamma$, i.e. \[\overline{f}:=(id_{S^1}\times h)\circ f|_{\nu\overline{\gamma}}:\nu\overline{\gamma}\to S^1\times D^3,\] where $h:(\frac{1}{8} D^3+\frac{1}{4}e_2)\to D^3$ is the affine transformation $h(x)=8\cdot(x-\frac{1}{4} e_2)$. See Figure \ref{fig: main idea surgery A}.
\end{definition}

\begin{constr}\label{constr: Sigma* S}
 Let $X^*:=(X\bslash\nu\overline{\gamma})\cup_{\overline{f}|_\partial} (D^2\times S^2)$ be the 4-manifold which is the result of loop surgery along $\overline{\gamma}$ and notice that the surface $\Sigma$ still define a surface $\Sigma^*$ in $X^*$. We can isotope the arc $ \frac{1}{2}D^1\subseteq \frac{1}{2}D^3$ fixing the boundary to make an arc $A\subseteq\frac{1}{2} D^3$ with a small part of it in $(\frac{1}{8} D^3+\frac{1}{4} e_2)$, see Figure \ref{fig: main idea surgery A}. 
 We can suppose that $A$ is chosen is such a way that \[((S^1\times A)\cap f(\partial\nu\overline{\gamma}))=S^1\times\{\pm\frac{1}{8}e_1+\frac{1}{4}e_2\}\subseteq f( \partial\nu\overline{\gamma})\subseteq S^1\times D^3 \] Now we define a surface $S\subseteq X^*$ as follows
\[S:=\left((\Sigma\bslash\nu_{1/2}\gamma)\cup (f^{-1}(S^1\times A)\bslash\nu\overline{\gamma})\right)\cup \left(D^2\times\{\pm1\}\right)\subseteq (X\bslash\nu\overline{\gamma})\cup (D^2\times S^2)=X^*\]
We call $S$ \emph{the surface in $X^*$ obtained from $\Sigma^*$ by compressing the loop $\gamma$}.    
\end{constr}

\begin{figure}
\begin{subfigure}[b]{0.45\textwidth}
    \centering
    \includegraphics[width=0.55\textwidth]{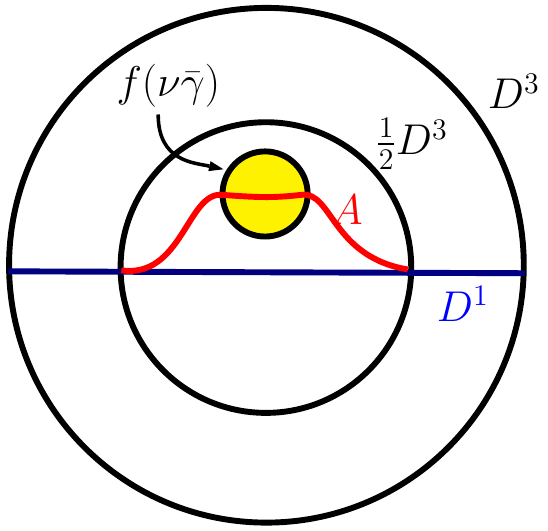}
    \caption{\label{fig: main idea surgery A}}
    
\end{subfigure}
\begin{subfigure}[b]{0.45\textwidth}
    \centering
    \includegraphics[width=0.55\textwidth]{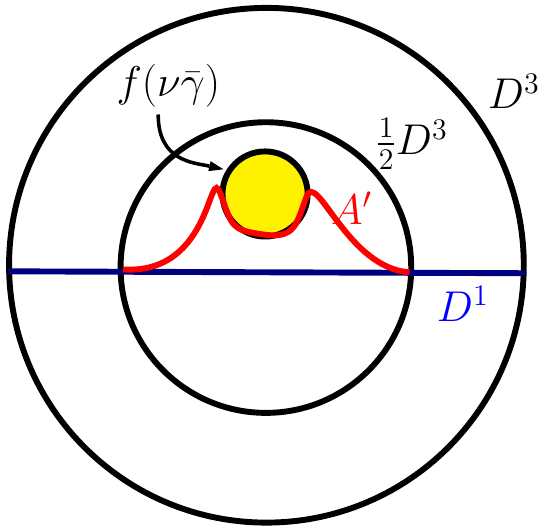}
    \caption{\label{fig: Main idea surgery B}}
\end{subfigure}
\caption{}
    
\end{figure}
\begin{oss}\label{rem: nugamma*}
    Notice that the portion of the surface $S$ lying in $(\nu_{1/2}\gamma)^*:=(\nu_{1/2}\gamma\bslash\nu\overline{\gamma})\cup_{\overline{ f}|_\partial} (D^2\times S^2)\subseteq X^*$ depends only on the framed curve $(\gamma,\nu\gamma, f)$, and does not depend on the surface $\Sigma$.
\end{oss}
\begin{oss}\label{oss: Compressing disk}
    The construction of $S$ can be rephrased as follows. The surface $S$ is   obtained from $\Sigma^*$ by compressing a 2-disk $D_{\gamma}\subseteq(\nu_{1/2}\gamma)^*$ bounding $\gamma\subseteq\Sigma^*$. Indeed, there is an embedded 3-dimensional 2-handle $H_\gamma\subseteq(\nu_{1/2}\gamma)^*$ attached to $\Sigma^*$, such that \[\partial H_\gamma=f^{-1}(S^1\times\frac{1}{2}D^1)\cup \left(\left(f^{-1}(S^1\times A)\bslash\nu\overline{\gamma}\right)\cup (D^2\times\{\pm1\})\right)\subseteq(\nu_{1/2}\gamma)^*\]
    To describe the 2-handle $H_\gamma$, let 
    $Q$ be the 2-dimensional region bounded by the curve $A'\cup\frac{1}{2} D^1\subseteq \frac{1}{2}D^2\subseteq \frac{1}{2}D^3$, where the arc $A'$ is the one depicted in Figure \ref{fig: Main idea surgery B}. Define $H_\gamma$ as
    \[ H_\gamma:=(f^{-1}(S^1\times Q))\cup (D^2\times D^1_-)\subseteq (\nu_{1/2}\gamma\bslash\nu\overline{\gamma})\cup (D^2\times S^2)=(\nu_{1/2}\gamma)^*, \] where $D^1_-$ is the arc in  $S^2\cap (\R\times\R^-\times\{0\})\subseteq S^2$ with endpoints $\pm1\in S^1\subseteq S^2$. The compressing 2-disk $D_\gamma$ is the core 2-disk of $H_\gamma$.
\end{oss}

The next proposition states a useful relation between the surfaces $S$ and $\Sigma^*$.

\begin{prop}\label{Prop: Relation External-Internal}
   Let $X$ be a 4-manifold and $\gamma\subseteq X$ a simple framed loop. Assume that $\Sigma$ is $\nu\gamma$-standard surface in $X$ and that $\overline{\gamma}$ is a small push off of $\gamma$.
Let $S,\Sigma^*\subseteq X^*=(X\bslash \nu\bar\gamma)\cup(D^2\times S^2)$ be as Construction \ref{constr: Sigma* S}.
Consider the arc \[\alpha:=\{1\}\times D^1_-\subset D^2\times S^2\subset X^*,\] where $D^1_-$ is the arc in $S^2\cap (\R\times\R^-\times\{0\})\subseteq S^2$ with endpoints $\pm1\in S^1\subseteq S^2$.\\ The surface $S\subseteq X^*$ becomes smoothly isotopic to $\Sigma^*\subseteq X^*$  after an internal stabilization along the arc $\alpha$.
Moreover, $\pi_1(X^*\bslash S)$ is isomorphic to $\pi_1(X^*\bslash \Sigma^*)$.
In particular, if $\overline{\gamma}$ is nullhomotopic in $X\bslash\Sigma$, then $\pi_1(X\bslash \Sigma)$ is isomorphic to $\pi_1(X^*\bslash S)$.
\end{prop}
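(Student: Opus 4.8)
The plan is to analyze the local model of Construction \ref{constr: Sigma* S} inside $(\nu_{1/2}\gamma)^* \subseteq X^*$ and show that the surface $S$ there is obtained from $\Sigma^*$ by compressing along the explicit disk $D_\gamma$ described in Remark \ref{oss: Compressing disk}, and conversely that $\Sigma^*$ is recovered from $S$ by an internal stabilization along the cord $\alpha$. First I would record that, by construction, $S$ and $\Sigma^*$ agree on $X^* \bslash (\nu_{1/2}\gamma)^* = X \bslash \nu_{1/2}\gamma$, so the whole discussion reduces to the compact piece $(\nu_{1/2}\gamma)^* = (\nu_{1/2}\gamma \bslash \nu\overline{\gamma}) \cup_{\overline{f}|_\partial} (D^2 \times S^2)$, where the framing $\overline{f}$ makes everything a standard model independent of $\Sigma$ (Remark \ref{rem: nugamma*}). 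In this model, $\Sigma^* \cap (\nu_{1/2}\gamma)^*$ is $f^{-1}(S^1 \times \tfrac12 D^1)$ (after the isotopy pushing it through the region $(\tfrac18 D^3 + \tfrac14 e_2)$, which we may perform since it is supported away from $\partial$), while $S \cap (\nu_{1/2}\gamma)^*$ is $(f^{-1}(S^1 \times A) \bslash \nu\overline{\gamma}) \cup (D^2 \times \{\pm 1\})$. The boundary of the 3-dimensional 2-handle $H_\gamma$ from Remark \ref{oss: Compressing disk} is precisely the union of these two arcs' traces, with $\partial H_\gamma = f^{-1}(S^1 \times \tfrac12 D^1) \cup ((f^{-1}(S^1 \times A)\bslash\nu\overline{\gamma}) \cup (D^2 \times \{\pm 1\}))$, so $S$ and $\Sigma^*$ are related by attaching/removing $H_\gamma$ along its core and cocore; this is exactly the statement that $S$ is obtained from $\Sigma^*$ by compressing the disk $D_\gamma$, and equivalently that $\Sigma^*$ is obtained from $S$ by an internal stabilization — the cord being the cocore of $H_\gamma$, which I would identify with $\alpha = \{1\} \times D^1_- \subseteq D^2 \times S^2$ after a small ambient isotopy of the cord rel endpoints. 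The smooth isotopy then follows from the standard fact that compressing along a disk and then stabilizing back along the dual cord returns the original surface up to isotopy supported in a neighborhood of $H_\gamma \cup D_\gamma$, which lies in $(\nu_{1/2}\gamma)^*$; see \cite[Section 9]{4DLightBulb}.

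For the fundamental group statements, I would argue that $X^* \bslash S$ and $X^* \bslash \Sigma^*$ are related by the dual moves of attaching a 3-dimensional 1-handle and a 3-dimensional 2-handle to the surface inside the contractible-enough local region, so their complements have the same fundamental group. Concretely: compressing $\Sigma^*$ along $D_\gamma$ removes an annular neighborhood of $\gamma$ from the surface and caps with two parallel disks; in the complement this corresponds to carving out a neighborhood of $D_\gamma$ (which is an unknotted 2-disk in the local model, so $\pi_1$ of its complement within $(\nu_{1/2}\gamma)^*$ is controlled) and filling in a solid region. A van Kampen argument across $\partial(\nu_{1/2}\gamma)^*$, using that the relevant meridians and the loop $\gamma$ behave identically before and after the move, yields $\pi_1(X^* \bslash S) \cong \pi_1(X^* \bslash \Sigma^*)$. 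For the last sentence, if $\overline{\gamma}$ is nullhomotopic in $X \bslash \Sigma$, then the loop surgery along $\overline{\gamma}$ that produces $X^*$ is, in the complement of the surface, a connected sum with a simply connected piece (this is the content of Remark \ref{rem: loop is connected sum}, which I may invoke): writing $X^* \bslash \Sigma^* = (X \bslash \Sigma) \# B'$ for an appropriate simply connected $B'$, we get $\pi_1(X^* \bslash \Sigma^*) \cong \pi_1(X \bslash \Sigma)$, and combining with the isomorphism just established gives $\pi_1(X \bslash \Sigma) \cong \pi_1(X^* \bslash S)$.

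I expect the main obstacle to be the careful verification that the cord for recovering $\Sigma^*$ from $S$ can be taken to be exactly $\alpha$ rather than some other arc connecting the two capping disks $D^2 \times \{\pm 1\}$. A priori the internal stabilization could be along any cord, and the claim that it is isotopic (as an arc in $X^* \bslash S$, rel endpoints) to $\{1\} \times D^1_-$ requires tracking the specific arcs $A$ and $A'$ and the region $Q$ from Remark \ref{oss: Compressing disk} through the identification $(\nu_{1/2}\gamma \bslash \nu\overline{\gamma}) \cup (D^2 \times S^2) = (\nu_{1/2}\gamma)^*$, and checking that the complement of $S$ in this local model is simply connected enough that the cocore of $H_\gamma$ is isotopic to $\alpha$. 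This is essentially a bookkeeping exercise in the explicit model, but it is where the geometric content of the proposition lives; the rest is an assembly of standard compressing/stabilizing and van Kampen facts.
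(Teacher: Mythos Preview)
Your argument for the first claim (that stabilizing $S$ along $\alpha$ recovers $\Sigma^*$) is essentially the paper's: identify $\alpha$ with the cocore of the 3-dimensional 2-handle $H_\gamma$ of Remark~\ref{oss: Compressing disk}, so that $H_\gamma$ viewed from the other side is a 1-handle on $S$ with core $\alpha$. That is exactly the paper's one-line proof of this part, and your worry about ``exactly $\alpha$'' is resolved just as the paper does, by that cocore identification.

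The fundamental-group paragraph, however, has a genuine gap. Your opening sentence (``dual moves \dots\ so their complements have the same fundamental group'') is not a valid principle: an internal stabilization of a surface in general \emph{does} change $\pi_1$ of the complement --- it imposes the relation $m_+ = m_-$ between the meridians at the two feet of the 1-handle (this is \cite[Lemma~9]{Boyle_1988}). So $\pi_1(X^*\bslash\Sigma^*)$ is a quotient of $\pi_1(X^*\bslash S)$, and you must show the quotient is trivial. Your van Kampen sketch does not do this: the phrase ``the relevant meridians \dots\ behave identically before and after the move'' is precisely the assertion that needs proof, not an input. The paper's proof supplies exactly the missing observation: both meridians $m_\pm$ of the capping disks $D^2\times\{\pm 1\}\subseteq S$ already lie in the piece $D^2\times(S^2\bslash\{\pm 1\})\subseteq X^*\bslash S$, and in $\pi_1\bigl(D^2\times(S^2\bslash\{\pm 1\})\bigr)\cong\Z$ they visibly represent the same element, hence $m_+ = m_-$ in $\pi_1(X^*\bslash S)$ and the quotient map is an isomorphism. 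Once you insert this one concrete check, your outline is complete; the ``In particular'' clause then follows as you say.
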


\begin{proof}
We can see $\alpha$ as the cocore of the 3-dimensional 2-handle $H_\gamma\subseteq(\nu_{1/2}\gamma)^*$ attached to $\Sigma^*$ described in Remark \ref{oss: Compressing disk}. So, $H_\gamma$ defines a 3-dimensional 1-handle attached to $S$ with core the chord $\alpha$, 
and this concludes the first part of the proof.

Now we will prove the claim on the fundamental groups. 
Since $\Sigma$ is obtained from $S$ via an internal stabilization we have that $\pi_1(X^*\bslash \Sigma)$ is isomorphic to $ \pi_1(X^*\bslash S)$ modulo the normal subgroup generated by $m_+\cdot m_-^{-1}$, where $m_{\pm}$ are the class in $\pi_1(X^*\bslash S_1)$ of loops obtained connecting the meridians of $D^2\times\{\pm1\}\subseteq S$ with paths, near the arc defining the stabilization, to a base point. For more details see \cite[Proof of Lemma 9]{Boyle_1988}. It is possible to see that $m_+=m_-\in\pi_1(D^2\times (S^2\bslash\{\pm1\}))$ and so in $\pi_1(X^*\bslash S)$, therefore $\pi_1(X^*\bslash S)$ and $\pi_1(X^*\bslash \Sigma^*)$ are isomorphic.
\end{proof}

\begin{oss} \label{rem: loop is connected sum}
    If $\overline{\gamma}$ is the boundary of an immersed 2-disk $D$ and $U$ is a neighborhood of $D$, then we have a diffeomorphism  $\phi_{D,U}:X^*\approx X\# B$, where the connected sum is performed inside $U$, $\phi_{D,U}$ restrict to the identity in the complement of $U\cup\nu\gamma$  and $B$ is $\Stab$ or $\StabTwist$.
 In particular, if $\Sigma$ is closed and disjoint from $D$, then $U$ can be chosen to be disjoint from $\Sigma$ and it follows that $\phi_{D,U}(\Sigma^*)=\Sigma\subseteq X\# B$. That is we have a diffeomorphism of pairs
 \[\phi_{D,U}:(X^*, \Sigma^*)\to (X\#B,\Sigma)\]
 Therefore, Proposition \ref{Prop: Relation External-Internal} shows how an external stabilization of a surface $\Sigma$ in $X$ can be seen as an internal stabilization of the surface $\phi_{D,U}(S)$ in $X\# B$.
\end{oss}


\begin{definition}\label{def: internal stab across}
    For a $\nu\gamma$-standard surface $\Sigma$ we can define the 
    surface $\Sigma'$ as a result of an internal stabilization, locally orientation preserving, of $\Sigma$ along a 3-dimensional 1-handle $h$ with core arc $f^{-1}(\{1\}\times a)$, where $a$ is any  arc in $D^3$ connecting the two components of $D^1\bslash (\frac{1}{2} D^1)\subseteq D^3\bslash (\frac{1}{2} D^3)$ and $f(h)$ is contained in $ S^1\times (int(D^3)\bslash (\frac{1}{2} D^3))$.
    We say that $\Sigma'$ is the trivial \emph{internal stabilization across $\gamma$} of $\Sigma$. It is well-defined up to smooth isotopy relative to $\nu_{1/2}\gamma:=f^{-1}(S^1\times(\frac{1}{2}D^3))$ and $X\bslash \nu\gamma$. \\
    A 1-handle $h$ defines an \emph{internal stabilization away} from $\gamma$ if it defines an internal stabilization for a $\nu\gamma$-standard surface and $h$ is disjoint from the closure of ${\nu_{1/2}\gamma}$.\\
    We say that two $\nu\gamma$-standard surfaces $\Sigma_1,\Sigma_2\subseteq X$ are \emph{internally $\gamma$-stably isotopic} if the surfaces $\Sigma_1',\Sigma_2'\subseteq X$, defined by a trivial internal stabilization across $\gamma$, are smoothly isotopic relative to $\nu_{1/2}\gamma$.
\end{definition}
\begin{oss}
The surface $\Sigma'$, the trivial internal stabilization across $\gamma$ of a surface $\Sigma$, does not depend on the chosen framing of $\gamma$ up to smooth isotopy relative to ${\nu_{1/2}\gamma}\cup (X\bslash \nu\gamma)$. 
\end{oss}

\begin{prop}\label{Prop Extern by internal}
Let $X$ be a 4-manifold, $\gamma$ a simple loop in $X$ with a chosen framing. Assume that $\Sigma_1,\Sigma_2\subseteq X$ are $\nu\gamma$-standard surfaces. Let $\Sigma_1^*,\Sigma_2^*\subseteq X^*$ be defined as in Construction \ref{constr: Sigma* S}.
If one of the following two conditions holds:
\begin{enumerate}[label=(\Alph*)]
    \item the surfaces $\Sigma_1,\Sigma_2\subseteq X$ are internally $\gamma$-stably isotopic,
    \item the surfaces $\Sigma_1,\Sigma_2\subseteq X$ are oriented, and they become smoothly ambient isotopic relative to $\nu_{1/2} \gamma$ after one orientation preserving internal stabilization away from $\gamma$, the manifold $X^*$ is simply connected, $\pi_1(X^*\bslash \Sigma_i^*)$ is cyclic  and $\Sigma_i\bslash\gamma$ is connected for $i=1,2$,
\end{enumerate}
then $\Sigma_1^*,\Sigma_2^*\subseteq X^*$ are smoothly isotopic. 
 Moreover, the statement still holds if we require isotopies to be relative to the boundary.
\end{prop}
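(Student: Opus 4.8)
The plan is to reduce both cases to the following common setup: realize $\Sigma_i^*$ as the result of an internal stabilization of a surface $S_i \subseteq X^*$ (Construction \ref{constr: Sigma* S}), where, crucially, by Remark \ref{rem: nugamma*} the surfaces $S_1$ and $S_2$ coincide on $(\nu_{1/2}\gamma)^* \subseteq X^*$ and coincide with $\Sigma_1, \Sigma_2$ respectively on $X \bslash \nu_{1/2}\gamma$. Thus it suffices to show $S_1$ and $S_2$ become smoothly isotopic after one internal stabilization along the arc $\alpha \subseteq D^2 \times S^2$ of Proposition \ref{Prop: Relation External-Internal}, together with the relevant hypothesis on the isotopy in $X$; then, since internal stabilization along $\alpha$ recovers $\Sigma_i^*$ from $S_i$ up to isotopy (Proposition \ref{Prop: Relation External-Internal}), we conclude that $\Sigma_1^*$ and $\Sigma_2^*$ are smoothly isotopic in $X^*$.

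First I would treat case (A). Since $\Sigma_1, \Sigma_2$ are internally $\gamma$-stably isotopic, the surfaces $\Sigma_1', \Sigma_2'$ obtained by a trivial internal stabilization across $\gamma$ (Definition \ref{def: internal stab across}) are smoothly isotopic in $X$ relative to $\nu_{1/2}\gamma$. The key observation is that the trivial internal stabilization across $\gamma$ can be performed inside the region $f^{-1}(S^1 \times (\mathrm{int}(D^3)\bslash \tfrac12 D^3))$, which is disjoint from $\nu\overline{\gamma}$ and survives untouched into $X^*$; hence $\Sigma_i'$ also defines a surface $(\Sigma_i')^* \subseteq X^*$, and it is smoothly isotopic to $S_i$ stabilized once along $\alpha$ — indeed both are internal stabilizations of $\Sigma_i^*$, and by the light-bulb-type argument (the extra handle of the trivial stabilization across $\gamma$ can be slid and the two stabilizing handles merged, using that the compressing disk $D_\gamma$ of Remark \ref{oss: Compressing disk} provides room) one identifies them up to isotopy relative to the complement of $\nu\gamma$. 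Since the isotopy between $\Sigma_1'$ and $\Sigma_2'$ is relative to $\nu_{1/2}\gamma$, it is supported in $X \bslash \nu_{1/2}\gamma \subseteq X^*$ and so carries over verbatim to an isotopy between $(\Sigma_1')^*$ and $(\Sigma_2')^*$ in $X^*$. Chaining these identifications gives that $\Sigma_1^*$ and $\Sigma_2^*$ are smoothly isotopic, relative to the boundary if the original isotopies were.

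Now case (B). Here the extra internal stabilization is performed \emph{away} from $\gamma$, i.e. inside $X \bslash \nu_{1/2}\gamma$, so the stabilized surfaces $\widehat\Sigma_i$ again define surfaces $\widehat\Sigma_i^* \subseteq X^*$, and one sees as above that $\widehat\Sigma_i^*$ is isotopic to $S_i$ internally stabilized once along $\alpha$. By hypothesis $\widehat\Sigma_1$ and $\widehat\Sigma_2$ are smoothly ambient isotopic in $X$ relative to $\nu_{1/2}\gamma$, so $\widehat\Sigma_1^*$ and $\widehat\Sigma_2^*$ are smoothly isotopic in $X^*$; but this only tells us $S_1$ and $S_2$ agree after \emph{one} internal stabilization placed away from $\gamma$, whereas we need them to agree after one stabilization along the specific arc $\alpha$. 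This is where the remaining hypotheses enter: $X^*$ simply connected, $\pi_1(X^* \bslash \Sigma_i^*)$ cyclic, $\Sigma_i \bslash \gamma$ connected (so $S_i$ is connected), and $\Sigma_i$ oriented. Under these conditions one invokes a uniqueness-of-stabilization result in the spirit of \cite[Theorem 1]{InternalStabilizationBaykurSunukian} / the Gabai–Schneiderman–Teichner 4D light bulb theorem: for connected orientable surfaces with cyclic (hence abelian) $\pi_1$ of the complement in a simply connected 4-manifold, any two internal stabilizations of $S_i$ — in particular the one away from $\gamma$ and the one along $\alpha$ — are smoothly isotopic, because the cords differ by an element of $\pi_1$ that can be unknotted once stabilized and the framings match by orientability. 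Combining, $S_1$ and $S_2$ stabilized along $\alpha$ are isotopic, hence $\Sigma_1^* \simeq \Sigma_2^*$.

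\textbf{Main obstacle.} The delicate point, in both cases but sharpest in case (B), is the interchange of stabilizing arcs: one has an isotopy after stabilizing at one location and must produce an isotopy after stabilizing along the prescribed arc $\alpha$ living in the surgered region $D^2 \times S^2 \subseteq X^*$. Making this precise requires carefully tracking that the various internal stabilizations can be isotoped past one another and merged — an application of the relevant uniqueness theorem for internal stabilizations — and verifying that the framing/orientation bookkeeping (locally orientation-preserving, untwisted) is consistent throughout; the hypotheses in (B) are exactly calibrated so that this uniqueness result applies.
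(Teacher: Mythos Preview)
Your overall architecture matches the paper's: pass to $X^*$, realize $\Sigma_i^*$ as $S_i$ stabilized along $\alpha$, and transport the hypothesis isotopy (which is rel.\ $\nu_{1/2}\gamma$) through the decomposition $X^*=(X\bslash\nu_{1/2}\gamma)\cup(\nu_{1/2}\gamma)^*$. Case (B) is essentially right, and your ``Main obstacle'' paragraph correctly isolates the only nontrivial step, namely moving the stabilizing arc to $\alpha$ using \cite[Lemma~3]{InternalStabilizationBaykurSunukian} under the cyclic-$\pi_1$ hypothesis.

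However, case (A) as written contains a genuine error. You assert that $(\Sigma_i')^*$ is isotopic to ``$S_i$ stabilized once along $\alpha$'', and that ``both are internal stabilizations of $\Sigma_i^*$''. But by Proposition~\ref{Prop: Relation External-Internal}, $S_i$ stabilized along $\alpha$ \emph{is} $\Sigma_i^*$ (genus $g(\Sigma_i)$), whereas $(\Sigma_i')^*$ is $\Sigma_i^*$ with an extra handle (genus $g(\Sigma_i)+1$); these cannot be isotopic. The object you actually want is not $(\Sigma_i')^*$ but $S_i':=S_i$ stabilized along the arc $\alpha_i$ coming from the hypothesis. This $S_i'$ agrees with $\Sigma_i'$ on $X\bslash\nu_{1/2}\gamma$ (so the hypothesis isotopy carries $S_1'$ to $S_2'$) and agrees with the standard $S$-piece on $(\nu_{1/2}\gamma)^*$ (so $S_1'$ and $S_2'$ match there). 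The chain is then $\Sigma_1^*\leadsto S_1'\leadsto S_2'\leadsto\Sigma_2^*$, and the links $\Sigma_i^*\leadsto S_i'$ amount to showing that the two stabilizations of $S_i$ (along $\alpha$ and along $\alpha_i$) are isotopic.

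In case (A) this last step is elementary: the ``across $\gamma$'' arc $\alpha_i$ lives in $\nu\gamma\bslash\nu_{1/2}\gamma\subseteq(\nu\gamma)^*$ and is isotopic rel.\ endpoints to $\alpha$ inside this local model, so one may simply take $\alpha_1=\alpha_2=\alpha$. No light-bulb argument or $\pi_1$ input is needed here; that machinery is reserved for (B). So once you replace $(\Sigma_i')^*$ by $S_i'$ and drop the extraneous light-bulb appeal in (A), your argument coincides with the paper's.
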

\begin{proof}
By Proposition \ref{Prop: Relation External-Internal}, the surfaces $\Sigma_1^*,\Sigma_2^*\subseteq X^*$ can be considered as the internal stabilization along an arc $\alpha$ of surfaces $S_1,S_2\subseteq X^*$ which coincide with $\Sigma_1$ and $\Sigma_2$ in $X\bslash\nu_{1/2} \gamma$ and which match a standard piece in $(\nu_{1/2}\gamma)^*$, see Remark \ref{rem: nugamma*}.

 By hypothesis, in both cases, there exists an isotopy $f_t:X\to X$ relative to ${\nu_{1/2} \gamma}$ between the surfaces $\Sigma'_1,\Sigma'_2\subseteq X$ obtained from $\Sigma_1,\Sigma_2\subseteq X$ via an internal stabilization, respectively, along some arcs $\alpha_1,\alpha_2\subseteq X$ away from $\gamma$. Define $S_i'\subseteq X^*$ to be the surface obtained from $S_i\subseteq X^*$ via an internal stabilization along the arc $\alpha_i$.
To finish the proof we will construct the following sequence of smooth isotopies between the indicated surfaces:

    \[\Sigma_1^*\leadsto S_1'\leadsto S_2'\leadsto\Sigma_2^*\]
 
 Since the surfaces $S_i'$ and $\Sigma'_i$ coincide in $X\bslash\nu_{1/2}\gamma=X^*\bslash (\nu_{1/2}\gamma)^*\subseteq X^*$  for $i=1,2$, the surfaces $S_1'$ and $S_2'$ coincide in $(\nu_{1/2}\gamma)^*$,
then the isotopy $f_t$ induces a smooth ambient isotopy of $X^*$, which restrict to the identity on $(\nu_{1/2}\gamma)^*$ and it sends $S_1'$ to $S_2'$.
 
    Under the first set of assumptions we have that the internal stabilizations are trivial and across $\gamma$, this means that we can assume $\alpha_1=\alpha_2=\alpha$. So there exist a smooth ambient isotopy between $\Sigma_i^*$ and the surface $S_i'$ for $i=1,2$. 
    
    Under the second set of assumption by \cite[Lemma 3]{InternalStabilizationBaykurSunukian} the stabilization arc $\alpha$ can be positioned to where is needed, so again there exist a smooth ambient isotopy between $\Sigma_i^*$ and the surface $S_i'$ for $i=1,2$.
    This concludes the proof.
\end{proof}

\subsection{Stabilization sets \label{sec: StabilizationSet}}
In this section we will define \emph{the stabilization sets}, these sets will allowed us to generalize Remark \ref{rem: loop is connected sum} and to state results more clearly.
\begin{definition}\label{def: Ss}
     Let $\gamma\subseteq X$ be a framed loop in the interior of a 4-manifold $X$, with framing $f:\nu\gamma\to S^1\times D^3$.  Let $\Sigma_1,\Sigma_2\subseteq X$ be two $\nu\gamma$-standard surfaces, $\overline{\gamma}$  be the small push off of $\gamma$ and $\overline{ f}:\nu\overline{\gamma}\to S^1\times D^3$ its framing. Identify $X$ as $X\# S^4$, where the connected sum is performed along a 4-disk in $X$ disjoint from $\Sigma_1\cup \Sigma_2\cup\overline{\gamma}$, and let $U$ be the unknotted loop $S^1\subseteq S^4\bslash D^4\subseteq X\# S^4$. 
     Suppose that there exists a smooth automorphism $\phi$ of $ X\# S^4$ which moves the loop $\overline{\gamma}$ to the loop $U$ and we can also suppose that $\phi(\nu\gamma)=\nu U\subseteq S^4\bslash D^4$. The loop $U\subseteq S^4$ has an induced framing, namely $f(\phi):=\overline{ f}\circ \phi^{-1}|_{\nu U}:\nu U\to S^1\times D^3$, so call $B(\phi)$ the 4-manifold that is the result of loop surgery along $U$ in $S^4$, namely \[B(\phi):=(S^4\bslash\nu U)\cup_{f(\phi)|_\partial} (D^2\times S^2).\] Notice that $B(\phi)$ is diffeomorphic to $\Stab$ or $\StabTwist$, see for example \cite[Proposition 5.2.4]{Gompf_Stipsicz_1999}.  \\
     We define \emph{the stabilization set of $\Sigma_1,\Sigma_2\subseteq X$ with respect to $\gamma$ framed with $f$ } to be the set
     \[\Ss(\Sigma_1,\Sigma_2, (\gamma,f)):=\bigcup_{\phi} B(\phi)\subseteq\{\Stab,\StabTwist\},\]
     where the union  is taken over all (possibly empty) automorphism $\phi$ as described above which restricts to the identity on 
     $\Sigma_1\cup\Sigma_2\cup\partial X$. If the framing $f$ of $\gamma$ is understood, we will omit it from the notation and write $\Ss(\Sigma_1,\Sigma_2, \gamma)$ instead of $\Ss(\Sigma_1,\Sigma_2, (\gamma,f))$. 
     We define \emph{the extended stabilization set of $\Sigma_1,\Sigma_2\subseteq X$ with respect to $\gamma$ } 
     to be the set
     \[\overline{\Ss}(\Sigma_1,\Sigma_2, \gamma):=\bigcup_{ f'} {\Ss}(\Sigma_1,\Sigma_2, (\gamma,f'))\subseteq\{\Stab,\StabTwist\},\]
     where the union is taken over all possible framings $ f':\nu\gamma\to S^1\times D^3$ for which $\Sigma_1,\Sigma_2\subseteq X$ are $\nu\gamma$-standard.
     If $\Sigma\subseteq X$ is a $\nu\gamma$-standard surface, then define the \emph{the stabilization set of $\Sigma\subseteq X$ with respect to $\gamma$ with framing $f$ } to be the set
     \[\Ss(\Sigma, (\gamma,f)):=\Ss(\Sigma,\Sigma, (\gamma,f))\subseteq\{\Stab,\StabTwist\}\]
     and similarly for the extended version
     \[\overline\Ss(\Sigma, \gamma):=\overline{\Ss}(\Sigma,\Sigma, \gamma)\subseteq\{\Stab,\StabTwist\}.\]
 \end{definition}
Later in this note is given an alternative definition of the stabilization set, which is stated in terms of immersed 2-disks in the complement of $\Sigma_1\cup\Sigma_2\cup \partial X$ bounding $\overline{\gamma}$, see  Proposition \ref{prop: equivalent definition}.\\
Now we are able to state the main result of this section.

\begin{thm}\label{thm Extern by internal}
Let $X$ be a  4-manifold and  $\gamma\subseteq X$ a simple loop in $X$ with a chosen framing. Assume that $\Sigma_1,\Sigma_2\subseteq X$ are $ \nu\gamma$-standard surfaces. 
If the surfaces $\Sigma_1,\Sigma_2\subseteq X$ are internally $\gamma$-stably isotopic,
then $\Sigma_1,\Sigma_2\subseteq X$ are 
$B$-stably isotopic for any $B\in \overline{\Ss}(\Sigma_1,\Sigma_2,\gamma)\subseteq\{\Stab,\StabTwist\}$.
  Moreover, the statement still holds if we require isotopies to be relative to the boundary.
\end{thm}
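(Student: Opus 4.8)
The plan is to reduce Theorem \ref{thm Extern by internal} to the combination of Proposition \ref{Prop Extern by internal} and the identification of $X^*$ with $X\# B$ encoded in Definition \ref{def: Ss}. First I would fix a framing $f':\nu\gamma\to S^1\times D^3$ for which $\Sigma_1,\Sigma_2$ are $\nu\gamma$-standard, and suppose $B\in\Ss(\Sigma_1,\Sigma_2,(\gamma,f'))$; since $\overline{\Ss}(\Sigma_1,\Sigma_2,\gamma)$ is the union of the $\Ss(\Sigma_1,\Sigma_2,(\gamma,f'))$ over such framings, it suffices to treat a single framing. With this framing I form the small push off $\overline{\gamma}$, the surfaces $\Sigma_1^*,\Sigma_2^*\subseteq X^*$ from Construction \ref{constr: Sigma* S}, and invoke Proposition \ref{Prop Extern by internal}(A): since $\Sigma_1,\Sigma_2$ are internally $\gamma$-stably isotopic, $\Sigma_1^*$ and $\Sigma_2^*$ are smoothly isotopic in $X^*$, rel boundary if we wish.

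Next I would unwind what membership $B\in\Ss(\Sigma_1,\Sigma_2,(\gamma,f'))$ gives us: by Definition \ref{def: Ss} there is a smooth automorphism $\phi$ of $X\# S^4$, restricting to the identity on $\Sigma_1\cup\Sigma_2\cup\partial X$, carrying $\overline{\gamma}$ to the unknotted $U\subseteq S^4\bslash D^4$ together with its tubular neighborhood and framing, and $B=B(\phi)$ is the result of loop surgery on $U$. The key observation is that loop surgery is local to $\nu\overline{\gamma}$, so $\phi$ induces a diffeomorphism
\[
\Phi:X^*=(X\bslash\nu\overline{\gamma})\cup_{\overline{f}|_\partial}(D^2\times S^2)\ \longrightarrow\ (X\# S^4\bslash\nu U)\cup_{f(\phi)|_\partial}(D^2\times S^2)=X\# B(\phi)=X\# B,
\]
where the connected sum with $B$ is performed inside $S^4\bslash D^4$, hence away from $\Sigma_1\cup\Sigma_2$. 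Because $\phi$ is the identity on $\Sigma_1\cup\Sigma_2$ and the surgery region $\nu\overline{\gamma}$ is disjoint from $\Sigma_i$ (it lies in $\nu_{1/2}\gamma\cap(X\bslash\Sigma)$), the diffeomorphism $\Phi$ sends $\Sigma_i^*$ to $\Sigma_i\subseteq X\# B$; this is exactly the version of Remark \ref{rem: loop is connected sum} adapted to the automorphism $\phi$. Transporting the isotopy $\Sigma_1^*\leadsto\Sigma_2^*$ in $X^*$ through $\Phi$ then yields an isotopy $\Sigma_1\leadsto\Sigma_2$ in $X\# B$.

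The one point needing care — and I expect it to be the main technical obstacle — is checking that the transported ambient isotopy of $X\# B$ is genuinely isotopic to the identity, i.e. that $\Sigma_1,\Sigma_2$ are $B$-stably \emph{isotopic} and not merely $B$-stably equivalent. Here I would use that the connected sum $X\# B$ is performed away from $\Sigma_1\cup\Sigma_2$ inside the $S^4$-summand: the composite diffeomorphism $X\# B\to X^*\to X\# B$ obtained from $\Phi$ and the isotopy is supported in the interior and, being a mapping-class representative coming from an ambient isotopy of $X^*$ conjugated by $\Phi$, is isotopic to the identity because $\Phi$ matches the standard inclusions of $X\bslash(\text{ball})$ on the nose outside the local surgery/sum region. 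Concretely, one checks $\Phi$ restricts to the identity outside $U\cup\nu\gamma$ (as in Remark \ref{rem: loop is connected sum}), so the resulting self-diffeomorphism of $X\# B$ is isotopic to one supported near $\Sigma_1\cup\Sigma_2$ together with the standard $B$-summand, and such maps are isotopic to the identity by the usual light-bulb/standard-summand arguments. Finally, the ``rel boundary'' refinement is immediate: Proposition \ref{Prop Extern by internal} already supplies a boundary-relative isotopy of $\Sigma_i^*$, and $\phi$ is required to be the identity on $\partial X$, so $\Phi$ and hence the transported isotopy are rel $\partial(X\# B)=\partial X$.
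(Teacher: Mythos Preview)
Your argument is essentially identical to the paper's: pick a framing realizing $B$, apply Proposition~\ref{Prop Extern by internal}(A) to get $\Sigma_1^*\simeq\Sigma_2^*$ in $X^*$, and then push this isotopy through the diffeomorphism $\Phi:(X^*,\Sigma_1^*,\Sigma_2^*)\to(X\# B,\Sigma_1,\Sigma_2)$ induced by $\phi$. The paper records exactly this diffeomorphism of triples and concludes in one line.

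Your third paragraph, however, manufactures a difficulty that is not there. If $f_t:X^*\to X^*$ is an ambient isotopy with $f_0=\mathrm{id}$ and $f_1(\Sigma_1^*)=\Sigma_2^*$, then $g_t:=\Phi\circ f_t\circ\Phi^{-1}$ is an ambient isotopy of $X\# B$ with $g_0=\mathrm{id}$ and $g_1(\Sigma_1)=\Sigma_2$; no ``light-bulb/standard-summand'' input is needed to see that $g_1$ is isotopic to the identity, since the family $g_t$ itself witnesses this. The only point to check (which you do) is that the connected sum lands away from $\Sigma_1\cup\Sigma_2$, which follows because $\phi$ fixes $\Sigma_1\cup\Sigma_2\cup\partial X$ and the surgery region $\nu\overline{\gamma}$ is disjoint from both surfaces. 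So the proof is correct, but you can safely delete that paragraph.
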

\begin{proof}   
If $B(\phi)\in\overline{\Ss}(\Sigma_1,\Sigma_2,\gamma)$ then there exists a framing $f$ of $\gamma$ such that the surfaces $\Sigma_1$ and $\Sigma_2$ are $\nu\gamma$-standard and the smooth automorphism $\phi$ of $X\# S^4$ restricts to the identity on $\Sigma_1\cup\Sigma_2$.
In particular, we have the following diffeomorphism of quadruples
\[\phi:(X,\Sigma_1,\Sigma_2,\nu\bar\gamma)\to (X\# S^4,\Sigma_1,\Sigma_2,\nu U),\] where the connected sum is performed away from $\Sigma_1\cup\Sigma_2$ and $U$ is the unknotted loop in $S^4\bslash D^4\subseteq S^4$. 
Moreover, $\phi$ induces a diffeomorphism of triples
\[\Phi: (X^*,\Sigma_1^*,\Sigma_2^*)\to (X\# B(\phi), \Sigma_1,\Sigma_2)\]

However, we are under the assumption of Proposition \ref{Prop Extern by internal}, so we get that $\Sigma_1^*$ and $\Sigma_2^*$ are smoothly isotopic in $X^*$. Follows that $\Sigma_1$ and $\Sigma_2$ are smoothly isotopic in $X\# B(\phi)$.\end{proof}

\subsubsection{Framings and 2-disks\label{sec: proprieties of SS 2}}
In this section we will change of perspective regarding framings of simple loops in a 4-manifold.
So far, for a curve $\alpha\subseteq X$, a framing was a diffeomorphism $f:\nu\alpha\to S^1\times D^3$.
Let us consider the following sections of the tangent bundle of $X$ restricted to $\alpha$ given by the chart $f$, namely $\mathcal{F}_i(x):=df^{-1} (f(x),(0,e_i))\in T_x X$ for $i=1,2,3$, we can notice that they form a basis for the normal bundle $N \alpha$ of $\alpha$, where $N\alpha\cong TX|_\alpha/T\alpha$.
We call the \emph{framing for the normal bundle $N \alpha$ of $\alpha$ induced by $f$}, the map
\[\mathcal{F}(\alpha,f)=(\mathcal{F}_1,\mathcal{F}_2,\mathcal{F}_3):S^1\to \mathcal{F}(N\alpha)\]
where $\mathcal{F}(N \alpha)$ is the frame bundle of $N \alpha$.
Two framings $f$ and $f'$ for the curve $\alpha$ are called \emph{equivalent} if $\mathcal{F}(\alpha,f)$ and $\mathcal{F}(\alpha,f')$ are homotopic sections of $\mathcal{F}(N \alpha)$.
\begin{oss}\label{oss: opposite framing}
    It is a well known fact that in dimension $4$ a curve with trivial normal bundle has exactly two equivalence class of framings. So given a framing $f$ for a curve $\alpha$ we denote as $f_{op}$ any other framing in the other equivalence class and we call it the opposite framing. Notice that $(f_{op})_{op}$ is in the same equivalence class as $f$.  \\
    If $\alpha$ is framed curve with framing $f$, then we denote as $\alpha_{op}$ the curve $\alpha$ equipped with the framing $f_{op}$.
\end{oss}
\begin{oss}\label{rem: Framing given framing}
    For any section of $\mathcal{F}$ of $\mathcal{F}(N\alpha)$ there exists a framing $f:\nu\alpha\to S^1\times D^3$ such that $\mathcal{F}(\alpha,f)=\mathcal{F}$.
\end{oss}
\begin{oss}\label{rem: framing from 2 section}
    For every two linearly independent sections $s_1, s_2$ of $N\alpha$, there exists a section $s_3$ of $N\alpha$ such that $\mathcal{F}=(s_1,s_2,s_3)$ is a section of $\mathcal{F}(N\alpha)$. Moreover,  $\mathcal{F}$ is uniquely defined by $s_1$ and $s_2$ up to homotopy.
\end{oss}
\begin{oss}
    If $\alpha$ is equipped with framings $f$ and $f'$, and they are equivalent framings, then given an open set  $V$ containing $\nu\alpha$, there exists an ambient isotopy $\phi_t$ of $X$ supported $V$ such that $f\circ\phi_1|_{\nu\alpha}=f'$.
\end{oss}
\begin{definition}\label{def: connsumloop}
     Suppose that are given two framed and disjoint curves  $\alpha$ and $\beta$ in a 4-manifold $X$ and assume that in a local chart $D^4\subseteq X$ they appear as in the local model pictured in Figure \ref{fig: sumframed 1}. In  $D^4\approx D^1\times D^3$ the curves lie in $\{0\}\times D^2\subseteq D^1\times D^3$, they are oriented with opposite orientations and they are lines with direction $\pm e_1\in \R^3$.
Suppose that the both curves have framings $\mathcal{F}(\alpha)$ and $\mathcal{F}(\beta)$ for the normal bundles $N \alpha$ and $N\beta$ given by the sections 
\begin{equation}\label{eq: framing A}
(e_1,0),(0,\pm e_2), (0, e_3)\in \R\times\R^3\cong T_p(D^1\times D^3)\end{equation}
for any point $p\in (\alpha\cup\beta)\cap (D^1\times D^3)$. We will replace this local pictures with Figure \ref{fig: sumframed 2} and call the resulting curve $\alpha\#\beta$ the \emph{band-sum} of $\alpha$ and $\beta$. Notice that the band-sum comes equipped with an orientation, moreover we will equip this curve with a framing induced by a framing $\mathcal{F}(\alpha\#\beta)$ of its normal bundle, see Remark \ref{rem: Framing given framing}. Define $\mathcal{F}(\alpha\#\beta)$ to coincide with $\mathcal{F}(\alpha)$ and $\mathcal{F}(\beta)$ outside $D^4$, meanwhile in $D^4\approx D^1\times D^3$ the framing is the one given by 
\begin{equation} \label{eq: framing B}
    (e_1,0), (0,n(x)), (0,e_3)\in \R\times\R^3\cong T_p(D^1\times D^3),
\end{equation}
    where $(0,n(x))$ is the normal vector in $x$ to the oriented curve $\alpha\#\beta\subseteq\{0\}\times D^2\subseteq D^1\times D^3$.\\
We will abuse of notation and denote as $\alpha\#\beta$ the framed curve $\alpha\#\beta$ with framing induced by $\mathcal{F}(\alpha\#\beta)$.
\end{definition}

\begin{figure}
    \centering
     \begin{subfigure}[b]{0.45\textwidth}
         \centering
         \includegraphics[width=0.45\textwidth]{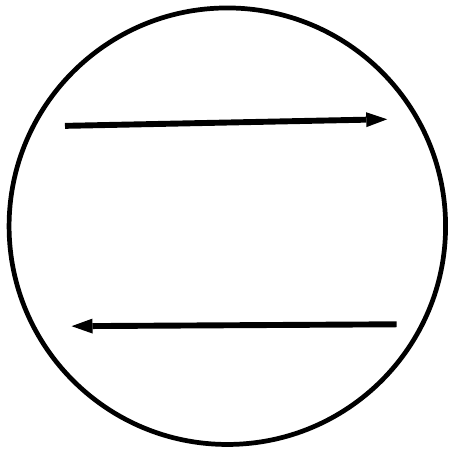}
         \caption{
         \label{fig: sumframed 1}}
     \end{subfigure}
      \begin{subfigure}[b]{0.45\textwidth}
         \centering
         \includegraphics[width=0.45\textwidth]{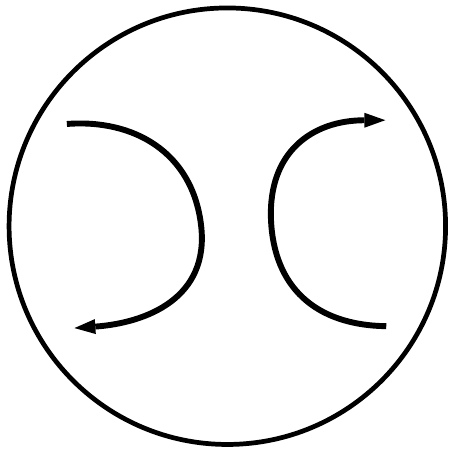}
         \caption{
         \label{fig: sumframed 2}}
     \end{subfigure}
    \caption{The figures represent $D^1 \times D^3$ where the $D^1$ factor is suppressed. \label{fig: sum of framed curve}}
\end{figure}
\begin{oss} The following equality holds
    $(\alpha\#\beta)_{op}=\alpha_{op}\#\beta=\alpha\#\beta_{op}$.
\end{oss}
\begin{oss}
In Definition \ref{def: connsumloop} the framings used in Equations (\ref{eq: framing A}) and (\ref{eq: framing B}) are called product framing. \\
    If $a$ is an arc in a 3-manifold $ Y^3$ and it is equipped with a framing $\mathcal{F}(a)$ of $Na$, 
then to the arc $\{0\}\times a\subseteq D^1\times Y$ has normal bundle $N(\{0\}\times a)$ isomorphic to $\epsilon^1\oplus Na$, where $\epsilon_1$ is the trivial rank $1$ bundle. The product framing of $N(\{0\}\times a)$ is defined as a section of $\epsilon^1$ together with $\mathcal{F}(a)$, compare also with Remark \ref{rem: framing from 2 section}. 
In Definition \ref{def: connsumloop} we took the arcs in $D^3$ in Figure \ref{fig: sum of framed curve} equipped with the blackboard framing.
\end{oss}

\begin{definition}\label{def: compatible framing}
    Let $\beta$ be a simple loop in a 4-manifold $X$ and $D\subseteq X$ be an immersed 2-disk bounding $\beta$. The 2-disk $D$ induce a framing $f_D$ of $\beta$ called the framing induced by the unique normal framing of $D$, see for example \cite[Chapter 5.6]{Gompf_Stipsicz_1999}. The framing $f_D$ is sometimes called a spin framing of $\beta$.
    If $\beta$ is equipped with a framing $f$, then we say that the framing of $\beta$ is \emph{compatible with $D$} if $(\beta,f)$ is equivalent to $(\beta,f_D)$. Otherwise, we say that the framing of $\beta$ is \emph{not compatible with $D$}.
\end{definition}

\begin{lem}\label{lem: ConSumOfLoop and Disk}
    Let $\alpha,\beta,\alpha\#\beta\subseteq X$ and $D^4\subseteq X$ be as in Definition \ref{def: connsumloop}. If $\beta$ bounds an immersed 2-disk $D\subseteq X$ and $V$ is an open neighborhood of $\alpha\cup D\cup D^4$, then there exists an ambient isotopy $\phi_t$ of $X$ supported $V$ such that
    \[\phi_1(\alpha\# \beta)=\begin{cases}
        \alpha, & \text{ if the framing of $\beta$ is compatible with $D$}\\
        \alpha_{op}, &\text{ if the framing of $\beta$ is not compatible with $D$}\\
    \end{cases}\]
\end{lem}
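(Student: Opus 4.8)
I want to show that band-summing $\alpha$ with a loop $\beta$ that bounds an immersed $2$-disk $D$ undoes the band-sum up to an ambient isotopy supported near $\alpha\cup D\cup D^4$, with the resulting framing on $\alpha$ determined by whether the framing of $\beta$ agreed with the disk framing $f_D$. The idea is that the $2$-disk $D$ gives a way to ``slide $\alpha$ over $D$'': starting from $\alpha\#\beta$, push the band that was attached to $\beta$ along a collar of $D$, sweeping across the disk, and contract $\beta$ to a point, which retracts $\alpha\#\beta$ back to $\alpha$. The only thing this sweep can affect is the normal framing, and the discrepancy between the framing we track along the sweep and the original framing of $\alpha$ is exactly the difference between the framing of $\beta$ and the normal framing $f_D$ that $D$ provides — which is the content of Definition \ref{def: compatible framing}.

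\textbf{Key steps.} First I would set up the local model precisely: inside $D^4\cong D^1\times D^3$ we have $\alpha$ and $\beta$ as antiparallel lines in $\{0\}\times D^2$, and outside $D^4$ the curve $\alpha\#\beta$ is $\alpha\sqcup\beta$ with their given framings; I will also arrange (by a preliminary isotopy, using that framings in dimension $4$ have exactly two classes, Remark \ref{oss: opposite framing} and Remark \ref{rem: Framing given framing}) that near the band region the framing of $\beta$ is a chosen product framing. Second, using a collar $\partial D\times[0,1]\hookrightarrow \nu D$ of the immersed disk, I would construct an isotopy of $X$, supported in a neighborhood $V$ of $\alpha\cup D\cup D^4$, that carries the ``$\beta$-part plus band'' of $\alpha\#\beta$ across $D$; formally this is a finger-move/disk-slide isotopy realizing the null-homotopy of $\beta$ given by $D$. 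The immersed (not embedded) nature of $D$ is harmless here because we only move a $1$-complex across it and we are allowed to create self-crossings of the curve during the isotopy — the ambient isotopy of $X$ is perfectly well-defined even if $D$ has double points, since we thicken $D$ to a (possibly self-intersecting) immersed $4$-dimensional handle $\nu D$ and the isotopy is supported there. Third, after the sweep the curve is isotopic to $\alpha$ as an unframed curve; I then compute the resulting framing. The normal bundle of the band, as it is dragged across $D$, is framed by: the tangent-to-sweep direction, the $D$-normal framing $f_D$, and one complementary section; comparing this against the original framing $\mathcal F(\alpha)$ along $\alpha$, the total twisting picked up equals the relative framing $[\beta,f]-[\beta,f_D]\in\Z/2$. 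Hence the outcome is $\alpha$ if $f$ is compatible with $D$ (Definition \ref{def: compatible framing}) and $\alpha_{op}$ otherwise, using that $(\alpha\#\beta)_{op}=\alpha\#\beta_{op}$ so a framing change on $\beta$ toggles the class of $\alpha$.

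\textbf{Main obstacle.} The delicate point is the framing bookkeeping in the third step: one must be careful that the sweep across an \emph{immersed} disk does not introduce extra twisting beyond the $f$-versus-$f_D$ discrepancy. I would handle this by noting that the normal bundle $N\alpha$ is $\epsilon^3$, that along the isotopy the framing varies continuously, and that the only topologically meaningful quantity is the homotopy class of the loop of frames, which is detected by a single $\Z/2$ (or $\Z$, collapsing to $\Z/2$ since we only care about the two framing classes) invariant; this invariant changes by exactly the relative Euler number / spin-framing defect of $(\beta,f)$ against $D$, which is precisely what ``compatible with $D$'' records. The rest — constructing the support $V\supseteq\alpha\cup D\cup D^4$ and checking the isotopy is the identity outside $V$ — is routine, exactly as in the standard ``sliding a knot across a Seifert-type disk'' arguments and as in \cite[Chapter 5.6]{Gompf_Stipsicz_1999}.
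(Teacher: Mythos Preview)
Your proposal is correct and follows essentially the same approach as the paper: slide the band across the immersed disk $D$ (emulating a handle slide over a 2-handle) and track what happens to the framing. The paper organizes the framing bookkeeping slightly more cleanly by splitting into two cases --- proving the compatible case directly (where the slide manifestly preserves the framing since $\beta$ carries $f_D$), then reducing the incompatible case to the first via the identity $(\alpha\#\beta)_{op}=\alpha\#\beta_{op}$ --- which sidesteps the ``main obstacle'' you flagged; your phrase about ``creating self-crossings of the curve during the isotopy'' is a slip (ambient isotopies keep the curve embedded, and general position $1+1<4$ ensures this automatically), but this does not affect the argument.
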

\begin{proof}
    We will separate the proof in two cases.
    
    {\bf Case 1:} Assume first the framing of $\beta$ is compatible with $D$.
    Up to perturbing a little bit the interior of $D$ we can assume that $D$ and $\alpha$ are disjoint. Furthermore, by assumption we can assume that $\beta$ is equipped with the framing $f_D$ induced by $D$.
    Next we will perform a "slide" of $\beta$ over $D$, this emulates the operation of sliding a framed curve over a 2-handle, the only difference here is that $D$ is not an embedded 2-disk, but it is immersed. So, we move a small arc of $\alpha$ across $D$, carefully passing through double points of $D$. The resulting curve is $\alpha\#\beta$ and it  has the correct framing.
    
   {\bf Case 2:} If the framing of $\beta$ is not compatible with $D$, then $\beta_{op}$ has framing compatible with $D$. Applying the first part of the proof we get an isotopy $\phi_t$ such that
   \[\phi_1(\alpha\#\beta)=(\phi_1(\alpha\#\beta)_{op})_{op}=\left(\phi_1(\alpha\#\beta_{op})\right)_{op}=(\alpha)_{op}.\]
   This concludes the proof.
\end{proof}
Now we can reformulate Definition \ref{def: Ss} in terms of 2-disks.
\begin{prop}\label{prop: equivalent definition}
Let $\Sigma_1,\Sigma_2\subseteq X$,  the framed loop $\gamma\subseteq \Sigma_1\cap\Sigma_2$ and the framed loop $\overline{\gamma}\subseteq X$ be as in Definition \ref{def: Ss}.
      The manifold $\Stab$ (respectively $\StabTwist$) is in the set $\Ss(\Sigma_1, \Sigma_2,\gamma)$ if and only if the framing of $\overline{\gamma}$ is compatible (resp. not compatible) with an immersed 2-disk $D\subseteq X\bslash(\Sigma_1\cup \Sigma_2\cup\partial X)$ bounding $\overline{\gamma}$.
\end{prop}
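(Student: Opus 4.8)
The plan is to unwind both sides of the asserted equivalence and recognize them as the two possible behaviours of loop surgery on the standard unknot $U\subseteq S^4$: by \cite[Proposition 5.2.4]{Gompf_Stipsicz_1999}, surgering $U$ with its spin framing produces $\Stab$, while surgering it with the opposite framing produces $\StabTwist$. Here one uses that $U$ bounds the evident embedded $2$-disk $\delta$ in the ball $S^4\bslash D^4\subseteq X\#S^4$, that $\delta$ is disjoint from $\Sigma_1\cup\Sigma_2\cup\partial X$ (which all live in the $X$-summand), and that $\delta$ induces precisely the spin framing on $U$; so for $U$ the notions ``spin framing'' and ``compatible with $\delta$'' coincide. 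One also uses that the connected sum $X\#S^4$ of Definition \ref{def: Ss} is performed along a $4$-ball that may be taken disjoint from $\Sigma_1\cup\Sigma_2\cup\partial X\cup\nu\overline{\gamma}$, so there is a diffeomorphism $X\#S^4\to X$ equal to the identity on a neighborhood of $\Sigma_1\cup\Sigma_2\cup\partial X\cup\nu\overline{\gamma}$; such a diffeomorphism fixes the framed loop $(\overline{\gamma},\overline{f})$ and preserves the property of a framing of $\overline{\gamma}$ being compatible (Definition \ref{def: compatible framing}) with an immersed $2$-disk, since compatibility is detected by the restriction of the disk's normal framing to $\overline{\gamma}$, where the diffeomorphism is the identity.

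For the forward direction, suppose $\Stab\in\Ss(\Sigma_1,\Sigma_2,\gamma)$, witnessed by an automorphism $\phi$ of $X\#S^4$ with $\phi(\overline{\gamma})=U$ rel $\Sigma_1\cup\Sigma_2\cup\partial X$ and $B(\phi)=\Stab$. By \cite[Proposition 5.2.4]{Gompf_Stipsicz_1999} the induced framing $f(\phi)$ is then equivalent to the spin framing of $U$, hence compatible with $\delta$. Since $\phi^{-1}$ carries $(U,\nu U,f(\phi))$ to $(\overline{\gamma},\nu\overline{\gamma},\overline{f})$ and fixes $\partial X$, the embedded disk $\phi^{-1}(\delta)$ lies in $(X\#S^4)\bslash(\Sigma_1\cup\Sigma_2\cup\partial X)$, bounds $\overline{\gamma}$, and is compatible with $\overline{f}$; transporting it by the diffeomorphism $X\#S^4\to X$ above yields the required immersed disk $D\subseteq X\bslash(\Sigma_1\cup\Sigma_2\cup\partial X)$. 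The case $\StabTwist\in\Ss(\Sigma_1,\Sigma_2,\gamma)$ is identical, using that $B(\phi)=\StabTwist$ forces $f(\phi)$ to be the opposite framing of $U$ (Remark \ref{oss: opposite framing}), so that $\phi^{-1}(\delta)$ becomes a disk with which $\overline{f}$ is \emph{not} compatible.

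For the converse, suppose $\overline{\gamma}$ bounds an immersed $2$-disk $D\subseteq X\bslash(\Sigma_1\cup\Sigma_2\cup\partial X)$. Working in $X\#S^4$, after an isotopy supported away from $\Sigma_1\cup\Sigma_2\cup\partial X$ we may place $\overline{\gamma}$ and $U$ (the latter with its spin framing $f_\delta$) in the local model of Definition \ref{def: connsumloop}, joined by a band routed inside the connected complement of $\Sigma_1\cup\Sigma_2\cup\partial X$, and form $\overline{\gamma}\#U$. Lemma \ref{lem: ConSumOfLoop and Disk} applied with $\beta=U$ and its disk $\delta$ gives $\overline{\gamma}\#U\cong(\overline{\gamma},\overline{f})$ as framed loops, while the same lemma applied with $\beta=\overline{\gamma}$ and the disk $D$ gives $\overline{\gamma}\#U=U\#\overline{\gamma}\cong(U,f_\delta)$ when $\overline{f}$ is compatible with $D$ and $\cong(U,(f_\delta)_{op})$ when it is not. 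Composing these (ambient) isotopies, all supported away from $\Sigma_1\cup\Sigma_2\cup\partial X$, produces an automorphism $\phi$ of $X\#S^4$, rel $\Sigma_1\cup\Sigma_2\cup\partial X$, carrying $(\overline{\gamma},\overline{f})$ to $(U,f_\delta)$, respectively to $(U,(f_\delta)_{op})$; then $f(\phi)$ is the spin framing, resp. its opposite, so by \cite[Proposition 5.2.4]{Gompf_Stipsicz_1999} we get $B(\phi)=\Stab$, resp. $\StabTwist$, and hence $\Stab\in\Ss(\Sigma_1,\Sigma_2,\gamma)$, resp. $\StabTwist\in\Ss(\Sigma_1,\Sigma_2,\gamma)$.

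I expect the main obstacle to be the framing bookkeeping: keeping the three dictionaries --- ``compatible with the disk'' $\leftrightarrow$ ``spin framing of $U$'' $\leftrightarrow$ ``$\Stab$ rather than $\StabTwist$'' --- consistently aligned through the automorphism $\phi$ and through the band-sum, and verifying the two small facts this uses, namely that the band-sum of framed loops is symmetric ($\overline{\gamma}\#U\cong U\#\overline{\gamma}$ as framed loops) and that the diffeomorphism $X\#S^4\to X$ may be chosen to be the identity near $\overline{\gamma}$ so that it respects compatibility. Everything else is a direct application of Remark \ref{oss: opposite framing}, Lemma \ref{lem: ConSumOfLoop and Disk}, and \cite[Proposition 5.2.4]{Gompf_Stipsicz_1999}; in particular Remark \ref{rem: loop is connected sum} could be substituted for the band-sum step in the converse direction at the cost of a separate identification of the resulting connected summand.
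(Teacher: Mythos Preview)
Your proof is correct and follows essentially the same approach as the paper's: pull back the standard disk $D_U$ via $\phi^{-1}$ for the forward direction, and for the converse form the band sum $\overline{\gamma}\#U$ and apply Lemma \ref{lem: ConSumOfLoop and Disk} in the two possible ways to produce the required automorphism $\phi$. Your exposition is more detailed about the framing bookkeeping and the identification $X\#S^4\cong X$, but the underlying argument is identical.
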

\begin{proof}
Consider the unknotted loop $U\subseteq S^4\bslash D^4\subseteq X\#S^4$ bounded by a 2-disk $D_U$  in $S^4\bslash D^4$.

If $B(\phi)\in \Ss(\Sigma_1, \Sigma_2,\gamma)$, then $D:=\phi^{-1}(D_U)\subseteq X\# S^4\cong X$ is an embedded 2-disk bounding $\overline{\gamma}$ and disjoint from $\Sigma_1\cup \Sigma_2\cup \partial X$. The framing of $\overline{\gamma}$ is compatible with $D$ if and only the framing $f_\phi=f\circ \phi^{-1}$ of $U$ is compatible with $D_U$, i.e. when $B(\phi)\cong\Stab$.

If $D$ is an immersed 2-disk bounding $\gamma$ disjoint from $\Sigma_1\cup\Sigma_2\cup\partial X$, then it is sufficient to consider a band sum $\overline{\gamma}\# U$, where $U$ now is equipped with a framing compatible with $D_U$. We use Lemma \ref{lem: ConSumOfLoop and Disk} in the two possible way to find an automorphism $\phi_D$ of $X\# S^4$ with compact support disjoint from $\Sigma_1\cup \Sigma_2\cup \partial X$ sending the loop $\overline{\gamma}$ to the loop $U$. Moreover, $B(\phi_D)$ is $\Stab$ if and only if $\overline{\gamma}$ has a framing compatible with $D$.
\end{proof}

The next lemma will give us some way to concretely compute the cardinality of set $\Ss(\Sigma_1,\Sigma_2,\gamma)$.
\begin{lem}\label{lem: proprieties Ss}
Let $X$ be a 4-manifold and $\gamma\subseteq X$ a framed loop with  framing $f$. Assume that $\Sigma_1,\Sigma_2\subseteq X$ are $\nu\gamma$-standard surfaces.
\begin{enumerate}
    \item \label{lem: prop Part1}The set ${\Ss}(\Sigma_1,\Sigma_2,(\gamma,f))\not=\emptyset$ if and only if the curve $\overline{\gamma}$, the small push off of $\gamma$, is nullhomotopic in $X\bslash(\Sigma_1\cup\Sigma_2)$.
    \item  \label{lem: SS item 3}
    The curve $\overline{\gamma}$ is nullhomotopic in $X\bslash(\Sigma_1\cup \Sigma_2)$ and there exists an immersed sphere of odd self intersection in $X\bslash (\Sigma_1\cup \Sigma_2)$ if and only if the set  ${\Ss}(\Sigma_1,\Sigma_2,(\gamma,f))=\{\Stab,\StabTwist\}$
\end{enumerate}   
\end{lem}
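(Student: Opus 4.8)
The plan is to prove both statements by reducing everything to Proposition \ref{prop: equivalent definition}, which translates membership in the stabilization set into the existence of an immersed 2-disk bounding $\overline{\gamma}$ in the complement $X\bslash(\Sigma_1\cup\Sigma_2\cup\partial X)$ with prescribed framing behavior. For part \ref{lem: prop Part1}, the key observation is that a simple loop in a $4$-manifold bounds an immersed $2$-disk (with transverse double points) disjoint from a codimension-$2$ stuff if and only if it is nullhomotopic in the relevant complement; this is a standard transversality/general-position fact in dimension $4$, since a nullhomotopy is a map of $D^2$ which can be made a smooth generic immersion, and conversely an immersed disk gives a nullhomotopy. So $\overline{\gamma}$ nullhomotopic in $X\bslash(\Sigma_1\cup\Sigma_2)$ is equivalent to the existence of \emph{some} immersed $D\subseteq X\bslash(\Sigma_1\cup\Sigma_2\cup\partial X)$ bounding $\overline{\gamma}$; by Proposition \ref{prop: equivalent definition}, such a disk forces at least one of $\Stab$, $\StabTwist$ into $\Ss(\Sigma_1,\Sigma_2,(\gamma,f))$ depending on whether the given framing of $\overline{\gamma}$ is compatible with $D$ or not, hence the set is nonempty. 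The converse is immediate since nonemptiness of $\Ss$ already produces (via Proposition \ref{prop: equivalent definition}) such a disk.

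For part \ref{lem: SS item 3}, first note that $\Ss(\Sigma_1,\Sigma_2,(\gamma,f))=\{\Stab,\StabTwist\}$ certainly requires $\Ss\neq\emptyset$, so by part \ref{lem: prop Part1} we may assume $\overline{\gamma}$ is nullhomotopic in $X\bslash(\Sigma_1\cup\Sigma_2)$ throughout. The forward direction: suppose $\overline{\gamma}$ is nullhomotopic and there is an immersed sphere $R$ of odd self-intersection in $X\bslash(\Sigma_1\cup\Sigma_2)$. Fix an immersed disk $D$ bounding $\overline{\gamma}$ in the complement; its framing is either compatible or not with the chosen framing of $\overline{\gamma}$, giving one of the two manifolds in $\Ss$. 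To get the other, band-sum $D$ with (a parallel copy near the boundary of) the immersed sphere $R$ — more precisely, tube $D$ to $R$ along an arc — producing a new immersed disk $D'$ bounding $\overline{\gamma}$ in the same complement. Adding $R$ changes the self-intersection number of the disk by the self-intersection of $R$, i.e. by an odd number, hence flips the parity of the framing induced on $\overline{\gamma}$: $D'$ has framing compatible with $\overline{\gamma}$ iff $D$ does not. By Proposition \ref{prop: equivalent definition} applied to $D$ and to $D'$, both $\Stab$ and $\StabTwist$ lie in $\Ss(\Sigma_1,\Sigma_2,(\gamma,f))$.

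The reverse direction of part \ref{lem: SS item 3}: if $\Ss=\{\Stab,\StabTwist\}$, then $\Ss\neq\emptyset$ gives (part \ref{lem: prop Part1}) that $\overline{\gamma}$ is nullhomotopic, and by Proposition \ref{prop: equivalent definition} there exist two immersed disks $D_1,D_2\subseteq X\bslash(\Sigma_1\cup\Sigma_2\cup\partial X)$ bounding $\overline{\gamma}$, one with framing compatible with the given framing and one not. Then the union $D_1\cup D_2$ (after making the interiors transverse) is an immersed sphere in $X\bslash(\Sigma_1\cup\Sigma_2)$ whose self-intersection number is congruent mod $2$ to the difference of the framings induced by $D_1$ and $D_2$ on $\overline{\gamma}$, which is odd precisely because one framing is compatible and the other is not. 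Hence there is an immersed sphere of odd self-intersection in the complement, as required. The main obstacle I expect is bookkeeping the framing/self-intersection parity correctly across the tubing and the glued sphere — i.e. verifying that tubing $D$ to an immersed sphere $R$ changes the boundary framing by exactly $R\cdot R \bmod 2$, and that the glued sphere $D_1\cup D_2$ has self-intersection equal (mod $2$) to the framing discrepancy; everything else is routine general position in dimension $4$.
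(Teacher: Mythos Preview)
Your proof is correct. Part \ref{lem: prop Part1} and the forward direction of part \ref{lem: SS item 3} are essentially the paper's argument: the paper also invokes Proposition \ref{prop: equivalent definition} for part \ref{lem: prop Part1}, and for the forward direction it cites \cite[Proposition 5.2.4]{Gompf_Stipsicz_1999}, whose content is exactly the tubing-to-an-odd-sphere trick you describe. The framing bookkeeping you flag as the main obstacle is the standard spin-structure computation (the framing induced by an immersed disk is the one extending to a trivialization of $TX$ over the disk, and two such framings differ iff $w_2$ is nonzero on the glued sphere, i.e.\ iff its self-intersection is odd), so there is no hidden difficulty.

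Your reverse direction of part \ref{lem: SS item 3} is genuinely more direct than the paper's. The paper argues indirectly: from $\Ss=\{\Stab,\StabTwist\}$ it extracts, via Definition \ref{def: Ss}, a self-diffeomorphism of $X\#S^4$ fixing $\Sigma_1\cup\Sigma_2$ and reversing the framing of the unknot $U$, hence a diffeomorphism $(X\bslash(\Sigma_1\cup\Sigma_2))\#(\Stab)\cong(X\bslash(\Sigma_1\cup\Sigma_2))\#(\StabTwist)$, and then appeals to \cite[Exercise 5.2.6(b)]{Gompf_Stipsicz_1999} to produce the odd immersed sphere. Your approach stays entirely at the level of Proposition \ref{prop: equivalent definition}: take the two immersed disks $D_1,D_2$ witnessing the two elements of $\Ss$ and glue them along $\overline{\gamma}$. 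This avoids the detour through stabilized complements and the cited exercise, and in fact your argument \emph{is} a solution to that exercise specialized to this situation.
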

\begin{proof} Point 1 is an immediate corollary of Proposition \ref{prop: equivalent definition}. 
Now we are going to prove point 2.
    If $\overline{\gamma}$ is nullhomotopic, then there  exists an automorphism $\phi$ as prescribed as Definition \ref{def: Ss} by the first point of this lemma.
    The proof of \cite[Proposition 5.2.4]{Gompf_Stipsicz_1999} describes an ambient isotopy that exchange the framing of a nullhomotopic loop in the presence of an immersed odd intersection 2-sphere $S\subseteq X$. Let $\psi_t$ be such an isotopy that exchange the framings of $U$, we can assume that the support of this isotopy is in $(X\#S^4)\bslash\nu(\Sigma_1\cup \Sigma_2\cup\partial X)$. 
    The automorphism $\phi':=\psi_1\circ\phi$ satisfies the condition of Definition \ref{def: Ss} and by construction we have that 
    \[B(\phi')\in \left(\{\Stab,\StabTwist\}\bslash \{B(\phi)\}\right).\]
    Therefore, $\Ss(\Sigma_1, \Sigma_2, (\gamma,f))=\{\Stab,\StabTwist\}$.\\
Vice versa, if ${\Ss}(\Sigma_1, \Sigma_2,\gamma)=\{\Stab,\StabTwist\}$, then we have a diffeomorphism 
$h:(X\# S^4,(\Sigma_1\cup \Sigma_2), U)\to (X\# S^4,(\Sigma_1\cup \Sigma_2), U)$, where $U$ is the unknotted loop in $S^4\bslash D^4$ and $h$ reverse the framing of $U$. Therefore, the map $h$ is inducing a diffeomorphism of couples
\[(X\# (\Stab),\Sigma_1\cup \Sigma_2)\to (X\# (\StabTwist),\Sigma_1\cup \Sigma_2).\]
In particular, $(X\bslash(\Sigma_1\cup \Sigma_2))\#(\Stab)$ is diffeomorphic to $(X\bslash(\Sigma_1\cup \Sigma_2))\#(\StabTwist)$. By \cite[Exercise 5.2.6.(b)]{Gompf_Stipsicz_1999} there is an immersed 2-sphere of odd  self intersection in $X\bslash(\Sigma_1\cup \Sigma_2)$. 
\end{proof}
\subsubsection{Proof Theorem \ref{mainthm: int to ext}\label{sec: thm A}}
If $\Sigma_1,\Sigma_2\subseteq X$ are internally $\gamma$-stably isotopic, then they are internally 1-stably isotopic (with a trivial stabilization) by definition. By Theorem \ref{thm Extern by internal} they are $B$-stably isotopic for any $B\in\overline{\Ss}(\Sigma_1,\Sigma_2,\gamma)$. Lemma \ref{lem: proprieties Ss} says that $\overline{\Ss}(\Sigma_1,\Sigma_2,\gamma)$ is not empty if and only if there exists a framing for $\gamma$ with a small push off nullhomotopic in $X\bslash(\Sigma_1\cup\Sigma_2\cup\partial X)$, i.e. if there exists a nullhomotopic push off of $\gamma$ in $X\bslash(\Sigma_1\cup\Sigma_2)$. $\hfill\qed$

\subsubsection{Properties of stabilization sets\label{sec: proprieties SS}}
The next lemma allows us to consider the stabilization set of a single surface instead of a pair. Therefore, we will focus later on the properties of the stabilization set of a single surface, even though most of the statements hold (with small twinks) for the stabilization set of two surfaces.
\begin{lem} \label{lem: Sdi2 in fuzione di S1}Let $\Sigma_1,\Sigma_2\subseteq X$ be $\nu\gamma$-standard surfaces in a 4-manifold $X$.
    If $\Sigma_2\subseteq\nu \Sigma_1$, then 
    ${\Ss}(\Sigma_1,\Sigma_2,\gamma)={\Ss}(\Sigma_1,\gamma)$
   and 
    $\overline{\Ss}(\Sigma_1,\Sigma_2,\gamma)=\overline{\Ss}(\Sigma_1,\gamma)$.
\end{lem}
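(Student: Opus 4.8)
The plan is to show both set equalities by reducing the $\Sigma_2$-dependence to a $\Sigma_1$-dependence via the fact that $\Sigma_2\subseteq\nu\Sigma_1$. The key observation is that, by Proposition \ref{prop: equivalent definition}, membership of $\Stab$ (resp.\ $\StabTwist$) in $\Ss(\Sigma_1,\Sigma_2,\gamma)$ is equivalent to the existence of an immersed 2-disk $D\subseteq X\bslash(\Sigma_1\cup\Sigma_2\cup\partial X)$ bounding the small push off $\overline\gamma$ and compatible (resp.\ not compatible) with its framing. So the whole statement becomes a question about which immersed 2-disks bounding $\overline\gamma$ one can find in the respective complements, and one just needs $X\bslash(\Sigma_1\cup\Sigma_2\cup\partial X)$ and $X\bslash(\Sigma_1\cup\partial X)$ to contain ``the same'' such disks up to the relevant isotopy.

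First I would fix, as in Definition \ref{def: gamma-standarad}, the framing $f$ of $\gamma$ and the induced framed push off $\overline\gamma$ together with its tubular neighborhood $\nu\overline\gamma$; note that $\overline\gamma$ and $\nu\overline\gamma$ lie inside $\nu_{1/2}\gamma$ and, since both $\Sigma_1$ and $\Sigma_2$ are $\nu\gamma$-standard with respect to the \emph{same} $f$, the push off $\overline\gamma$ is disjoint from $\Sigma_1\cup\Sigma_2$ and is the same framed curve for both surfaces. Then I would prove the inclusion $\Ss(\Sigma_1,\gamma)\subseteq\Ss(\Sigma_1,\Sigma_2,\gamma)$: given an immersed 2-disk $D\subseteq X\bslash(\Sigma_1\cup\partial X)$ bounding $\overline\gamma$ with prescribed compatibility, I want to push $D$ off $\Sigma_2$ as well. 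Since $\Sigma_2\subseteq\nu\Sigma_1$, a small isotopy supported near $\nu\Sigma_1$ that shrinks $\nu\Sigma_1$ (equivalently, pushes $D$ outside a smaller tubular neighborhood of $\Sigma_1$ that still contains $\Sigma_2$) moves $D$ into $X\bslash(\Sigma_1\cup\Sigma_2\cup\partial X)$ without changing $\partial D=\overline\gamma$, its framing, or its compatibility type; hence the same manifold $B(\phi)$ appears, using Proposition \ref{prop: equivalent definition} again in the other direction. The reverse inclusion $\Ss(\Sigma_1,\Sigma_2,\gamma)\subseteq\Ss(\Sigma_1,\gamma)$ is immediate because $X\bslash(\Sigma_1\cup\Sigma_2\cup\partial X)\subseteq X\bslash(\Sigma_1\cup\partial X)$, so any disk realizing a given element for the pair also realizes it for $\Sigma_1$ alone. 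The extended-set equality then follows by taking the union over all framings $f'$ of $\gamma$ for which the surfaces are $\nu\gamma$-standard: $\Sigma_1$ is $\nu\gamma$-standard for $f'$ if and only if $\Sigma_2$ is (again because $\Sigma_2\subseteq\nu\Sigma_1$ forces their intersections with $\nu\gamma$ to be isotopic rel $\partial$, so the set of admissible framings coincides), and termwise $\Ss(\Sigma_1,\Sigma_2,(\gamma,f'))=\Ss(\Sigma_1,(\gamma,f'))$ by the first part.

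The step I expect to be the main obstacle is making precise the claim that an immersed 2-disk bounding $\overline\gamma$ and disjoint from $\Sigma_1$ can be made disjoint from $\Sigma_2$ \emph{without affecting its framing or compatibility type}, since ``$\Sigma_2\subseteq\nu\Sigma_1$'' is used as a hypothesis but the construction of the disk $D$ in Proposition \ref{prop: equivalent definition} is only up to ambient isotopy. The clean way around this is to observe that $\nu\Sigma_1$ can be chosen as a disk bundle and that $\Sigma_2$, being a closed subsurface inside it, sits inside $\nu_\epsilon\Sigma_1$ for some $0<\epsilon<1$; then one applies the fiberwise radial shrinking of $\nu\Sigma_1$, which is an ambient isotopy of $X$ fixing $\overline\gamma$ and everything outside $\nu\Sigma_1$, carrying $D$ off of $\nu_\epsilon\Sigma_1\supseteq\Sigma_2$ while only changing $D$ by an isotopy — so by construction the framing it induces on $\overline\gamma$, hence its compatibility type, is unchanged. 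Alternatively, one can bypass the disk picture entirely and argue directly with Definition \ref{def: Ss}: an automorphism $\phi$ of $X\#S^4$ restricting to the identity on $\Sigma_1\cup\partial X$ and taking $\overline\gamma$ to $U$ can be post-composed with the shrinking isotopy to also fix $\Sigma_2\subseteq\nu\Sigma_1$ (since the shrinking is supported in $\nu\Sigma_1$ and is the identity on $\Sigma_1$, one instead arranges $\phi$ to already have support disjoint from a neighborhood of $\Sigma_2$), yielding the same $B(\phi)$; I would present whichever of these two is shorter, probably the disk version built on Proposition \ref{prop: equivalent definition}.
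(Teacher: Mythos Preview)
Your proof is correct and follows essentially the same route as the paper's: both use Proposition \ref{prop: equivalent definition} to reduce the question to perturbing an immersed 2-disk for $\overline\gamma$ in $X\setminus(\Sigma_1\cup\partial X)$ off of $\Sigma_2\subseteq\nu\Sigma_1$, and then pass to the extended set by definition. The paper is simply terser, saying only ``perturb the interior of $D$''; your extra care with the radial shrinking and the coincidence of admissible framings is correct but more detail than the paper supplies.
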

\begin{proof}
    It is always true that ${\Ss}(\Sigma_1,\Sigma_2,\gamma)\subseteq {\Ss}(\Sigma_1,\gamma)$ by Proposition \ref{prop: equivalent definition}. 
   Let $D$ be an immersed 2-disk bounding $\overline{\gamma}$, the small push off of $\gamma$, in the complement of $\Sigma_1\cup \partial X$, then, under our assumption, we can perturb the interior of $D$ so that it lies in the complement of $\Sigma_1\cup \Sigma_2\cup \partial X$. This shows that ${\Ss}(\Sigma_1,\gamma)\subseteq{\Ss}(\Sigma_1,\Sigma_2,\gamma)$ by Proposition \ref{prop: equivalent definition}. It follows that $\overline{\Ss}(\Sigma_1,\Sigma_2,\gamma)=\overline{\Ss}(\Sigma_1,\gamma)$ by definition.
\end{proof}

\begin{constr}\label{constr: framings}
If $\gamma\subseteq X$ has a framing $f$ for which a surface $\Sigma\subseteq X$ is $\nu\gamma$-standard, then for all $n\in\Z$ we can define the framing 
\begin{equation}\label{eq: framings nucompatible}
    f_n:=\tau^n\circ f:\nu\gamma\to S^1\times D^3,
\end{equation}
where $\tau:S^1\times D^3\to S^1\times D^3$ is the map $\tau(\theta, x)=(\theta, R_\theta x)$ with $R_\theta$ being the rotation of $D^3$ of angle $\theta$ with rotation axis the line $\R\times\{0\}^2\subseteq\R^3$. The surface $\Sigma$ is $\nu\gamma$-standard for all framings $f_n$ with $n\in\Z$. For any framing $f_n$ the small push off of $\gamma$ is a curve $\overline{\gamma}(n)$. We  fix a base point $x_0\in\overline{\gamma}(0)\in X$ and an oriented meridian $\mu$ of $\Sigma$. The following equation holds in the fundamental group of $X\bslash \Sigma$:
\begin{equation} \label{eq: pi1 framings}
    [\overline{\gamma}(n)]=[\overline{\gamma}(0)]\cdot [\mu]^n\in\pi_1(X\bslash\Sigma,x_0)
\end{equation}    
\end{constr}

If $\Sigma\subseteq X$ is a $\nu\gamma$-standard surface, then the framing $\mathcal{F}(\gamma)$ of the normal bundle of $\gamma$ in a point is given by a first vector, which is normal to $\gamma$ and tangent to $\Sigma$, and other two in the normal bundle of $\Sigma$. Therefore, different framings of $\gamma$ (for which $\Sigma$ is $\nu\gamma$-standard) represent different homotopy classes in \[[S^1,\{A\in SO(3): A e_1=\pm e_1\}]\cong [S^1, O(2)]\cong \Z\times \Z_2.\]  The $\Z$ factor is realized by Construction \ref{constr: framings} and the $\Z_2$ factor is realized by the automorphism $Id_{S^1}\times R^{e_2}_\pi$ of $ S^1\times D^3$, where $R^{e_2}_\pi: D^3\to D^3$ is a rotation of angle $\pi$ with axis $e_2$. The next lemma follows easily.
\begin{lem}\label{lem: BarSs is union of Ss}
    Let $\gamma\subseteq X$ be a simple curve in a 4-manifold $X$ with a framing $f_0$, assume that $\Sigma\subseteq X$ is a $\nu\gamma$-standard surface.
    We have that
    \[\overline{\Ss}(\Sigma,\gamma)=\bigcup_{n\in\Z} \Ss\left(\Sigma,(\gamma,f_n)\right),\]
    where the framings $f_n$ are defined as in Construction \ref{constr: framings}.
\end{lem}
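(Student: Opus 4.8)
The plan is to prove the two inclusions of the displayed equality separately. The inclusion $\bigcup_{n\in\Z}\Ss(\Sigma,(\gamma,f_n))\subseteq\overline{\Ss}(\Sigma,\gamma)$ is immediate from Definition \ref{def: Ss}, since by Construction \ref{constr: framings} each $f_n$ is among the framings of $\gamma$ making $\Sigma$ a $\nu\gamma$-standard surface, over which the union defining $\overline{\Ss}$ is taken. The content is therefore the reverse inclusion, and for this it suffices to check that for \emph{every} framing $f'$ of $\gamma$ for which $\Sigma$ is $\nu\gamma$-standard one has $\Ss(\Sigma,(\gamma,f'))=\Ss(\Sigma,(\gamma,f_n))$ for some $n\in\Z$. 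The input is the classification recalled just above: once $f_0$ is fixed, the framings of $\gamma$ making $\Sigma$ $\nu\gamma$-standard are, up to equivalence, indexed by $[S^1,\{A\in SO(3): Ae_1=\pm e_1\}]\cong\Z\times\Z_2$, with the $\Z$ factor realized by the $f_n$ and the $\Z_2$ factor by postcomposition with $Id_{S^1}\times R^{e_2}_\pi$. Hence every $f'$ is equivalent, as a framing of $\gamma$, to $f_n$ or to $f_n\circ(Id_{S^1}\times R^{e_2}_\pi)$ for a unique $n$, and I would split the argument into two claims: (i) equivalent framings of $\gamma$ produce the same stabilization set; (ii) postcomposing a framing with $Id_{S^1}\times R^{e_2}_\pi$ produces the same stabilization set.

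For (i) I would use that two equivalent framings of $\gamma$ differ, near $\gamma$, by an ambient isotopy of $X$ supported in a neighborhood of $\nu\gamma$; since both framings here make $\Sigma$ a $\nu\gamma$-standard surface, the transition bundle automorphism of $S^1\times D^3$ preserves $S^1\times D^1$ and, representing the trivial class in $[S^1,\{A\in SO(3):Ae_1=\pm e_1\}]$, is isotopic to the identity through automorphisms of the pair $(S^1\times D^3,S^1\times D^1)$; conjugating such an isotopy by the chart and extending by the identity yields an ambient isotopy of $X$ supported in $\nu\gamma$ that preserves $\Sigma$ setwise, hence preserves $X\bslash(\Sigma\cup\partial X)$, and carries the framed small push off of one framing to that of the other. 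By the description of the stabilization set in terms of immersed $2$-disks in the complement (Proposition \ref{prop: equivalent definition}), this forces equality of the two stabilization sets.

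Claim (ii) is where I expect the only real subtlety, so I would spend most of the care there. The rotation $R^{e_2}_\pi$ fixes the $e_2$-axis of $D^3$ pointwise and carries each ball $\frac{1}{8}D^3+\frac{1}{4}e_2$ to itself, so the framed loop $\overline{\gamma}$ attached to $f_n\circ(Id_{S^1}\times R^{e_2}_\pi)$ by Definition \ref{def: gamma-standarad} has exactly the same underlying curve and the same chosen tubular neighborhood as the one attached to $f_n$, and a short computation shows that its induced framing differs from that of $f_n$ only by postcomposition with $Id_{S^1}\times R^{e_2}_\pi$. Since $R^{e_2}_\pi\in SO(3)$ and $SO(3)$ is path-connected, the two resulting sections of the frame bundle of $\overline{\gamma}$ differ by a nullhomotopic $SO(3)$-valued loop, hence are homotopic; that is, these two framings of $\overline{\gamma}$ are equivalent. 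Consequently a given immersed $2$-disk in $X\bslash(\Sigma\cup\partial X)$ bounding $\overline{\gamma}$ is compatible (in the sense of Definition \ref{def: compatible framing}) with one of them if and only if it is compatible with the other, and Proposition \ref{prop: equivalent definition} gives $\Ss(\Sigma,(\gamma,f_n\circ(Id_{S^1}\times R^{e_2}_\pi)))=\Ss(\Sigma,(\gamma,f_n))$. Combining (i) and (ii) yields $\Ss(\Sigma,(\gamma,f'))=\Ss(\Sigma,(\gamma,f_n))$ for some $n$, and taking the union over all admissible $f'$ gives the stated equality. The main obstacle is thus the $\Z_2$-bookkeeping in (ii); everything else reduces to the classification above together with Proposition \ref{prop: equivalent definition}.
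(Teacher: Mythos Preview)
Your proof is correct and is precisely the argument the paper has in mind: the paper states the $\Z\times\Z_2$ classification of framings of $\gamma$ for which $\Sigma$ is $\nu\gamma$-standard in the paragraph immediately preceding the lemma and then simply says ``The next lemma follows easily,'' without writing out any further details. Your claims (i) and (ii) are exactly the two points one must check, and your verification of (ii)---that $R^{e_2}_\pi$ fixes the $e_2$-axis, hence preserves the small push off and changes its induced framing only by a constant rotation in $SO(3)$---is the right computation. One cosmetic remark: since $f_n:\nu\gamma\to S^1\times D^3$, the $\Z_2$-twisted framing should be written $(Id_{S^1}\times R^{e_2}_\pi)\circ f_n$ rather than $f_n\circ(Id_{S^1}\times R^{e_2}_\pi)$; this does not affect the argument (and $R^{e_2}_\pi$ is an involution anyway), but it is worth getting the composition order consistent with your later use of ``postcomposition.''
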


The rest of this section aims to better understand the relationship between the framed curves $\overline{\gamma}(n)$ defined in the Construction (\ref{constr: framings}) for $n\in\Z$ and the dependence of the stabilization set on the chosen curve $\gamma\subseteq \Sigma$.

\begin{lem} \label{lem: gamma1 in terms of gamma0} Let $\gamma\subseteq X$ be a simple curve in a 4-manifold $X$ with framing $f_0$, assume that $\Sigma\subseteq X$ is a $\nu\gamma$-standard surface. Let $D\subseteq X$ be a 2-disk fiber of $\nu\Sigma$ bounding the meridian $\mu\subseteq X$ of $\Sigma$ over a point of $\gamma$. If $\mu$ is equipped with a framing compatible with $D$, then the framed loop $\overline{\gamma}(1)$ is isotopic to $\overline{\gamma}(0)_{op}\# \mu$ relative to $\Sigma\cup \partial X$, where $\overline{\gamma}(0)_{op}\# \mu$ is a band sum constructed as in Definition \ref{def: connsumloop} and away form $\Sigma$.
\end{lem}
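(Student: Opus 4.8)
The plan is to do everything inside the framed tubular neighbourhood $\nu\gamma$, so that all isotopies are automatically relative to $\Sigma\cup\partial X$ (and to $X\bslash\nu\gamma$). Using $f_0$ to identify $(\nu\gamma,\nu\gamma\cap\Sigma)$ with $(S^1\times D^3,S^1\times D^1)$, one reads off from Construction \ref{constr: framings} the explicit models: $\overline{\gamma}(0)=\{(\theta,\tfrac14 e_2):\theta\in S^1\}$ with its standard constant product framing, and, since $f_1=\tau\circ f_0$ with $\tau(\theta,x)=(\theta,R_\theta x)$,
\[\overline{\gamma}(1)=f_1^{-1}(S^1\times\tfrac14 e_2)=\{(\theta,\tfrac14 R_{-\theta}e_2):\theta\in S^1\},\]
whose induced push-off framing, read in these coordinates, is obtained at the point with parameter $\theta$ from the standard frame by applying the rotation $R_{-\theta}$ about the $e_1$-axis (note $R_{-\theta}e_1=e_1$). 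In particular, as a loop in the trivialised frame bundle $S^1\times SO(3)$, the framing of $\overline{\gamma}(1)$ is $\theta\mapsto R_{-\theta}$, i.e.\ one full revolution, while that of $\overline{\gamma}(0)$ is constant.

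The first step is the \emph{unframed} isotopy. The curve $\overline{\gamma}(1)$ spirals once around $S^1\times D^1=\Sigma\cap\nu\gamma$; replacing the function $\theta\mapsto\theta$ governing the rotation $R_{-\theta}$ by a function $g$ with $g(0)=0$, $g(2\pi)=2\pi$ that is constant (equal to $0$, resp.\ $2\pi$) on a long arc and increases by $2\pi$ across a short interval around a chosen point $\theta_0\in\gamma$ — a homotopy rel endpoints of maps into $\R$, realised by the ambient isotopy $\Phi_t(\theta,x)=(\theta,R_{\theta-g_t(\theta)}x)$ of $S^1\times D^3$, suitably damped near $\partial\nu_{1/2}\gamma$ and fixing $\gamma$ and $\Sigma\cap\nu\gamma$ pointwise — carries $\overline{\gamma}(1)$ to a framed curve $c$. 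By construction $c$ coincides with $\overline{\gamma}(0)$ \emph{as a framed curve} away from a small ball $B$ about $\theta_0$ (there $R_{-g(\theta)}=\mathrm{id}$), and inside $B$ it makes one quick loop around $\Sigma\cap\nu\gamma$ while its frame performs a full $2\pi$ rotation about the $e_1$-axis. The quick loop is a meridian of $\Sigma$ over $\theta_0$, so $c$ realises, up to isotopy, the band sum of Definition \ref{def: connsumloop} of $\overline{\gamma}(0)$ with the meridian $\mu$, with band taken away from $\Sigma$; this is the isotopy-level refinement of the identity $[\overline{\gamma}(1)]=[\overline{\gamma}(0)]\cdot[\mu]$ of Equation (\ref{eq: pi1 framings}).

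It remains to pin down which framing $c$ carries, and this is the step I expect to be the main obstacle. By Remark \ref{oss: opposite framing} the underlying curve of $c$ admits exactly two framings, so $c$ equals either $\overline{\gamma}(0)\#\mu$ or its opposite $(\overline{\gamma}(0)\#\mu)_{op}=\overline{\gamma}(0)_{op}\#\mu$, the two being distinguished by the non-trivial element of $\pi_1(SO(3))\cong\Z/2$. Comparing the two framed loops inside $B$: the frame of $c$ undergoes the full $2\pi$ twist recorded above, whereas the band-sum framing prescribed by Definition \ref{def: connsumloop}, together with the $D$-compatible framing of $\mu$, contributes a different amount, and the discrepancy is exactly one full twist; hence $c=\overline{\gamma}(0)_{op}\#\mu$, which is the assertion. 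The delicate part is precisely this $\Z/2$-computation: one must chase the twist contributed by the product band-sum convention of Definition \ref{def: connsumloop} and by the $D$-compatibility of the framing of $\mu$, and check that it differs by a full twist from that carried by $c$ — ruling out both the possibility that the discrepancy is trivial and the possibility that it is instead absorbed into $\mu$ versus $\mu^{-1}$ or into the $\Z/2$-factor of framings discussed before Lemma \ref{lem: BarSs is union of Ss}. A more robust way to settle this last point is to evaluate the mod-$2$ framing invariant of the pair $(c,\ \overline{\gamma}(0)\#\mu)$ against an annulus joining them in $\nu\gamma\bslash\Sigma$, which by the explicit models above equals $1$.
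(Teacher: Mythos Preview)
Your overall strategy matches the paper's: restrict to $\nu\gamma$, recognise the once-spiralling curve as a band sum with a meridian, and then pin down the $\Z/2$ framing discrepancy. The unframed part of your argument is fine (modulo the damping, which should be near $\partial\nu\gamma$ rather than $\partial\nu_{1/2}\gamma$, so that $\Phi_t$ extends by the identity to all of $X$).

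The difference, and the place where your write-up has a genuine gap, is the framing step. You correctly record that the frame of $c$ performs one full $2\pi$-rotation about $e_1$ across the meridian region, but you never compute what the band-sum framing of $\overline{\gamma}(0)\#\mu$ contributes there; you only assert that ``the discrepancy is exactly one full twist'' and that the annulus invariant ``equals $1$''. This is exactly the nontrivial content of the lemma, and it is not settled by the models you wrote down: after your concentration the third frame vector $R_{-g}e_3$ of $c$ is nearly tangent to the meridian circle, so a direct comparison with the $D$-compatible frame of $\mu$ (which has constant $\partial_s,\partial_{x_1}$ and a rotating radial vector) is awkward and easy to get wrong.

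The paper avoids this by a change of viewpoint that makes the $\Z/2$ visible \emph{before} the band sum rather than after. It looks through $f_1$ instead of $f_0$, so that $\sigma:=f_1(\overline{\gamma}(1))$ is the straight curve and $\alpha:=f_1(\overline{\gamma}(0))=\tau(\sigma)$ is the spiral. All three loops $\alpha,\beta:=f_1(\mu),\sigma$ lie in the $3$-slice $S^1\times\{0\}\times D^2$, where one can speak of the blackboard framing. The framings of $\sigma$ and $\beta$ are blackboard, while the framing carried by $\alpha$ is the radial one; for a curve spiralling once about the core this differs from blackboard by one full twist, so $\alpha_{op}$ is blackboard. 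Then $\alpha_{op}\#\beta$ and $\sigma$, both blackboard, are seen to be isotopic as framed curves in a single picture (Figure~\ref{fig: ciao}). In short: rather than concentrating and then comparing two framings on the meridian, the paper reads the single twist directly from ``radial versus blackboard'' on the spiral, which cleanly produces the ${}_{op}$.
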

\begin{proof}
    It is sufficient to restrict our attention to $\nu\gamma$, we will look at the curves through the diffeomorphism  $f_1:\nu\gamma\to S^1\times D^3$.
    Set \[\alpha:=f_1(\overline{\gamma}(0))\subseteq S^1\times (D^1\times D^2),\]  \[\sigma:=f_1(\overline{\gamma}(1))\subseteq S^1\times(\{0\}\times D^2)\subseteq S^1\times (D^1\times D^2)\] and \[\beta:=f_1(\mu)=\{p\}\times (\{0\}\times S^1)\subseteq S^1\times(D^1\times D^2).\]
    The aim is to show that the framed loops $\alpha_{op}\#\beta,\sigma\subseteq S^1\times D^3$ are isotopic  relative to $(S^1\times \partial D^3)\cup(S^1\times D^1)$.
    Notice that $\alpha=\tau(\sigma)\subseteq S^1\times D^3$ by equation (\ref{eq: framings nucompatible}). Recall that $\tau(\theta,x)=(\theta,R_\theta x)$ where $R_\theta$ is the rotation of $\theta$ around the axis defined by $e_1$ in $D^3\cong D^1\times D^2$. 
    Therefore, we have that \[\alpha,\beta,\sigma\subseteq S^1\times(\{0\}\times D^2)\subseteq S^1\times (D^1\times D^2)\]
    The framings of three loops in  $S^1\times (D^1\times D^2)$ is determinate by the framings of the loops as subsets of $S^1\times(\{0\}\times D^2)$. 
    A picture of $\alpha,\beta$ and $\sigma$ as subset of $S^1\times \{0\}\times D^2$ is depicted in Figure \ref{fig: 1 ciao} and \ref{fig: 3 ciao}. The loops $\sigma$ and $\beta$ have the blackboard framing, however for $\alpha$ have not. In particular, the framing of $\sigma$ is determined by the section of its normal bundle in $S^1\times\{0\}\times D^2$ which takes value the radial direction $x$ at each point $(\theta, 0,x)\in\sigma$. Since $\alpha=\tau(\sigma)$,  as framed curves, we have that the framing of $\alpha$ is determined by a section of its normal bundle in $S^1\times\{0\}\times D^2$ which takes value the radial direction $R_\theta x$ at each point $(\theta, 0,R_\theta x)\in\alpha$. Therefore, $\alpha$ has a framing with an additional \emph{full twist} and we can represent $\alpha_{op}$ in Figure \ref{fig: 1 ciao} by framing it with the black board framing. Figure \ref{fig: 2 ciao} shows 
 the framed curve $\alpha_{op}\#\beta$, which is clearly isotopic to $\sigma$ in Figure \ref{fig: 3 ciao}, as framed curves.
 This concludes the proof that $\overline{\gamma}(0)_{op}\#\mu$ is isotopic to $\overline{\gamma}(1)$.
\end{proof}
\begin{figure}[h]
    \centering
     \begin{subfigure}[b]{0.30\textwidth}
         \centering
         \includegraphics[width=0.65\textwidth]{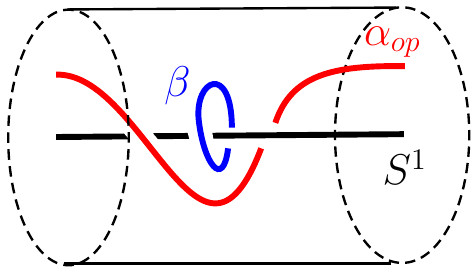}
         \caption{\label{fig: 1 ciao}}
     \end{subfigure}
      \begin{subfigure}[b]{0.30\textwidth}
         \centering
         \includegraphics[width=0.65\textwidth]{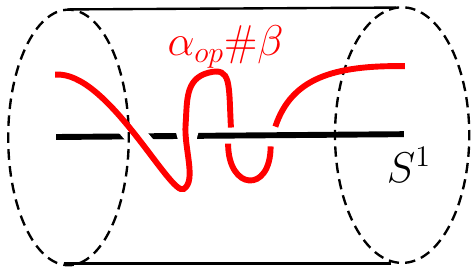}
         \caption{\label{fig: 2 ciao}}
     \end{subfigure}
     \begin{subfigure}[b]{0.30\textwidth}
         \centering
         \includegraphics[width=0.65\textwidth]{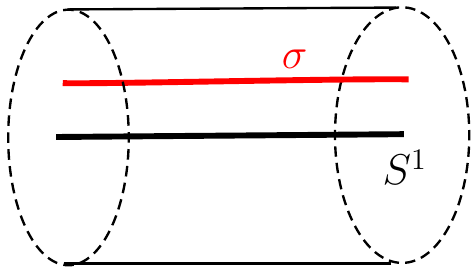}
         \caption{\label{fig: 3 ciao}}
     \end{subfigure}
    \caption{\label{fig: ciao}The figures depict a full cylinder $I\times D^2$ which represent $S^1 \times \{0\}\times D^2$ where  and the boundary $\partial I\times D^2$ should be identified. Each of the loops showed in the pictures is equipped with the blackboard framing. The black curve labeled $S^1$ in all picture is $S^1\times\{0\}\times\{0\}$.}
\end{figure}

\begin{oss}\label{oss: framed loop}
Iterating the proof above, it can be proved that
    $\overline{\gamma}(n)=\left(\overline{\gamma}(0) \# (n\mu)\right)_{n\cdot op}$, where $n\mu$ is the band sum of $n$ parallel push off of the meridian $\mu$ and ${n\cdot op}$ denotes the opposite framing if $n$ is odd.
    \end{oss}
 
\begin{prop}\label{lem: geometricDualSphere}
    Let $\gamma\subseteq X$ be a framed curve in a 4-manifold $X$ with framing $f_0$, assume that $\Sigma\subseteq X$ is a $\nu\gamma$-standard surface. 
    If there exists an immersed 2-sphere $S\subseteq X$ intersecting $\Sigma$ in exactly one point on the component of $\Sigma$ containing $\gamma$, then there exists an ambient isotopy $\psi_t$ of $X$ relative to $\Sigma\cup\partial X$ which sends the loop $\bar\gamma(1)$ to $\overline{\gamma}(0)$. Moreover, the following equality of framed loops holds:\\
    \[\psi_1(\overline{\gamma}(1))=\begin{cases}
        \overline{\gamma}(0)_{op}, &\text{ if $S$ has even self intersection}\\
        \overline{\gamma}(0), &\text{ if $S$ has odd self intersection}\\
    \end{cases}.\]
    In particular, if $S$ has odd self intersection $\Ss(\Sigma,(\gamma,f_0))=\Ss(\Sigma,(\gamma,f_1))$ and if $S$ has even self intersection  $\Ss(\Sigma,(\gamma,f_0))\cup\Ss(\Sigma,(\gamma,f_1))=\{\Stab,\StabTwist\}$.
\end{prop}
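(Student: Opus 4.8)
The plan is to leverage Lemma \ref{lem: gamma1 in terms of gamma0} (or rather its immersed analogue) together with the hypothesis that $S$ intersects $\Sigma$ transversally in a single point, which will let us "tube off" the meridian $\mu$ at the cost of either changing the framing by a full twist or not, depending on the parity of $S\cdot S$. First I would recall from Lemma \ref{lem: gamma1 in terms of gamma0} that $\overline{\gamma}(1)$ is isotopic, relative to $\Sigma\cup\partial X$, to the band sum $\overline{\gamma}(0)_{op}\#\mu$, where $\mu$ is a meridian of $\Sigma$ taken over a point of $\gamma$ and equipped with a framing compatible with the fiber disk $D$ of $\nu\Sigma$. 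So it suffices to produce an isotopy relative to $\Sigma\cup\partial X$ that undoes the connect-summand $\mu$, up to a framing change.

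The key step is to build an immersed disk bounding $\mu$ in the complement of $\Sigma$. Starting from the immersed $2$-sphere $S$ meeting $\Sigma$ in one point $p$ (on the component containing $\gamma$), remove a small disk $\nu_\epsilon p\cap S$ around the intersection point; the result is an immersed disk $D'\subseteq X$ whose boundary is exactly a meridian $\mu'$ of $\Sigma$, and whose interior is disjoint from $\Sigma$ (after a small perturbation of the interior double points of $S$ away from $\Sigma$, which is possible in dimension $4$). I would then isotope $\mu$ to $\mu'$ near $p$; since both are meridians over points of the same component of $\Sigma$, and $\Sigma\bslash\gamma$ together with the structure of $\nu\gamma$ allows pushing the basepoint of the meridian along $\gamma$, this isotopy can be taken relative to $\Sigma\cup\partial X$. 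Now apply Lemma \ref{lem: ConSumOfLoop and Disk} to the band sum $\overline{\gamma}(0)_{op}\#\mu$ using the immersed disk $D'$ in the complement of $\Sigma\cup\partial X$: we obtain an ambient isotopy $\psi_t$ relative to $\Sigma\cup\partial X$ with
\[
\psi_1(\overline{\gamma}(1))=\begin{cases}
\overline{\gamma}(0)_{op}, &\text{if the framing of $\mu'$ is compatible with $D'$,}\\
(\overline{\gamma}(0)_{op})_{op}=\overline{\gamma}(0), &\text{if not.}
\end{cases}
\]

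The heart of the matter, and the step I expect to be the main obstacle, is to show that the framing of the meridian $\mu'$ inherited from the fiber disk $D$ of $\nu\Sigma$ is compatible with the punctured-sphere disk $D'$ precisely when $S\cdot S$ is even. The framing discrepancy between $D$ and $D'$ along the common boundary $\mu'$ is measured by a relative Euler number of the normal bundle: gluing $D'$ to (a parallel copy of) $D$ along $\mu'$ yields a closed immersed $2$-sphere isotopic to $S$ itself (it is $S$ with its cap near $p$ replaced by a normal disk of $\Sigma$, which is isotopic rel nothing to the original cap), hence the difference of the two framings of $\mu'$ equals the self-intersection $S\cdot S\bmod 2$ — recalling from Remark \ref{oss: opposite framing} that only the parity of the twisting matters, since a curve with trivial normal bundle in a $4$-manifold has exactly two framings. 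One must be a little careful that the immersed double points of $S$ do not affect this count: they contribute an even (indeed, a pair-wise cancelling in sign or, for self-intersections, integer) amount to $S\cdot S$ that is already accounted for in the integer $S\cdot S$, and the parity is all that enters. Thus $\psi_1(\overline{\gamma}(1))=\overline{\gamma}(0)_{op}$ when $S\cdot S$ is even and $\overline{\gamma}(0)$ when $S\cdot S$ is odd, proving the displayed formula. Finally, the two assertions about stabilization sets follow formally: $\psi_1$ is an ambient isotopy of $X$ relative to $\Sigma\cup\partial X$ carrying $(\gamma,f_1)$ to $(\gamma,f_0)$ or to $(\gamma,(f_0)_{op})$, so by Definition \ref{def: Ss} (invariance of $\Ss$ under such isotopies) we get $\Ss(\Sigma,(\gamma,f_0))=\Ss(\Sigma,(\gamma,f_1))$ in the odd case; in the even case we instead get $\Ss(\Sigma,(\gamma,f_1))=\Ss(\Sigma,(\gamma,(f_0)_{op}))$, and since reversing the framing of the push-off interchanges $\Stab$ and $\StabTwist$ in the stabilization set (Proposition \ref{prop: equivalent definition}, as reversing the framing toggles compatibility with any bounding immersed disk), we conclude $\Ss(\Sigma,(\gamma,f_0))\cup\Ss(\Sigma,(\gamma,f_1))=\{\Stab,\StabTwist\}$ whenever either set is nonempty — and both are empty simultaneously exactly when $\overline{\gamma}$ fails to be nullhomotopic in the complement, in which case the union is empty and the statement is vacuous.
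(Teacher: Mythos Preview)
Your proof is correct and follows essentially the same approach as the paper: both invoke Lemma~\ref{lem: gamma1 in terms of gamma0} to write $\overline{\gamma}(1)$ as $\overline{\gamma}(0)_{op}\#\mu$, puncture the immersed sphere $S$ at its intersection with $\Sigma$ to obtain an immersed disk bounding the meridian in the complement, apply Lemma~\ref{lem: ConSumOfLoop and Disk}, and identify the framing compatibility with the parity of $S\cdot S$. Your treatment of the framing parity via the relative Euler number is more explicit than the paper's, which simply asserts the equivalence; and your remark on the edge case where both stabilization sets are empty is a legitimate observation about a point the paper does not address (though ``vacuous'' is not quite the right word---the even-case conclusion would be false, not vacuous, if both sets were empty; in the paper's applications this situation does not arise).
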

\begin{proof}
By Lemma \ref{lem: gamma1 in terms of gamma0} $\overline{\gamma}(1)_{op},\overline{\gamma}(0) \# \mu$ are isotopic as framed curves relative to $\Sigma\cup\partial X$, where $\mu$ is a framed meridian of $\Sigma$. 
The immersed 2-sphere $S$ can be written as union of two 2-disk $D_1\cup D_2$ with boundary $\mu$. We can  assume that $D_1$ is in the complement of $\Sigma$, $D_2$ intersects $\Sigma$ in one point and the framing of $\mu$ is compatible with $D_2$. \\
By Lemma \ref{lem: ConSumOfLoop and Disk}
 there exists and isotopy $\psi_t$ of $X$ that restricts to the identity in a neighborhood of $\Sigma$, and
  \[\psi_1(\overline{\gamma}(1)_{op})=\begin{cases}
        \overline{\gamma}(0), &\text{ if the framing of $\mu$ is compatible with $D_1$}\\
        \overline{\gamma}(0)_{op}, &  \text{ if the framing of $\mu$ is not compatible with $D_1$}\\
    \end{cases}\]
    Moreover, we have that the framing of $\mu$ is compatible with $D_1$ if and only if the immersed 2-sphere $S$ has even self intersection.

Now notice that, if $B(\phi)\in \Ss(\Sigma,(\gamma,f_0))$, where $\phi$ is 
 a diffeomorphism as in Definition \ref{def: Ss}, then $B(\phi\circ \psi_1)\in\Ss (\Sigma,(\gamma,f_1))$. The rest of the statement follows immediately.
\end{proof}

Now we would like to study the stabilization sets of different curves on the same surface.\\
Let $\alpha, \beta\subseteq X$ be framed curves on a surface $\Sigma\subseteq X$ and that $\Sigma$ is both $\nu\alpha$-standard and $\nu\beta$-standard. Assume that $\alpha$ intersect $\beta$ transversely in one point. 
Define $\gamma:=\alpha+\beta$ to be the curve in $\Sigma$ obtained from $\alpha\cup\beta$ by solving the intersection point, that is replacing a small neighborhood of the intersection point $\alpha\cap \beta$ with the model in Figure \ref{fig: Asumframed 1}.
The curve $\gamma\subseteq X$ can be equipped with a framing induced by the framed curves $\alpha$ and $\beta$. To do so consider a small 2-disk $D\subseteq \Sigma$ containing the intersection point $\alpha\cap\beta$ and consider a trivialization of its tubular neighborhood $\phi:\nu D\to D\times D^2$. The framing $\mathcal{F}(\alpha)=(\mathcal{F}_1(\alpha),\mathcal{F}_2(\alpha),\mathcal{F}_3(\alpha))$ of the normal bundle of $\alpha$ has $\mathcal{F}_1(\alpha)(p)$ in $T_p\Sigma$ by assumption. We can assume that $\phi_*(\mathcal{F}_2(\alpha)(p))=(0,e_1)\in T_p D\times\R^2\cong T_{\phi(p)}(D\times D^2) $ and $\phi_*(\mathcal{F}_3(\alpha)(p))=(0,e_2)$  for all point $p\in\alpha$. A similar condition can be stated for $\beta$. We define a framing for the normal bundle of $\gamma$ to be the restriction of the framings of $\alpha$ and $\beta$ on $(\alpha\cup\beta)\bslash D$ and to be $\mathcal{F}(\gamma)(p)=(n(p),(0,e_1),(0,e_2))$ at the point $p$ for any $p\in D\cap\gamma$, where $n(p)$ is a normal vector of $\gamma$ in $\Sigma$.
Denote as $\overline{\alpha}$,
$\overline{\beta}$ and $\overline{\gamma}$ the small push offs of the corresponding framed curves.
\begin{prop}\label{prop: Sumofframed curves on a surface}
The framed curves $\overline{\gamma}$ and $(\overline{\alpha}\#\overline{\beta})_{op}$ are isotopic in $X$ relative to $\Sigma$, where $\overline{\alpha},\overline{\beta},\overline{\gamma}\subseteq X$ are defined in the preceding paragraph.
\end{prop}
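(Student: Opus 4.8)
The plan is to work entirely inside a neighborhood of the surface $\Sigma$, and in fact inside the neighborhood of the union $\alpha \cup \beta$, reducing everything to a local computation near the intersection point $p = \alpha \cap \beta$. First I would set up a convenient local model: choose a small $4$-ball $D^4 \subseteq X$ around $p$ such that $D^4 \cong D^1 \times D^3$ and $\Sigma \cap D^4$ is the standard flat $2$-disk, with $\alpha$ and $\beta$ appearing as two coordinate arcs crossing at $p$. Outside $D^4$ the three curves $\overline{\alpha}, \overline{\beta}, \overline{\gamma}$ are small push offs of three arcs that agree (as subsets of $\Sigma$, and as framed arcs in $X$) — this is exactly the setup of the band-sum construction in Definition \ref{def: connsumloop}, once one checks that the product framings match. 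So the content of the proposition is concentrated in the behavior inside the local model.

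The key steps, in order: (1) Describe the small push offs: since $\alpha,\beta,\gamma$ all lie on $\Sigma$ and $\Sigma$ is $\nu\cdot$-standard for each, their push offs $\overline{\alpha},\overline{\beta},\overline{\gamma}$ are push offs in the direction of the first framing vector $\mathcal{F}_1$, which is normal to the curve but \emph{tangent} to $\Sigma$. Hence $\overline{\alpha},\overline{\beta},\overline{\gamma}$ are parallel copies of $\alpha,\beta,\gamma$ inside $\Sigma$ itself (pushed slightly off in $\Sigma$). (2) Inside the local disk $D \subseteq \Sigma$ around $p$, the curve $\gamma = \alpha + \beta$ is obtained by smoothing the crossing; correspondingly $\overline{\gamma}$ near $p$ is the smoothing of $\overline{\alpha} \cup \overline{\beta}$. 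This identifies the \emph{underlying unframed} curve $\overline{\gamma}$ with the band-sum $\overline{\alpha}\#\overline{\beta}$ of the definition, after an isotopy supported near $p$ that pushes the band into the standard position of Figure \ref{fig: sumframed 2}. (3) Compare framings. The framing of $\overline{\gamma}$ near $p$ is, by the definition given just before the proposition, the one whose second vector is the in-$\Sigma$ normal $n(p)$ of $\gamma$ and whose third vector is the fixed transverse direction $(0,e_2)$. The framing produced by the band-sum construction (Equation (\ref{eq: framing B})) is likewise $(e_1,0),(0,n(x)),(0,e_3)$ — the ``rotating'' normal framing. These agree as \emph{unframed} data but one must track whether they agree or differ by a full twist, i.e. by the $_{op}$ operation; the claim is that they differ by exactly one full twist, hence the $_{op}$ in the statement.

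The main obstacle — and the one step deserving real care — is step (3): the precise bookkeeping of the twist. The subtlety is the same one that appeared in Lemma \ref{lem: gamma1 in terms of gamma0}: when you smooth the crossing of $\alpha$ and $\beta$, the in-surface normal direction $n(p)$ along the new curve $\gamma$ does not stay constant but rotates as you traverse the smoothed corner, and this rotation — when expressed relative to the product framing inherited from $\alpha$ and $\beta$ separately — contributes a $\pm$ full twist. I would make this precise by the same device used in Lemma \ref{lem: gamma1 in terms of gamma0}: draw $\overline{\alpha}_{op}, \overline{\beta}, \overline{\gamma}$ in the solid-torus picture $S^1 \times \{0\} \times D^2$ with their blackboard framings, observe that $\overline{\gamma}$ with its defining framing carries one extra full twist compared to the blackboard-framed band-sum $\overline{\alpha}\#\overline{\beta}$, and conclude $\overline{\gamma} \simeq (\overline{\alpha}\#\overline{\beta})_{op}$ as framed curves, the isotopy being supported in a neighborhood of $\alpha\cup\beta$ and hence relative to the rest of $\Sigma$ (and to $\partial X$). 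The sign of the twist, and the verification that $\beta$'s orientation in the band-sum is the one forced by ``solving the intersection point'' as in Figure \ref{fig: Asumframed 1}, are the points where I would slow down and check a picture rather than trust a formula.
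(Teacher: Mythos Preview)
Your overall plan --- localize near $p=\alpha\cap\beta$, match the underlying curves outside a ball, and then bookkeep the framing twist inside --- is correct and is exactly what the paper does. But step (1) contains a real error that, if left uncorrected, would derail the local analysis.

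The small push off (Definition \ref{def: gamma-standarad}) is $\overline{\gamma}=f^{-1}(S^1\times\tfrac{1}{4}e_2)$. In the framing $\mathcal{F}=(\mathcal{F}_1,\mathcal{F}_2,\mathcal{F}_3)$ it is $\mathcal{F}_1$ that lies in $T\Sigma$, while the push off is in the $e_2$-direction, i.e.\ along $\mathcal{F}_2$, which is \emph{normal} to $\Sigma$. So $\overline{\alpha},\overline{\beta},\overline{\gamma}$ are not parallel copies on $\Sigma$; they sit in $X\setminus\Sigma$, which is essential both for the isotopy to be ``relative to $\Sigma$'' and for the whole stabilization-set framework. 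With your picture (curves on $\Sigma$) there is no room for a Reidemeister I move and no meaningful blackboard framing.

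Once you fix the direction of the push off, the paper's argument is the short picture computation you are aiming for: write the local $D^4$ as $D^1\times D^3$ with the $D^1$ factor one normal direction to $\Sigma$, so that $\Sigma$ is a $2$-disk lying ``under'' the push offs. In the visible $D^3$ (the $D^1$ factor suppressed, as in Figure \ref{fig: Asum of framed curve}) the arcs $\overline{\alpha},\overline{\beta}$ cross (Figure \ref{fig: Asumframed 3}), their band sum $\overline{\alpha}\#\overline{\beta}$ carries the kink of Figure \ref{fig: Asumframed 2}, and $\overline{\gamma}$ is the smoothed arc of Figure \ref{fig: Asumframed 1}. A single Reidemeister I move relates Figures \ref{fig: Asumframed 1} and \ref{fig: Asumframed 2}; since Reidemeister I changes the blackboard framing by one full twist, this yields $\overline{\gamma}\simeq(\overline{\alpha}\#\overline{\beta})_{op}$. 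Your suggestion to import the solid-torus picture of Lemma \ref{lem: gamma1 in terms of gamma0} is unnecessary and does not quite fit: there is no single curve whose tubular neighborhood would serve as the ambient solid torus here --- the three push offs live in a $4$-ball, and the $D^3$ blackboard framing is already the right reference.
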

\begin{proof}
    We will focus on the 4-disk $D^4\cong D\times D^2$ centered at the point $\alpha\cap \beta$. 
    We can assume the framed curve $\overline{\alpha}$ and $\overline{\beta}$ (possibly up to a small isotopy to make them disjoint) in this $D^4$ appear as in Figure \ref{fig: Asumframed 3}, where $D^4$ is written as $D^1\times D^3$ and the $D^1$ factor is a normal direction of the surface. The surface $\Sigma$ in $D^4$ looks like a 2-disk under the curve $\overline{ \alpha}$ and $\overline{\beta}$. Figure \ref{fig: Asumframed 2} shows the framed curve $\overline{\alpha}\#\overline{\beta}$ and Figure \ref{fig: Asumframed 1} shows the framed curve $\overline{\gamma}$. A Reidemeister I move describes an isotopy between the arcs in Figures \ref{fig: Asumframed 2}-\ref{fig: Asumframed 3}, but it does not preserve the black board framing, instead it adds a full twist. It follows that the framed curves $\overline{\gamma}$ and $(\overline{\alpha}\#\overline{\beta})_{op}$ are isotopic in $X$ relative to $\Sigma$.
\end{proof}

\begin{figure}
    \centering
     \begin{subfigure}[b]{0.30\textwidth}
         \centering
         \includegraphics[width=0.65\textwidth]{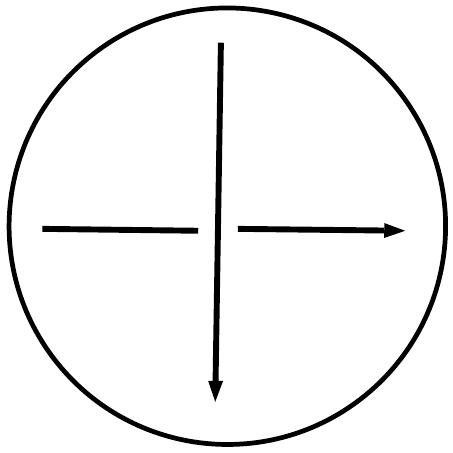}
         \caption{
         \label{fig: Asumframed 3}}
     \end{subfigure}
      \begin{subfigure}[b]{0.30\textwidth}
         \centering
         \includegraphics[width=0.65\textwidth]{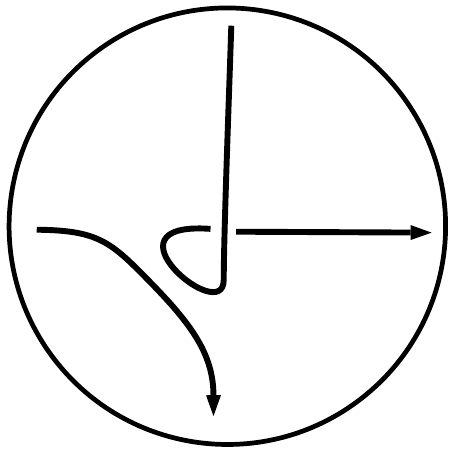}
         \caption{
         \label{fig: Asumframed 2}}
     \end{subfigure}
     \begin{subfigure}[b]{0.30\textwidth}
         \centering
         \includegraphics[width=0.65\textwidth]{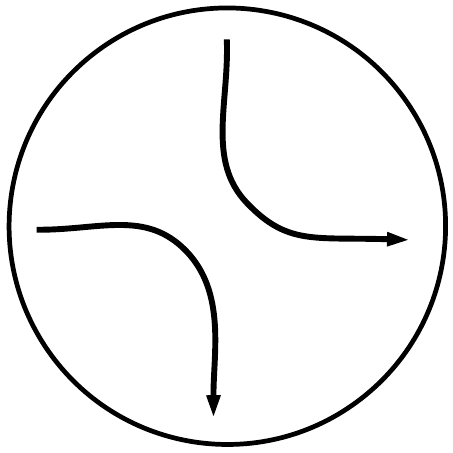}
         \caption{
         \label{fig: Asumframed 1}}
     \end{subfigure}
    \caption{ In the proof of Proposition \ref{prop: Sumofframed curves on a surface} the figures represent $D^1 \times D^3$ where the $D^1$ factor is suppressed. All the curves in the pictures are equipped with the black board framing. \label{fig: Asum of framed curve}} 
\end{figure}

It is possible to generalize Lemma \ref{prop: Sumofframed curves on a surface} as follows:
\begin{prop}\label{prop: curves obtaiend by dehn twist}
    Let $\alpha,\beta\subseteq \Sigma$ be two framed curves on a surface $\Sigma\subseteq X$ which is both $\nu\alpha$-standard and $\nu\beta$-standard. Assume that the curves $\alpha$ and $\beta$ intersect transversely in $n$ points. Let $\gamma$ the curve obtained from $\alpha$ by the Dehn twist $D_\beta$ defined by $\beta$, i.e. $\gamma:=D_\beta(\alpha)$.
    There exists a framing for $\gamma$ such that $\Sigma$ is $\nu\gamma$-standard and such that the framed curve $\overline{\gamma}$ is isotopic to $(\overline{\alpha}\#n\overline{\beta})_{n\cdot op}$ in $X$ relative to $\Sigma$. Here $\#n\overline{\beta}$ denotes $n$ band sums with $n$ parallel copies of the framed loop $\overline{\beta}$, possibly with  reversed orientation and $n\cdot op$ means that the opposite framing should be considered if $n$ is odd.
\end{prop}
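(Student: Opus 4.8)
The plan is to reduce Proposition \ref{prop: curves obtaiend by dehn twist} to Proposition \ref{prop: Sumofframed curves on a surface} by an induction on the number of intersection points $n$, using the fact that a Dehn twist $D_\beta$ can be realized concretely in a collar neighborhood of $\beta$ on $\Sigma$. First I would recall the standard picture: in an annular neighborhood $\A\subseteq\Sigma$ of $\beta$, the Dehn twist $D_\beta$ is supported in $\A$, and the curve $\gamma=D_\beta(\alpha)$ agrees with $\alpha$ outside $\A$ while, inside $\A$, each of the $n$ strands of $\alpha\cap\A$ gets wrapped once around the core $\beta$. Combinatorially, this means $\gamma$ is isotopic on $\Sigma$ to the curve obtained from $\alpha$ by doing an (oriented) sum with $n$ parallel copies of $\beta$, one sum at each of the $n$ intersection points of $\alpha$ with $\beta$; the orientations of the parallel copies are dictated by the local intersection signs, but since we only care about framed isotopy type up to the $\Z_2$-ambiguity recorded by $op$, the precise orientations will not matter for the conclusion.

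Next I would set up the framing on $\gamma$. Exactly as in the paragraph preceding Proposition \ref{prop: Sumofframed curves on a surface}, at each of the $n$ points where a band-sum is performed we can choose a small $2$-disk $D_j\subseteq\Sigma$ around that point, a trivialization $\nu D_j\cong D_j\times D^2$ compatible with the normal framings of $\alpha$ and $\beta$, and declare the framing of $\gamma$ to agree with those of $\alpha$ and $\beta$ away from the $D_j$'s and to be the product of a normal vector of $\gamma$ in $\Sigma$ with the standard $(0,e_1),(0,e_2)$ in the $D_j$'s. This makes $\Sigma$ into a $\nu\gamma$-standard surface, and it is the natural framing to which the inductive argument applies.

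The induction itself goes by peeling off the band-sums one at a time. Let $\gamma_0=\alpha$ and let $\gamma_j$ be the framed curve obtained after performing the first $j$ band-sums with copies of $\beta$; then $\gamma_n$ is (framed-isotopic on $\Sigma$, rel $\Sigma$, to) $\gamma$. Each passage from $\gamma_{j-1}$ to $\gamma_j$ is a single band-sum of $\gamma_{j-1}$ with one parallel copy of $\beta$ at a transverse intersection point, so Proposition \ref{prop: Sumofframed curves on a surface} applies verbatim and yields that the small push-off $\overline{\gamma_j}$ is isotopic in $X$ rel $\Sigma$ to $(\overline{\gamma_{j-1}}\#\overline{\beta})_{op}$. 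Composing these $n$ isotopies, and collecting the parity of the accumulated $op$'s, gives that $\overline{\gamma}$ is isotopic rel $\Sigma$ to $(\overline{\alpha}\#n\overline{\beta})_{n\cdot op}$, which is exactly the claim. (Here I use the identity $(\delta\#\beta)_{op}=\delta_{op}\#\beta$ and the fact that $({}\cdot{})_{op}$ is an involution to merge the twists, together with the standard observation that band-sum of framed loops is associative and commutative up to isotopy, so the order in which the $n$ copies of $\beta$ are summed on is irrelevant.)

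The main obstacle is not the parity bookkeeping but the geometric identification of the Dehn-twisted curve with the iterated band-sum \emph{together with the asserted framing}: one must check that the framing on $\gamma$ induced by the Dehn-twist picture (i.e.\ by pushing $\alpha$'s normal framing around the collar of $\beta$) agrees, up to the $\Z_2$-ambiguity, with the product framing built from the $n$ copies of $\overline{\beta}$. This is a local computation in the annulus $\A\times D^2$: going once around the core of $\A$ rotates the $\Sigma$-normal direction of the strand, contributing precisely the single full twist that Proposition \ref{prop: Sumofframed curves on a surface} already accounts for via its Reidemeister~I move, so the twists match on the nose at each crossing and the cumulative discrepancy is exactly $n$ full twists, i.e.\ $n\cdot op$. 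Once this local model is pinned down, the rest is a mechanical iteration of the $n=1$ case.
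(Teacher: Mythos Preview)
The paper does not give a separate proof of this proposition; it presents it as the direct generalization of Proposition~\ref{prop: Sumofframed curves on a surface}, and your strategy of reducing to that $n=1$ local computation is exactly the intended one. Your final paragraph, identifying the Reidemeister~I contribution at each of the $n$ crossings as the source of the $n\cdot op$, is the heart of the matter and is correct.

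However, the inductive packaging you set up has a technical gap. You want to apply Proposition~\ref{prop: Sumofframed curves on a surface} to the pair $(\gamma_{j-1},\beta_j)$ at each step, but that proposition requires the two curves on $\Sigma$ to meet transversely in a \emph{single} point. A parallel copy $\beta_j$ of $\beta$ in the annulus $\A$ meets $\alpha$---and hence $\gamma_{j-1}$, which still agrees with $\alpha$ near the unresolved crossings---in $n$ points, not one; resolving only one of these leaves $\gamma_j$ immersed rather than embedded on $\Sigma$, so neither the hypothesis of Proposition~\ref{prop: Sumofframed curves on a surface} nor the $\nu\gamma_j$-standard condition is available at the intermediate stages. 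The clean fix, which you already gesture at in your last paragraph, is to abandon the induction on curves in $\Sigma$ and instead perform all $n$ local replacements simultaneously: choose $n$ disjoint $4$-balls around the intersection points, note that the computation of Figure~\ref{fig: Asum of framed curve} is supported entirely in each ball and is independent of what happens in the others, and read off the $n$ band-sums with copies of $\overline\beta$ and the $n$ accumulated full twists directly. This is the same mechanism by which Remark~\ref{oss: framed loop} iterates Lemma~\ref{lem: gamma1 in terms of gamma0}: the iteration takes place among framed loops in $X\bslash\Sigma$, where there is room to keep everything embedded, rather than among curves on $\Sigma$.
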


The following is a useful proposition about the stabilization sets for surfaces with a cyclic fundamental group of the complement, with special attention to the trivial case.
We say that a compact connected surface $\Sigma\subseteq X$ is a \emph{characteristic surface} if, for any immersed, orientable, closed surface  $S\subseteq X$ transversal to $\Sigma$, the number of points of the intersection  $\Sigma\cap S$ equal the self intersection of $S$ modulo 2. Otherwise, we say that $\Sigma$ is an \emph{ordinary surface}. In the case of $X$ closed oriented and simply connected 4-manifold, $\Sigma$ is characteristic if and only if $[\Sigma]\in H_2(X;\Z)$ is a characteristic class, i.e. its modulo 2 reduction is the Poincare dual of the second Stiefel-Whitney class $w_2(X)\in H^2(X;\Z_2)$. For a non-connected, but still compact, surface $\Sigma\subseteq X$, we say that a component $C\subseteq\Sigma$ is a \emph{characteristic component} if $C$ is a characteristic surface in $X\bslash (\Sigma\bslash C)$. Otherwise, we say that $C$ is an \emph{ordinary component} of $\Sigma$.

\begin{prop}
    \label{prop: Propriety of S}
    Let $\Sigma\subseteq X$ be a surface in a simply connected 4-manifold $X$.
    \begin{enumerate}
          
        \item \label{prop: SS item 1}If $\pi_1(X\bslash \Sigma)$ is the cyclic group $\Z/k\Z$ for $k\in\N$ (possibly $k=0$) and $\gamma\subseteq \Sigma$ is a simple curve, then 
        $\overline{\Ss}(\Sigma,\gamma)\not=\emptyset$. 
        Moreover, if $f_0$ is a framing for $\gamma$ such that $\Sigma$ is $\nu\gamma$-standard, the small push off of $\gamma$ is nullhomotopic in $X\bslash\Sigma$ and $\Sigma$ is connected, then \[\overline{\Ss}(\Sigma,\gamma)=\bigcup_{n\in\Z} \Ss\left(\Sigma,(\gamma,f_{kn})\right).\]

         \item \label{prop: SS item 2} If $\Sigma\subseteq X$ has simply connected complement and $\gamma\subseteq \Sigma$ is a simple loop in an ordinary component of $\Sigma$, then $\overline{\Ss}(\Sigma,\gamma)=\{\Stab,\StabTwist\}$.
        \item  \label{prop: SS item 3} If $\Sigma\subseteq X$ has simply connected complement and $\gamma\subseteq \Sigma$ is a simple loop in a characteristic component of $\Sigma$, then $\overline{\Ss}(\Sigma,\gamma)$ has exactly one element. 
       \item \label{prop: SS item 4} If a simple loop $\gamma\subseteq\Sigma$ is nullhomotopic in $\Sigma$, then $\Stab\in\overline{\Ss}(\Sigma,\gamma)$. 
         \item \label{prop: SS item 5} If $\Sigma\subseteq X$ has simply connected complement and it has a $T^2$ factor, then there exists a simple non-separating loop $\gamma\subseteq \Sigma$ such that $\StabTwist\in\overline{\Ss}(\Sigma,\gamma)$.
    \end{enumerate}
\end{prop}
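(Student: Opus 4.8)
Throughout I abbreviate $M:=X\bslash\Sigma$ and, for a component $C$ of $\Sigma$, $Y:=X\bslash(\Sigma\bslash C)$; note that $Y$ is again simply connected when $M$ is, since filling the neighbourhood of $C$ back in kills $\pi_1$ (van Kampen: $1*_{\pi_1(\partial\nu C)}\pi_1(C)=1$ because $\pi_1(\partial\nu C)\to\pi_1(C)$ is onto). The plan is to read off each $\overline{\Ss}$ from the bookkeeping of Lemma \ref{lem: proprieties Ss} (non\-emptiness/fullness of $\Ss(\Sigma,(\gamma,f_n))$ via a nullhomotopic push\-off and an immersed odd sphere), Lemma \ref{lem: BarSs is union of Ss} ($\overline{\Ss}(\Sigma,\gamma)=\bigcup_{n}\Ss(\Sigma,(\gamma,f_n))$), the disk reformulation of Proposition \ref{prop: equivalent definition}, and the framed\-loop calculus of Lemmas \ref{lem: gamma1 in terms of gamma0}, \ref{lem: ConSumOfLoop and Disk} and Remark \ref{oss: framed loop}. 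Two reductions drive everything. First, by Construction \ref{constr: framings}, $[\overline{\gamma}(n)]=[\overline{\gamma}(0)][\mu]^n$ in $\pi_1(M)$ with $\mu$ the meridian of the component $C\ni\gamma$. Second, when $M$ is simply connected every $\overline{\gamma}(n)$ is nullhomotopic, so each $\Ss(\Sigma,(\gamma,f_n))$ is non\-empty, and combining Remark \ref{oss: framed loop} (which gives $\overline{\gamma}(n)=(\overline{\gamma}(0)\#n\mu)_{n\cdot op}$) with Lemma \ref{lem: ConSumOfLoop and Disk} applied to an immersed disk $D_\mu\subseteq M$ bounding $\mu$ yields $\overline{\gamma}(n)\simeq\overline{\gamma}(0)_{\,n(\delta+1)\cdot op}$ as framed loops in $M$, where $\delta\in\Z/2$ is $0$ if the framing of $\mu$ induced by a normal fibre disk $D$ of $\nu C$ is compatible with $D_\mu$ and $1$ otherwise; equivalently $\delta\equiv[D\cup D_\mu]^2\pmod 2$ (a well\-defined quantity when $M$ is spin, irrelevant otherwise).

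For item~\ref{prop: SS item 1}: when $\Sigma$ is connected, $\pi_1(M)$ is abelian and normally -- hence actually -- generated by $[\mu]$, so $[\overline{\gamma}(0)]=[\mu]^m$ for some $m$; then $\overline{\gamma}(-m)$ is nullhomotopic, $\Ss(\Sigma,(\gamma,f_{-m}))\neq\emptyset$ by Lemma \ref{lem: proprieties Ss}, and $\overline{\Ss}(\Sigma,\gamma)\neq\emptyset$ (the disconnected case is handled similarly, checking that the class of a push\-off of a curve on $C$ lies in $\langle[\mu]\rangle$). For the ``moreover'': if $[\overline{\gamma}(0)]=1$ and $\Sigma$ is connected then $[\mu]$ has order $k$ in $\Z/k\Z$, so $\overline{\gamma}(n)$ is nullhomotopic exactly when $k\mid n$; with Lemma \ref{lem: proprieties Ss} and Lemma \ref{lem: BarSs is union of Ss} this gives $\overline{\Ss}(\Sigma,\gamma)=\bigcup_{n:\,k\mid n}\Ss(\Sigma,(\gamma,f_n))=\bigcup_{m\in\Z}\Ss(\Sigma,(\gamma,f_{km}))$.

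Items~\ref{prop: SS item 2} and~\ref{prop: SS item 3} both follow from the identity $\overline{\gamma}(n)\simeq\overline{\gamma}(0)_{\,n(\delta+1)\cdot op}$ plus a Wu\-type translation of ``ordinary/characteristic''. If $M$ contains an immersed sphere of odd self\-intersection, Lemma \ref{lem: proprieties Ss} gives $\Ss(\Sigma,(\gamma,f_n))=\{\Stab,\StabTwist\}$ for all $n$ and we are done; so assume $M$ is spin, in which case $\delta$ is defined, the framing of a nullhomotopic loop induced by a bounding immersed disk is well\-defined, and each $\Ss(\Sigma,(\gamma,f_n))$ is a singleton. If $\delta=0$ then $\overline{\gamma}(1)\simeq\overline{\gamma}(0)_{op}$, so $\Ss(\Sigma,(\gamma,f_0))$ and $\Ss(\Sigma,(\gamma,f_1))$ are the two distinct singletons and $\overline{\Ss}(\Sigma,\gamma)=\{\Stab,\StabTwist\}$; if $\delta=1$ then $\overline{\gamma}(n)\simeq\overline{\gamma}(0)$ for all $n$ and $\overline{\Ss}(\Sigma,\gamma)$ has one element. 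Finally I match these cases with the hypotheses: using Wu's formula and Lefschetz duality on the simply connected $Y$, ``$C$ characteristic'' is equivalent to $w_2(Y)=PD_Y[C]$ in $H^2(Y;\Z/2)$, while the exact sequence of $(Y,M)$ identifies $\ker\!\big(H^2(Y;\Z/2)\to H^2(M;\Z/2)\big)=\{0,\,PD_Y[C]\}$ (the Thom class of $\nu C$). Since $w_2(M)=w_2(Y)|_M$ and $PD_Y[C]|_M=0$, ``$C$ characteristic'' forces $w_2(M)=0$ and, evaluating Wu on the sphere $D\cup D_\mu\subseteq Y$ (which meets $C$ once), $\delta\equiv\langle w_2(Y),[D\cup D_\mu]\rangle=\langle PD_Y[C],[D\cup D_\mu]\rangle=1$ -- proving item~\ref{prop: SS item 3}. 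Dually, if $C$ is ordinary then either $w_2(M)\neq0$ (an odd sphere in $M$, done), or $w_2(Y)-PD_Y[C]$ is a non\-zero element of the kernel above, hence equals $PD_Y[C]$, so $w_2(Y)=0$ and $\delta\equiv\langle w_2(Y),[D\cup D_\mu]\rangle=0$; either way $\overline{\Ss}(\Sigma,\gamma)=\{\Stab,\StabTwist\}$, proving item~\ref{prop: SS item 2}.

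For item~\ref{prop: SS item 4}: a simple loop nullhomotopic in $\Sigma$ bounds an embedded disk $D_\Sigma\subseteq\Sigma$; taking a $\nu\gamma$\-standard framing whose tangential normal vector points into $D_\Sigma$, the push\-off $\overline{\gamma}$ bounds the parallel copy $D'$ of $D_\Sigma$ displaced off $\Sigma$ in a normal direction, an embedded disk in $M$ inducing on $\overline{\gamma}$ exactly its chosen framing, so $\Stab\in\Ss(\Sigma,(\gamma,f))\subseteq\overline{\Ss}(\Sigma,\gamma)$ by Proposition \ref{prop: equivalent definition}. For item~\ref{prop: SS item 5}: if $\Sigma$ has an ordinary component we are done by item~\ref{prop: SS item 2}, so assume $\Sigma$ is characteristic; then $M$ is spin, by item~\ref{prop: SS item 3} each $\overline{\Ss}(\Sigma,\delta)$ is a singleton, and we put $\theta(\delta)\in\Z/2$ equal to $0$ or $1$ according as it is $\{\Stab\}$ or $\{\StabTwist\}$, i.e. $\theta$ is the compatibility defect between the $\nu\delta$\-standard framing of $\overline{\delta}$ and a bounding immersed disk in $M$; this defect is additive under band\-sum of framed push\-offs. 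Choosing curves $a,b$ on a $T^2$\-summand with $a\cdot b=1$, the curves $a$, $b$ and $c:=a+b$ (the resolution of their intersection) are all non\-separating, and Proposition \ref{prop: Sumofframed curves on a surface} gives $\overline{c}\simeq(\overline{a}\#\overline{b})_{op}$, whence $\theta(c)=\theta(a)+\theta(b)+1$; so $\theta$ cannot vanish on all three and one of these non\-separating curves has $\StabTwist\in\overline{\Ss}(\Sigma,\cdot)$. The main obstacle is this Wu\-type translation in items~\ref{prop: SS item 2}--\ref{prop: SS item 3}: correctly pairing the homological notions ``$C$ ordinary/characteristic'' with the invariants $w_2(M)$ and $\delta$; once that is in place, everything reduces to the framed\-loop identities already available.
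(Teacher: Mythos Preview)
Your argument is correct and, for items~\ref{prop: SS item 1} and~\ref{prop: SS item 4}, matches the paper's proof essentially verbatim. For item~\ref{prop: SS item 5} the paper proceeds slightly more directly: it does not reduce to the characteristic case first but works immediately on the $T^2$ factor, picking $\alpha,\beta$ with $\alpha\cdot\beta=1$ and observing that if $\StabTwist\notin\Ss(\Sigma,\alpha)\cup\Ss(\Sigma,\beta)$ then $\gamma:=\alpha+\beta$ works via Proposition~\ref{prop: Sumofframed curves on a surface} and Lemma~\ref{lem: ConSumOfLoop and Disk}. Your route is equivalent (your identity $\theta(c)=\theta(a)+\theta(b)+1$ is the same computation), though your reduction ``assume $\Sigma$ is characteristic'' should really read ``assume the component containing the $T^2$ factor is characteristic''.

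The genuine divergence is in items~\ref{prop: SS item 2}--\ref{prop: SS item 3}. The paper invokes Proposition~\ref{lem: geometricDualSphere} directly, citing \cite{Auchly1stb} for the existence of a geometrically dual immersed $2$-sphere of even (resp.\ odd) self-intersection when the component is ordinary (resp.\ characteristic); for item~\ref{prop: SS item 3} it then argues by contradiction via Lemma~\ref{lem: proprieties Ss}(\ref{lem: SS item 3}). You instead establish the framed-loop identity $\overline{\gamma}(n)\simeq\overline{\gamma}(0)_{\,n(\delta+1)\cdot op}$ from scratch using Remark~\ref{oss: framed loop} and Lemma~\ref{lem: ConSumOfLoop and Disk}, and then identify $\delta$ with $\langle w_2(Y),[D\cup D_\mu]\rangle$ through a Wu-type computation and the exact sequence of $(Y,M)$. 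Both routes yield the same dichotomy; the paper's is shorter because it packages the framing calculus into Proposition~\ref{lem: geometricDualSphere} and outsources the ordinary/characteristic translation to existing literature, whereas yours is self-contained at the cost of the cohomological bookkeeping on the (possibly non-compact) manifold $Y=X\bslash(\Sigma\bslash C)$.
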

\begin{proof} Point \ref{prop: SS item 1}.
Let $\pi_1(X\bslash \Sigma)$ be a cyclic group and $\gamma\subseteq \Sigma$ be a simple curve. Fix a framing $f_0$ of $\gamma$ such that $\Sigma$ is $\nu\gamma$-standard. Consider $f_n$ defined as in equation (\ref{eq: framings nucompatible}). Since $\pi_1(X\bslash \Sigma)$ is cyclic and generated by the meridian $\mu$ equation (\ref{eq: pi1 framings}) implies that there exists $n\in\Z$ such that $\overline{\gamma}(n)$ is nullhomotopic in $X\bslash\Sigma$. By Lemma  \ref{lem: proprieties Ss} the stabilization set $\Ss(\Sigma,(\gamma,f_n))$ is not empty, and so it is  $\overline{\Ss}(\Sigma,\gamma)$.\\
Assuming now that $f_0$ has a small push off that is nullhomotopic in the complement of $\Sigma$, by equation (\ref{eq: framings nucompatible}) we have that $\overline{\gamma}(n)$ is nullhomotopic in $X\bslash\Sigma$ if and only if $n$ is divisible by $k$. It follows that 
\[\overline{\Ss}(\Sigma,\gamma)=\bigcup_{n\in\Z} \Ss\left(\Sigma,(\gamma,f_{n})\right)=\bigcup_{n\in\Z} \Ss\left(\Sigma,(\gamma,f_{kn})\right)\]
by Lemma \ref{lem: BarSs is union of Ss} and Lemma  \ref{lem: proprieties Ss}.

Point \ref{prop: SS item 2}.
Under our assumption $\Sigma$ has a geometrically dual immersed 2-sphere of even self intersection, which intersect $\Sigma$ in the component containing the loop $\gamma$ (see the proof of \cite{Auchly1stb} for more details on this fact).
We can apply Proposition \ref{lem: geometricDualSphere} to conclude that $\overline{\Ss}(\Sigma,\gamma)=\{\Stab,\StabTwist\}$.

Point \ref{prop: SS item 3}.
Since $X\bslash\Sigma$ is simply connected, we have $\overline{\Ss}(\Sigma,\gamma)$ is not empty by Proposition \ref{prop: Propriety of S} point \ref{prop: SS item 1}, moreover, there exists a framing $f_0$ of $\gamma$ such that $\overline{\Ss}(\Sigma,\gamma)=\bigcup_{n\in\Z}\Ss(\Sigma,(\gamma,f_n))$. Our hypothesis imply that $\Sigma$ has a geometrically dual immersed 2-sphere of odd self intersection, which intersect $\Sigma$ in the component containing the loop $\gamma$.  By Proposition \ref{lem: geometricDualSphere} we get $\overline{\Ss}(\Sigma,\gamma)=\Ss(\Sigma,(\gamma,f_0))$.  Assuming by contradiction that $\overline{\Ss}(\Sigma,\gamma)$ has not exactly one element, it should have two. By Lemma \ref{lem: proprieties Ss} point \ref{prop: SS item 3},
there exists an immersed 2-sphere $A\subseteq X\bslash\nu\Sigma$ with odd self intersection, which is in contradiction with our assumption! Therefore,  the stabilization set $\overline{\Ss}(\Sigma,\gamma)$ has exactly one element.

Point \ref{prop: SS item 4}.
The curve $\gamma$ bounds a 2-disk $D\subseteq\Sigma$, fix a trivialization for a tubular neighborhood of $D$. We can equip $\gamma$ with a framing $f$ compatible with $D$ such that the surface $\Sigma$ is $\nu\gamma$-standard.
The small push off $\overline{\gamma}$ bounds a 2-disk $\overline{D}$ that is a push off of $D$, and the framing of $\overline{\gamma}$  is compatible with $\overline{D}$. It follows that $\Stab\in \Ss(\Sigma,(\gamma,f))\subseteq\overline{\Ss}(\Sigma,\gamma)$.

Point \ref{prop: SS item 5}.
Pick two curves $\alpha$ and $\beta$ on the $T^2$ factor of $\Sigma$ that intersect each other transversely in one point. There exist framings for $\alpha$ and $\beta$ such that the small push off $\overline{\alpha}$ and $\overline{\beta}$ are nullhomotopic in $X\bslash\Sigma$. Therefore, the corresponding  stabilizing sets are not empty. If the manifold $\StabTwist$ is in $\Ss(\Sigma, \alpha)$ or in $\Ss(\Sigma,\beta)$, then we can conclude picking $\gamma:=\alpha$ or $\gamma:=\beta$. Otherwise $\{\Stab\}=\Ss(\Sigma,\alpha)\cup\Ss(\Sigma,\beta)$, in this case pick the framed curve $\gamma:=\alpha+\beta\subseteq \Sigma$ as in Proposition \ref{prop: Sumofframed curves on a surface}, and we get that $\overline{\gamma}$ is isotopic to a framed curve $(\overline{\alpha}\#\overline{\beta})_{op}=\bar\alpha\#(\bar\beta_{op})$. The loop $\bar\alpha$ bounds an embedded 2-disk  $D_\alpha$ in $X\bslash\Sigma$ and the framing of $\alpha$ is compatible with $D_\alpha$. Similarly, $\beta$ has a framing compatible with a 2-disk $D_\beta\subseteq X\bslash\Sigma$. By Lemma \ref{lem: ConSumOfLoop and Disk} the framed curve $\overline{\gamma}$ is isotopic to a framed curve $\overline{\beta}_{op}$. It follows that $\StabTwist\in \Ss(\Sigma,\gamma)$ by Proposition \ref{prop: equivalent definition}.
\end{proof}

\begin{oss}\label{rem: Rocklin}
Assume that $\Sigma$ is a closed, oriented, connected surface in a closed 4-manifold $X$. Assume also that $\Sigma$ is a characteristic surface.
    We have to point out an interplay between the extended stabilization set $\overline{\Ss}(\Sigma,\gamma)$ and the Rokhlin quadratic form $q:H_1(\Sigma;\Z)\to\Z_2$ defined in \cite{rokhlin1972proof}. 
    One can check that the following implication holds: if $\Stab\in \overline{\Ss}(\Sigma,\gamma)$, then $q([\gamma])=0$. 
    The inverse implication in general is false, since it possible that  $q([\gamma])=0$ and $\overline{\Ss}(\Sigma,\gamma)=\emptyset$. 
\end{oss}

\section{Application: external stabilization of surfaces\label{sec: application}}
Theorem \ref{mainthm: int to ext} gives us a way to use what we know about internal stabilization to prove results about external stabilization.
 \cite[Section 3]{InternalStabilizationBaykurSunukian} explicitly describes isotopies between exotically knotted surfaces, from the literature, after one internal stabilization. These isotopies are compositions of "elementary" isotopies of two types.
 The first type is given by the repositioning of a stabilization arc through the manifold. The second isotopies are local in nature and are performed in a well-chosen neighborhood of the surfaces, in particular some of them are described in \cite[Proposition 9]{InternalStabilizationBaykurSunukian}.
 If we want to apply Theorem \ref{mainthm: int to ext}, we must first show that these surfaces are smoothly isotopic after one internal stabilization relative to the neighborhood of a well-chosen loop $\gamma$, i.e. internally $\gamma$-stably isotopic. Therefore, we need to check that any elementary isotopy can be chosen to fix ${\nu_{1/2}\gamma}$. As one can imagine, it is not always true that the first type of isotopy can be chosen to be relative to $\nu_{1/2}\gamma$. Instead, the second type of isotopy is not a big problem in the right framework and with a suitable choice of $\gamma$, as we will see.


The following proposition will be very useful to repositioning cords  without moving a loop $\gamma\subseteq\Sigma$.
\begin{prop}\label{Prop repositioning arc}
    Let $\Sigma\subseteq X$ an oriented $\nu\gamma$-standard surface. Consider two orientation preserving internal stabilization of $\Sigma$ away from $\gamma$
    defined by 3-dimensional handles $h_i$ with core arc $\alpha_i$ for $i=1,2$.
    If $X$ is simply connected,  $\pi_1(X\bslash\Sigma)$ is cyclic generated a meridian of $\Sigma$ over a point of $\partial\alpha_1$ and the sets $\partial \alpha_1 $ and $\partial\alpha_2$ are homotopic in $\Sigma\bslash\gamma$, then the surfaces defined as internal stabilizations of $\Sigma$ by the 3-dimensional handles $h_1$ and $h_2$ are smoothly isotopic in $X$ relative to $\nu_{1/2}\gamma\cup \partial X$.
\end{prop}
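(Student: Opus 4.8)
The plan is to reduce the statement to a standard fact about repositioning stabilization arcs, namely \cite[Lemma 3]{InternalStabilizationBaykurSunukian}, while carefully checking that the repositioning can be carried out away from $\nu_{1/2}\gamma$. First I would recall that an internal stabilization of $\Sigma$ depends, up to smooth isotopy, only on the framed arc $\alpha_i$ together with a framing of its normal bundle; since both stabilizations are orientation preserving and the ambient manifold is oriented, the normal framing is determined once the arc and a path component of $\Sigma \setminus \partial\alpha_i$ are fixed, so we only need to isotope $\alpha_1$ to $\alpha_2$ rel endpoints (after first isotoping $\partial\alpha_1$ to $\partial\alpha_2$ inside $\Sigma\setminus\gamma$, which is possible by the hypothesis that these point-pairs are homotopic in $\Sigma\setminus\gamma$ — here one uses that $\Sigma\setminus\gamma$ is connected, or runs the argument componentwise).

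**Main step.** The key point is that two arcs $\alpha_1,\alpha_2$ in a $4$-manifold with the same endpoints are always smoothly isotopic rel endpoints (arcs in dimension $4$ are unknotted), but an \emph{ambient} isotopy carrying one to the other, and carrying the chosen normal framings to each other, need not exist: the obstruction is exactly the relative homotopy/homology class in $\pi_1$ of the complement and the framing discrepancy. This is where the hypotheses enter. Simple connectivity of $X$ kills the homotopy obstruction to moving $\alpha_1$ onto $\alpha_2$ ambiently through the complement of $\Sigma$ up to adding loops; the hypothesis that $\pi_1(X\setminus\Sigma)$ is cyclic, generated by a meridian over a point of $\partial\alpha_1$, means that any such extra loop is a power of that meridian, and one absorbs meridional corrections by sliding the arc through a fiber disk of $\nu\Sigma$ near $\partial\alpha_1$ — an operation supported near the surface and disjoint from $\gamma$ since $\partial\alpha_1\subseteq\Sigma\setminus\gamma$. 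The framing discrepancy between the two arcs (a full twist, i.e. the generator of $\pi_1(SO(3))$) is absorbed the same way, by the standard trick of pushing a small kink of the arc across a meridian disk; again this is local near $\Sigma\setminus\gamma$.

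**Assembling and the obstacle.** Having arranged that $\alpha_1$ is ambiently isotopic to $\alpha_2$ with matching framings through an isotopy supported in $X\setminus(\nu_{1/2}\gamma\cup\partial X)$, extend this ambient isotopy over the attached $3$-dimensional $1$-handle to get the desired smooth isotopy of the stabilized surfaces, rel $\nu_{1/2}\gamma\cup\partial X$. The one subtlety to state carefully is the rel-boundary clause: since $\partial\alpha_1$ avoids both $\gamma$ and $\partial X$, the meridian-disk slides and kink-corrections can be kept in a collar-free region, and \cite[Lemma 3]{InternalStabilizationBaykurSunukian} already provides the repositioning of the arc rel the boundary once these algebraic obstructions vanish, so one may simply invoke it. I expect the main obstacle to be the bookkeeping that every correction move (meridional slide, framing kink) is genuinely supported in the complement of $\nu_{1/2}\gamma$: this follows because all such moves happen in a neighborhood of $\Sigma\setminus\gamma$ together with the arc $\alpha_1$, which by the hypothesis that $\partial\alpha_1,\partial\alpha_2$ are homotopic in $\Sigma\setminus\gamma$ (and that the handles are attached away from $\gamma$, so the $\alpha_i$ miss $\nu_{1/2}\gamma$) can be taken disjoint from $\nu_{1/2}\gamma$ — but one should point to the precise neighborhoods, and this is the step where a reader would want the details spelled out rather than asserted.
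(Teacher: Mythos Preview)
Your proposal is correct and follows essentially the same approach as the paper: reduce to the homotopy class of the cords (the paper cites \cite{Boyle_1988} for this, while you spell out the framing bookkeeping), match endpoints inside $\Sigma\setminus\gamma$, observe that the remaining obstruction is a power of the meridian since $\pi_1(X\setminus\Sigma)$ is cyclic, and absorb it by composing with meridians near $\partial\alpha_1$ as in \cite[Lemma 3]{InternalStabilizationBaykurSunukian}, all supported away from $\nu_{1/2}\gamma$. The paper's write-up is terser but the argument is the same.
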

\begin{proof}
Since the internal stabilization is determined, up to isotopy, by the homotopy class of the core cords (see \cite[Section 2]{Boyle_1988}), it will be sufficient to show the following claim: the cords $\alpha_1 $ and $\alpha_2$ are homotopic in $X\bslash(\nu_{1/2}\gamma)$ keeping the boundary of the arcs on the surface $\Sigma\bslash\gamma$. 
To start, by our hypothesis, we can position $\alpha_1$ and $\alpha_2$ to make them share the endpoints on $\Sigma$.

We can \emph{align} the cords $\alpha_1$ and $\alpha_2$ such that they coincide in a small tubular neighborhood of $\Sigma$, see figure \ref{fig: stabilizationRelGamma}. It is defined a loop $l:=(\alpha_1\cup\alpha_2)\bslash\nu\Sigma$ and our aim should be to prove that this loop is nullhomotopic in $X\bslash\Sigma$. But this in not always the case, the loop $l$ represents an element in the fundamental group $\pi_1(X\bslash\Sigma)$ that by assumption is cyclic and generated by a meridian $\mu$ over an endpoint of the arc $\alpha_1$. So a priory the class of the loop $[l]$ is $[\mu]^d$ for some $d\in\Z$. The problem lies in the way in which we had \emph{aligned} the cords $\alpha_1$ and $\alpha_2$. As explained in the proof of \cite[Lemma 3]{InternalStabilizationBaykurSunukian} we can compose any of the two cords with a meridian over the points $\partial\alpha_1$, this will change the homotopy class of $l$. Therefore, we can assume the homotopy class $[l]$ to be trivial after performing $d$ times this operation.
\end{proof}
\begin{figure}
    \centering
    \includegraphics[width=0.3\textwidth]{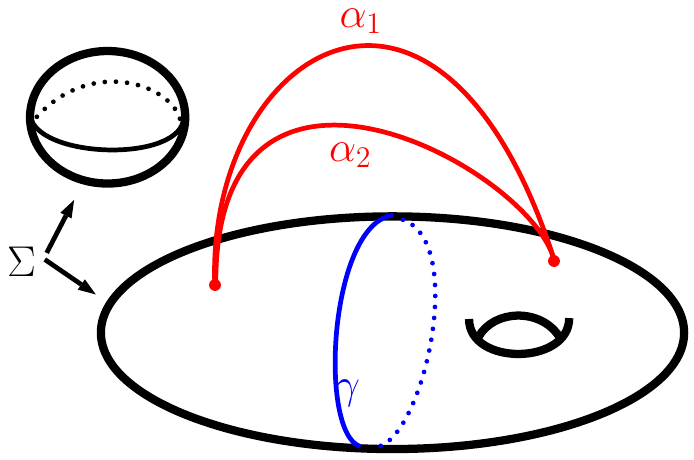}
    \caption{
    \label{fig: stabilizationRelGamma}}
\end{figure}
\subsection{Rim-Surgery\label{sec: Rim Surgery}}
In this section we will show that any surface $\Sigma_{K,\beta}\subseteq X$ produced via rim-surgery \cite{Fintushel1997SurfacesI4} from $\Sigma\subseteq X$ becomes smoothly isotopic to $\Sigma$ in $X\# (\Stab)$ or in $X\# (\StabTwist)$.
Let us first recall the rim-surgery operation. 
\begin{constr}[Rim-surgery \cite{Fintushel1997SurfacesI4}]\label{const: Rim surgery}
    Let $X$ be a 4-manifold, fix a framed loop $\beta\subseteq X$ with a framing $f:\nu\beta\to S^1\times D^3$ and consider a $\nu\beta$-standard surface $\Sigma\subseteq X$, i.e.
\[\Sigma=(\Sigma\bslash\nu\beta)\cup \left(f^{-1}\left(S^1\times D^1\right)\right)\subseteq X.\]
For any oriented knot $K\subseteq S^3$ we define a surface $\Sigma_{K,\beta}\subseteq X$ by setting it to be equal to $\Sigma$ outside $\nu\beta$ and equal to $S^1\times (D^1\# K)\subseteq S^1\times D^3\cong{\nu\beta}$,
\begin{equation}
    \Sigma_{K,\beta}=\Sigma_{K,\beta}(f):=(\Sigma\bslash\nu\beta)\cup \left(f^{-1}\left(S^1\times(D^1\# K)\right)\right)\subseteq X,
\end{equation}where $(D^1\# K)$ denotes the arc in $D^3$ knotted with the knot $K$.
\end{constr} Notice that the surface $\Sigma_{K,\beta}$  can be defined starting from any ${\nu\beta}$-standard surface $\Sigma$, and it is not necessary to ask for connectedness, compactness or orientability.

\begin{oss}
    If $\beta$ bounds a 2-disk in $\Sigma$, then there exists a framing $f_0$ of $\beta$ such that $\Sigma_{K,\beta}\subseteq X$ is smoothly isotopic to $\Sigma\# K^*\subseteq X\# S^4$, where $K^*\subseteq S^4$ is the spun of the knot $K$. The framing $f_n$ produces $\Sigma\# K^*_n$, where $K^*_n$ is the $n$-twist spun of the knot $K$ \cite{zeeman1965twisting}. 
\end{oss}

We would like to compare the surfaces $\Sigma,\Sigma_{K,\beta}\subseteq X$. They share the same homology class by construction and if $X\bslash\Sigma$ is simply connected, then also $X\bslash \Sigma_{K,\beta}$ is. The surfaces $\Sigma,\Sigma_{K,\beta}\subseteq X$ can form an exotic pair, as explained in \cite{Fintushel1997SurfacesI4,fintushel2005surfacesAddendum,mark2013knotted}.

\begin{prop} \label{Prop Rim-Surgery internal}
Let $\Sigma,\Sigma_{K,\beta}\subseteq X$ be defined as in Construction \ref{const: Rim surgery}. Furthermore, assume that $\Sigma$ is an orientable surface. 
There exists a framed loop $\gamma$ which is a parallel copy of $\beta$ in $\Sigma$ such that the surfaces $\Sigma,\Sigma_{K,\beta}\subseteq X$ are ${\nu\gamma}$-standard.
  If  $\pi_1(X\bslash \Sigma)$ is trivial, then the surfaces $\Sigma,\Sigma_{K,\beta}\subseteq X$ are internally $\gamma$-stably isotopic rel. boundary.
\end{prop}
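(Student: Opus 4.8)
The plan is to reduce the claim to the known internal stabilization result for rim-surgered surfaces from \cite[Section 3.1]{InternalStabilizationBaykurSunukian}, upgraded to the ``relative to a neighborhood of $\gamma$'' setting using Proposition \ref{Prop repositioning arc}. First I would choose the framed loop $\gamma$: take $\gamma$ to be a parallel copy of $\beta$ inside $\Sigma$, pushed off slightly within $\Sigma$ away from $\nu\beta$, and give it the framing inherited from the product structure $\nu\beta\cong S^1\times D^3$ restricted near $\gamma$ (so the first normal vector is tangent to $\Sigma$, the other two are the normal framing of $\Sigma$). Since $\Sigma$ and $\Sigma_{K,\beta}$ agree outside $\nu\beta$, and $\gamma$ together with a neighborhood $\nu\gamma$ can be arranged to lie in $\Sigma\smallsetminus\nu\beta$, both surfaces are genuinely $\nu\gamma$-standard and coincide on $\nu\gamma$. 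This establishes the first sentence of the proposition.

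For the second sentence, the strategy is: (1) recall that \cite[Section 3.1]{InternalStabilizationBaykurSunukian} shows $\Sigma$ and $\Sigma_{K,\beta}$ become smoothly isotopic after one internal stabilization, and that this stabilization arc can be taken in a collar of a Seifert-type surface for $K$, hence away from $\beta$ (and thus away from $\gamma$); (2) examine that isotopy and decompose it into the ``elementary'' moves described in \cite[Section 3]{InternalStabilizationBaykurSunukian}: local moves near the surface (of the type in \cite[Proposition 9]{InternalStabilizationBaykurSunukian}) and repositionings of the stabilization cord. The local moves happen inside $\nu\beta$, which is disjoint from $\nu_{1/2}\gamma$, so they cause no trouble. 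The cord-repositioning moves are exactly what Proposition \ref{Prop repositioning arc} is designed to handle: since $\pi_1(X\smallsetminus\Sigma)$ is trivial (hence cyclic, in fact trivially generated by any meridian) and $X$ is simply connected — note $X$ is simply connected here because $\Sigma$ has simply connected complement and $\Sigma\neq\emptyset$ — any two cords with homotopic boundary data in $\Sigma\smallsetminus\gamma$ give isotopic stabilized surfaces rel $\nu_{1/2}\gamma\cup\partial X$. Combining these, $\Sigma$ and $\Sigma_{K,\beta}$ are internally $\gamma$-stably isotopic rel. boundary.

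The main obstacle I expect is bookkeeping the ``rel $\nu_{1/2}\gamma$'' condition through the internal isotopy of \cite{InternalStabilizationBaykurSunukian}: one must verify that \emph{all} the elementary moves in their argument can be localized away from $\nu_{1/2}\gamma$, and in particular that the boundary endpoints of the stabilization cords at each stage can be kept in $\Sigma\smallsetminus\gamma$ rather than being forced to pass through $\gamma$. Because $\gamma$ is just one circle on $\Sigma$ disjoint from $\beta$, and the cord endpoints for the rim-surgery stabilization naturally sit near $\beta$, there should be enough room on $\Sigma\smallsetminus\gamma$ to do this; the repositioning Proposition \ref{Prop repositioning arc} then supplies the isotopy rel $\nu_{1/2}\gamma$. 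A secondary point to check is that the isotopy can indeed be made relative to $\partial X$ (if $X$ has boundary), but this is inherited directly from the rel-boundary version in \cite{InternalStabilizationBaykurSunukian} and from the rel-boundary clause in Proposition \ref{Prop repositioning arc}.
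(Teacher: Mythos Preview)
Your proposal has a genuine gap: it only works when $\beta$ is non-separating in $\Sigma$, which the proposition does not assume. Indeed, the paper's own Remark immediately following this proposition says exactly that the direct adaptation of \cite[Section~3.1]{InternalStabilizationBaykurSunukian} yields the result only under the extra hypothesis that $\beta$ is non-separating, and that the paper's proof is written precisely to remove this restriction.

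The concrete failure is in your placement of $\gamma$ \emph{outside} $\nu\beta$. The trivial internal stabilization \emph{across} $\gamma$ has its two endpoints on opposite sides of $\gamma$ in $\Sigma$, whereas every cord appearing in the Baykur--Sunukjian argument has both endpoints inside $\nu\beta$, hence on the same side of $\gamma$. If $\beta$ (and thus $\gamma$) separates $\Sigma$, these two endpoint sets are \emph{not} homotopic in $\Sigma\setminus\gamma$, so Proposition~\ref{Prop repositioning arc} does not apply and you cannot pass from the across-$\gamma$ stabilization to the Baykur--Sunukjian cord. Your sentence ``there should be enough room on $\Sigma\smallsetminus\gamma$'' is exactly where this breaks: it is not a question of room but of $\pi_0(\Sigma\setminus\gamma)$.

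The paper solves this by placing $\gamma$ \emph{inside} $\nu\beta$ (at $S^1\times\{3/4\}$), then enlarging the local $3$-disk $D$ to contain both the crossing region and a strand on the far side of $\gamma$, so that the working cords always straddle $\gamma$ just as the across-$\gamma$ cord does. This forces a second issue you did not anticipate: after one application of \cite[Proposition~9]{InternalStabilizationBaykurSunukian} the underlying surface is no longer one of the $\Sigma_i$ but a disconnected surface $S=T\sqcup U$ with a torus component $U$, and Proposition~\ref{Prop repositioning arc} requires $\pi_1(X\setminus S)$ to be cyclic. The paper verifies this explicitly using the unknotting-sequence setup; your appeal to ``$\pi_1(X\setminus\Sigma)$ is trivial'' does not cover it, since $S\neq\Sigma$.
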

\begin{oss}
       The argument in \cite[Section 3.1]{InternalStabilizationBaykurSunukian} gives us the statement under the additional assumption that $\beta$ is a non-separating loop in $\Sigma$ (and thus $\Sigma$ has positive genus). The proof of Proposition \ref{Prop Rim-Surgery internal} follows and shares the same ideas, but it removes this restriction. 
\end{oss}
\begin{proof}
    During this proof we will identify $S^1\times D^3$ with ${\nu\beta}\subseteq X$ by the framing of $\beta$.
    Pick a plane diagram for the knotted arc $A_0=D^1\# K\subseteq D^3$ and orient it starting from the base point $1\in \partial D^1\subset D^1\subseteq D^3$. We can assume also that the connected sum is performed inside $\frac{1}{2} D^3\subseteq D^3$. Starting from the base point of $A_0$ and following the orientation enumerate the crossing $c_1,\dots, c_n$ and define inductively the \emph{unknotting sequence} $A_0,\dots,A_n$ of the arc $A_0$ as follows. 
    Starting from the base point and following the orientation,  stop when you are entering the crossing region $c_i$ for the first time. If you stop on the overcrossing arc, then define $A_i=A_{i-1}$.
    Otherwise, define $A_i$ to be the arc $A_{i-1}$ with a crossing change at $c_i$, see Figure \ref{fig: crossings}. Notice that following the orientation on the arc $A_n$ we enter each new crossing region always from the overcrossing arc. It follows that $A_n$ smoothly isotopic to the unknotted arc $D^1\subseteq D^3$ relative to $D^3\bslash (\frac{1}{2}D^3)$, see for example \cite[Exercise 2.B.11]{Rolfsen_2004}).
    Consider the surfaces $\Sigma_0,\dots,\Sigma_n$ in $X$ that differ only in the interior of ${\nu\beta}$ for which $\Sigma_0=\Sigma_{K,\beta}$ and
    \[\forall i=1,\dots,n\qquad({\nu \beta},{\nu \beta}\cap\Sigma_i)\cong (S^1\times D^3, S^1\times A_i).\]
    Notice that $\Sigma_n,\Sigma\subseteq X$ are smoothly isotopic relative to the complement of ${\nu_{1/2}\beta}\cong S^1\times (\frac{1}{2}D^3)$.\\
    
    Take $\gamma:=S^1\times \{3/4\}\subseteq S^1\times D^1\subset S^1\times D^3\cong {\nu\beta}\subseteq X$ and  ${\nu\gamma}:=S^1\times (\frac{1}{8}D^3+\frac{3}{4}e_1)
    $ with the canonical framing. Notice that $\Sigma_i$, $\Sigma$ and $\Sigma_{K,\beta}$ are ${\nu\gamma}$-standard.
    We would like to prove that $\Sigma_{i-1},\Sigma_{i}\subseteq X$ are internally $\gamma$-stably isotopic, i.e. they become smoothly isotopic relative to ${\nu_{1/2}\gamma}$ after a trivial internal stabilization across $\gamma$.
    
    If $\Sigma_{i-1}=\Sigma_{i}$ then there is nothing to prove, otherwise the crossing $c_i$  as in Figure \ref{fig: undecrossing} has been replaced with the one in \ref{fig: Overcrossing}. We can connect a small 3-disk around  the crossing $c_i$ with an arc passing under the diagram of the knot to a small 3-ball centered in a point in the interval $(7/8, 1)\subseteq D^1 \subseteq D^3$, moreover this arc can be chosen to be disjoint from ${\nu\gamma}$. Therefore, there exists a 3-disk $D$ within the arcs $A_{i-1}$ and $A_i$ are positioned as in Figure \ref{fig: Goodpos1} and Figure \ref{fig: Goodpos6} respectively. This gives us a local representation of the surfaces $\Sigma_{i-1}$ and $\Sigma_i$ in $S^1\times D\subseteq S^1\times D^3\cong{\nu\beta}$.
A trivial internal stabilization across $\gamma$ to $\Sigma_{i-1}$ gives us the internal stabilization along the dotted red cord in Figure \ref{fig: Goodpos1}, indeed we can move the stabilization arc by Proposition \ref{Prop repositioning arc} since $\pi_1(X\bslash\Sigma_{i-1})$ is trivial. The same holds for the surface $\Sigma_i$ and the red arc described in Figure \ref{fig: Goodpos6}.
    \\
    We now perform a collection of isotopies, relative to ${\nu_{1/2}\gamma}$, that changes the  surfaces only in $S^1\times D$ as indicated in Figure \ref{fig: GoodPos}, where the $S^1$ factor is suppressed. We can use \cite[Proposition 9]{InternalStabilizationBaykurSunukian} to move between Figures \ref{fig: Goodpos1}-\ref{fig: Goodpos2}, \ref{fig: Goodpos3}-\ref{fig: Goodpos4} and  \ref{fig: Goodpos5}-\ref{fig: Goodpos6}. It is left to describe the isotopy between Figures \ref{fig: Goodpos2}-\ref{fig: Goodpos3} and  \ref{fig: Goodpos4}-\ref{fig: Goodpos5}. The argument is similar in the two cases, so we will focus on the first one. To move the cord $\alpha_1$ to the cord $\alpha_2$ we would like to use Proposition \ref{Prop repositioning arc}, so we have to verify the assumptions. So, consider the surface $S$ which is equal to $ \Sigma_i$ outside $S^1\times D$ and intersect it in $S^1\times (t_1 \cup t_2 \cup u )\subseteq S^1\times D\subseteq S^1\times D^3\cong{\nu\beta}$. Notice that $S$ can be written as two disjoint surfaces $T\sqcup U$,   where $U$ is the connected component that contains $S^1\times u$ and $T$ is the union of all other components. By construction $S^1\times t_1$ and $S^1\times t_2$ are in the same connected component of $T$, since we are under the assumption of first entering the crossing region of $c_i$ from the overcrossing arc. Observe that $\gamma\subseteq U$ and that $U=S^1\times \Tilde{u}\subseteq S^1\times D^3$, where $\Tilde{u}$ is an oriented loop in $D^3$. We can see that $\Tilde{u}$ is an unknotted loop in $D^3$. Indeed, $\Tilde{u}$ has a planar diagram for which, following the orientation, we enter each new crossing region from the overcrossing arc, here we are using the definition of the arc $A_{i-1}$ and our construction of $D$. Therefore, $U$ is a smoothly unknotted 2-torus and the fundamental group of its complement is isomorphic to the integers $\Z$. Since a meridian  of $\Sigma$ is nullhomotopic ($\pi_1(X\bslash\Sigma)=1$), it bounds an immersed 2-disk that we can assume to be disjoint from $\nu\beta$, it follows that also any meridian of $T$ is nullhomotopic in $X\bslash (T\sqcup U)=X\bslash S$. We have proved that the fundamental group of the complement of $S$ is infinite cyclic, so we are under the assumption of Proposition \ref{Prop repositioning arc} and we can reposition the cord $\alpha_1$ to the cord $\alpha_2$ with a smooth isotopy relative to ${\nu_{1/2}\gamma}$.\\
    This proves that $\Sigma_{i-1},\Sigma_i\subseteq X$ are internally $\gamma$-stably isotopic, it implies that $\Sigma_{K,\beta},\Sigma\subseteq X$ are internally $\gamma$-stably isotopic.
    \end{proof}

\begin{figure}[h]
     \centering
     \begin{subfigure}[b]{0.45\textwidth}
         \centering
         \includegraphics[width=0.40\textwidth]{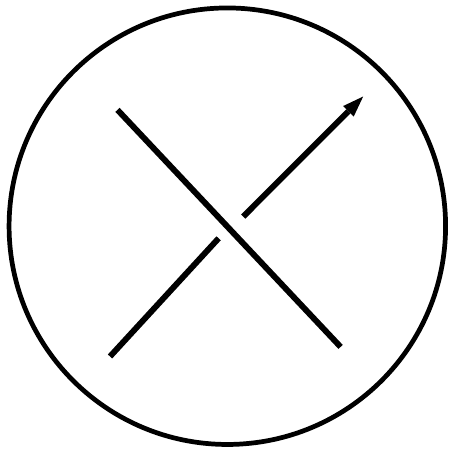}
         \caption{Undercrossing
         \label{fig: undecrossing}}
     \end{subfigure}
     \begin{subfigure}[b]{0.45\textwidth}
         \centering
         \includegraphics[width=0.40\textwidth]{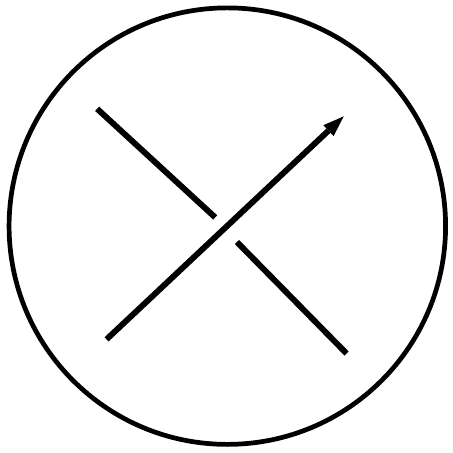}
         \caption{Overcrossing
         \label{fig: Overcrossing}}
     \end{subfigure}
     \caption{\label{fig: crossings}}
\end{figure}

\begin{figure}[h]
     \centering
     \begin{subfigure}[b]{0.25\textwidth}
         \centering
         \includegraphics[width=0.8\textwidth]{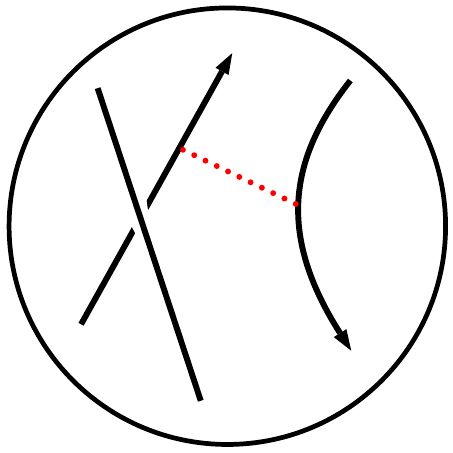}
         \caption{
         \label{fig: Goodpos1}}
     \end{subfigure}
     \hfill
     \begin{subfigure}[b]{0.25\textwidth}
         \centering
         \includegraphics[width=0.8\textwidth]{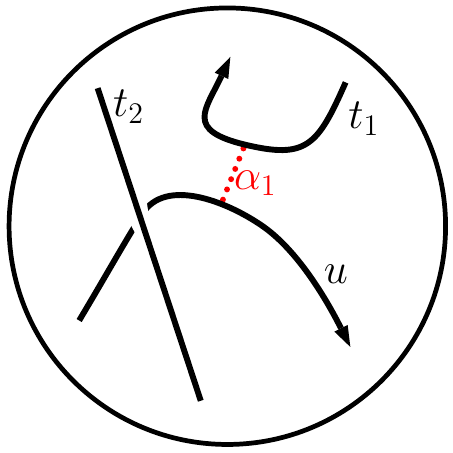}
         \caption{
         \label{fig: Goodpos2}}
     \end{subfigure}
     \hfill
     \begin{subfigure}[b]{0.25\textwidth}
         \centering
         \includegraphics[width=0.8\textwidth]{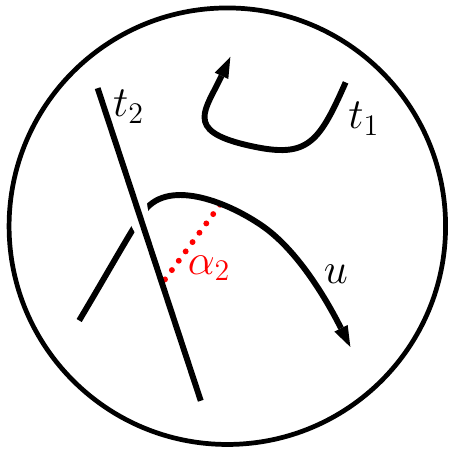}
         \caption{         \label{fig: Goodpos3}}
     \end{subfigure}

     \begin{subfigure}[b]{0.25\textwidth}
         \centering
         \includegraphics[width=0.8\textwidth]{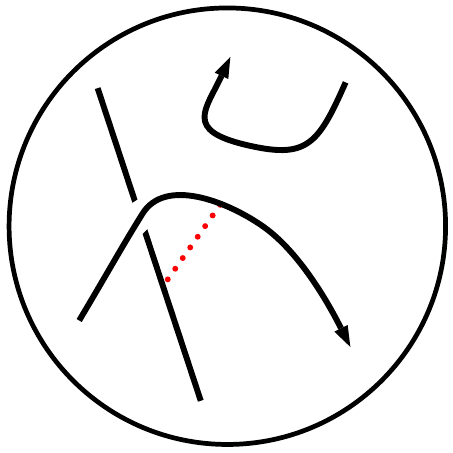}
         
         \caption{\label{fig: Goodpos4}}
     \end{subfigure}
     \hfill
     \begin{subfigure}[b]{0.25\textwidth}
         \centering
         \includegraphics[width=0.8\textwidth]{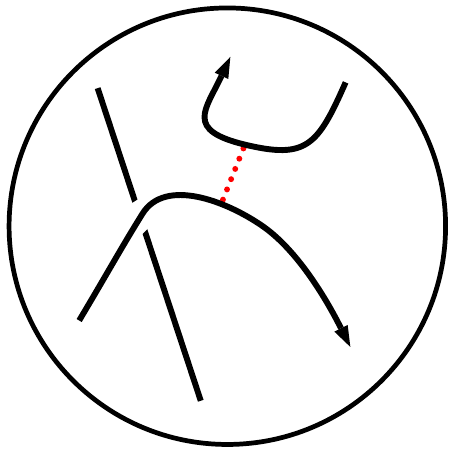}
                  \caption{\label{fig: Goodpos5}}
     \end{subfigure}
     \hfill
     \begin{subfigure}[b]{0.25\textwidth}
         \centering
         \includegraphics[width=0.8\textwidth]{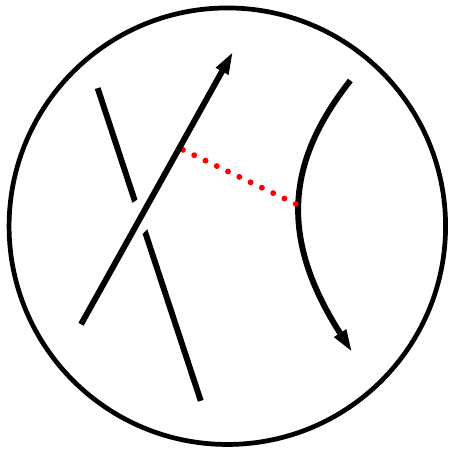}
         \caption{\label{fig: Goodpos6}}
     \end{subfigure}
     \caption{\label{fig: GoodPos}The arcs $A_{i-1}$ and $A_i$ in the 3-disk $D\subseteq D^3$ are represented in Figures \ref{fig: Goodpos1} and \ref{fig: Goodpos6}. The surfaces $\Sigma_{i-1}$ and $\Sigma_i$ differs only in $S^1\times D\subseteq S^1\times D^3\cong{\nu\beta}$. The dotted red arcs are stabilization chords. }
\end{figure}
\begin{thm}\label{thm: ExtStab RimSurgery}
Let $\Sigma,\Sigma_{K,\beta}\subseteq X$ be defined as in Construction \ref{const: Rim surgery}. Furthermore, assume $\Sigma$ to be an orientable surface.
 If  $\pi_1(X\bslash \Sigma)$ is trivial, then for any $B\in\overline{\Ss}(\Sigma,\beta)\subseteq\{\Stab,\StabTwist\}$ the surfaces $\Sigma,\Sigma_{K,\beta}\subseteq X$  are $B$-stable isotopic rel. boundary.
 Moreover, 
  \begin{equation}\label{eq: StabSet RimSurgery}
      \overline{\Ss}(\Sigma,\beta)=\begin{cases}\{\Stab,\StabTwist\}, &\text{ if $\beta$ is in a ordinary component of $\Sigma$}\\
    \{\Stab\}\text{ or }\{\StabTwist\}, &\text{ if $\beta$ is in a characteristic component of $\Sigma$}
    \end{cases}.
  \end{equation}
\end{thm}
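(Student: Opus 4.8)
The statement has two parts — producing the stable isotopy, and computing $\overline{\Ss}(\Sigma,\beta)$ — and for the first I feed Proposition \ref{Prop Rim-Surgery internal} into Theorem \ref{mainthm: int to ext}. Since $\pi_1(X\bslash\Sigma)$ is trivial, Proposition \ref{Prop Rim-Surgery internal} produces a framed loop $\gamma$, a parallel copy of $\beta$ inside $\Sigma$, for which $\Sigma$ and $\Sigma_{K,\beta}$ are $\nu\gamma$-standard and internally $\gamma$-stably isotopic rel.\ boundary. Theorem \ref{mainthm: int to ext} then gives that $\Sigma$ and $\Sigma_{K,\beta}$ are $B$-stably isotopic rel.\ boundary for every $B\in\overline{\Ss}(\Sigma,\Sigma_{K,\beta},\gamma)$, so it remains to identify $\overline{\Ss}(\Sigma,\Sigma_{K,\beta},\gamma)$ with $\overline{\Ss}(\Sigma,\beta)$ and to compute that set.

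\textbf{From the pair back to a single surface.} Take the knotted arc $D^1\#K\subseteq D^3$ of Construction \ref{const: Rim surgery} in its standard model, agreeing with the straight arc $D^1$ outside a small $3$-ball $B_0\subseteq D^3$ in which $K$ is tied; then $D^1\#K$ is contained in a tubular neighbourhood $N\subseteq D^3$ of $D^1$ that engulfs $B_0$. Hence $\Sigma_{K,\beta}\cap\nu\beta=f^{-1}(S^1\times(D^1\#K))$ lies in the tubular neighbourhood $f^{-1}(S^1\times N)$ of $\Sigma\cap\nu\beta$, and since $\Sigma_{K,\beta}$ agrees with $\Sigma$ off $\nu\beta$ we may choose the tubular neighbourhood $\nu\Sigma$ so that $\Sigma_{K,\beta}\subseteq\nu\Sigma$. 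As both surfaces are $\nu\gamma$-standard, Lemma \ref{lem: Sdi2 in fuzione di S1} gives $\overline{\Ss}(\Sigma,\Sigma_{K,\beta},\gamma)=\overline{\Ss}(\Sigma,\gamma)$. Finally $\gamma$ is a parallel push off of $\beta$ inside $\Sigma$, so an ambient isotopy of $X$ preserving $\Sigma$ setwise carries $\gamma$, with a tubular neighbourhood, onto $\beta$ and matches the framings making $\Sigma$ $\nu\gamma$-standard with those making it $\nu\beta$-standard; taking the union over all such framings gives $\overline{\Ss}(\Sigma,\gamma)=\overline{\Ss}(\Sigma,\beta)$, which proves the first assertion.

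\textbf{Computing $\overline{\Ss}(\Sigma,\beta)$.} Deleting the codimension-two submanifold $\Sigma$ preserves surjectivity on $\pi_1$, so $\pi_1(X\bslash\Sigma)=1$ forces $\pi_1(X)=1$; thus $X$ is simply connected and $\Sigma$ has simply connected complement, and Proposition \ref{prop: Propriety of S} applies to the simple loop $\beta\subseteq\Sigma$. If $\beta$ lies in an ordinary component of $\Sigma$, then $\overline{\Ss}(\Sigma,\beta)=\{\Stab,\StabTwist\}$ by Proposition \ref{prop: Propriety of S}(\ref{prop: SS item 2}). If $\beta$ lies in a characteristic component, then $\overline{\Ss}(\Sigma,\beta)$ has exactly one element by Proposition \ref{prop: Propriety of S}(\ref{prop: SS item 3}), hence equals $\{\Stab\}$ or $\{\StabTwist\}$; which of the two occurs depends on finer data — it is $\{\Stab\}$ whenever $\beta$ is nullhomotopic in $\Sigma$ by Proposition \ref{prop: Propriety of S}(\ref{prop: SS item 4}), and $q([\beta])=0$ is in general a necessary condition for $\{\Stab\}$ by Remark \ref{rem: Rocklin}. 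This gives \eqref{eq: StabSet RimSurgery} and finishes the proof.

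\textbf{Where the work is.} Modulo the cited results the proof is an assembly; the one step deserving a short geometric remark is the passage from the pair stabilization set $\overline{\Ss}(\Sigma,\Sigma_{K,\beta},\gamma)$ delivered by Theorem \ref{mainthm: int to ext} to the single-surface set $\overline{\Ss}(\Sigma,\beta)$ in the statement, the relevant point being that a rim surgery has its knotting confined to a ball and therefore does not push the surface out of a tubular neighbourhood of the original one. If Proposition \ref{Prop Rim-Surgery internal} were not already available, the genuinely hard part would be that proposition itself — establishing the internal $\gamma$-stable isotopy for a possibly separating $\beta$, i.e.\ arranging every repositioning of the stabilizing cord to fix $\nu_{1/2}\gamma$.
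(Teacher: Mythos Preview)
Your proof is correct and follows essentially the same route as the paper: combine Proposition \ref{Prop Rim-Surgery internal} with Theorem \ref{mainthm: int to ext}, then pass from $\overline{\Ss}(\Sigma,\Sigma_{K,\beta},\gamma)$ to $\overline{\Ss}(\Sigma,\beta)$ via Lemma \ref{lem: Sdi2 in fuzione di S1} and the fact that $\Sigma_{K,\beta}\subseteq\nu\Sigma$, and compute the set using Proposition \ref{prop: Propriety of S}(\ref{prop: SS item 2}),(\ref{prop: SS item 3}). Your write-up is in fact more careful than the paper's in justifying why $X$ is simply connected (needed to invoke Proposition \ref{prop: Propriety of S}) and why $\overline{\Ss}(\Sigma,\gamma)=\overline{\Ss}(\Sigma,\beta)$.
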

\begin{proof}
    We can combine Proposition \ref{Prop Rim-Surgery internal} and Theorem \ref{mainthm: int to ext} to obtain that $\Sigma,\Sigma_{K,\beta}\subseteq X$ are  $B$-stably isotopic rel. boundary for any $B\in\overline{\Ss}(\Sigma,\Sigma_{K,\beta},\gamma)$, where $\gamma$ is a parallel copy of $\beta$ in $\Sigma$. We have $\Sigma_{K,\beta}\subseteq\nu\Sigma$ by definition  and we get $\overline{\Ss}(\Sigma,\Sigma_{K,\beta},\gamma)=\overline{\Ss}(\Sigma,\gamma)=\overline{\Ss}(\Sigma,\beta)$ by Lemma \ref{lem: Sdi2 in fuzione di S1}. 
    To compute the stabilization set as in equation (\ref{eq: StabSet RimSurgery}) we can use Proposition \ref{prop: Propriety of S}: point \ref{prop: SS item 2} if $\beta$ is in an ordinary component of $\Sigma$ and point \ref{prop: SS item 3} if it is in a characteristic one. 
\end{proof}

\subsection{\label{sec: twistRim}Twist rim-Surgery}
Twist rim-surgery \cite{Kim_2006} is a variation of the rim surgery operation, but it can be expressed in terms of a rim-surgery operation with a different choice of framing. 
\begin{constr}[$n$-twist rim-surgery \cite{Kim_2006,Kim_Ruberman_2008}]\label{constr: twist rim}
    Let $X$ be a smooth connected 4-manifold, fix a framed loop $\beta\subseteq X$ with a framing $f_0$ and consider a smoothly embedded $\nu\beta$-standard surface $\Sigma\subseteq X$.
Moreover, we require that the small push off of $\beta$ is nullhomotopic in the complement of $\Sigma$. 
For any $n\in\Z$ and for any oriented knot $K\subseteq S^3$ we define the surface 
\begin{equation}
\Sigma_{K,\beta,n}=\Sigma_{K,(\beta,f_0),n}:=\Sigma_{K,\beta}(f_n)\subseteq X,
\end{equation}  
where $\Sigma_{K,\beta}(f_n)$ is the surface produced by rim-surgery (Construction \ref{const: Rim surgery}) with framing $f_n$ (Construction \ref{constr: framings}).
\end{constr}

    Construction \ref{constr: twist rim} is equivalent to the definition of rim-surgery given in \cite[Definition 2.1 and 2.3]{Kim_2006}.  Using the twist rim-surgery operation it is possible to define an exotic pair of surfaces $\Sigma, \Sigma_{K,\beta,n}\subseteq\C P^2$, see \cite[Corollary 3.5 and Theorem 4.5]{Kim_2006}, where $\Sigma$ is an algebraic surface with degree $\geq 3$.

\begin{thm}\label{thm: external Twist Rim Surgery}\label{thm: External Twist Rim Surgery}
Let $\Sigma,\Sigma_{K,\beta,n}\subseteq X$ be defined as in Construction \ref{constr: twist rim}. Let us further assume that $\Sigma$ is an orientable surface and $\beta\subseteq\Sigma$ is non-separating.
 If  $\pi_1(X\bslash \Sigma)$ and $\pi_1(X\bslash\Sigma_{K',\beta,n})$ are cyclic groups generated by the meridians for any knot $K'\subseteq S^3$, then $\Sigma,\Sigma_{K,\beta,n}\subseteq X$  are $B$-stably isotopic rel. boundary for any $B\in\overline{\Ss}(\Sigma,\beta)\subseteq\{\Stab,\StabTwist\}$. 
 Moreover, the set $\overline{\Ss}(\Sigma,\beta)$ is not empty.
\end{thm}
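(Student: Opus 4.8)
The plan is to mirror the proof of Theorem \ref{thm: ExtStab RimSurgery}, replacing the framing $f_0$ of $\beta$ by the twisted framing $f_n$ of Construction \ref{constr: framings} and weakening the hypothesis $\pi_1(X\bslash\Sigma)=1$ used there to the cyclic hypothesis in the statement. I would first record a small observation: the rotation $\tau$ of Construction \ref{constr: framings} fixes the axis $\R\times\{0\}^2\subseteq\R^3$, hence the arc $D^1\subseteq D^3$, pointwise, so $\tau^n$ fixes $S^1\times D^1\subseteq S^1\times D^3$ pointwise. Consequently $\Sigma$ is $\nu\beta$-standard also for the framing $f_n$, one has $\Sigma_{K,\beta,n}=\Sigma_{K,(\beta,f_n)}$ and $\Sigma=\Sigma_{U,(\beta,f_n)}$ for $U$ the unknot, and $\Sigma_{K,\beta,n}\subseteq\nu\Sigma$ (by Construction \ref{const: Rim surgery} the rim surgery only modifies $\Sigma$ inside $\nu\beta$, which we may take inside $\nu\Sigma$). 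As in Proposition \ref{Prop Rim-Surgery internal} I would then fix a framed loop $\gamma$, a parallel push-off of $\beta$ inside $\Sigma$, so that $\Sigma$, $\Sigma_{K,\beta,n}$ and all the intermediate surfaces below are $\nu\gamma$-standard and $\nu_{1/2}\gamma$ lies in the complement of $\nu_{1/2}\beta$.

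The heart of the argument is to show that $\Sigma$ and $\Sigma_{K,\beta,n}$ are internally $\gamma$-stably isotopic rel. boundary. I would follow the proof of Proposition \ref{Prop Rim-Surgery internal}, now with $\beta$ carrying the framing $f_n$: pick a plane diagram of the knotted arc $D^1\#K$, build its unknotting sequence $A_0=D^1\#K, A_1,\dots,A_\ell=D^1$ by crossing changes at overcrossings, and form the surfaces $\Sigma_0=\Sigma_{K,\beta,n}, \Sigma_1,\dots,\Sigma_\ell$ that agree outside $\nu\beta$ and satisfy $\Sigma_i\cap\nu\beta=f_n^{-1}(S^1\times A_i)$. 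Up to isotopy rel endpoints each $A_i$ is $D^1\#K_i$ for a knot $K_i$, so $\Sigma_i$ is isotopic to $\Sigma_{K_i,\beta,n}$; moreover $\Sigma_\ell$ is isotopic to $\Sigma$ relative to $\nu_{1/2}\gamma\cup\partial X$. The crossing-change passages of Proposition \ref{Prop Rim-Surgery internal} (Figures \ref{fig: crossings} and \ref{fig: GoodPos}, including the local isotopies provided by \cite[Proposition 9]{InternalStabilizationBaykurSunukian}) take place inside $\nu\beta\cong S^1\times D^3$ and their local models are insensitive to the twist $n$, which only records how $\nu\beta$ patches into $X$. Hence each consecutive pair $\Sigma_{i-1},\Sigma_i$ is internally $\gamma$-stably isotopic, and transitivity of internal $\gamma$-stable isotopy among $\nu\gamma$-standard surfaces yields the claim for $\Sigma_0=\Sigma_{K,\beta,n}$ and $\Sigma_\ell$, hence for $\Sigma$.

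The only point where the argument genuinely departs from the case $\pi_1(X\bslash\Sigma)=1$, and the step I expect to be the main obstacle, is the repeated repositioning of the stabilization cords via Proposition \ref{Prop repositioning arc}: this requires that the complements of the intermediate surfaces $\Sigma_i$, and of the auxiliary split surface $S=T\sqcup U$ appearing at each crossing change, have fundamental group cyclic generated by a meridian. For the $\Sigma_i$ this is precisely the hypothesis applied with $K'=K_i$, which is exactly what the clause ``for any knot $K'$'' provides. For $S$ one must check separately that its complement group is again cyclic generated by a meridian; here the assumption that $\beta$, hence $\gamma$, is non-separating enters, guaranteeing that $U$ is an unknotted torus embedded standardly so that the bookkeeping of $\pi_1$ closes up --- this is the one place where one follows \cite[Section 3.2]{InternalStabilizationBaykurSunukian} rather than \cite[Section 3.1]{InternalStabilizationBaykurSunukian}.

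With the internal $\gamma$-stable isotopy established, I would conclude exactly as in Theorem \ref{thm: ExtStab RimSurgery}. By Theorem \ref{mainthm: int to ext} the surfaces $\Sigma,\Sigma_{K,\beta,n}\subseteq X$ are $B$-stably isotopic rel. boundary for every $B\in\overline{\Ss}(\Sigma,\Sigma_{K,\beta,n},\gamma)$; since $\Sigma_{K,\beta,n}\subseteq\nu\Sigma$ and $\gamma$ is a parallel copy of $\beta$, Lemma \ref{lem: Sdi2 in fuzione di S1} identifies $\overline{\Ss}(\Sigma,\Sigma_{K,\beta,n},\gamma)=\overline{\Ss}(\Sigma,\gamma)=\overline{\Ss}(\Sigma,\beta)$. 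Finally, Construction \ref{constr: twist rim} requires the small push-off of $(\beta,f_0)$ to be nullhomotopic in $X\bslash\Sigma$, so $\Ss(\Sigma,(\beta,f_0))\neq\emptyset$ by Lemma \ref{lem: proprieties Ss}(\ref{lem: prop Part1}), whence $\overline{\Ss}(\Sigma,\beta)\neq\emptyset$; alternatively the hypothesis forces $X$ to be simply connected with $\pi_1(X\bslash\Sigma)$ cyclic, so Proposition \ref{prop: Propriety of S}(\ref{prop: SS item 1}) applies.
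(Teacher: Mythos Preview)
Your proposal is correct and follows essentially the same approach as the paper: both establish internal $\gamma$-stable isotopy by adapting the Baykur--Sunukjian argument (the paper cites \cite[Section 3.2]{InternalStabilizationBaykurSunukian} directly, you spell out the unknotting-sequence mechanics from Proposition~\ref{Prop Rim-Surgery internal} and note where Section~3.2 rather than 3.1 is needed), then apply Theorem~\ref{mainthm: int to ext} and Lemma~\ref{lem: Sdi2 in fuzione di S1}. Your primary non-emptiness argument via Lemma~\ref{lem: proprieties Ss}(\ref{lem: prop Part1}) and the nullhomotopy hypothesis built into Construction~\ref{constr: twist rim} is in fact slightly more direct than the paper's citation of Proposition~\ref{prop: Propriety of S}(\ref{prop: SS item 1}), which carries an implicit simply-connected hypothesis on $X$.
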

\begin{proof}
Fix $\gamma\subseteq\Sigma$ to be a parallel copy of $\beta\subseteq\Sigma$.
    The surfaces $\Sigma,\Sigma_{K,\beta,n}\subseteq X$ are internally 1-stable isotopic by \cite[Section 3.2]{InternalStabilizationBaykurSunukian}, moreover,  the proof can be modified to show that the surfaces are internally $\gamma$-stably isotopic relative to the boundary. To do so, every time is needed to move a stabilization arc we apply Proposition \ref{Prop repositioning arc}, here are essential the assumptions that $\beta$ is non separating ($\Sigma$ has positive genus) and that the fundamental group of the complement of $\Sigma_{K',\beta,n}$ is always cyclic for any knot $K'$. It follows from Theorem \ref{mainthm: int to ext} that for any $B\in\overline{\Ss}(\Sigma,\Sigma_{K,\beta,n},\gamma)$ the surfaces $\Sigma,\Sigma_{K,\beta,n}\subseteq X$ are $B$-stable smoothly isotopic. By Lemma \ref{lem: Sdi2 in fuzione di S1} we conclude that $\overline{\Ss}(\Sigma,\Sigma_{K,\beta,n},\gamma)=\overline{\Ss}(\Sigma,\beta)$. This set is not empty by Proposition \ref{prop: Propriety of S} point \ref{prop: SS item 1}.
\end{proof}
\begin{oss}
    In Theorem \ref{thm: External Twist Rim Surgery}, if $\pi_1(X\bslash\Sigma)$ is the cyclic group $\Z/d\Z$ (possibly $d=0$) and $(d,n)=1$, then $\pi_1(X\bslash\Sigma_{\beta,K',n})\cong \Z/d\Z$ for any knot $K'\subseteq S^3$ by \cite[Proposition 2.4]{Kim_Ruberman_2008}.
\end{oss}
\subsubsection{Concrete examples\label{sec: twist rim application}}
Let us consider the infinite exotic family $\{T_i\}_{i\in\N}$ of punctured 2-tori in $B^4$ bounding a knot $K$ in $S^3$ of \cite[Theorem 1.1]{MillerZenke2021transverse}.  The surface $T_0$ is obtained from a Seifert surface of $K$ by pushing the interior of it in the interior of $B^4$. It follows that $\pi_1(B^4\bslash T_0)\cong \Z$. All the 2-tori $T_i$ are obtained from a 2-torus $T_0$ by doing 1-twist rim surgery on a non separating curve $\beta\subseteq T_0$. The framing of $\beta$ is induced by a locally flat embedded 2-disk $D$ with interior in the complement of $B^4\bslash T_0$. We can perturb the interior of 2-disk $D$ to find a smoothly immersed 2-disk $D'$ and easily conclude that $(\Stab)\in \overline{\Ss}(T_0,\beta)$. By Theorem \ref{thm: External Twist Rim Surgery} the surfaces $T_0,T_i\subseteq B^4$ are $(\Stab)$-stably isotopic rel. boundary for any $i\in\N$.\\
The same result can be obtained using \cite[Remark 2.6 and Corollary 2.7]{MillerZenke2021transverse}. For doing this, it is sufficient to find a smoothly embedded 2-disk $D''\subseteq B^4\#(\Stab)$ bounding $\beta$, with interior disjoint from $T_0$ and inducing on $\beta$ the same framing of $D$. We can construct $D''$ from $D'$ by removing its self intersection using the new $\Stab$ summand (see \cite[Lemma 1]{norman1969dehn}).\\

Theorem \ref{thm: External Twist Rim Surgery} is only valid for surfaces with cyclic fundamental group of the complement, but it can also be used to prove the $B$-stable isotopy for exotic surfaces with  more complicated fundamental group, as the next example will show.

 In \cite[Theorem B]{hayden2021exotically} there are constructed two infinite collection of closed surfaces  in a closed or convex symplectic 4-manifold $X$, the ones of point $(b)$ have non-cyclic fundamental group of the complement. Let $\{F_i\}_{i\in\N}$ be one of those two collections.  Also this collection is obtained from one element $F_0$ by 1-twist rim surgery. The twist rim surgery is performed along a curve $\beta\subseteq F_-\subseteq F_0$ where $F_-$ is $F_0\cap B^4$ for some 4-ball $B^4$ in $X$. Also in this case $\beta\subseteq F_-$ is non separating, $F_-$ is obtained from the Seifert surface of a knot $K\subseteq\partial B^4=S^3$ and $\pi_1(B^4\bslash F_-)\cong\Z$. By the proof of Theorem \ref{thm: External Twist Rim Surgery}, the surfaces $F_0\cap B^4, F_i\cap B^4\subseteq B^4$ are internally $\beta^+$-stably isotopic relative to $\partial X$, where $\beta^+$ is a parallel copy of $\beta$ in $F_-$. It follows that  $F_0, F_i\subseteq X$ are internally $\beta^+$-stably isotopic, and so $B$-stably isotopic for any $B\in \overline{\Ss}(F_0\subseteq X,\beta)$ by Theorem \ref{mainthm: int to ext} and Proposition \ref{lem: Sdi2 in fuzione di S1}.  Moreover, the stabilization set is not empty since $\overline{\Ss}(F_-\subseteq B^4,\beta)\subseteq\overline{\Ss}(F_0\subseteq X,\beta)$ and by Proposition \ref{prop: Propriety of S}.

\subsection{\label{sec: AnnulusRim}Annulus rim-Surgery}
Annulus rim-surgery \cite{finashin2002knotting} is an operation, in which rim-surgery is performed at once along two loops connected by an annulus in the exterior of the surface.
\begin{constr}[Annulus rim-surgery \cite{finashin2002knotting}]\label{const: annulus rim surgery}
Consider a surface $F\subseteq X$ with a \emph{membrane} $M\subseteq X$, that is an embedded annulus $S^1\times D^1$ intersecting $F$ only with its boundary $\partial M= M\cap F$.
Assume that there exists a neighborhood $U\subseteq X$ of $M$ and a diffeomorphism of triples
\[\phi: (U,M,F\cap U)\to (S^1\times D^3, S^1\times ([-1/2,1/2]\times\{0\}^2), S^1\times f),\] where $f\subseteq D^3$ are the "vertical" unknotted arcs bounded by  the band \[b:=([-1/2,1/2]\times D^1\times\{0\})\cap D^3.\]
If $F$ is an oriented surface, then we require that the two  components of $S^1\times f$ have an orientation induced by an orientation of $S^1\times b$ matching with the orientation of $F$ (through $\phi$).
Consider a knot $K\subseteq S^3$ and define the surface
\begin{equation}
    F_{\phi,K}:=(F\bslash U)\cup (\phi^{-1}(S^1\times f_k))\subseteq X,
\end{equation}
where $f_K$ is a pair of arcs bounded by the band $b_K$ which is obtained from the band $b$ by knotting the knot $K$, see \cite[Figure 1]{finashin2002knotting} or \cite[Figure 6.a]{InternalStabilizationBaykurSunukian}.
\end{constr}
Annulus rim-surgery can produce smoothly inequivalent surfaces in $\C P^2$ \cite[Theorem 1.1]{finashin2002knotting} with the same homology class and fundamental group of the complement \cite[Propositions 1.2 and 1.3]{finashin2002knotting}.
\begin{thm}\label{thm: external annulus Surgery}  Let $F,F_{\phi,K}, M\subseteq X$ be defined as in Construction \ref{const: annulus rim surgery} and let us further assume that $F$ is orientable.
Let $\gamma\subseteq F$ be a curve which is parallel to one of the two component of $\partial M$.\begin{itemize}
    \item If  $\pi_1(X\bslash F)$ and $\pi_1(X\bslash F_{\phi,K})$ are cyclic generated by a meridian, and $F\bslash  \gamma$ is connected, then the surfaces $F,F_{\phi,K}\subset X$ are internally $\gamma$-stably isotopic rel. boundary.
    \item If $\pi_1(X\bslash (F\cup M))$ is abelian, $F\bslash \partial M$ is connected, and $X$ is compact and simply connected, then  $F,F_{\phi,K}\subset X$ are $B$-stably isotopic rel. boundary for any $B\in\overline{\Ss}(F,F_{\phi,K},\gamma)\subset\{\Stab,\StabTwist\}$ and $\overline{\Ss}(F,F_{\phi,K},\gamma)$ is not empty.
\end{itemize}
\end{thm}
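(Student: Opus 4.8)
The plan is to mirror Section~\ref{sec: Rim Surgery}: first establish the internal $\gamma$-stable isotopy (the first bullet), and then derive the external statement (the second bullet) by feeding it into Theorem~\ref{mainthm: int to ext}. Observe first that the hypothesis of the first bullet already forces $X$ to be simply connected: if $\pi_1(X\bslash F)$ is cyclic and generated by a meridian $\mu$ then, by van Kampen, $\pi_1(X)$ is the quotient of $\langle\mu\rangle$ by its (automatically normal) subgroup $\langle\mu\rangle$, hence trivial. Consequently Proposition~\ref{Prop repositioning arc} is available throughout.

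For the first bullet I would argue exactly as in Proposition~\ref{Prop Rim-Surgery internal} and \cite[Section 3.3]{InternalStabilizationBaykurSunukian}. Identify $U$ with $S^1\times D^3$ via $\phi$; since $b_K$ agrees with $b$ near $\partial b$, one can take $\gamma$ to be a parallel copy of the chosen component of $\partial M$ sitting in the region of $F$ where $F$ and $F_{\phi,K}$ coincide, so that both surfaces are $\nu\gamma$-standard for one common framing and $\gamma$ is disjoint from the part of $U$ where the knotting occurs. Choose a planar diagram of the knotted band $b_K$ oriented from its boundary, enumerate its crossings $c_1,\dots,c_n$, and form an unknotting sequence $b_K=B_0,B_1,\dots,B_n=b$ of bands exactly as in the proof of Proposition~\ref{Prop Rim-Surgery internal}; let $F_{\phi,K}=F^{(0)},\dots,F^{(n)}$ be the resulting surfaces, which differ only inside $U$ and satisfy $F^{(n)}\cong F$ rel.\ $\nu_{1/2}\gamma$. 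For each $i$ with $B_{i-1}\ne B_i$, push a small ball around the crossing region $c_i$ along an arc disjoint from $\nu_{1/2}\gamma$ to a fixed location, so that a trivial internal stabilization across $\gamma$ can be slid, by Proposition~\ref{Prop repositioning arc}, to a stabilization cord next to the crossing; here one checks, as in Proposition~\ref{Prop Rim-Surgery internal}, that the relevant auxiliary surface has cyclic fundamental group of its complement, using the cyclicity hypotheses on $\pi_1(X\bslash F)$ and $\pi_1(X\bslash F_{\phi,K})$ and the connectedness of $F\bslash\gamma$ (so that the ``overcrossing-first'' piece is an unknotted torus with infinite cyclic complement). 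Then realize the crossing change by the local moves of \cite[Proposition 9]{InternalStabilizationBaykurSunukian}, repositioning cords between the intermediate pictures via \cite[Lemma 3]{InternalStabilizationBaykurSunukian} and Proposition~\ref{Prop repositioning arc}, everything relative to $\nu_{1/2}\gamma\cup\partial X$. Composing these isotopies shows $F$ and $F_{\phi,K}$ become smoothly isotopic rel.\ $\nu_{1/2}\gamma\cup\partial X$ after a trivial internal stabilization across $\gamma$, i.e.\ they are internally $\gamma$-stably isotopic rel.\ boundary.

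For the second bullet I would first check that its hypotheses imply those of the first. Since $M$ is $2$-dimensional, the inclusion $X\bslash(F\cup M)\hookrightarrow X\bslash F$ is $\pi_1$-surjective, so $\pi_1(X\bslash F)$ is a quotient of the abelian group $\pi_1(X\bslash(F\cup M))$, hence abelian; as $F\bslash\partial M$ is connected, $F$ is connected, so $\pi_1(X\bslash F)$ is normally generated by one meridian, and being abelian it is cyclic generated by a meridian (and $X$ is simply connected). By \cite[Propositions 1.2 and 1.3]{finashin2002knotting} the same holds for $\pi_1(X\bslash F_{\phi,K})$, and $F\bslash\gamma$ is connected because $\gamma$ is parallel to a component of $\partial M$ and $F\bslash\partial M$ is connected. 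Thus the first bullet applies, and Theorem~\ref{mainthm: int to ext} gives that $F,F_{\phi,K}\subseteq X$ are $B$-stably isotopic rel.\ boundary for every $B\in\overline{\Ss}(F,F_{\phi,K},\gamma)$. It remains to prove $\overline{\Ss}(F,F_{\phi,K},\gamma)\ne\emptyset$; by the last assertion of Theorem~\ref{mainthm: int to ext} (equivalently Lemma~\ref{lem: proprieties Ss} applied to the pair, together with Construction~\ref{constr: framings}) it suffices to produce a framing of $\gamma$ whose small push off is nullhomotopic in $X\bslash(F\cup F_{\phi,K})$. For this I would arrange the knotting of $b_K$ to be supported away from the core of $M$, so that the interior of $M$ is disjoint from both $F$ and $F_{\phi,K}$; pushing $\gamma$ off $F$ onto $M$ and sliding across the annulus $M$ then expresses $\overline{\gamma}$, up to modifying the framing by meridians of $F$ (Construction~\ref{constr: framings}, equation~(\ref{eq: pi1 framings})), in terms of the abelian group $\pi_1(X\bslash(F\cup M))$, from which one reads off a framing making the push off nullhomotopic. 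This is the analogue, for annulus rim surgery, of the step ``$\overline{\Ss}(\Sigma,\beta)\ne\emptyset$ by Proposition~\ref{prop: Propriety of S}'' used in the rim-surgery and twist-rim-surgery cases.

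I expect the main obstacle to be twofold. First, in the first bullet, the content beyond the internal $1$-stable isotopy of \cite[Section 3.3]{InternalStabilizationBaykurSunukian} is making every elementary isotopy relative to $\nu_{1/2}\gamma$; this is precisely where Proposition~\ref{Prop repositioning arc} and the cyclicity of the complements of \emph{both} $F$ and $F_{\phi,K}$ enter, and verifying step by step that the relevant auxiliary surface has cyclic fundamental group of its complement is the delicate bookkeeping, just as in Proposition~\ref{Prop Rim-Surgery internal}. Second, in the second bullet, unlike the rim-surgery case (Theorem~\ref{thm: ExtStab RimSurgery}, where one invokes $\Sigma_{K,\beta}\subseteq\nu\Sigma$ and Lemma~\ref{lem: Sdi2 in fuzione di S1}), here the surgery region protrudes along the membrane and $F_{\phi,K}$ need not lie in a tubular neighborhood of $F$, so the non-emptiness of the \emph{pair} stabilization set $\overline{\Ss}(F,F_{\phi,K},\gamma)$ cannot be reduced to that of $\overline{\Ss}(F,\gamma)$ and must be established directly from the membrane~$M$.
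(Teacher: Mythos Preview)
Your argument for the first bullet and for reducing the second bullet to the first (via \cite[Proposition 1.2]{finashin2002knotting} and Theorem~\ref{mainthm: int to ext}) matches the paper's proof essentially verbatim; the paper is terser but cites exactly the same ingredients.

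The divergence is in the non-emptiness of $\overline{\Ss}(F,F_{\phi,K},\gamma)$. Your ``slide across $M$'' argument is not doing what you want: pushing $\overline{\gamma}$ onto the annulus $M$ and sliding to the other boundary only shows that $\overline{\gamma}$ is freely homotopic in $X\bslash(F\cup F_{\phi,K})$ to a push off of the \emph{other} component of $\partial M$, which is again a nontrivial curve on $F$, not a nullhomotopic loop. More importantly, to ``read off a framing making the push off nullhomotopic'' via Construction~\ref{constr: framings} you need the class of $\overline{\gamma}$ to be a power of the meridian of $F$; from the hypothesis that $\pi_1(X\bslash(F\cup M))$ is merely abelian this does not follow, so your argument has a gap here.

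The paper closes this gap in two clean steps. First, it observes that $F\cup F_{\phi,K}$ sits inside a small tubular neighborhood of $F\cup M$ which deformation retracts to $F\cup M$, so a loop nullhomotopic in $X\bslash(F\cup M)$ is automatically nullhomotopic in $X\bslash(F\cup F_{\phi,K})$; this is the correct replacement for Lemma~\ref{lem: Sdi2 in fuzione di S1} that you noted is unavailable here. Second, it invokes \cite[Lemma 2.1]{finashin2002knotting}, which upgrades the hypothesis ``$\pi_1(X\bslash(F\cup M))$ abelian'' to ``$\pi_1(X\bslash(F\cup M))$ cyclic generated by the meridian of $F$'', and then the meridian-twisting of Construction~\ref{constr: framings} produces the required nullhomotopic push off. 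You identified the right place to work (the complement of $F\cup M$) but missed both of these ingredients.
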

\begin{proof}
    For first point of the theorem it is sufficient to adapt the proof in \cite[Section 3.3]{InternalStabilizationBaykurSunukian} using Proposition \ref{Prop repositioning arc}.\\
    If $\pi_1(X\bslash (F\cup M))$ is abelian, $F\bslash \partial M$ is connected, and $X$ compact and simply connected, then the set of hypothesis of the first point hold by \cite[Proposition 1.2]{finashin2002knotting}, and so  $F,F_{\phi,K}\subset X$ are internally $\gamma$-stably isotopic. The surfaces $F,F_{\phi,K}\subset X$ are $B$-stably isotopic for any $B\in\overline{\Ss}(F,F_{\phi,K},\gamma)\subseteq\{\Stab,\StabTwist\}$ by Theorem \ref{mainthm: int to ext}. There exists a nullhomotopic push off $\overline{\gamma}$ of $\gamma$ in $X\bslash(F\cup M)$ since $\pi_1(X\bslash(F\cup M))$ is cyclic generated by the meridian of $F$ by \cite[Lemma 2.1]{finashin2002knotting}. It follows that $\overline{\gamma}\subseteq X\bslash(F\cup F_{\phi,K})$ is nullhomotopic, since $F\cup F_{\phi_K}$ sits in a small neighborhood of $F\cup M$ which deformation retracts to $F\cup M$. Therefore, $\overline{\Ss}(F,F_{\phi,K},\gamma)$ is not empty by Theorem \ref{mainthm: int to ext}.
\end{proof}

\subsection{\label{sec: nullhom 2tori}Null-homologous 2-tori}
We start this section recalling the construction introduced in \cite{KnotSurgery} and explored in \cite{Hoffman_Sunukjian_2020}.
We would like to study the couple of surfaces $U,T_{F,K}\subseteq X$, which is composed by an unknotted 2-torus $U$, i.e. $U$ bounds a $S^1\times D^2$ in $X$, and a 2-torus $T_{F,K}\subseteq X$ that is defined as follows.
\begin{constr}[Null-homologus 2-tori \cite{Hoffman_Sunukjian_2020}]\label{const: nullhomologus tori}
Let $X$ be a 4-manifold and let $T\subseteq X$ be an embedded 2-torus with trivial normal bundle in $X$. Fix an identification of the tubular neighborhood of $T$
\[F:{\nu T}\to S^1\times (S^1\times D^2).\]
Fix an oriented knot $K\subseteq S^3$ with unknotting number equal to $1$. Choose a diagram $\mathcal{D}$ for $K$ and select a crossing $c$ such that a crossing change at $c$ gives a diagram for the unknot. \\ 
Performing $\pm 1$ Dehn surgery along a loop $l=l(\mathcal{D}, c)\subseteq S^3$ has the effect of changing the crossing $c$ of the diagram $\mathcal{D}$ of $K$. The knot $K_0$, obtained by this crossing change, is an unknot and in its complement there is the loop $l$. By a (canonical) identification of $S^3\bslash\nu K_0$ with  $S^1\times D^2$, the loop $l$ defines a knot $\widetilde{l}=\widetilde{l}(\mathcal{D},c)\subseteq S^1\times D^2$.
\[(S^3\bslash\nu K_0,l)\cong (S^1\times D^2,\widetilde{l}).\]
Finally, consider the 2-torus $S^1\times \widetilde{l}\subseteq S^1\times (S^1\times D^2)$ and define
\begin{equation}
    T_{F,K}=T_{F, K}(\mathcal{D},c):=F^{-1}\left(S^1\times \widetilde{l}\right)\subseteq \nu T\subseteq X.
\end{equation}
\end{constr}

The 2-torus $T_{F,K}$ is always nullhomologus and $\pi_1(X\bslash T_{F,K})$ has infinite cyclic fundamental group of the complement under the additional assumption of trivial $\pi_1(X\bslash T)$, see \cite[Section 2]{Hoffman_Sunukjian_2020}.

Let
$\gamma=F^{-1}(S^1\times\{1\}\times\{0\})$.
The framing of $\gamma$ induced by $F$ is the map
\begin{equation}\label{eq: definition framing induced by the restriction}
    f:=(id_{S^1}\times \alpha)\circ F|_{\nu\gamma}:\nu\gamma\to \left(S^1\times \nu\{(1,0)\}\right)\cong S^1\times D^3,
\end{equation}
where $\alpha: \nu\{(1,0)\}\to D^3$ is an identification of the tubular neighborhood of the point $(1,0)$ with $D^3$. The curve $\gamma$ equipped with the framing $f$ makes the 2-torus $T$ be $\nu\gamma$-standard.

\begin{thm}\label{thm: ExtStab nullhomologus 2-tori}
Let $T_{F,K}\subseteq X$ be 2-torus defined as in Construction \ref{const: nullhomologus tori}.
Let $\gamma\subseteq T_{F,K}$ be the simple loop with  the framing $f$ induced by the framing $F$ of $T$ (\ref{eq: definition framing induced by the restriction}).
For all the manifolds in the extended stabilization set $B\in \overline{\Ss}(T_{F,K},  \gamma)\subseteq\{\Stab,\StabTwist\}$ the 2-torus $T_{F,K}$ is smoothly unknotted in $X\# B$,  where the connected sum is performed away from $T_{F,K}$.\\
If $X$ is a spin manifold, then ${\Ss}(T, (\gamma,f))= \overline{\Ss}(T_{F,K}, \gamma)$ and it contains exactly one element.\\
If $X$ is  non spin manifold, then $\overline{\Ss}(T_{F,K}, \gamma)=\{\Stab,\StabTwist\}$.
\end{thm}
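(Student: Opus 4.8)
The plan is to reduce everything to Theorem \ref{mainthm: int to ext} together with the properties of stabilization sets established in Section \ref{sec: StabilizationSet}. First I would establish the internal $\gamma$-stable isotopy: the surfaces $T_{F,K}$ and the unknotted torus $U$ coincide on the tubular neighborhood $\nu\gamma$ (both are $\nu\gamma$-standard with respect to the same framing $f$ coming from $F$, since $\widetilde{l}$ agrees with the core circle away from the crossing loop), and by \cite[Section 3.6]{InternalStabilizationBaykurSunukian} they become smoothly isotopic after one internal stabilization. As in the rim-surgery case (Proposition \ref{Prop Rim-Surgery internal}), one checks that this isotopy can be arranged to fix $\nu_{1/2}\gamma$: every repositioning of a stabilization arc is handled by Proposition \ref{Prop repositioning arc}, using that $\pi_1(X\bslash T_{F,K})$ and $\pi_1(X\bslash U)$ are infinite cyclic generated by a meridian (this is where the hypothesis $\pi_1(X\bslash T)=1$, hidden in the setup of Construction \ref{const: nullhomologus tori}, enters). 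Hence $T_{F,K}$ and $U$ are internally $\gamma$-stably isotopic. Applying Theorem \ref{mainthm: int to ext} then gives that they are $B$-stably isotopic for every $B\in\overline{\Ss}(T_{F,K},U,\gamma)$; since $U\subseteq\nu T_{F,K}$ up to isotopy (both sit inside $\nu T$), Lemma \ref{lem: Sdi2 in fuzione di S1} identifies this with $\overline{\Ss}(T_{F,K},\gamma)$, and "$B$-stably isotopic to the unknot" is exactly "smoothly unknotted in $X\#B$".

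Next I would compute the stabilization set. The curve $\gamma$ is a non-separating simple loop on a $T^2$ (either the torus $T_{F,K}$ itself or, more conveniently, the torus $T$, which contains $\gamma$ as a $\nu\gamma$-standard surface and whose stabilization set agrees because $T_{F,K}\subseteq\nu T$). Since $X\bslash T$ is simply connected, Proposition \ref{prop: Propriety of S}(\ref{prop: SS item 1}) gives $\overline{\Ss}\neq\emptyset$. The dichotomy spin/non-spin is governed by whether there is an immersed 2-sphere of odd self-intersection in $X\bslash(\Sigma_1\cup\Sigma_2)$, via Lemma \ref{lem: proprieties Ss}(\ref{lem: SS item 3}). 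If $X$ is non-spin, it contains an immersed 2-sphere of odd self-intersection; one can push it off $T$ and $T_{F,K}$ (they are nullhomologous, so intersection number with any sphere is even, and one removes the geometric intersection points in pairs using the ambient connected-sum region or tube-summing arguments). Then Lemma \ref{lem: proprieties Ss}(\ref{lem: SS item 3}) forces $\overline{\Ss}(T_{F,K},\gamma)=\{\Stab,\StabTwist\}$. If $X$ is spin, no such sphere exists in $X$, hence none in $X\bslash(T\cup T_{F,K})$ either (a sphere there is also a sphere in $X$), so by the converse direction of Lemma \ref{lem: proprieties Ss}(\ref{lem: SS item 3}) the set cannot be $\{\Stab,\StabTwist\}$; being nonempty, it has exactly one element. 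Finally, to see that in the spin case $\Ss(T,(\gamma,f))=\overline{\Ss}(T_{F,K},\gamma)$, I would invoke Proposition \ref{lem: geometricDualSphere}: the torus $T$ has a geometrically dual immersed 2-sphere (the second $S^1$ factor of $\nu T\cong S^1\times S^1\times D^2$ capped off — wait, more carefully, one uses that $T$ lies in a product neighborhood and has a dual given by a meridian disk union a Seifert-type cap), and its self-intersection is even in the spin setting, so $\overline{\Ss}$ collapses to a single $\Ss(\Sigma,(\gamma,f_0))$; choosing $f_0=f$, one checks directly that $f$ is the framing for which the push-off is nullhomotopic in $X\bslash T$.

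The main obstacle I anticipate is the internal-stabilization input: verifying that the isotopy of \cite[Section 3.6]{InternalStabilizationBaykurSunukian} between $T_{F,K}$ and the unknot can genuinely be taken relative to $\nu_{1/2}\gamma$. This requires tracking, crossing-by-crossing in an unknotting sequence for $K$, that each local move and each arc-repositioning stays away from the chosen loop $\gamma$ and that the relevant fundamental groups of complements of intermediate surfaces remain cyclic so Proposition \ref{Prop repositioning arc} applies — exactly the subtlety that consumed most of the proof of Proposition \ref{Prop Rim-Surgery internal}. A secondary subtlety is the push-off argument for the odd-self-intersection sphere in the non-spin case: one must ensure the sphere can be made disjoint from \emph{both} tori simultaneously, which uses that both are nullhomologous together with the standard trick of tubing intersection points along arcs in the (simply connected) complement of the sphere.
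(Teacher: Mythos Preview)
Your overall architecture is correct and matches the paper: reduce to internal $\gamma$-stable isotopy via \cite[Section 3.6]{InternalStabilizationBaykurSunukian} and Proposition \ref{Prop repositioning arc}, then apply Theorem \ref{mainthm: int to ext}, and finally use Lemma \ref{lem: Sdi2 in fuzione di S1} to pass from $\overline{\Ss}(T_{F,K},U,\gamma)$ to $\overline{\Ss}(T_{F,K},\gamma)$. The spin case is also essentially right, though the paper argues it more cleanly via the chain $\emptyset\neq\Ss(T,(\gamma,f))\subseteq\Ss(T_{F,K},(\gamma,f))=\overline{\Ss}(T_{F,K},\gamma)$ (the last equality coming from Proposition \ref{prop: Propriety of S}(\ref{prop: SS item 1}) with $k=0$) rather than through Proposition \ref{lem: geometricDualSphere}.

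The genuine gap is in the non-spin case. Your ``push the odd sphere off $T_{F,K}$'' step does not work: tubing $S$ along arcs in $T_{F,K}$ to cancel intersection pairs raises the genus of $S$, so it is no longer a sphere; and the Norman trick (tubing $S$ to a geometric dual of $T_{F,K}$) is unavailable because $T_{F,K}$ has no immersed dual sphere --- its meridian generates $\pi_1(X\bslash T_{F,K})\cong\Z$ and hence does not bound in the complement. Nullhomology of $T_{F,K}$ gives you algebraic intersection zero but no mechanism to remove the points while keeping $S$ spherical, and there is no reason for $X\bslash\nu T$ to be non-spin just because $X$ is.

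The paper circumvents this by an indirect route: first stabilize $X$ by $3(\Stab)$ (which does not change the stabilization set) to force $b_2(X)\geq|\sigma(X)|+6$, then invoke \cite[Theorem 7.2]{SunukjianNathan} to get a \emph{topological} ambient isotopy between $T_{F,K}$ and the unknotted torus $U$. Since $U$ sits in a ball, an odd immersed sphere disjoint from $U$ exists trivially; the topological isotopy carries it to a continuous map $S^2\to X\bslash T_{F,K}$ representing an odd class, which can then be smoothly approximated by an immersion. Only at that point does Lemma \ref{lem: proprieties Ss}(\ref{lem: SS item 3}) apply.
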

\begin{proof} We can assume $T_{F,K}$ and $U$ to be $\nu\gamma$-standard, where $U$ is a smoothly unknotted 2-torus in $X$.
It follows by \cite[Section 3.6]{InternalStabilizationBaykurSunukian} that $T_{F,K},U\subseteq X$ becomes smoothly isotopic after a single trivial stabilization away from $\nu\gamma$ and  that it can be taken  to be relative to $\nu_{1/2}\gamma$. By Proposition \ref{Prop repositioning arc}, the surfaces $T_{F,K},U\subseteq X$ are internally $\gamma$-stably isotopic. By Theorem \ref{mainthm: int to ext} we have that for any $B\in\overline{\Ss}(T_{F,K}, U, \gamma)$ the 2-tori $T_{F,K}, U\subseteq X$ are $B$-stably isotopic, in particular $T_{F,K}\subseteq X\# B$ is smoothly unknotted. 
     Since $U\subseteq X$ can be chosen to be in a small tubular neighborhood of $T_{F,K}\subseteq X$, then by Lemma \ref{lem: Sdi2 in fuzione di S1} we have that $\overline{\Ss}(T_{F,K}, U, \gamma)=\overline{\Ss}(T_{F,K},  \gamma)$.

Now we will prove the content of equation (\ref{eq: StabSet nullhologus 2-tori}).
\begin{equation}\label{eq: StabSet nullhologus 2-tori}
    \emptyset \not={\Ss}(T, (\gamma,f))\subseteq {\Ss}(T_{F,K}, (\gamma,f))=\overline{\Ss}(T_{F,K}, \gamma).
\end{equation}
    Since the 2-torus $T_{F,K}$ has complement with infinite cyclic fundamental group (see \cite[Section 2]{Hoffman_Sunukjian_2020}) we can apply Proposition \ref{prop: Propriety of S}.\ref{prop: SS item 1}, so  ${\Ss}(T_{F,K}, (\gamma,f))=\overline{\Ss}(T_{F,K},  \gamma)$ with framing $f$ of $\gamma$ given by the restriction of $F$ to $\nu\gamma\subseteq\nu T$. 
    Clearly ${\Ss}(T,T_{F,K}, \gamma)\subseteq{\Ss}(T_{F,K}, \gamma)$ 
    and by Lemma \ref{lem: Sdi2 in fuzione di S1} we have that ${\Ss}(T, \gamma)={\Ss}(T,T_{F,K}, \gamma)$. Lemma \ref{lem: proprieties Ss}, and the hypothesis on  the complement of $T$ tell us that $ \emptyset \not={\Ss}(T, \gamma)$.

    If $X$ is a spin manifold, then by Lemma \ref{lem: proprieties Ss} we have that ${\Ss}(T_{F,K}, \gamma)$ can't have two elements. The relation expressed in equation (\ref{eq: StabSet nullhologus 2-tori}) implies that ${\Ss}(T_{F,K}, \gamma)$ has one element and it coincides with ${\Ss}(T, \gamma)$.
    
  We can assume that the inequality
   \begin{equation}\label{eq: sunukjian inequality}
        b_2(X)\geq |\sigma(X)|+6
    \end{equation}
    holds by replacing $X$ with $X\# 3(\Stab)$, here $\sigma(X)$ denotes the signature of $X$. Notice that under this replacement the stabilization set does not change.
         Now, we can apply \cite[Theorem 7.2]{SunukjianNathan} to conclude that the 2-tori $T_{F,K}$ and $U$ are topologically isotopic (see also \cite[Theorem 1.1]{Hoffman_Sunukjian_2020}).  If $X$ is not spin, there exists an immersed 2-sphere with odd self intersection disjoint from $U$ since $X\cong X\# S^4$ and $U$ can be moved in the $S^4$ factor. By the topological ambient isotopy between $T_{F,K}$ and $U$,  we can conclude that there exists a continuous map from the 2-sphere to $X\bslash T_{F,K}$ representing an odd class of $H_2(X;\Z)$, but this map can be approximated by a smooth immersion. Now by Lemma \ref{lem: proprieties Ss} we have that ${\Ss}(T_{F,K}, \gamma)=\{\Stab,\StabTwist\}$.
 \end{proof}

\begin{oss}\label{oss: torus framings and stabilization sets}

If $X$ is a spin manifold, then the only element of stabilization set $\overline{\Ss}(T_{F,K},\gamma)=\Ss(T,\gamma)$ depends on the chosen parametrization of $T$ as $S^1\times S^1$ since this determinate the curve $\gamma$ (see Proposition \ref{prop: Propriety of S}.\ref{prop: SS item 5}). But even fixed a parametrization for $T$  the set $\Ss(T,\gamma)$ still depends also on the chosen framing  $F$ for the 2-torus $T$.
Indeed, given any framing $F_0:\nu T\to S^1\times S^1\times D^2$ for a 2-torus $T$ with trivial normal bundle inside a 4-manifold, it can be composed with the diffeomorphism
    \[t:S^1\times S^1\times D^2\to S^1\times S^1\times D^2 \] defined as
    $t(\alpha,\beta,z)=(\alpha,\beta,\alpha\cdot z)$. So, define $F_n:=t^n\circ F_0$ for any $n\in \Z$, set the curve $\gamma_n:=F_n^{-1}(S^1\times \{p\}\times\{0\})$ with the framing $f_n$ given by the restriction of $F_n$. We can notice that $f_n=\tau^n\circ f_0$ as defined in Construction \ref{constr: framings}. Therefore, since $X$ is spin and $T$ has simply connected complement, we can apply Proposition \ref{lem: geometricDualSphere} to deduce that $\Ss(T,(\gamma,f_0)$ is different from $\Ss(T,(\gamma,f_1))$.
\end{oss}

\section{External stabilization of surfaces obtained by mixing and satellite operations \label{sec: externalPrescribedGroup}}
In this section we will discuss some stabilization behavior for the surfaces produced in \cite{torres2023topologically}. The main results of this section are  Theorem \ref{thm: Stable isotopy for TORRES Z_2} and Corollary \ref{cor: torres patterns}.
First we will review very schematically Torres's construction.
Assume that is given a pair of smooth, closed, simply connected 4-manifold $X$ and $X'$ which are homeomorphic. Assume also that there exists a diffeomorphism
\begin{equation}
    \phi: X'\#(\Stab)\to X\# (\Stab)
\end{equation} such that
\begin{equation}\label{eq: condition basic}
    \phi_*([\Sigma])=[\Sigma]\in H_2(X\#(\Stab);\Z),
\end{equation} where $\Sigma$ is a closed surface in $(\Stab)\bslash D^4$.
This defines a surface $\Sigma':=\phi(\Sigma)\subseteq X\#(\Stab)$ in the same homology class of $\Sigma\subseteq(\Stab)\bslash D^4\subseteq X\#(\Stab)$. 
The surface $\Sigma'$ is defined by the following datum: the manifolds $X$ and $X'$, the surface $\Sigma\subseteq \Stab$ and the diffeomorphism $\phi$.
\begin{equation}\label{def: S'}
    \Sigma'=\Sigma'(X,X',\Sigma,\phi)\subseteq X\# (\Stab)
\end{equation}
It is easy to see that the surfaces $\Sigma,\Sigma'\subseteq X\# (\Stab)$ are topologically equivalent, i.e. there exists a self homeomorphism  of $X\# (\Stab)$ sending $\Sigma$ to $\Sigma'$. Indeed, given any homeomorphism $h:X'\to X$ one can define a homeomorphism $(h^{-1}\# id_{\Stab})\circ\phi: X\# (\Stab)\to X\# (\Stab)$.\\
Furthermore, if the map induced in homology
\begin{equation}
    \phi_{*}:H_2( X';\Z)\oplus H_2(\Stab;\Z)\to H_2(X;\Z)\oplus H_2(\Stab;\Z)
\end{equation}
can be written as 
\begin{equation}\label{eq: condition for Quinn}
    \phi_*=h_*\oplus Id \text{ for some homeomorphism }h:X'\to X,
\end{equation}
then $\Sigma,\Sigma'\subseteq X\# (\Stab)$ are topologically ambient isotopic   by the work of \cite[Theorem 1.1]{Quinn_1986} (see also \cite{gabai2023pseudo}). 
\begin{definition}
\label{def: mixing}
    We say that a surface $\Sigma'\subseteq X\#(\Stab)$ is \emph{a mixing of} $\Sigma\subseteq(\Stab)\bslash D^4\subseteq X\#(\Stab)$ if it is defined as in equation (\ref{def: S'}).
     The surface $\Sigma'$ is \emph{a good mixing of} $\Sigma$ if it is a mixing of $\Sigma$ and it satisfies the condition in equation (\ref{eq: condition for Quinn}). 
\end{definition}

 Now fix a group $G$ that can be finite cyclic or the binary-icosahedral group. In \cite{torres2023topologically} it is proved that there exists a 2-sphere $S_Q\subseteq \Stab$ of zero self intersection such that surgery along this sphere gives a 4-manifold $Q$ that is a $\Q$-homology 4-sphere with fundamental group isomorphic to $G$, see also \cite{sato1991locally}. We can insert the 2-sphere $S_Q$ in the previous construction a define
 \begin{equation}\label{def: S'_G}
     S'_Q:=\Sigma'(X,X',S_Q,\phi)\subseteq X\#(\Stab).
 \end{equation}
 Under opportune assumption on the Seiberg-Witten invariants of $X$ and $X'$, it is shown  that the 4-manifolds $X\# Q$ and $X'\# Q$ are not diffeomorphic, it follows that the two 2-spheres $S_Q,S'_Q\subseteq X\#(\Stab)$ are not smoothly equivalent. Moreover, using a infinite family of pairwise non diffeomorphic manifolds $\{X'_n\}_{n\in\N}$ one can produce an infinite collection of exotically embedded 2-spheres with fundamental group of the complement isomorphic to $G$.

\subsection{Smooth stable equivalence}

One can prove the following proposition about smooth stable \emph{equivalence}, while we will get \emph{isotopy} only with some restrictions in the next section.
\begin{prop} \label{prop: stable equivalence}
Let $\Sigma\subseteq (\Stab)\bslash D^4\subseteq X\#(\Stab)$ be a surface, where $X$ is a closed and simply connected 4-manifold. Assume that $\Sigma'\subseteq X\#(\Stab)$ is a mixing of $\Sigma$, see Definition \ref{def: mixing}. The following holds:
\begin{itemize}
    \item The surfaces $\Sigma,\Sigma'\subseteq X\#(\Stab)$ are topologically equivalent.
    \item If $\Sigma'$ is a good mixing of $\Sigma$, then $\Sigma,\Sigma'\subseteq X$ are topologically ambient isotopic.
    \item If $X$ has indefinite intersection form (or $b_2(X)\leq 8$), then $\Sigma'$ is smoothly equivalent to a good mixing of $\Sigma$ in $X\#(\Stab)$.
        \item The surfaces $\Sigma,\Sigma'\subseteq X\#(\Stab)$ are smoothly $(\Stab)$-stably equivalent.
\end{itemize}
    
\end{prop}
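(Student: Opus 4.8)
The first two bullets are immediate from the discussion preceding Definition \ref{def: mixing}: the homeomorphism $(h^{-1}\# \mathrm{id}_{\Stab})\circ\phi$ gives topological equivalence, and when $\Sigma'$ is a good mixing the extra hypothesis \eqref{eq: condition for Quinn} lets us invoke \cite[Theorem 1.1]{Quinn_1986} to upgrade this to a topological ambient isotopy. So the real content is the third and fourth bullets, and the plan is to derive the fourth from the third together with the definition of a mixing, and to prove the third by a stable-diffeomorphism argument.

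For the third bullet, recall that $\Sigma' = \phi(\Sigma)$ where $\phi\colon X'\#(\Stab)\to X\#(\Stab)$ is a diffeomorphism with $\phi_*[\Sigma]=[\Sigma]$. To show $\Sigma'$ is smoothly equivalent to a \emph{good} mixing, it suffices to replace $\phi$ by a diffeomorphism $\psi\colon X'\#(\Stab)\to X\#(\Stab)$ which agrees with $\phi$ near $\Sigma$ (so that $\psi(\Sigma)=\phi(\Sigma)=\Sigma'$, giving smooth equivalence of the two surfaces via a diffeomorphism of $X\#(\Stab)$) but whose induced map on $H_2$ splits as $h_*\oplus\mathrm{Id}$ for some homeomorphism $h\colon X'\to X$. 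The point is that $\phi_*$ already preserves the class $[\Sigma]$, which generates the hyperbolic summand of the $\Stab$ factor where $\Sigma$ sits; the condition that $X$ has indefinite intersection form (or small $b_2$) is exactly what is needed, via Wall's theorem on stable diffeomorphisms of simply connected $4$-manifolds and the classification of indefinite forms, to modify $\phi$ by an ambient diffeomorphism supported away from $\Sigma$ so that on homology it respects the orthogonal splitting $H_2(X')\oplus H_2(\Stab)\cong H_2(X)\oplus H_2(\Stab)$ coming from the connected-sum decompositions. One then checks that the resulting homology automorphism of $H_2(X')\to H_2(X)$ is realized by a homeomorphism $h$ (Freedman), which is precisely condition \eqref{eq: condition for Quinn}.

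The fourth bullet then follows formally. By definition of a mixing, $\Sigma$ and $\Sigma'$ live in $X\#(\Stab)$, and the diffeomorphism $\phi\colon X'\#(\Stab)\to X\#(\Stab)$ carrying $\Sigma$ (sitting in the $\Stab$ summand of $X'\#(\Stab)$, hence really in $X'\#(\Stab)$ after re-associating the connected sum) to $\Sigma'$ is, after one further connected sum with $\Stab$, a diffeomorphism of pairs $(X\#(\Stab)\#(\Stab),\Sigma)\to(X\#(\Stab)\#(\Stab),\Sigma')$: concretely, $\phi\#\mathrm{id}_{\Stab}$ together with the identification $X'\#(\Stab)\cong X\#(\Stab)$ on the complementary side (using that $X$ and $X'$ become diffeomorphic after one $\Stab$-stabilization, which is built into the construction). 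This is exactly a $(\Stab)$-stable equivalence in the sense of the relevant definition, the connected sums being performed away from the surfaces. I expect the main obstacle to be the bookkeeping in the third bullet: tracking the identifications $H_2(X\#(\Stab))\cong H_2(X)\oplus H_2(\Stab)$ through $\phi$, arranging the modification of $\phi$ to be supported in the complement of $\Sigma$, and verifying that the hypotheses on the intersection form of $X$ genuinely suffice to split the homology automorphism — this is where the indefiniteness (or $b_2(X)\le 8$) enters and where a clean citation to Wall's stable-diffeomorphism results must be matched precisely to the situation at hand.
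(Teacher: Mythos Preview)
Your overall approach matches the paper's, but there is one genuine overcomplication in the third bullet that you should correct.

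For the third bullet you insist on finding $\psi$ that \emph{agrees with $\phi$ near $\Sigma$}, so that $\psi(\Sigma)=\Sigma'$ and hence $\Sigma'$ is itself a good mixing. Wall's theorem gives no such control on the support of the correcting diffeomorphism, and you do not need it. The paper simply post-composes: take any self-diffeomorphism $\alpha$ of $X\#(\Stab)$ (from Wall's result on realizing automorphisms of the intersection form, which is where indefiniteness or $b_2\le 8$ enters) so that $\phi':=\alpha\circ\phi$ satisfies condition~\eqref{eq: condition for Quinn}. Then $\alpha(\Sigma')=\phi'(\Sigma)$ is a good mixing, and $\alpha$ itself is the diffeomorphism of $X\#(\Stab)$ witnessing the smooth equivalence between $\Sigma'$ and $\alpha(\Sigma')$. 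No support condition, no bookkeeping about where the modification happens relative to $\Sigma$.

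For the fourth bullet, note first that it is logically independent of the third (no indefiniteness hypothesis), so your stated plan to ``derive the fourth from the third'' is misleading, though your actual argument is direct and essentially correct. The paper phrases it cleanly: $\phi^{-1}$ gives $(X\#(\Stab),\Sigma')\cong(X'\#(\Stab),\Sigma)$; after adding a fresh $\Stab$ summand, swap the old and new $\Stab$ factors so that $\Sigma$ sits in the new one, then apply $\phi\#\mathrm{id}_{\Stab}$ to turn $X'\#(\Stab)$ into $X\#(\Stab)$ without touching $\Sigma$. Your sentence about ``the identification $X'\#(\Stab)\cong X\#(\Stab)$ on the complementary side'' is this swap-then-$\phi$ move, but saying it explicitly removes the vagueness.
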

\begin{proof}
Let $\Sigma'=\Sigma'(X,X',\Sigma,\phi)$ be defined as in equation (\ref{def: S'}).
The first two point follows from the discussion contained in the paragraph after Definition \ref{def: mixing}.\\
If the manifold $X$ has indefinite intersection form (or $b_2(X)\leq 8$), then by \cite{WallSelfDiffeo} the diffeomorphism $\phi'=\alpha\circ\phi$  satisfy equation (\ref{eq: condition for Quinn}), where $\alpha$ is a smooth automorphism of $X\#(\Stab)$. So $\alpha(\Sigma')$ is a good mixing of $\Sigma$.\\
To prove the last point, notice that $\Sigma'\subseteq X\#(\Stab)$ is smoothly equivalent to $\Sigma\subseteq X'\#(\Stab)$. Moreover, $\Sigma\subseteq (X'\#(\Stab))\#(\Stab)$ is smoothly equivalent to $\Sigma\subseteq (X\#(\Stab))\#(\Stab)$, where the diffeomorphism is given by exchanging the old and the new $(\Stab)$ factors and then applying the diffeomorphism $\phi\# Id_{\Stab}$.
\end{proof}
\begin{oss}
    The proof of the last point of Proposition \ref{prop: stable equivalence} in general gives a map that is not isotopic to the identity. For example the induced map in the second homology group can easily be different from the identity map.
\end{oss}


\subsection{Satellite operations on 2-knots in 4-manifolds and smooth stable isotopy\label{sec: satelliteoperations}}
In this section we will set our attention on surfaces embedded in the tubular neighborhood of embedded 2-spheres. 
In classical knot theory, dimension 3, the satellite operation is well known. It  ties a new knot $K(P,\phi)$ in a neighborhood of a given knot $K$ following a pattern $P$. This pattern $P$ is a knot or a link in the neighborhood of the unknot, and so the operation is defined via an identification $\phi$ of the tubular neighborhood of the unknot and the one of the knot $K$. The same ideas can be applied in dimension 4 using 2-sphere and surfaces. The main two  differences to have in mind are the following:
\begin{itemize}
    \item There are infinite isomorphism class of orientable rank 2 vector bundle over the 2-sphere and they are determined by the Euler number. We can not restrict ourselves to the trivial bundle over $S^2$, which is the normal bundle of unknotted 2-sphere.
    \item Any automorphism of an orientable rank 2 vector bundle over the 2-sphere is isotopic to the identity. Therefore, if two bundles are isomorphic, then there is a unique way to identify them up to isotopy!
\end{itemize}
\begin{prop}\label{prop: rank2 vecotr bundle over the sphere}
    If $V$ is a rank 2 real and oriented vector bundle over $S^2$, then any orientation preserving automorphism of $V$ is isotopic to the identity. 
\end{prop}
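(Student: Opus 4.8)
The plan is to reduce the structure group to $SO(2)$ and then exploit that maps $S^2\to S^1$ are nullhomotopic. First I would fix a fibre metric on $V$; together with the given orientation this reduces the structure group of $V$ to $SO(2)$. Given an orientation-preserving vector-bundle automorphism $\varphi\colon V\to V$ (covering $\mathrm{id}_{S^2}$), apply the polar decomposition fibrewise: write $\varphi_x=R_x\circ S_x$ with $R_x\in SO(V_x)$ and $S_x$ self-adjoint and positive definite, a decomposition which is unique and depends smoothly on $\varphi_x\in GL^+(V_x)$. Then the formula $\varphi^t_x:=R_x\circ\big((1-t)S_x+t\,\mathrm{id}\big)$ defines, for $t\in[0,1]$, a path of fibrewise automorphisms of positive determinant (since $(1-t)S_x+t\,\mathrm{id}$ stays positive definite), starting at $\varphi$ and ending at the \emph{orthogonal} automorphism $R\colon V\to V$ with $R|_{V_x}=R_x$. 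Thus $\varphi$ is isotopic, through orientation-preserving automorphisms, to a section $R$ of the bundle $SO(V)\to S^2$ of fibrewise orientation-preserving isometries.

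Next I would observe that, because each fibre $V_x$ carries an orientation, an orientation-preserving linear isometry of $V_x$ is rotation by a well-defined angle in $\mathbb{R}/2\pi\mathbb{Z}$, independent of the choice of positively oriented orthonormal basis; equivalently, since $SO(2)$ is abelian its conjugation action on itself is trivial, so $SO(V)\cong S^2\times SO(2)$ canonically. Hence a section $R$ of $SO(V)$ is the same datum as a smooth map $\theta\colon S^2\to \mathbb{R}/2\pi\mathbb{Z}\cong S^1$, with $R=\mathrm{id}_V$ exactly when $\theta\equiv 0$. Since $S^2$ is simply connected, $\theta$ lifts to a map $\widetilde\theta\colon S^2\to\mathbb{R}$ to the universal cover, which is nullhomotopic as $\mathbb{R}$ is contractible; pushing this homotopy down, $\theta$ is homotopic to a constant map, and since $S^1$ is path-connected that constant map is in turn homotopic to the constant map at $0$. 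This homotopy is a path of maps $S^2\to S^1$, hence a path of sections of $SO(V)$, i.e. an isotopy through orthogonal (in particular orientation-preserving) automorphisms from $R$ to $\mathrm{id}_V$. Concatenating $\varphi\leadsto R\leadsto\mathrm{id}_V$ gives the statement.

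The only point that requires any care is the reduction step: one must check that the polar-decomposition deformation stays inside orientation-preserving automorphisms — which is clear, since $\det R_x=1>0$ and $\det\big((1-t)S_x+t\,\mathrm{id}\big)>0$ — and that it is smooth jointly in $x$ and $t$, which follows from uniqueness and smoothness of the polar decomposition on $GL^+(2,\mathbb{R})$. Everything after that is formal, the essential input being that the orientation of $V$ trivializes $SO(V)$ as an $S^1$-bundle together with $\pi_2(S^1)=0$, i.e. the specific feature of the base $S^2$ that $H^1(S^2;\mathbb{Z})=0$.
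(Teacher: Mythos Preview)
Your proof is correct and takes a genuinely different route from the paper's. The paper argues locally: it splits $S^2=D_1\cup D_2$, trivializes $V$ over each disk, uses contractibility of $D_1$ to straighten the automorphism there, and then observes that what remains over $D_2$ is a map $(D_2,\partial D_2)\to (GL^+(2;\mathbb{R}),\{\mathrm{id}\})$, which is nullhomotopic rel boundary since $\pi_2(GL^+(2;\mathbb{R}))\cong\pi_2(S^1)=0$. Your argument is global: you first retract to orthogonal automorphisms via polar decomposition, then use the key observation that abelianness of $SO(2)$ makes the conjugation action trivial, so $SO(V)\cong S^2\times S^1$ canonically, reducing the problem directly to $[S^2,S^1]=0$.

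Both proofs hinge on the same homotopy-theoretic input, but yours packages it more cleanly: the trivialization of $SO(V)$ via the rotation-angle map avoids the clutching bookkeeping entirely and makes transparent why the oriented rank-$2$ case is special. The paper's approach, on the other hand, is slightly more elementary in that it never introduces a metric and works directly with $GL^+(2;\mathbb{R})$, and its disk-by-disk strategy would adapt more readily to higher-rank bundles (where $SO(n)$ is no longer abelian and your trivialization trick fails).
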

\begin{proof}
Let $S^2$ be the union of two 2-disks $D_1, D_2\subseteq S^2$. The bundle $V$ restricted over $D_1$ is isomorphic $D_1\times \R^2$ and an automorphism is determined by a map $\varphi_1: D_1\to GL^+(2;\R)\simeq S^1$. So, any automorphism $\phi: V\to V$ can be assumed to be isotopic the identity when restricted to $D_1$. The restriction of $\phi$ to the bundle over $D_2$ is determined by a map $\varphi_2:D^2\to GL^+(2:\R)$ which restricts to the identity on $\partial D_2$. This map is  homotopic to the identity map relative to $\partial D_2$, and so $\phi$ is isotopic to the identity. 
\end{proof}
\begin{definition}
    Consider $S$ to be a embedded 2-sphere in an oriented 4-manifold $M$. Assume that $N$ is an oriented rank 2-vector bundle isomorphic to the normal bundle $\nu S$ of $S\subseteq M$, and call $\phi:N\to \nu S$ an identification. Given any surface $P\subseteq N$ we call the surface $\phi(P)\subseteq M$ the satellite of $S$ with pattern $P$.
The surface $\phi(P)$ does not depend on the map $\phi$ up to an isotopy supported in $\nu S$ and it comes with a canonical embedding map given by $\phi\circ j:P\to \nu S$, where $j:P\hookrightarrow N$ is the inclusion map.
\end{definition}

\begin{oss}\label{rem: mixing satellite}
    A surface $\Sigma\subseteq S^2\times D^2\subseteq (\Stab)\bslash D^4$  can be considered as a \emph{a satellite} of the 2-knot $S:=S^2\times\{0\}\subseteq S^2\times D^2\subseteq X\# (\Stab)$. Moreover, the surface $\Sigma'=\Sigma'(X,X',\Sigma,\phi)\subseteq X\# (\Stab)$ can be considered to be the satellite with \emph{pattern} $\Sigma$ of the 2-knot $S':=\phi(S)$. Notice that the identification of the two tubular neighborhood is given by the restriction of the map $\phi$ to $\nu S'$.
    \[\phi|_{\nu S}:\nu S\to \phi(\nu S)=\nu S'\]
\end{oss}

This motivates us to study satellite operations in dimension 4 and their behaviors with isotopies.

\begin{prop}\label{prop: Patten isotopy} Let $S_1,S_2\subseteq M$ be  oriented 2-spheres in the interior of an oriented 4-manifold $M$. 
    Assume that we can identify the tubular neighborhoods via a diffeomorphism $\phi:\nu S_1\to\nu S_2$ and consider the inclusion maps \[j_i:\nu S_i\hookrightarrow M\quad i=1,2.\]
        If the 2-spheres $S_1,S_2\subseteq M$ are smoothly isotopic, then the maps $j_1$ and $j_2\circ \phi:\nu S_1\hookrightarrow M$ are smoothly isotopic. In particular, for any pattern $P\subseteq\nu S_1$ the surfaces $P,\phi(P)\subseteq M$ are smoothly isotopic.
\end{prop}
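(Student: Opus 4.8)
The plan is to promote the given isotopy of the $2$-spheres $S_1,S_2$ to an isotopy between the two embeddings $j_1$ and $j_2\circ\phi$ of $\nu S_1$ into $M$; the statement about an arbitrary pattern will then follow immediately by restriction. So I would reduce the proposition to the assertion: \emph{if $S_1$ is smoothly isotopic to $S_2$, then the embeddings $j_1,\,j_2\circ\phi\colon\nu S_1\hookrightarrow M$ are smoothly isotopic.}

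Since $S_1$ and $S_2$ are smoothly isotopic, I would start from an ambient isotopy $\Phi_t$ of $M$ with $\Phi_0=\mathrm{id}_M$ and $\Phi_1(S_1)=S_2$. Composing with $j_1$ gives an isotopy of embeddings $\Phi_t\circ j_1\colon\nu S_1\hookrightarrow M$, so $j_1$ is isotopic to $j_1':=\Phi_1\circ j_1$, an embedding whose image $\Phi_1(\nu S_1)$ is a tubular neighborhood of $S_2$ and which carries the zero section $S_1$ diffeomorphically onto $S_2$. It thus remains to compare $j_1'$ and $j_2\circ\phi$, both of which are tubular-neighborhood parametrizations of $S_2$ restricting on the zero section to orientation-preserving diffeomorphisms onto $S_2$.

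Here I would proceed in two stages. First, arrange that $j_1'$ and $j_2\circ\phi$ induce the \emph{same} diffeomorphism $S_1\to S_2$ on the zero section: their discrepancy $(\phi|_{S_1})^{-1}\circ(j_1'|_{S_1})$ is an orientation-preserving self-diffeomorphism of $S_1\cong S^2$, hence isotopic to the identity since $\mathrm{Diff}^+(S^2)$ is connected (Smale), and lifting such an isotopy to a diffeotopy of $\nu S_1$ lets one absorb it into $j_2\circ\phi$. Second, with the zero-section restrictions now equal, invoke the uniqueness of tubular neighborhoods to get an ambient isotopy of $M$ fixing $S_2$ that carries $j_1'$ to $(j_2\circ\phi)\circ\lambda$, where $\lambda$ is an orientation-preserving vector bundle automorphism of $\nu S_1$ covering $\mathrm{id}_{S_1}$. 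Since $\nu S_1$ is an oriented rank-$2$ bundle over $S^2$, Proposition \ref{prop: rank2 vecotr bundle over the sphere} shows that $\lambda$ is isotopic to the identity, so $(j_2\circ\phi)\circ\lambda$ is isotopic to $j_2\circ\phi$. Chaining all these isotopies produces the desired isotopy from $j_1$ to $j_2\circ\phi$.

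Finally, I would restrict the resulting isotopy of embeddings $\nu S_1\hookrightarrow M$ to the compact surface $P\subseteq\nu S_1$: this yields an isotopy of embeddings $P\hookrightarrow M$ from the inclusion of $P$ to the inclusion of $\phi(P)$, which by the isotopy extension theorem comes from an ambient isotopy of $M$, so $P$ and $\phi(P)$ are smoothly isotopic in $M$. The main obstacle is the third paragraph, i.e.\ upgrading an isotopy of the zero sections to an isotopy of their tubular neighborhoods: this forces one to trivialize separately the parametrization of the zero section (via connectedness of $\mathrm{Diff}^+(S^2)$) and the identification of the normal bundle, and it is precisely for the second of these that Proposition \ref{prop: rank2 vecotr bundle over the sphere} — and hence the assumption that the base is $S^2$ — is needed.
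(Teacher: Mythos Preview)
Your proof is correct and follows essentially the same approach as the paper: both arguments reduce to showing the embeddings $j_1$ and $j_2\circ\phi$ are isotopic by first matching the zero-section maps via connectedness of $\mathrm{Diff}^+(S^2)$, then invoking uniqueness of tubular neighborhoods to obtain a residual bundle automorphism, and finally applying Proposition~\ref{prop: rank2 vecotr bundle over the sphere} to kill that automorphism. Your version is simply more explicit about each step.
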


\begin{proof}
     If $\iota:S_1\hookrightarrow\nu S_1$  the standard inclusion map, then the embeddings $ j_1\circ\iota$ and $j_2\circ \phi\circ \iota$ have images that are ambient isotopic. It follows that these maps are isotopic since $S_1$ is a 2-sphere and every orientation preserving self diffeomorphism is isotopic the identity map.  By the uniqueness of the tubular neighborhood,  see for example \cite[Theorem 5.3]{Hirsch_1997}, the maps $j_1$ and $ j_2\circ \phi\circ f$ are isotopic, where $f:\nu S_1\to \nu S_1$ is an isomorphism of vector bundles.
     Since any isomorphism of an oriented rank 2 vector bundle over the 2-sphere is isotopic the  identity, see Proposition \ref{prop: rank2 vecotr bundle over the sphere}, we can conclude the proof.
\end{proof}

Thanks to  \cite{Auchly1stb}, we are ready to prove that any pair of surfaces obtained as satellite of 2-spheres with simply connected complement  are externally stably isotopic.

\begin{thm}\label{thm: patterns}
Let $M$ be a closed, oriented and simply connected 4-manifold and $\alpha\in H_2(M;\Z)$ a homology class.
 Let $S_1$ and $S_2$ be embedded 2-spheres in $X$ with simply connected complement and homology class $\alpha$.
    Fix a pattern $P\subseteq\nu S_1$ and an isomorphism of tubular neighborhoods $\phi:\nu S_1\to\nu S_2$.
    \begin{enumerate}
   \item If $\alpha$ is an ordinary class, then the surfaces $P,\phi(P)\subseteq M$ are $(\Stab)$-stably isotopic.
   \item If $\alpha$ is a characteristic class, then the surfaces $P,\phi(P)\subseteq M$ are $(\StabTwist)$-stably isotopic.
    \end{enumerate}

    Moreover, the result hold also for the embedding maps of the surfaces.
    \end{thm}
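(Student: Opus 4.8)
The plan is to reduce the statement to the main theorem of \cite{Auchly1stb}. That result says that any two smoothly embedded closed oriented surfaces with simply connected complements, the same genus, and the same homology class $\alpha$ become $B$-stably isotopic, where $B=\Stab$ if $\alpha$ is ordinary and $B=\StabTwist$ if $\alpha$ is characteristic. The surfaces $P$ and $\phi(P)$ in our statement are not closed (they live in the tubular neighborhoods $\nu S_i$, which are $D^2$-bundles over $S^2$), so the cited theorem does not apply to them directly; instead, we exploit it for the \emph{core spheres} $S_1$ and $S_2$ themselves and then transport the pattern along an isotopy using Proposition~\ref{prop: Patten isotopy}.

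\medskip

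\noindent First I would apply \cite{Auchly1stb} to the closed oriented 2-spheres $S_1,S_2\subseteq M$: they have simply connected complements by hypothesis, genus $0$, and the same homology class $\alpha$, so there is a diffeomorphism
\[
\Phi:(M\#B,\,S_1)\longrightarrow (M\#B,\,S_2)
\]
smoothly isotopic to the identity, where $B=\Stab$ if $\alpha$ is ordinary and $B=\StabTwist$ if $\alpha$ is characteristic, and the connected sum is performed away from $S_1\cup S_2$. Since $\Phi$ is isotopic to the identity, in particular $S_1$ and $S_2$ are smoothly isotopic \emph{inside} $M\#B$. Now I invoke Proposition~\ref{prop: Patten isotopy} with the ambient manifold $M\#B$ in place of $M$: the identification $\phi:\nu S_1\to\nu S_2$ of tubular neighborhoods (which still exists inside $M\#B$, the connected sum being disjoint from the $\nu S_i$), together with the smooth isotopy just produced, gives that the inclusions $j_1:\nu S_1\hookrightarrow M\#B$ and $j_2\circ\phi:\nu S_1\hookrightarrow M\#B$ are smoothly isotopic as embeddings. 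Applying this isotopy to the pattern $P\subseteq\nu S_1$ shows that $P\subseteq M\#B$ and $\phi(P)\subseteq M\#B$ are smoothly isotopic, which is exactly the assertion that $P$ and $\phi(P)$ are $B$-stably isotopic in the sense of the relevant definition.

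\medskip

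\noindent For the final sentence, that the conclusion holds at the level of embedding maps, I would note that Proposition~\ref{prop: Patten isotopy} already produces an isotopy of the embeddings $\nu S_1\hookrightarrow M\#B$, not merely of their images; restricting this ambient/embedded isotopy to $P$ yields an isotopy of the canonical embedding maps $P\hookrightarrow \nu S_1\hookrightarrow M\#B$ to $P\hookrightarrow \nu S_1\xrightarrow{\phi}\nu S_2\hookrightarrow M\#B$, which are the canonical embedding maps of $P$ and $\phi(P)$ respectively. The main obstacle here is essentially bookkeeping rather than a new idea: one must check that the isotopy of the core spheres supplied by \cite{Auchly1stb} can be upgraded (via uniqueness of tubular neighborhoods, as already used in the proof of Proposition~\ref{prop: Patten isotopy}) to an isotopy of a \emph{neighborhood} of the core sphere that carries $P$ along, and that the choice of the bundle isomorphism $\phi$ does not matter by Proposition~\ref{prop: rank2 vecotr bundle over the sphere}. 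Both of these points are handled inside Proposition~\ref{prop: Patten isotopy}, so the proof is genuinely just the composition "apply \cite{Auchly1stb} to the cores, then apply Proposition~\ref{prop: Patten isotopy} with ambient manifold $M\#B$."
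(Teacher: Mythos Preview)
Your proposal is correct and is exactly the paper's approach: apply \cite{Auchly1stb} to the core spheres $S_1,S_2$ and then invoke Proposition~\ref{prop: Patten isotopy} in the stabilized manifold $M\#B$. One small quibble with your exposition: the pattern $P$ may well be closed (e.g.\ the spheres $P(k)$), so the reason \cite{Auchly1stb} does not apply to $P,\phi(P)$ directly is that their complements need not be simply connected, not that they fail to be closed; but this does not affect the argument.
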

\begin{proof}
    It is sufficient to apply \cite{Auchly1stb} and Proposition \ref{prop: Patten isotopy}. 
\end{proof}
\begin{cor}\label{cor: Isotopy pattern S2xS2}
    Let $X$ to be a closed, oriented and simply connected 4-manifold, the 2-sphere $S:=S^2\times\{p\}\subseteq(\Stab)\bslash D^4\subseteq X\#(\Stab)$ and $S'\subseteq X\#(\Stab)$ be a mixing of $S$. For any pattern $P\subseteq \nu S\subseteq X\#(\Stab)$ and isomorphism of tubular neighborhood $\phi: \nu S\to \nu S'$,
the surfaces $P,\phi (P)\subseteq X\#(\Stab)$ are $(\Stab)$-stably isotopic.  Moreover, the same results hold for the embedding maps.
     If $S'$ is a good mixing of $S$, then $P,\phi (P)\subseteq X\#(\Stab)$ are topological ambient isotopic.
\end{cor}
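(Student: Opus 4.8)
The plan is to read the corollary off Theorem \ref{thm: patterns}, applied to the closed simply connected $4$-manifold $M := X\#(\Stab)$ with $S_1 := S$, $S_2 := S'$, pattern $P$, identification $\phi$, and homology class $\alpha := [S^2\times\{p\}]\in H_2(M;\Z)$, and then to add a short topological postscript coming from Proposition \ref{prop: stable equivalence}. So the work splits into verifying the three hypotheses of Theorem \ref{thm: patterns} on $S$ and $S'$ --- simply connected complements, equal homology classes, and that this class is ordinary --- followed by the topological upgrade.

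First I would check the complements. The complement $M\setminus S$ is assembled from $X$ minus an open $4$-ball and $S^2\times S^2$ minus the union of an open $4$-ball and an open tubular neighborhood of $S$, glued along an $S^3$; since $(S^2\times S^2)$ minus an open tubular neighborhood of $S$ is diffeomorphic to $S^2\times D^2$, which deformation retracts onto $S^2$, and deleting a further open $4$-ball leaves $\pi_1$ unchanged, both pieces, and hence by van Kampen $M\setminus S$, are simply connected. Writing the mixing data as a diffeomorphism $\psi\colon X'\#(\Stab)\to X\#(\Stab)$ with $S' = \psi(S^2\times\{p\})$, the identical argument with $X'$ (also simply connected) in place of $X$ shows $M\setminus S'$ is simply connected. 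Next, the defining condition (\ref{eq: condition basic}) of a mixing is exactly $\psi_*([S^2\times\{p\}]) = [S^2\times\{p\}]$, so $[S'] = [S] = \alpha$. Finally, to see that $\alpha$ is ordinary it suffices to pair it with $[\{q\}\times S^2]$: the intersection number is $1$ whereas $[\{q\}\times S^2]^2 = 0$, violating the characteristic congruence. With these three points in place, Theorem \ref{thm: patterns}(1) gives that $P$ and $\phi(P)$ are $(\Stab)$-stably isotopic, and its final sentence gives the same for the embedding maps.

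For the last assertion, I would argue as follows. If $S'$ is a good mixing of $S$, then by the second bullet of Proposition \ref{prop: stable equivalence} the $2$-spheres $S$ and $S'$ are topologically ambient isotopic in $M$. One then repeats the proof of Proposition \ref{prop: Patten isotopy} in the topological category: a topological ambient isotopy carrying $S$ onto $S'$ can be promoted, using uniqueness of topological normal bundles of locally flat $2$-spheres in a $4$-manifold (here $S$ and $S'$ are smooth, with honest normal bundles, both of Euler number $[S]^2 = 0$), to one carrying $\nu S$ onto $\nu S'$; the induced bundle homeomorphism $\nu S\to\nu S'$ differs from $\phi$ by an orientation-preserving bundle automorphism of $\nu S$, which is isotopic to the identity through bundle automorphisms as in Proposition \ref{prop: rank2 vecotr bundle over the sphere}. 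Concatenating these isotopies carries $P$ to $\phi(P)$, so they are topologically ambient isotopic.

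The only step I expect to require care is this topological postscript: the smooth proof of Proposition \ref{prop: Patten isotopy} invokes smooth uniqueness of tubular neighborhoods and $GL^+(2;\R)\simeq S^1$, and transporting it to the topological category needs the corresponding facts for locally flat $2$-spheres in $4$-manifolds, namely uniqueness of topological normal bundles and contractibility up to homotopy of the relevant automorphism space. Everything preceding it is a routine verification of the hypotheses of Theorem \ref{thm: patterns}.
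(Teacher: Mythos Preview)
Your argument for the first conclusion is correct and matches the paper: apply Theorem \ref{thm: patterns} with $M=X\#(\Stab)$, $S_1=S$, $S_2=S'$, and verify the hypotheses; your verifications of simply connected complements, of $[S]=[S']$, and of $\alpha$ being ordinary are all fine.

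For the topological assertion you take a different route from the paper. You first use Proposition \ref{prop: stable equivalence} on the companion spheres $S,S'$ and then try to run Proposition \ref{prop: Patten isotopy} topologically, which forces you to invoke uniqueness of topological normal bundles for locally flat $2$-spheres in $4$-manifolds. This is a real theorem, so your argument is not wrong, but it is the hard way. The paper instead observes that if $\psi\colon X'\#(\Stab)\to X\#(\Stab)$ is the good-mixing diffeomorphism, then $\psi(P)$ is itself a good mixing of $P$ (since $P\subseteq(\Stab)\setminus D^4$ and $\psi_*$ is the identity on the $H_2(\Stab)$ summand), so Proposition \ref{prop: stable equivalence} applied directly to the surfaces $P$ and $\psi(P)$ gives the topological ambient isotopy. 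The remaining step, that $\phi(P)$ is smoothly isotopic to $\psi(P)$, is just Proposition \ref{prop: rank2 vecotr bundle over the sphere} in the smooth category, which you already have. This avoids the topological tubular neighborhood machinery entirely.
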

\begin{proof}
    It is sufficient to apply Theorem \ref{thm: patterns} choosing $M:=X\#(\Stab)$, $S_1:=S^2\times\{0\}$, $S_2:=\phi(S_1)$ and $P\subseteq S^2\times D^2=\nu S_1$. By assumption $[S_1]=[S_2]$ and it is an ordinary class.\\
    If $S'$ is a good mixing of $S$, then $\phi(P)$ is a good mixing of $P$, therefore they are topologically ambient isotopic by Proposition \ref{prop: stable equivalence}.
\end{proof}

\subsection{The 2-sphere $S_{\Z/k}$ is the satellite of a 2-sphere with simply connected complement}
The aim of this section is to show that some of the 2-spheres $S_Q\subseteq\Stab$ defined by \cite{torres2023topologically} are contained in $S^2\times D^2$, which is a tubular neighborhood of $S:= S^2\times\{0\}$. After that, the main results of this section, Theorem \ref{thm: Stable isotopy for TORRES Z_2} and Corollary \ref{cor: torres patterns}, will follow from the previous section. We will focus on the case in which $G$ is finite cyclic group $\Z/k\Z$ and we will denote the corresponding 2-sphere $S_{\Z/k}$ instead of $S_Q$.
\\
Fix $k\in\N$ and let $Q=\Sigma_{k,0}$ denote the $\Q$-homology 4-sphere defined as in \cite[end of Section 2.5]{torres2023topologically}. Figure \ref{fig: SZ2 before surgery} is a Kirby diagram for $\Sigma_{2,0}$. For other values of $k$ the Kirby diagram of $\Sigma_{k,0}$ is obtained by replacing the 2-handle which attaching sphere loops twice through the dotted 1-handle with one that loops $k$ times. Doing loop surgery on $\Sigma_{k,0}$ along the meridian of the 1-handle gives a 4-manifold $\Sigma_{k,0}^*$ that is diffeomorphic to $\Stab$. The 2-sphere $S_{\Z/k}$ is defined to be the image through  a diffeomorphism $f:\Sigma_{k,0}^*\to\Stab$ of the belt 2-sphere $B\subseteq \Sigma_{k,0}^*$ of the surgery, see \cite[Section 2.6]{torres2023topologically}.   
\begin{equation}\label{eq: S_G}
    S_{\Z/k}:=S_{\Sigma_{k,0}}=f(B)\subseteq\Stab.
\end{equation}

\begin{definition}\label{def: P(k)}
Define the embedded surface $P(k)\subseteq S^2\times D^2$ for $k\in\N$ to be obtained by tubing $k$ parallel and cooriented copies of $S^2$ of the form $S^2\times\{p_1,\dots, p_k\}\subseteq S^2\times D^2$ along $k-1$ arcs $\{q_1\}\times\alpha_1,\dots,\{q_{k-1}\}\times\alpha_{k-1}$. Here the points $p_i\in D^2$ are distinct (and in the interior of $D^2$), the arcs $\alpha_i$ have disjoint interior and the endpoints of $\alpha_i$ are $p_i$ and $p_{i+1}$. In particular, we require that the union $\alpha_1\cup\dots\cup\alpha_{k-1}$ is an arc, see Figure \ref{fig: pK}. The tubing is performed respecting the orientation of each 2-sphere, so that $P(k)$ is an oriented 2-sphere.
\end{definition}
\begin{figure}[h]
    \centering
    \includegraphics[width=0.4\linewidth]{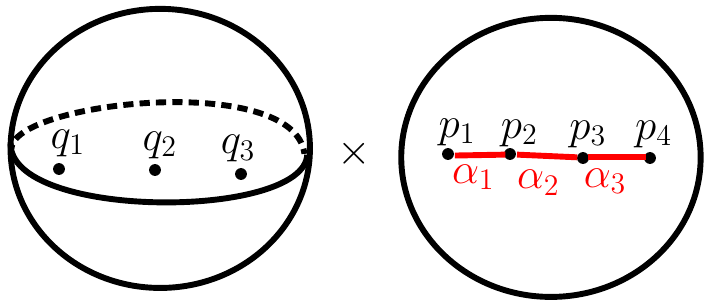}
    \caption{}
    \label{fig: pK}
\end{figure}
From this definition one can check that the complement of $P(k)$ in $S^2\times D^2$ is infinite cyclic generated by one element and the loop $\{p\}\times \partial D^2\subseteq S^2\times D^2$ is homotopic to $k$ times the generator. In particular, the 2-sphere $P(k)\subseteq S^2\times D^2\subseteq S^2\times S^2$ has complement with finite cyclic fundamental group of order $k$.

We will now see that the 2-spheres $S_{\Z/k},P(k)\subseteq S^2\times S^2$ are smoothly equivalent.

\begin{lem}\label{lem. S_G is P(k)}
For any $k\geq 1$ the embedded 2-spheres $S_{\Z/k}\subseteq \Stab$ (\ref{eq: S_G}) and $P(k)\subseteq S^2\times D^2\subseteq\Stab$ (Definition \ref{def: P(k)}) are smoothly equivalent.
Equivalently, the embedded 2-sphere $S_{\Z/k}\subseteq \Stab$ is smoothly equivalent to a satellite 2-knot of $S=S^2\times\{p\}\subseteq\Stab$ with pattern  $P(k)$.
Equivalently, the $\Q$-homology 4-sphere $\Sigma_{k,0}$ is diffeomorphic to the result of sphere surgery along the 2-sphere $P(k)\subseteq\Stab$.
\end{lem}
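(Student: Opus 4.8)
The plan is to compute a Kirby diagram for $\Sigma_{k,0}$ and show, by handle moves, that the 4-manifold obtained from $S^2\times S^2$ by surgery on $P(k)$ has the same Kirby diagram. Recall that $\Sigma_{k,0}$ is presented by a dotted 1-handle together with a 2-handle whose attaching circle runs $k$ times through the 1-handle (with an appropriate framing making the result a $\mathbb{Q}$-homology 4-sphere); this is the left side of Figure \ref{fig: SZ2 before surgery}. First I would describe explicitly the surgery that produces $\Stab$: loop surgery along the meridian $\mu$ of the dotted 1-handle converts the dotted circle into a $0$-framed round $2$-handle, and in the resulting $\Stab$ the belt $2$-sphere $B$ of this surgery is exactly the belt sphere of that $0$-framed $2$-handle. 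The key observation is that the cocore of the former $1$-handle together with the core of the new $2$-handle glue to give $B$, so I can track $B$ precisely through the diagram.

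Next I would compute the complement $S^4\bslash(\nu B)$ inside $\Stab$, equivalently express $\Sigma_{k,0}=(\Stab \bslash \nu B)\cup_\partial (S^1\times D^3)$ as the result of $S^2$-surgery along $B$, and identify a tubular neighborhood $\nu B\cong S^2\times D^2$. On the other side, I would compute the Kirby diagram of $(S^2\times S^2)$ surgered along $P(k)$: using the description of $P(k)$ as $k$ parallel copies of $S^2\times\{p_i\}$ tubed along $k-1$ arcs, one sees that $\nu P(k)$ is a disk bundle over $S^2$ of Euler number $0$, and that $(S^2\times D^2)\bslash\nu P(k)$ is (by the remark right before the lemma) a manifold with $\pi_1=\mathbb{Z}$ and boundary a circle bundle; capping off the $S^2$-surgery, the handle presentation becomes the $0$-framed $2$-handle running $k$ times through a $1$-handle — the same diagram as $\Sigma_{k,0}$. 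Throughout, I would use that $\Stab\bslash(S^2\times D^2)$ is itself $S^2\times D^2$ (the two factors of $S^2\times S^2$ are symmetric and dual), which is what makes the surgery descriptions match.

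Concretely, the steps in order: (1) write down the Kirby diagram for $\Sigma_{k,0}^*=\Stab$ after the loop surgery, and identify $B$ as the belt sphere of the new $0$-framed $2$-handle; (2) isotope $B$ inside $\Stab$ into a standard $S^2\times D^2\subseteq\Stab$, using that the belt sphere of a $0$-framed $2$-handle with unknotted attaching circle is standardly embedded; (3) read off the pattern, i.e. the isotopy class of $B$ inside that $S^2\times D^2$, by tracking how the $k$-fold winding of the attaching circle through the $1$-handle is reflected in the belt sphere — this is where the $k$ parallel sheets tubed along arcs, i.e. $P(k)$, appears; (4) conclude $S_{\Z/k}\subseteq\Stab$ is equivalent to $P(k)\subseteq S^2\times D^2\subseteq\Stab$, and note the three stated reformulations are tautologically equivalent via the definitions of satellite 2-knot and $S^2$-surgery.

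The main obstacle I expect is step (3): correctly identifying the embedded isotopy type of the belt sphere $B$ inside the $S^2\times D^2$ summand, i.e.\ verifying that the winding number $k$ of the $2$-handle's attaching circle through the $1$-handle translates precisely into the tubed configuration $P(k)$ rather than some other pattern with cyclic-order-$k$ complement. This requires a careful handle-by-handle dualization — turning the diagram "upside down" so that the belt sphere of a $2$-handle becomes the attaching sphere of a dual $2$-handle — and matching framings and the meridional generator of $\pi_1$ on both sides. The homological and $\pi_1$ bookkeeping (both complements have $\pi_1=\mathbb{Z}/k$, generated by a meridian, with $\{p\}\times\partial D^2$ representing $k$ times the generator) serves as a consistency check but is not by itself enough; the diffeomorphism must be exhibited at the level of the Kirby calculus.
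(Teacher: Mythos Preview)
Your proposal is correct and takes essentially the same approach as the paper: both argue by tracking the belt 2-sphere $B$ of the loop surgery through a sequence of Kirby moves from the diagram of $\Sigma_{k,0}^*$ to the standard diagram of $\Stab$, and identifying the resulting surface as $P(k)$. The paper carries this out explicitly (for $k=2$, with the general case analogous) via the handle slides and cancellations in Figure~\ref{fig: S_Z_2}, using the surface-sliding technique of \cite[Section~2]{Auckly_Kim_Melvin_Ruberman_2015}; your step~(3) ``dualization'' concern is resolved there by directly following the shaded surface through each move rather than by an abstract upside-down argument.
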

\begin{proof}
By definition $S_{\Z/k} $ and the belt 2-sphere $B$ of the loop surgery defining $\Sigma_{k,0}^*$ are smoothly equivalent.
The idea of the proof is to follow the surface $B\subseteq\Sigma_{k,0}^*$ in a sequence of Kirby diagrams related by isotopies and slides starting from $\Sigma_{k,0}^*$ and arriving to the standard Kirby diagram of $S^2\times S^2$ (in \cite[Section 2]{Auckly_Kim_Melvin_Ruberman_2015} it is explained how to slide surfaces in Kirby diagrams).
    This sequence is depicted and commented in Figure \ref{fig: S_Z_2} for the case $k=2$. A similar sequence of steps can be obtained for any $k\geq 2$.
\end{proof}

\begin{figure}
     \centering
     \begin{subfigure}[b]{0.3\textwidth}
         \centering
         \includegraphics[width=0.8\textwidth]{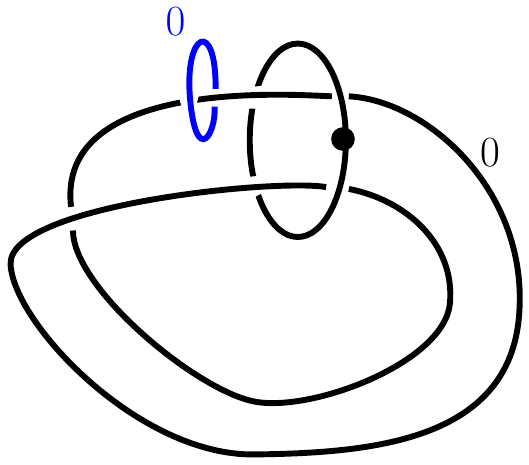}
         \caption{\label{fig: SZ2 before surgery}}
     \end{subfigure}
     \hfill
     \begin{subfigure}[b]{0.3\textwidth}
         \centering
         \includegraphics[width=0.8\textwidth]{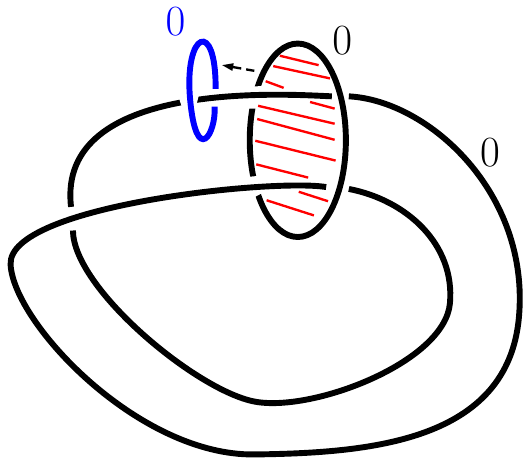}
         \caption{\label{fig: SZ2 after sugery}}
     \end{subfigure}
     \hfill
\begin{subfigure}[b]{0.3\textwidth}
         \centering
         \includegraphics[width=0.8\textwidth]{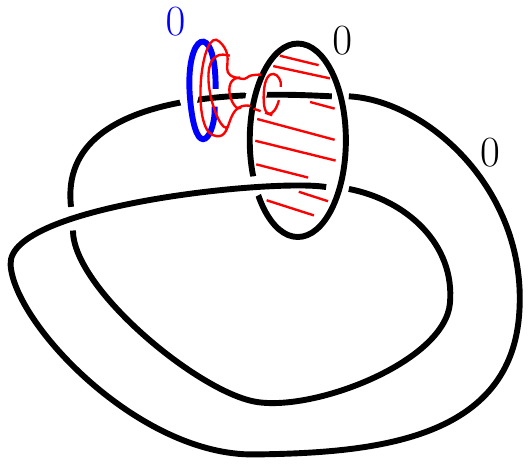}
         \caption{\label{fig: SZ2 slide}}
     \end{subfigure}
     \hfill
     \begin{subfigure}[b]{0.3\textwidth}
         \centering
         \includegraphics[width=0.8\textwidth]{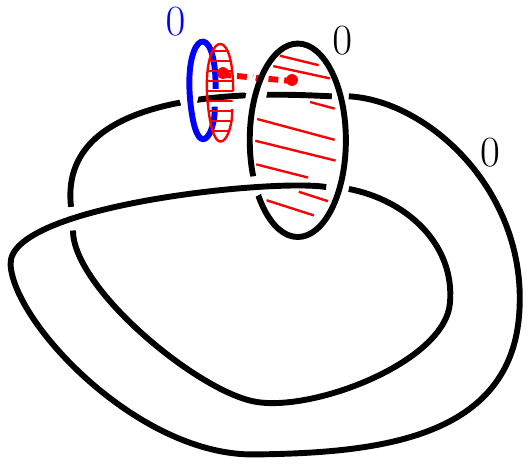}
         \caption{\label{fig: SZ2 d}}
     \end{subfigure}
     \hfill
     \begin{subfigure}[b]{0.3\textwidth}
         \centering
         \includegraphics[width=0.8\textwidth]{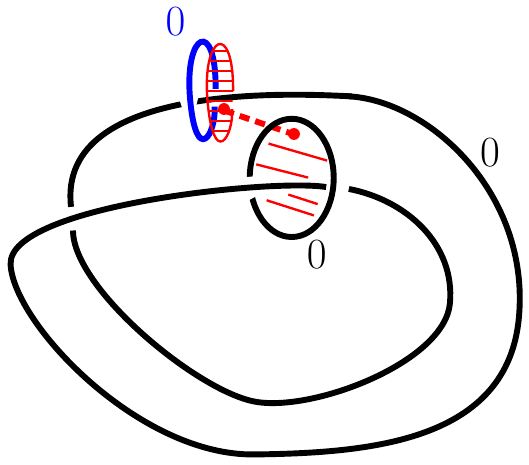}
         \caption{\label{fig: SZ2 e}}
     \end{subfigure}
     \hfill
     \begin{subfigure}[b]{0.3\textwidth}
         \centering
         \includegraphics[width=0.55\textwidth]{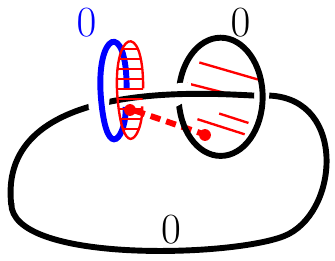}
         \caption{\label{fig: SZ2 f}}
     \end{subfigure}
     \hfill\\
     \begin{subfigure}[b]{0.3\textwidth}
         \centering
         \includegraphics[width=0.6\textwidth]{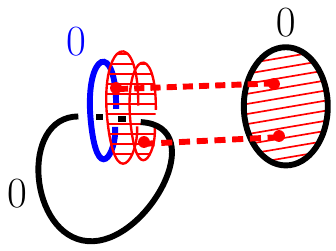}
         \caption{\label{fig: SZ2 g}}
     \end{subfigure}
     \hfill
     \begin{subfigure}[b]{0.3\textwidth}
         \centering
         \includegraphics[width=0.6\textwidth]{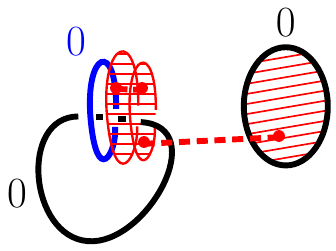}
         \caption{\label{fig: SZ2 h}}
     \end{subfigure}
     \hfill
     \begin{subfigure}[b]{0.3\textwidth}
         \centering
         \includegraphics[width=0.3\textwidth]{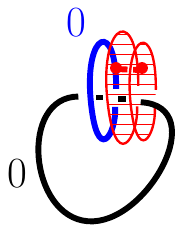}
         \caption{\label{fig: SZ2 last}}
     \end{subfigure}
     \hfill
     \caption{\label{fig: S_Z_2} The Figures \ref{fig: SZ2 before surgery}-\ref{fig: SZ2 last} are Kirby diagrams, they are completed by a 4-handle and, except for the last one, a 3-handle. Figure \ref{fig: SZ2 before surgery} is $Q_{\Z_2}=\R P^2\widetilde{\times} S^2$ the orientable $S^2$ bundle over $\R P^2$ with $0$ Euler number depicted as in \cite[Figure 5.46]{Gompf_Stipsicz_1999}. Figure \ref{fig: SZ2 after sugery} is the result of loop surgery along a $0$-framed meridian of the dotted circle in Figure \ref{fig: SZ2 before surgery}. The belt 2-sphere $B$ of the surgery is the union of the red dashed region and a 2-disk on the corresponding 2-handle. 
      The following sequence of Kirby diagram had been already be considered in \cite[Lemma 1]{torres2023topologically}, our contribution is to follow the 2-sphere $B$ in this sequence.
     Figure \ref{fig: SZ2 slide} is obtained  by the handle slide indicated by an arrow in the previous picture. In Figure \ref{fig: SZ2 d} we see the surface in the previous picture as the result of an internal stabilization along the dotted red. Notice that both 2-disks indicated in the figure are completed by the 2-disks sitting in the nearby 2-handle. By isotopies, we get the next two Figures, \ref{fig: SZ2 e} and \ref{fig: SZ2 f}. With a new handle slide, and steps similar to the previous ones, we obtain Figure \ref{fig: SZ2 g}, here three surfaces are dashed and two dotted arcs which connect them. The second 2-disk from the left have interior pushed into the 4th dimension so that the arc on top, lying in $S^3$, is not intersecting that surface. We can move this last arc along the other one via an isotopy, see Figure \ref{fig: SZ2 h}. Finally, we can remove the 2-sphere on the right since is smoothly unknotted. See Figure \ref{fig: SZ2 last}. In the last figure we can recognize the surface $P(2)\subseteq\Stab$.  }
\end{figure}

\begin{oss}
    In \cite{KANENOBU1985123} is given a definition of a \emph{k-cable} of a $(d-2)$ dimensional knot in $S^{d}$. In dimension $d=4$  a $k$-cable of a 2-knot $S\subseteq S^4$ (or more generally to an embedded 2-sphere with trivial normal bundle) correspond to the satellite surface of $S$ with pattern $P(k)$. 
\end{oss}
\begin{thm}\label{thm: Stable isotopy for TORRES Z_2}
Let $X$ be a closed, simply connected 4-manifold and let $S_{\Z/k}\subseteq\Stab$ (\ref{eq: S_G}).
    Assume that $S'_{\Z/k}\subseteq X\#(\Stab)$ is a mixing of $S_{\Z/k}\subseteq (\Stab)\bslash D^4$.
    The surfaces $S_{\Z/k},S_{\Z/k}'\subseteq X\#(\Stab)$ are $(\Stab)$-stably isotopic.
\end{thm}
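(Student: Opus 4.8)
The strategy is to recognize both surfaces as satellites of 2-spheres with simply connected complement and then invoke Corollary \ref{cor: Isotopy pattern S2xS2} (equivalently Theorem \ref{thm: patterns}). First I would record that, by Lemma \ref{lem. S_G is P(k)}, the 2-sphere $S_{\Z/k}\subseteq\Stab$ is smoothly equivalent to the satellite of $S:=S^2\times\{p\}\subseteq(\Stab)\bslash D^4$ with pattern $P(k)\subseteq\nu S=S^2\times D^2$; fix such an ambient diffeomorphism $\psi$ of $\Stab$ with $\psi(P(k))=S_{\Z/k}$, and use it to identify, once and for all, $S_{\Z/k}$ with the satellite $\phi_0(P(k))$ for a suitable identification $\phi_0:\nu S\to \nu(\psi(S))$ of tubular neighborhoods. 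Since $\psi(S)\subseteq\Stab$ is an embedded 2-sphere of square zero with simply connected complement (its complement deformation retracts onto that of $S^2\times\{p\}$ in $S^2\times S^2$, which is simply connected), this exhibits $S_{\Z/k}$ as a satellite of a 2-sphere of the kind required by Corollary \ref{cor: Isotopy pattern S2xS2}.

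Next I would unwind the definition of the mixing $S'_{\Z/k}=\Sigma'(X,X',S_{\Z/k},\Phi)$ for the diffeomorphism $\Phi\colon X'\#(\Stab)\to X\#(\Stab)$ appearing in Definition \ref{def: mixing}. By Remark \ref{rem: mixing satellite}, transporting the satellite description of $S_{\Z/k}$ through $\Phi$ shows that $S'_{\Z/k}$ is precisely the satellite of the 2-sphere $S':=\Phi(\psi(S))\subseteq X\#(\Stab)$ with the same pattern $P(k)$, the identification of tubular neighborhoods being $\Phi\circ\phi_0$ restricted to $\nu S$. Because $\Phi$ is a diffeomorphism fixing the homology class of $\Sigma$ (here $[S_{\Z/k}]=0$, so $[S']=0$ too), the 2-sphere $S'$ again has simply connected complement: indeed $X\#(\Stab)\smallsetminus\nu S'$ is diffeomorphic to $X'\#(\Stab)\smallsetminus\nu(\psi(S))$, which is simply connected since removing a tubular neighborhood of a 2-sphere of square zero from $X'\#(\Stab)$ (and capping, or just by van Kampen on $X'\#\,\text{(stuff)}$) leaves a simply connected manifold because $X'$ is simply connected and the $\Stab$-summand contributes nothing to $\pi_1$. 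Thus $S$ (or rather $\psi(S)$) and $S'$ are two square-zero, homologically trivial, hence \emph{ordinary}, 2-spheres in $M:=X\#(\Stab)$ with simply connected complements, and $P(k)$ is a pattern in $\nu(\psi(S))$ with $\Phi\circ\phi_0\colon \nu(\psi(S))\to\nu S'$ an identification of tubular neighborhoods.

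Finally, Theorem \ref{thm: patterns} (case (1), since $\alpha=0$ is ordinary) applied to $M=X\#(\Stab)$, the 2-spheres $\psi(S)$ and $S'$, the pattern $P(k)$, and the identification $\Phi\circ\phi_0$, gives that $P(k)$-satellite of $\psi(S)$ and the $P(k)$-satellite of $S'$ — that is, $S_{\Z/k}$ and $S'_{\Z/k}$ — are $(\Stab)$-stably isotopic in $X\#(\Stab)$, and the conclusion also holds at the level of embedding maps. (Equivalently one may just cite Corollary \ref{cor: Isotopy pattern S2xS2} directly with $S$ there being $\psi(S)$ and $P:=P(k)$, after observing via Lemma \ref{lem. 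S_G is P(k)} that $P(k)$ is exactly the pattern producing $S_{\Z/k}$.) The only genuinely delicate point is bookkeeping: one must make sure that the smooth \emph{equivalence} $S_{\Z/k}\cong\psi(P(k))$ furnished by Lemma \ref{lem. S_G is P(k)} is upgraded to a statement about satellites with a \emph{fixed} identification of tubular neighborhoods, so that Remark \ref{rem: mixing satellite} applies verbatim; this is where Proposition \ref{prop: rank2 vecotr bundle over the sphere} (uniqueness up to isotopy of the identification of rank-$2$ oriented bundles over $S^2$) does the work, guaranteeing that the satellite construction is insensitive to the choice of $\phi_0$. Everything else is a direct citation.
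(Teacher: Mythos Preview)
Your approach is essentially the paper's: use Lemma~\ref{lem. S_G is P(k)} to recognize $S_{\Z/k}$ as the $P(k)$-satellite of a square-zero sphere with simply connected complement, observe that the mixing $S'_{\Z/k}$ is then the $P(k)$-satellite of $S'=\Phi(\psi(S))$, and invoke the satellite stabilization result. The paper first applies the equivalence $\psi$ to carry $S_{\Z/k}$ back to $P(k)$ and then cites Corollary~\ref{cor: Isotopy pattern S2xS2}; you leave $S_{\Z/k}$ in place and cite Theorem~\ref{thm: patterns} directly. These are the same argument up to conjugation by $\psi$.

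There is, however, a factual slip in your middle paragraph that you should repair. The class $[S_{\Z/k}]$ is \emph{not} zero: by Definition~\ref{def: P(k)} and Lemma~\ref{lem. S_G is P(k)} it equals $k[S^2\times\{p\}]$ in $H_2(\Stab;\Z)$, and likewise $[\psi(S)]$ and $[S']$ are nonzero primitive classes of square zero, not ``homologically trivial''. This matters because Theorem~\ref{thm: patterns} requires the two spheres to lie in the \emph{same} homology class $\alpha$ and that $\alpha$ be ordinary; your justification of both facts (``$\alpha=0$'') is wrong. The fix is short: the mixing hypothesis gives $\Phi_*([S_{\Z/k}])=[S_{\Z/k}]$, i.e.\ $k\,\Phi_*([\psi(S)])=k\,[\psi(S)]$, and since $H_2(X\#(\Stab);\Z)$ is torsion-free one obtains $[S']=\Phi_*([\psi(S)])=[\psi(S)]$; this common class is ordinary because it has a geometric dual of square zero in the $\Stab$ summand (equivalently, $[S^2\times\{p\}]$ is ordinary in $\Stab$ and this is a diffeomorphism invariant). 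With this correction your argument is complete.
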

\begin{proof}
    By Lemma \ref{lem. S_G is P(k)} the 2-sphere $S_{\Z/k}$ is smoothly equivalent the satellite of $S=S^2\times\{p\}$ with pattern $P(k)$. There exists a diffeomorphism of pairs:
    \[\psi: (X\#(\Stab), S_{\Z/k})\to (X\#(\Stab), P(k)).\]
    It follows that $\psi(S_{\Z/k}')\subseteq X\#(\Stab)$ is a mixing of $P(k)$, so it is a satellite surface of a mixing  $S'\subseteq X\#(\Stab)$ of $S$, with pattern $P(k)$ (see Remark \ref{rem: mixing satellite}).
    The surfaces $\psi(S_{\Z/k}'),\psi(S_{\Z/k})\subseteq X\#(\Stab)$ are $(\Stab)$-stably isotopic by Corollary \ref{cor: Isotopy pattern S2xS2}, therefore, the same holds for $S_{\Z/k}', S_{\Z/k}\subseteq X\#(\Stab)$.
\end{proof}

The following corollary shows that applying a non-trivial satellite operation to each 2-sphere in an exotic collection can give rise to an exotic collection.
 
\begin{cor}[\cite{sato1991locally, torres2023topologically}]\label{cor: torres patterns}
 For any $k\geq 1$ let $P(k)\subseteq S^2\times D^2$ be the 2-sphere as in Definition \ref{def: P(k)}.
    There exists a smooth, closed, oriented and simply connected 4-manifold $Z$ and an infinite collection $\Co(1)$ of smoothly embedded and oriented 2-spheres in $Z\#(\Stab)$ with trivial normal bundle, such that for any $k\in\N$ the collection of 2-spheres
    \[\Co(k):=\{S(k)\subseteq Z\#(\Stab)| S(k)\text{ is the satellite surface of $S\in\Co(1)$ with pattern $P(k)$}\}\]
     have the following proprieties:
    \begin{enumerate}
        \item for any element $S(k)\in\Co(k)$, the surfaces $S(k)$ and $P(k)\subseteq S^2\times D^2\subseteq (\Stab)\bslash D^4\subseteq Z\#(\Stab)$ are topologically isotopic.
       In particular, the fundamental group of the complement of $S(k)$ is isomorphic to $\Z/k\Z$ and its homology class is $k[S^2\times\{p\}]\in H_2(Z\#(\Stab);\Z)$.
        \item The collection of 4-manifolds obtained by performing sphere surgery on $Z\#(\Stab)$ along the elements of $\Co(k)$ are pairwise non diffeomorphic.
        In particular, any distinct pair of elements $S(k),S'(k)\in\Co(k)$ are not smoothly equivalent, i.e. there is no diffeomorphism of pairs
        \[(Z\# (\Stab),S(k))\to (Z\# (\Stab),S'(k))\]
        Moreover, the surfaces $S(k),P(k)\subseteq Z\#(\Stab)$  are $(\Stab)$-stably isotopic.
    \end{enumerate}
    In particular, $Z$ can be taken to be a 4-manifold of the form $\C P^2\#j\overline{\C P^2}$ for $2\leq j\leq 7$.
\end{cor}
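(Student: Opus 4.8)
The plan is to assemble the ingredients already developed in this section: realize each $2$-sphere of a Torres-type exotic family as a satellite with pattern $P(k)$ via Lemma~\ref{lem. S_G is P(k)}, read off the topological and $(\Stab)$-stable isotopy statements from Corollary~\ref{cor: Isotopy pattern S2xS2}, and import the pairwise non-diffeomorphism from \cite{sato1991locally, torres2023topologically}. The point of the corollary is essentially a bookkeeping argument glueing these together.

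First I would fix the ambient manifold and the collection $\Co(1)$. By the construction recalled at the start of this section, applied with $G=\Z/k\Z$, there is a closed, simply connected $Z$ diffeomorphic to $\C P^2\#j\overline{\C P^2}$ for some $2\le j\le 7$, an infinite family $\{X'_n\}_{n\in\N}$ of pairwise non-diffeomorphic $4$-manifolds homeomorphic to $Z$, and diffeomorphisms $\phi_n\colon X'_n\#(\Stab)\to Z\#(\Stab)$ fixing the homology class of $S:=S^2\times\{p\}\subseteq(\Stab)\bslash D^4$; moreover the family can be chosen so that $X'_n\#\Sigma_{k,0}$ are pairwise non-diffeomorphic for \emph{every} $k\ge 1$, because the Seiberg--Witten data separating the $X'_n$ survives connected sum with the $\Q$-homology sphere $\Sigma_{k,0}$. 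Since $Z$ has indefinite intersection form, Proposition~\ref{prop: stable equivalence} lets me replace each mixing $\phi_n(S)$ by a smoothly equivalent \emph{good} mixing $\tilde S_n=\psi_n(S)$ for suitable diffeomorphisms $\psi_n$ satisfying (\ref{eq: condition for Quinn}), and I set $\Co(1):=\{\tilde S_n:n\in\N\}$. Each $\tilde S_n$ has trivial normal bundle since $S$ does; the surgery computation below with $k=1$ (where $\Sigma_{1,0}$, being surgery on $S^2\times\{p\}\subseteq\Stab$, is $S^4$) shows the $\tilde S_n$ are pairwise non-equivalent, so $\Co(1)$ is infinite.

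Next I would identify the surgered manifolds. Fix $k$ and $n$ and let $S(k):=\tilde S_n(P(k))$ be the satellite of $\tilde S_n$ with pattern $P(k)\subseteq\nu S$. After isotoping $\psi_n$ so that $\psi_n(\nu S)=\nu\tilde S_n$ (uniqueness of tubular neighborhoods), we have $S(k)=\psi_n(P(k))$ up to isotopy supported in $\nu\tilde S_n$. Because surgery along a framed $2$-sphere takes place inside its tubular neighborhood, surgery on $Z\#(\Stab)$ along $S(k)$ is diffeomorphic, via $\psi_n^{-1}$, to surgery on $X'_n\#(\Stab)$ along $P(k)$; and since $P(k)$ together with its neighborhood lies in the $(\Stab)\bslash D^4$ summand, this equals $X'_n\#N_k$, where $N_k$ is the sphere surgery on $\Stab$ along $P(k)$. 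By Lemma~\ref{lem. S_G is P(k)}, $N_k=\Sigma_{k,0}$, so surgery on the elements of $\Co(k)$ produces precisely the $4$-manifolds $\{X'_n\#\Sigma_{k,0}\}_{n\in\N}$.

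Finally I would read off the two properties. For (1): Corollary~\ref{cor: Isotopy pattern S2xS2}, applied to the good mixing $\tilde S_n$ and the pattern $P(k)$, gives that $S(k)$ and $P(k)$ are topologically ambient isotopic in $Z\#(\Stab)$; hence $\pi_1$ of the complement of $S(k)$ equals that of $P(k)$, which (by the discussion after Definition~\ref{def: P(k)} and van Kampen, using $\pi_1(Z)=1$) is $\Z/k\Z$, and $[S(k)]=[P(k)]=k[S^2\times\{p\}]$. For (2): the surgered manifolds $X'_n\#\Sigma_{k,0}$ are pairwise non-diffeomorphic by the choice of family, and since surgery is natural under diffeomorphisms of pairs, a diffeomorphism $(Z\#(\Stab),S(k))\to(Z\#(\Stab),S'(k))$ between distinct elements of $\Co(k)$ would descend to one of the surgered manifolds, which is impossible; the first assertion of Corollary~\ref{cor: Isotopy pattern S2xS2} then gives the $(\Stab)$-stable isotopy between $S(k)$ and $P(k)$. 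The step I expect to be the main obstacle is keeping the Seiberg--Witten obstruction alive uniformly: verifying that one family $\{X'_n\}$ works for all $k$ at once and that connected sum with $\Sigma_{k,0}$ does not kill the invariant. This is exactly what is supplied by \cite{torres2023topologically, sato1991locally}; everything else is tubular-neighborhood bookkeeping and the locality of surgery.
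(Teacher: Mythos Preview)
Your proof is correct and follows essentially the same route as the paper: build $\Co(1)$ from good mixings of $S^2\times\{p\}$ (using Proposition~\ref{prop: stable equivalence} and the indefinite form of $Z$), identify sphere surgery on the satellites with $X'_n\#\Sigma_{k,0}$ via Lemma~\ref{lem. S_G is P(k)}, read off the topological and $(\Stab)$-stable isotopy from Corollary~\ref{cor: Isotopy pattern S2xS2}, and defer the pairwise non-diffeomorphism of $\{X'_n\#\Sigma_{k,0}\}$ to \cite{torres2023topologically}. Your explicit remark that $\Sigma_{1,0}\cong S^4$ (hence surgery on $P(1)$ recovers $X'_n$) is a nice touch the paper leaves implicit.
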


\begin{proof}
Fix $\{Z_i\}_{i\in\N}$ to be any infinite collection of 4-manifolds that satisfy the assumptions of \cite[Theorem B]{torres2023topologically} and set $Z=Z_1$. 
Each element $Z_i$ can be used to define a 2-sphere $S_i$ which is a good mixing  of $P(1)=S^2\times \{p\}\subseteq (S^2\times S^2)\bslash D^4\subseteq Z\times(\Stab)$, see Definition \ref{def: mixing}. 
\[S_i=S_i(Z,Z_i,P(1),\phi_i)\subseteq Z\#(\Stab).\]
Here we can assume that the maps $\phi_i$ satisfy equation (\ref{eq: condition for Quinn}) since $Z$ has indefinite intersection form, see the third point of Proposition \ref{prop: stable equivalence}.
This gives us the collection $\Co(1):=\{S_i\}_{i\in\N}$.
This is the same construction described in \cite[Theorem B]{torres2023topologically} choosing $M=S^4$. \\
          By Corollary \ref{cor: Isotopy pattern S2xS2} all the elements in the collection $\Co(k)$ are topologically ambient isotopic to $P(k)$  and they become smoothly isotopic after one external stabilization with $\Stab$.\\
The 2-sphere $S_i$ is smoothly equivalent to the 2-sphere $P(1)\subseteq Z_i\#(\Stab)$, therefore performing sphere surgery along $S_i$ gives a 4-manifold diffeomorphic to $Z_i$.
Therefore, the 2-sphere $S_i(k)$, obtained as satellite of $S_i$ with pattern $P(k)$, is smoothly equivalent to the 2-sphere $P(k)\subseteq Z_i\#(\Stab)$, which is the satellite of $P(1)$ with pattern $P(k)$. So sphere surgery along $S_i(k)$ gives a 4-manifold diffeomorphic to $Z_i\#\Sigma_{k,0}$ by Lemma \ref{lem. S_G is P(k)}. By \cite[Theorem B]{torres2023topologically} the manifolds $\{Z_i\#\Sigma_{k,0}\}_{i\in\N}$ are pairwise non diffeomorphic. 
\end{proof}


\section{Stabilization of brunnianly exotic 2-links\label{sec: unlinked}}
\begin{definition}\label{DoublyToroidal}
A smooth,  closed, oriented and simply connected 4-manifold $M$ is said to be \emph{doubly toroidal} if there exists a pair of disjoint  smoothly embedded 2-tori $\{T_i\subseteq X: i=1,2\}$ in $M$  of self intersection $[T_i]^2=0$ and such that complement of those 2-tori $M\bslash (T_1\sqcup T_2)$ is simply connected.
\end{definition}
 For any doubly toroidal $M$ the authors of \cite{(Un)knotted} produce an infinite family of embedded $g$-component 2-links $\{\Gamma_{K,g}:K\subseteq S^3\}$ in $M\# g (S^2\times S^2)$, parameterized by knots in $S^3$  with different Alexander polynomial. 
 Here we will consider the case in which the fundamental group of the complement of the 2-links $\Gamma_{K,g}$ is free with $g$-generators.
With two additional proprieties on $M$ they proved that those 2-links are smoothly inequivalent but pairwise topologically ambient isotopic. Moreover, for $g=1$ the collection of $\{\Gamma_{K,1}:K\subseteq S^3\}$ is collection of 2-spheres  topologically unknotted and smoothly knotted. We will denote as $S_K$ the elements of this collection.

The main objectives of these sections is to show that the surfaces $S_K$ becomes unknotted after an internal stabilization, that the 2-link $\Gamma_{K,g}\subseteq M\#g(\Stab)$ is brunnian and it  becomes smoothly unlinked after one external stabilization with $\Stab$, i.e. $\Gamma_{K,g}$ is bounding a disjoint union of $g$ smoothly embedded 3-disk in $M\#(g+1)(\Stab)$. Under some additional hypothesis, we will achieve those objectives in Theorem \ref{thm: Int unknotted 1} and Theorem \ref{thm: (un)linked stabilization and bruniannity}.

\subsection{The setting\label{sec: setting unlinked}}
Let us do a quick recap, without proofs and technical details, of the constructions involved in \cite{(Un)knotted}. 
Starting with $M$ a doubly toroidal 4-manifold, with a given pair of \emph{framed} 2-tori $\{T_i\hookrightarrow M: i=1,2\}$ as in Definition \ref{DoublyToroidal}, and $K$ a knot in $S^3$, we can define the smooth 4-manifold $M_K$ to be the result of knot surgery operation along $T_1$ (see \cite{KnotSurgery} for further details on this operation).
\begin{equation}
    M_K:=(M\bslash \nu T_1)\cup_{\varphi_K} (S^1\times (S^3\bslash \nu K)).
\end{equation}
Note that by this definition the 2-torus $T_2$ is  a submanifold of $M_K$, so we can define $Z_K$ as the generalized fiber sum between $M_K$ and the manifold $N_g$ along the 2-tori $T_2\subset M_K$ and $x\times y \subset  N$. In Section \ref{sec: long KT} we will review the construction of $N_g$ and the framed 2-torus $x\times y\subseteq N_g$.
\begin{equation}\label{eq X_K^Z}
    Z_{K,g}:=(M_K\bslash\nu T_2)\cup (N_g\bslash \nu(x\times y)).
\end{equation}
One can check that $\pi_1(Z_{K,g})$ is isomorphic to the free group with $g$ generators and in \cite{(Un)knotted} is fixed a set of $g$ simple loop $\mathcal{L}$ representing generators for $\pi_1(Z_{K,g})$ (after connecting them to a base point). Moreover, the elements of $\mathcal{L}$ are a subset of  $N\bslash \nu(x\times y)$, and a framing for each of those loops is fixed. \\
Finally, we can perform loop surgery along each element in the collection $\mathcal{L}$ in $Z_{K,g}$ with respect to the chosen framings and obtain a smooth manifold $Z_{K,g}^*$. 
\[Z_{K,g}^*:=(Z_{K,g}\bslash\nu \mathcal{L})\cup \left(\bigsqcup_g (D^2\times S^2)\right).
\]
Similarly, since $\mathcal{L}\subseteq N_g$, we can define 
\[N_g^*:=\left(N_g\bslash(\nu\mathcal{L})\right)\cup\left(\bigsqcup_g (D^2\times S^2)\right).\]
In \cite[Proposition 13]{(Un)knotted} the authors prove that there exists a diffeomorphism $\phi_K:Z_K^*\to M\# g(S^2\times S^2)$ and the 2-link $\Gamma_{K,g}$ is defined to be the image through $\phi_K$ of the belt spheres of the loop surgeries, that is
\begin{equation}\label{eq spheres}
    \Gamma_{K,g}:=\phi_K(\Gamma_g)\subset M\# g(S^2\times S^2),
\end{equation}
where \begin{equation}\label{eq: Gamma g}
    \Gamma_g:=\bigsqcup_g(\{0\}\times S^2)\subseteq N_g^*\bslash\nu(x\times y)\subseteq Z_{K,g}^*.
\end{equation}

\begin{constr}[Nullhomotopic 2-links \cite{(Un)knotted}]\label{constr: brunnianl exotic 2-links}
    Even if we will omit it from the notation, we would like to remark that the 2-link $\Gamma_{K,g}$ are well determined by the following datum:
\begin{equation}\label{eq: def 2-links unknotted}
\Gamma_K=\Gamma_{K,g}=\Gamma_K(M,g,T_1,T_2,\phi_K)\subseteq M\#g(\Stab),
\end{equation}
where $M$ is a doubly toroidal manifold with \emph{framed} 2-tori $T_1$ and $T_2$, $g$ is a positive natural number defining the number of components, $K\subseteq S^3$ is an oriented knot and $\phi_K:Z_{K,g}^*\to M\# g(\Stab)$ is a diffeomorphism (see \cite[Propositions 13 and 14]{(Un)knotted}).
\end{constr}

\subsubsection{The Kodaira Thurston manifold and $N_g$\label{sec: long KT}}

It is well known that the Kodaira Thurston manifold $N$ can be described as the Cartesian product  $S^1\times Y$, see \cite[Section 3.2]{(Un)knotted}. The 3-manifold $Y$ is the mapping torus of a Dehn twist $D_a: T^2\to T^2$ around an embedded loop $a$ in $T^2$ which together with another loop $b$ forms a basis for $H_1(T^2)$. Using the notation introduced in \cite{(Un)knotted} we can write the Kodaira Thurston manifold $N$ as a $a\times b$ bundle over $x\times\ y$.
\begin{equation}\label{eq: Kodaira as T2 over T2}
    a\times b\hookrightarrow N\to x\times y,
\end{equation}
where $x$ denotes the $S^1$ factor of $N$ and the loop $y$ is a section of $Y$ as $T^2$ bundle over $S^1$.

The proof of \cite[Lemma 2]{(Un)knotted} gives us another way to describe the 3-manifold $Y$: it is the result of a $(1,1)$-Dehn surgery starting from the 3-torus $T^3$ performed along a simple loop $S^1\times\{1\}\times\{1\}$ in $S^1\times S^1\times S^1=T^3$. By this description starting with a Kirby diagram of $T^2\times D^2$, which has the 3-torus $T^3$ as boundary, we can produce a 4-manifold with boundary $Y$ by adding a 2-handle in correspondence to the loop $a=(S^1\times\{1\})\times\{1\}$ with the $1$-framing, see Figure \ref{fig:KT} with $n=1$. 
\begin{oss}\label{ossframing}
    The loop $y$ is the blue loop in Figure $\ref{fig:KT}$, we can fix a framing for the loop $\{p\}\times y\subseteq N$ by fixing the $0$ framing for the loop in the picture.
\end{oss}\begin{figure}[h]
    \centering
    \includegraphics[width=0.45\textwidth]{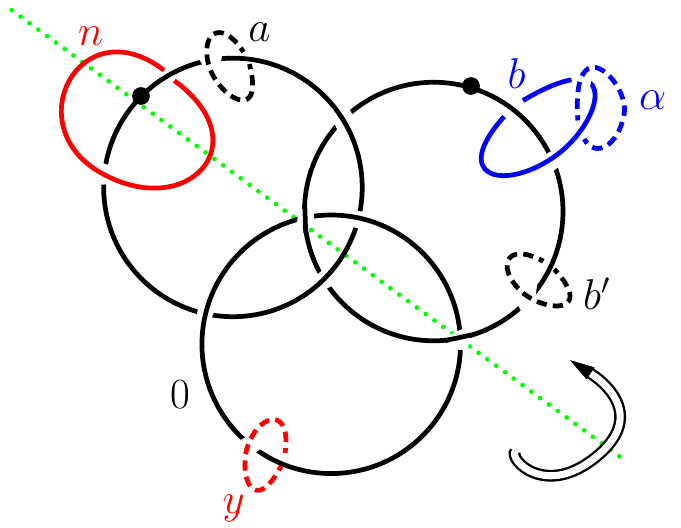}
    \caption{This picture is a Kirby diagram for a 4-manifold with boundary, there are no 3 or 4-handles. The handles drawn in black, two 1-handles and a 2-handle with framing $0$ in the shape of a Borromean link, represents the manifold $T^2\times D^2$. With the addition of the red 2-handle with framing $n$ the boundary is the 3-manifold $Y(n)$, where $Y=Y(1)$. The rotation of $\pi$ around the dotted green line describe an automorphism of $Y(n)$, since replacing all $1$-handles with $0$-framed 2-handles does not change the diffeomorphism type of the boundary component. The loops $y,a,b,b',\alpha$ are framed in $Y(n)$ with the 0-framing.\label{fig:KT}}
\end{figure}
\begin{constr}(The 4-manifold $N_g$ \cite[Section 3.3]{(Un)knotted})\label{constr: long KT}
For $g\in \N^+$, we define the manifold $N_g$ and a 2-torus $(x\times y)\subseteq N_g$ by induction. We set $N_1$ to be the Kodaira Thurston manifold $N$ and $N_{g+1}:=N_g\#_{T^2} N$, where the generalized fiber sum is performed along the framed 2-tori $(x\times y)\subseteq N_g$ and $(x\times y')\subseteq N$ where $y'$ is a parallel (and framed) push off of $y$ in $Y$. To complete the induction step we define the framed 2-torus $(x\times y)\subseteq N_{g+1}$ to be the framed 2-torus $(x\times y)\subseteq (N\bslash\nu(x\times y'))\subseteq N_{g+1}$.
\end{constr}
\begin{oss}
     The manifold $N_g$ is the Cartesian product of $S^1$ and a 3-manifold $Y_g$ as described in \cite[Section 3.3]{(Un)knotted}. So, the notation $(x\times y)\subseteq N_g$ still makes sense since we can interpret $x$ to be the $S^1$ factor and $y$ to be a (framed) loop in $Y_g$. 
\end{oss}

\subsubsection{The Moishezon trick} In this section we will see when a loop surgery on a 4-manifold can be replaced by a 0-log transform on a 2-torus and another loop surgery. This is a trick due to  Moishezon \cite{moishezon1977complex} and it is useful to compute the diffeomorphism type of 4-manifolds in many cases. Let $T\subseteq X$ be a framed 2-torus with a framing $F:\nu T\to S^1\times S^1\times D^2$. Consider the loop $\beta:=\{*\}\times S^1\times\{0\}\subseteq S^1\times S^1\times D^2$ equipped with the product framing and let $X^*$ be the manifold obtained from $X$ as result of loop surgery along the framed loop $F^{-1}(\beta)$. Consider the manifold $\widehat{X}$ which is the result of a $0$-log transform along $T$, that is
\[\widehat{X}:=X\bslash\nu T\cup_{\psi_0} (S^1\times S^1\times D^2), \]
where the gluing map $\psi_0:\partial\nu T\to S^1\times S^1\times\partial D^2$ sends $F^{-1}(\beta)$ to $\{*\}\times\{*\}\times\partial D^2$. 
Let $m\subseteq\partial\nu T$ be a meridian of $T$ equipped with a framing induced by a 2-disk fiber of $\nu T$. Consider the manifold $\widehat{X}_m$ which is the result of loop surgery along the framed loop $m\subseteq \widehat{X}$.
\begin{thm}\label{thm: mois}(\cite{moishezon1977complex}) There exists a diffeomorphism $  \widehat{X}_m\to X^*$ which restricts to the identity on $\partial X$.\end{thm}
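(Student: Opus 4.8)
\textbf{Proof proposal for Theorem \ref{thm: mois} (Moishezon's trick).}

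The plan is to exhibit $\widehat{X}_m$ and $X^*$ as gluings of the same two pieces along the same region, with the only difference being a bookkeeping of which curves bound which disks; a careful comparison of the gluing data will then produce the desired diffeomorphism fixing $\partial X$. First I would isolate the common part: outside a neighborhood of $T$, both manifolds agree with $X\bslash\nu T$, so the entire question is local and takes place inside $\nu T\cong S^1\times S^1\times D^2=:T^2\times D^2$ (with $S^1$-factors labelled by $x$ and $\beta$-direction), and the construction is insensitive to $\partial X$. Thus it suffices to produce a diffeomorphism between the two manifolds obtained from $T^2\times D^2$ by the two recipes, restricting to the identity on $\partial(T^2\times D^2)=T^3$.

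Next I would make both local models completely explicit as handlebodies. On the $X^*$ side, loop surgery along $F^{-1}(\beta)$ inside $T^2\times D^2$ replaces $\nu(F^{-1}(\beta))=\beta\times D^3$ by $D^2\times S^2$; this can be encoded as: attach a $0$-framed $2$-handle along $\beta$ (with its product framing) and then a cancelling $3$-handle. On the $\widehat{X}_m$ side, the $0$-log transform removes $\nu T=T^2\times D^2$ and reglues $T^2\times D^2$ by the map $\psi_0$ sending $\beta=\{*\}\times S^1\times\{0\}$ to $\{*\}\times\{*\}\times\partial D^2$; equivalently, one is doing a torus surgery that replaces the framing so that the curve $\beta$ now bounds a disk (the meridian disk of the new $D^2$ factor). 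Then $\widehat{X}_m$ is obtained by one further loop surgery along the meridian $m$ of $T$. The key algebraic observation is the standard one (see e.g. \cite[Section 8.3]{Gompf_Stipsicz_1999}): $T^2\times D^2$ with a $0$-log transform along $T$ is diffeomorphic to $T^2\times D^2$ itself, but the diffeomorphism does not extend over $\partial$; rather, on the boundary $T^3$ it is the map realizing the $(1,1)$-surgery coordinate change, which sends the meridian $m$ of $T$ to a curve that differs from $m$ by a longitude of $\beta$. I would set this up so that after the log-transform the meridian $m$ has become precisely a curve that, under loop surgery, has the same effect as attaching the $0$-framed $2$-handle along $\beta$ did in the $X^*$ picture — the point being that "surger the core torus to zero along $\beta$, then do loop surgery on the meridian" is the same operation as "do loop surgery on a push-off of $\beta$".

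The cleanest way to carry this out is via Kirby calculus: draw $T^2\times D^2$ as the Borromean-type diagram (two dotted $1$-handles and a $0$-framed $2$-handle, exactly as in Figure \ref{fig:KT}), realize each of the two constructions by adding the appropriate handles, and then slide and cancel handles to pass from one diagram to the other. The $0$-log transform along $T$ corresponds, in such a diagram, to replacing a $1$-handle/$2$-handle pair (this is how the $(1,1)$ surgery shows up, compare the proof of \cite[Lemma 2]{(Un)knotted}), after which the loop surgery along $m$ and the loop surgery along $F^{-1}(\beta)$ manifestly produce the same diagram up to handle slides that are supported away from $\partial$. Since all the moves are supported in the interior of $\nu T$, the resulting diffeomorphism is the identity near $\partial(\nu T)$ and hence extends by the identity over $X\bslash\nu T$, giving a diffeomorphism $\widehat{X}_m\to X^*$ which is the identity on $\partial X$.

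\textbf{Main obstacle.} The delicate point is the framing/orientation bookkeeping: I must check that the meridian $m$, framed by a fiber disk of $\nu T$, is sent by the $0$-log transform to a curve whose loop surgery exactly matches the loop surgery along $F^{-1}(\beta)$ with its \emph{product} framing — not the opposite framing — so that one genuinely gets $X^*$ on the nose rather than a variant with a twisted $S^2$-bundle summand. This is precisely the kind of "which of the two framings" issue handled systematically in Section \ref{sec: proprieties of SS 2}, and verifying it amounts to tracking the normal framing of $\beta$ through the coordinate change on $T^3$ induced by the log transform; I expect this to be the one genuinely careful computation in the argument, everything else being handle slides.
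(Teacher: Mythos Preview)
Your proposal is correct and takes essentially the same approach as the paper's sketch: both localize to $\nu T$, represent $(\nu T)^*$ and $(\nu\widehat{T})_m$ by Kirby diagrams of Borromean type following \cite[Proof of Lemma~3]{Gompf_1991}, and produce the rel-boundary diffeomorphism by an explicit sequence of handle slides and cancellations (the paper's Figures~\ref{fig: Boro S1}--\ref{fig: Boro S10}). The framing check you flag as the main obstacle is precisely what that handle-slide sequence verifies.
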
 
\begin{proof}[Sketch of proof]
    Gompf explains in \cite[Proof of Lemma 3]{Gompf_1991} that the manifolds $X, X^*$ and $\widehat{X}$ are identical outside the sets $\nu T,(\nu T)^*$ and $\nu\widehat{T}$ respectively. Here $\widehat{T}\subseteq \widehat{X}$ denote the 2-torus $S^1\times S^1\times \{0\}\subseteq\widehat{X}$. Moreover, Gompf shows Kirby diagrams for those three pieces: Figures \ref{fig: Boro S0} and \ref{fig: Boro S10} are Kirby diagrams for $\nu\widehat{T}$ and  $(\nu T)^*$ respectively, while a Kirby diagram for $\nu T$ can be obtained from Figure \ref{fig: Boro S10} by replacing the 2-handle bounding the red region with a dotted 1-handle. One can easily identify the loop $m\subseteq\widehat{X}$ in these pictures,  and see that $\nu\widehat{T}$ after a loop surgery along $m$ becomes diffeomorphic to $(\nu T)^*$ relative to the boundary. Figures \ref{fig: Boro S1}-\ref{fig: Boro S10} shows this diffeomorphism given by handle slides and cancellations.
\end{proof}
\subsubsection{Cobordism argument\label{Cobordism argument}}
In this section we provide a tool to compute the diffeomorphism type of 4-manifold obtained as a generalized fiber sum with an extra $\Stab$ summand.
The following proposition is the result of an argument used in the proof of \cite[Theorem 1]{DissolvingKnotSurgery}, our contribution is to state it in a general form that specify the framings of the loops and to take care of the boundary. A similar result is given in \cite[Lemma 4]{Gompf_1991} and similar techniques are used in \cite{baykur2013round}.

\begin{prop}\label{Baykur}
    Let us assume that $X_i$ for $i=1,2$ are two  4-manifolds each with a 2-torus  $T_i\subseteq X_i$ of self intersection $[T_i]^2=0$ and a framing $F_i:{\nu T_i}\to S^1\times S^1 \times D^2$. We denote as $M:=X_1\#_{T^2}X_2$ the manifold obtained from the manifolds $X_1$ and $X_2$ by performing a generalized fiber sum along the framed 2-tori $(T_1,F_1)$ and $(T_2,F_2)$.
    Define the framed loops \begin{equation}
        \alpha_i:F_i^{-1}(S^1\times\{1\}\times\{0\})\quad\text{and}\qquad \beta_i:=F_i^{-1}(\{1\}\times S^1\times\{0\}),
    \end{equation}
    where the framing is the one induced by $F_i$ (\ref{eq: definition framing induced by the restriction}). Let $\mu$ be the framed meridian of $T_1$ with framing induced by the 2-disk fiber of $\nu T_1$. There exists a framed loop $\alpha$ (and $\beta$) embedded in $X_1\#X_2$  defined as a band sum between $\alpha_1$ and $\alpha_2$ (respectively $\beta_1$ and $\beta_2$), such that the manifolds $(X_1\# X_2)_{\alpha,\beta}$ and $M_\mu$, obtained by doing loop surgery along the corresponding framed loops, are orientation preserving diffeomorphic.
    \begin{equation}\label{eq: cobordism diffeo}
        M_\mu\cong(X_1\# X_2)_{\alpha,\beta}
    \end{equation}
    Moreover, we can assume that the diffeomorphism (\ref{eq: cobordism diffeo}) is the identity
on the boundary $\partial M = \partial X_1 \sqcup\partial X_2$.
\end{prop}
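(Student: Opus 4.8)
The plan is to realize the generalized fiber sum $M = X_1 \#_{T^2} X_2$ together with a meridional loop surgery as the boundary of an explicit 5-dimensional cobordism, and then to recognize the "other end" of that cobordism as $(X_1 \# X_2)_{\alpha,\beta}$. First I would set up notation: write $M \bslash \nu T = M_0$, where $T$ is the common image of $T_1$ and $T_2$ in $M$, so that $M_\mu$ is obtained by gluing in $D^2 \times S^2$ along a framed meridian $\mu$ of $T$. The key geometric input is that, near $T$, the fiber sum replaces two copies of $T^2 \times D^2$ by $T^2 \times (\text{annulus}) = T^2 \times (S^1 \times I)$, and performing the meridional loop surgery on this piece can be analyzed locally in a neighborhood $\nu T \times I$.

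The heart of the argument is the cobordism used in \cite[Theorem 1]{DissolvingKnotSurgery}. I would build a 5-manifold $W$ with $\partial W = (-M_\mu) \sqcup (X_1 \# X_2)_{\alpha,\beta}$ by attaching, to $M_\mu \times [0,1]$, a pair of 5-dimensional 2-handles whose attaching circles are the loops $\alpha_i$ and $\beta_i$ pushed into a level set; the effect on the upper boundary is precisely to undo the fiber sum (turning $X_1 \#_{T^2} X_2$ back into the ordinary connected sum $X_1 \# X_2$) at the cost of transferring the loop surgeries from $\mu$ onto the band sums $\alpha = \alpha_1 \# \alpha_2$ and $\beta = \beta_1 \# \beta_2$. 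Concretely, the fiber sum and the connected sum differ by a "log transform / Gluck-type" move along $T^2$, and the Moishezon trick (Theorem \ref{thm: mois}) tells us how a loop surgery on a framed $T^2$-region relates to a $0$-log transform plus a meridional loop surgery. Iterating this correspondence in the two circle directions $a$ and $b$ of $T^2$ converts the single surgery along $\mu$ into the two surgeries along $\alpha$ and $\beta$. The framings must be tracked carefully: the framing on $\alpha_i$ (resp. $\beta_i$) is the one induced by $F_i$ as in (\ref{eq: definition framing induced by the restriction}), and the band-sum framing on $\alpha$ (resp. $\beta$) is defined as in Definition \ref{def: connsumloop}; the product framing on $\mu$ coming from the $2$-disk fiber of $\nu T_1$ matches the framing that the handle attachments produce on the other end. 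Since $W$ can be taken to be a product cobordism away from a compact neighborhood of $T$, the identification restricts to the identity on $\partial M = \partial X_1 \sqcup \partial X_2$, which gives the final clause.

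The main obstacle I expect is bookkeeping of framings and orientations through the Moishezon/Gompf handle manipulations: one must verify that the handle slides and cancellations in Figures \ref{fig: Boro S1}--\ref{fig: Boro S10} (as in the sketch of Theorem \ref{thm: mois}) carry the product framing on $\mu$ precisely to the band-sum framings on $\alpha$ and $\beta$, including the correct sign/twist so that the resulting diffeomorphism is orientation preserving. A secondary subtlety is that the two torus directions interact: performing the move in the $a$-direction changes the ambient manifold in which the $b$-direction surgery lives, so the band sums $\alpha_i \# \alpha_j$ and $\beta_i \# \beta_j$ have to be chosen to be disjoint and to avoid the surgery regions, which is exactly the content of the local models in Figure \ref{fig: sum of framed curve}. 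Once these local pictures are pinned down, the global statement follows by excision, since all modifications are confined to $\nu T \cup (\text{neck})$ and everything outside is an untouched product. $\hfill\qed$
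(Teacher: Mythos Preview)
Your instinct to use a 5-dimensional cobordism is correct and matches the paper's strategy, but the specific construction you describe does not work as written, and the invocation of the Moishezon trick is a red herring.

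The paper does \emph{not} build a cobordism between $M_\mu$ and $(X_1\# X_2)_{\alpha,\beta}$; it builds a cobordism $W$ from $X_1\sqcup X_2$ to $M$ and then reads a single \emph{middle level} of $W$ in two ways. Concretely, $W$ is $(X_1\sqcup X_2)\times I$ with a round $1$-handle $(T^2\times D^2)\times I$ glued along $\nu T_1\sqcup\nu T_2$ at the top. Decomposing the $T^2$ factor into its standard handles splits this round handle into one $1$-handle, two $2$-handles, and one $3$-handle. Reading upward from $X_1\sqcup X_2$: the $1$-handle gives $X_1\# X_2$, and the two $2$-handles (whose attaching circles are exactly the band sums $\alpha=\alpha_1\#\alpha_2$ and $\beta=\beta_1\#\beta_2$) give $(X_1\# X_2)_{\alpha,\beta}$. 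Reading downward from $M$: the single $3$-handle, turned upside down, is a $2$-handle attached along the meridian $\mu$ of $T$, giving $M_\mu$. Hence the level $\partial^+W_2$ is simultaneously $(X_1\# X_2)_{\alpha,\beta}$ and $M_\mu$, and the diffeomorphism is tautological rather than the result of any handle cancellation or h-cobordism argument.

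Your proposal instead starts from $M_\mu\times[0,1]$ and attaches ``a pair of $2$-handles along $\alpha_i$ and $\beta_i$''. This has two problems. First, attaching $2$-handles to $M_\mu$ along $\alpha_i,\beta_i$ performs further loop surgeries on $M_\mu$; it does not undo the fiber sum, and there is no evident reason the result is $(X_1\# X_2)_{\alpha,\beta}$. Second, even if your $W$ had the claimed boundary, a cobordism between two $4$-manifolds does not by itself yield a diffeomorphism between them; you would still need a product/s-cobordism argument that you have not supplied. The Moishezon trick (Theorem~\ref{thm: mois}) relates loop surgery on a torus curve to a $0$-log transform plus a meridional surgery; it plays no role here, since the fiber sum is not a log transform and the proposition requires only the round-handle decomposition above. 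The framing bookkeeping you worry about is automatic once the handles are identified: the attaching circles of the two $2$-handles in the round-handle decomposition are, by inspection of $T^2\times D^2\times I$, precisely the band sums of the corresponding torus curves with the product framings, and the dual $3$-handle's attaching sphere has meridian $\mu$ with its disk-fiber framing.
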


\begin{figure}[h]
    \centering
    \includegraphics[width=0.4\textwidth]{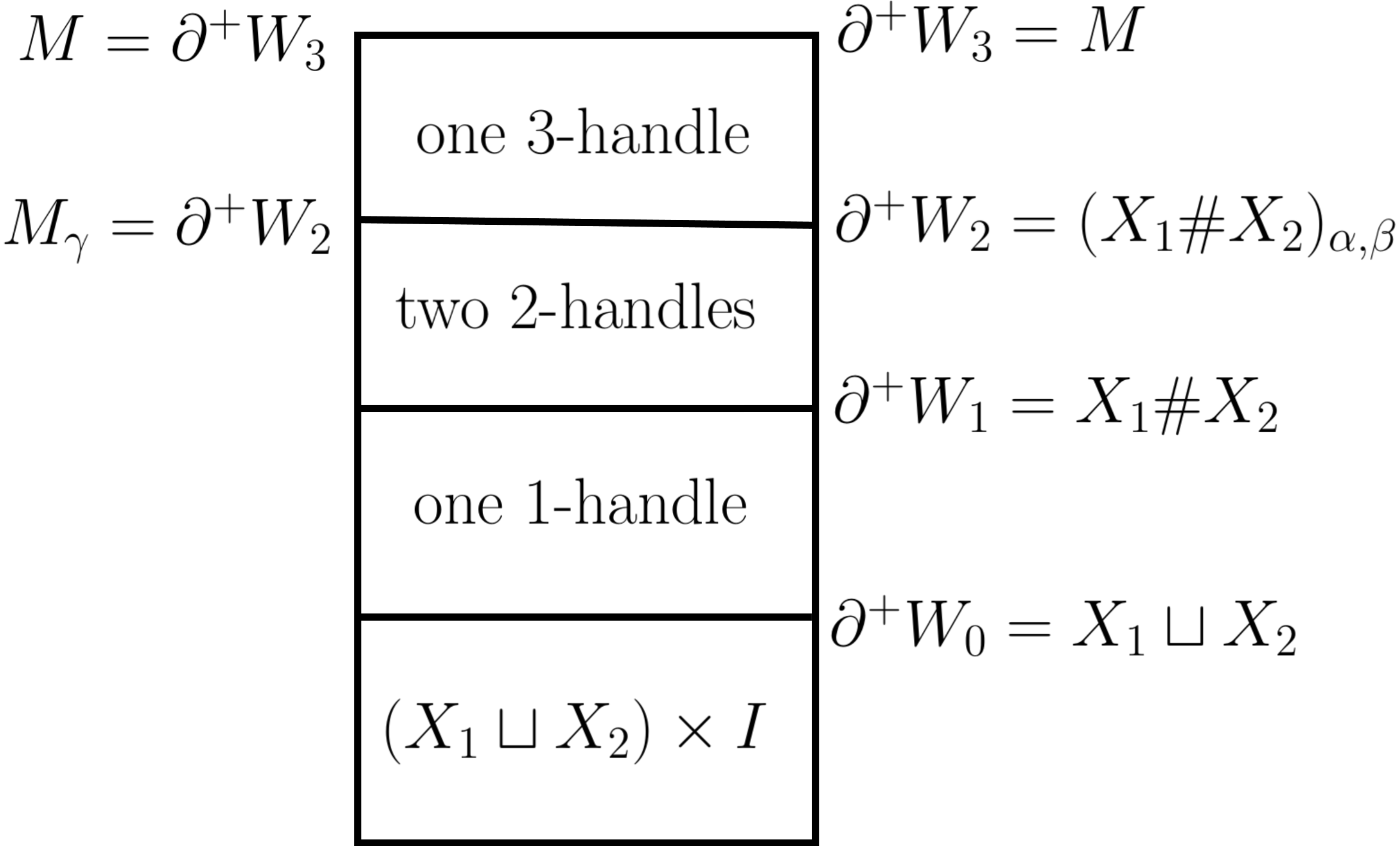}
    \caption{This picture is a schematic representation of the cobordism $W$ constructed in the proof of Proposition \ref{Baykur}.}
    \label{fig:Cobordism}
\end{figure}
   \begin{proof}[Sketch of proof.]
   We will construct a 5-dimensional cobordism $W$ between $X_1\sqcup X_2$ and $M$ and then we will consider a middle layer of this cobordism as the result of different surgeries, one coming from upstairs and one from downstairs, see figure \ref{fig:Cobordism}.
   
  The cobordism $W$ is obtained from  $(X_1\sqcup X_2)\times I$ by attaching a $ (T^2\times D^2)\times I$ glued along $T^2\times D^2\times\partial I$ on ${\nu T_1}\times \{1\}\sqcup{ \nu T_2}\times \{1\}$. We see $W$ as the result of adding four handles to $(X_1\sqcup X_2)\times I$, these handles come from the standard handles decomposition of the $T^2$ in the product $T^2\times D^2\times I$, therefore, one 1-handle, two 2-handles and one 3-handle.
   Let us denote as $W_i$ the cobordism obtained attaching only the $k$-handles with $k\leq i$, that is \[W_{-1}=(X_1\sqcup X_2)\times I \qquad W_i=W_{-1}\cup\bigcup_{k\leq i} k-handles.\]
   Let us denote as $\partial^+W_i$ the \emph{upper} boundary components of $W_i$, see Figure \ref{fig:Cobordism}, i.e. 
   $\partial^+W_i=\partial W_i\bslash ((X_1\sqcup X_2)\times \{0\})$.
There are not $0$-handles, therefore, $\partial^+ W_{-1}=\partial^+ W_0= ((X_1\sqcup X_2))\times \{1\}$.
   There is only one 1-handle with attaching points in the two components of $\partial^+ W_0$, so $\partial^+ W_1$ is diffeomorphic to the connected sum $X_1\# X_2$.
   There are two 2-handles, corresponding to the two 1-handles of $T^2$, to compute the diffeomorphism type $\partial^+ W_2$ we have to perform two loop surgery along the framed attaching circles $\alpha$ and $\beta$ in $\partial^+ W_1$, so there exists the following diffeomorphism
 \[\partial^+ W_2\cong (X_1\# X_2)_{\alpha,\beta}.  \]
 One can check that the framed loop $\alpha$ (and similarly for $\beta$) is the band sum of $\alpha_1$ and $\alpha_2$, by  explicitly writing down the attaching circle of the corresponding 2-handle.
   Since there are no $4$ or $5$-handles 
   $\partial^+ W_3=\partial^+ W_4=\partial^+ W_5=\partial^+ W\cong M$. There is only one $3$-handle, but we can see it, by the dual handle decomposition, as a $2$-handle attached to $W\bslash W_3$ on $\partial^+ W_3$, call $\mu$ the corresponding framed attaching loop. Notice that $\mu$ should be the framed meridian of the attaching 2-sphere of the 3-handle, which is made up of two 2-disks, one in $T_1$ and on in  $T_2$ and an annulus between them in $(X_1\# X_2)_{\alpha,\beta}$. It follows that $\mu$ is a framed meridian of $T_1$ in $\partial\nu T_1\subseteq M$. We have proved the existence of a diffeomorphism $\partial^+W_2\cong M_\mu$ and this concludes the proof.
   \end{proof}

    \begin{definition} \label{def: torus spin compatible framing} Given a coprime pair $(p,q)$ of integers, consider $\gamma_{q/p}\subseteq S^1\times S^1\times\{0\}$ the $q/p$ simple curve, i.e. a simple curve satisfying  \[   [\gamma_{q/p}]=p[S^1\times\{*\}\times\{0\}]+q[\{*\}\times S^1\times\{0\}]\in H_1(S^1\times S^1\times \{0\};\Z).\] We equip the curve $\gamma_{q/p}\subseteq (S^1\times S^1)\times D^2$ with the product framing. We say that a framing $F:\nu T\to S^1\times S^1\times D^2$ for a 2-torus $T\subseteq X$ is $q/p$ \emph{spin-compatible} (resp. \emph{$q/p$ not-spin-compatible}) if $\Stab$ (resp. $\StabTwist$) is an element of ${\Ss}(T,F^{-1}(\gamma_{q/p}))$, where $F^{-1}(\gamma_{q/p})$ is equipped with the framing induced by $F$.   \end{definition}
   The previous definition should be compared with \cite[Definition 2.1]{larson2018surgery}.
   \begin{cor} \label{cor: Cobordism with s2xs2 comapible torus}Let everything be defined as in the Proposition \ref{Baykur}. Assume that $T_1\subseteq X_1$ has simply connected complement.
   Define the framed loop $\widetilde{\alpha_2}$ to be the framed loop $\alpha_2$ if the framing $F_1$ of $T_1\subseteq X_1$ is $0/1$ spin-compatible, or $(\alpha_2)_{op}$ if $F_1$ is  $0/1$ not-spin-compatible. Similarly, we define $\widetilde{\beta_2}$ to be $\beta_2$ if $F_1$ is $1/0$ spin-compatible,  or $(\beta_2)_{op}$ if $F_1$ is $1/0$ not-spin-compatible.  
    There exists a diffeomorphism
       \begin{equation}\label{eq: diffeo cobordism swaps}
           M\#B\cong X_1\#(X_2)_{\widetilde{\alpha}_2,\widetilde{\beta}_2},
       \end{equation}
       where $B$ is $\Stab$ if $T_1$ is an ordinary surface, or it is $\StabTwist$ if $T_1$ is a characteristic surface. The manifold $(X_2)_{\widetilde{\alpha}_2,\widetilde{\beta}_2}$ is  obtained from $X_2$ by performing two loop surgery operations along the two framed loops $\widetilde{\alpha}_2,\widetilde{\beta}_2\subseteq X_2$ (after making them disjoint with a small isotopy). 
       Moreover,  we can assume that the diffeomorphism (\ref{eq: diffeo cobordism swaps}) is the identity
on the boundary.
   \end{cor}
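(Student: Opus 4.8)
The plan is to rewrite both sides of the asserted diffeomorphism using Proposition \ref{Baykur} together with the loop--surgery and band--sum machinery of Section \ref{sec: Internal->external}. First I would apply Proposition \ref{Baykur}: it gives an orientation preserving diffeomorphism $M_\mu\cong(X_1\# X_2)_{\alpha,\beta}$, equal to the identity on $\partial M=\partial X_1\sqcup\partial X_2$, where $\mu$ is the meridian of $T_1$ framed by the $2$-disk fiber of $\nu T_1$ and $\alpha=\alpha_1\#\alpha_2$, $\beta=\beta_1\#\beta_2$. Hence it suffices to produce two further diffeomorphisms rel boundary, namely $M_\mu\cong M\# B$ (the left side) and $(X_1\# X_2)_{\alpha,\beta}\cong X_1\#(X_2)_{\widetilde\alpha_2,\widetilde\beta_2}$ (the right side), with the same $B$.

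For the left side I would use that $T_1\subseteq X_1$ has simply connected complement, so $\pi_1(X_1\bslash\nu T_1)=1$ and the meridian $\mu$ is nullhomotopic in $X_1\bslash\nu T_1\subseteq M$; thus $\mu$ bounds an immersed $2$-disk $D_\mu$ there, which may be taken in the interior. By Remark \ref{rem: loop is connected sum} this yields $M_\mu\cong M\# B$ rel boundary with $B\in\{\Stab,\StabTwist\}$, and $B=\Stab$ exactly when the disk-fiber framing of $\mu$ is compatible with $D_\mu$. To match $B$ with the dichotomy in the statement I would observe that $S:=D\cup D_\mu$, with $D$ the fiber disk of $\nu T_1$, is an immersed $2$-sphere meeting $T_1$ transversally in one point, and argue as in the proof of Proposition \ref{lem: geometricDualSphere} that the disk-fiber framing of $\mu$ is compatible with $D_\mu$ if and only if $[S]^2$ is even. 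If $T_1$ is ordinary one may choose $D_\mu$ so that $S$ has even self-intersection (a geometrically dual immersed $2$-sphere of even square exists, cf.\ the proof of Proposition \ref{prop: Propriety of S}.\ref{prop: SS item 2}), whence $B=\Stab$; if $T_1$ is characteristic then $[S]^2$ is odd for every such $S$, whence $B=\StabTwist$.

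For the right side I would undo the two band sums with Lemma \ref{lem: ConSumOfLoop and Disk}. The curve $\alpha_1=F_1^{-1}(\gamma_{0/1})$ lies on $T_1$; pushing it slightly into $X_1\bslash\nu T_1$ inside $X_1\# X_2$ produces a loop $\overline\alpha_1$ which, by simple connectivity of $X_1\bslash T_1$, bounds an immersed $2$-disk $D^\alpha$ whose interior is disjoint from $\nu T_1$, hence from $\beta_1$, from the connected sum neck, from $X_2$, from the $\beta$-surgery curve, and from $\partial X_1$. Applying Lemma \ref{lem: ConSumOfLoop and Disk} to the band sum $\overline\alpha_1\#\alpha_2$ along $D^\alpha$ then gives an ambient isotopy of $X_1\# X_2$, supported near $\alpha_2\cup D^\alpha$ and the neck, carrying $\alpha$ to $\alpha_2$ if the framing of $\alpha_1$ is compatible with $D^\alpha$ and to $(\alpha_2)_{op}$ otherwise. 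By Proposition \ref{prop: equivalent definition} this compatibility is precisely the condition $\Stab\in\Ss(T_1,\alpha_1)$, i.e.\ that $F_1$ is $0/1$ spin-compatible in the sense of Definition \ref{def: torus spin compatible framing}; so $\alpha$ is carried to $\widetilde\alpha_2$. The identical argument applied to $\beta_1=F_1^{-1}(\gamma_{1/0})$ carries $\beta$ to $\widetilde\beta_2$. After both isotopies the two surgery curves sit inside $X_2$, so loop surgery affects only that summand and $(X_1\# X_2)_{\alpha,\beta}\cong X_1\#(X_2)_{\widetilde\alpha_2,\widetilde\beta_2}$ rel boundary. Composing the three diffeomorphisms gives the corollary.

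I expect the main obstacle to be the bookkeeping in the last step: one must choose the disks $D^\alpha$, $D^\beta$ and the supports of the two ambient isotopies to be mutually disjoint and disjoint from both surgery curves and from the boundary, so that undoing one band sum leaves the other one unchanged; this is where simple connectivity of $X_1\bslash T_1$ is used twice and where one must be careful about the position of the connected sum neck, which sits near $T_1$. A secondary, routine point is verifying that ``the $F_1$-induced framing of $\alpha_1$ is compatible with the complementary disk $D^\alpha$'' is literally the condition defining $0/1$ spin-compatibility, which follows from Proposition \ref{prop: equivalent definition} once one checks that the push off $\overline\alpha_1$ carries the framing induced by $F_1$.
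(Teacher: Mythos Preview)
Your proposal is correct and follows essentially the same route as the paper: apply Proposition~\ref{Baykur} to get $M_\mu\cong(X_1\#X_2)_{\alpha,\beta}$, identify $M_\mu$ with $M\#B$ using that the meridian $\mu$ bounds an immersed disk in the simply connected $X_1\bslash T_1$ (with $B$ determined by the ordinary/characteristic dichotomy), and then use Lemma~\ref{lem: ConSumOfLoop and Disk} to slide each band sum $\alpha_i\#\alpha_{3-i}$ over the immersed disk bounded by $\alpha_1$ (resp.\ $\beta_1$), picking up an $op$ exactly when the framing fails to be spin-compatible. Your discussion of the bookkeeping (disjointness of $D^\alpha,D^\beta$, supports of the isotopies, position of the neck) is more explicit than the paper's, but the underlying argument is the same.
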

   \begin{proof}
   Proposition \ref{Baykur} gives us  diffeomorphism (\ref{eq: cobordism diffeo}).
       Since $T_1$ is an ordinary surface with simply connected complement, it has a geometrically dual immersed 2-sphere in $X_1$. It follows that $\mu$ bounds an immersed 2-disk $D$ in $M=X_1\#_{T^2} X_2$. We can assume that the framing of $\mu$ is compatible with $D$ if $T_1$ is ordinary, and not compatible if $T_1$ is characteristic. Therefore, $M_\mu$ is diffeomorphic to $M\# B$ rel. boundary.
       The loop $\alpha_1$  bounds immersed 2-disk in $X_1$, which can be chosen to be compatible (resp. not compatible) with the framing of $\alpha_1$ if $F_1$ is $0/1$ spin-compatible (resp. not-spin-compatible). Similarly, for $\beta_1$. It follows that the framed loops $\alpha,\beta\subseteq (X_1\# X_2)$ are isotopic to the framed loops $\widetilde{\alpha}_2,\widetilde{\beta}_2$ by Lemma \ref{lem: ConSumOfLoop and Disk}. This implies the existence of a diffeomorphism, relative to the boundary, between $(X_1\# X_2)_{\alpha,\beta}$ and $ X_1\#(X_2)_{\widetilde{\alpha}_2,\widetilde{\beta}_2}$. 
   \end{proof}
\begin{cor}(\cite{auckly2003families,akbulut2002variations,DissolvingKnotSurgery})\label{cor: stabilization rel boundary}
    Let $T_1,T_2\subseteq M$ and $M_K$ be defined as in Section \ref{sec: setting unlinked}, if the framing of $T_1\subseteq M$ is $1/0$ spin-compatible, then there exists a diffeomorphism
    \begin{equation}\label{eq: diffeo knot surgery}
        M_K\#(\Stab)\cong M\#(\Stab).
    \end{equation} If the framing of $T_1\subseteq M\bslash\nu T_2$ is $0/1$ spin-compatible, then the diffeomorphism (\ref{eq: diffeo knot surgery}) is the identity on $\nu T_2$.
\end{cor}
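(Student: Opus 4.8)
The plan is to recast knot surgery as a generalized fiber sum and then invoke the cobordism machinery of Corollary~\ref{cor: Cobordism with s2xs2 comapible torus}. I would begin with the standard identity
\[S^1\times(S^3\bslash\nu K)=(S^1\times S^3)\bslash\nu(S^1\times K),\]
which exhibits $M_K$ as the generalized fiber sum $M\#_{T^2}(S^1\times S^3)$ performed along the framed torus $T_1\subseteq M$ and the torus $T_K:=S^1\times K\subseteq S^1\times S^3$, the latter equipped with the framing $F_K$ built from the product $S^1$-direction together with the $0$-framing (Seifert framing) of $K$ --- this is exactly the framing making the fiber sum reproduce Fintushel--Stern knot surgery.

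Next I would apply Corollary~\ref{cor: Cobordism with s2xs2 comapible torus} with $X_1=M$, $X_2=S^1\times S^3$, $T_2=T_K$. The complement $M\bslash\nu T_1$ is simply connected: it is obtained from the (simply connected) complement of $\nu T_1\cup\nu T_2$ by gluing back $\nu T_2\cong T^2\times D^2$, and the meridian of $T_2$ dies. Since $T_1$ is ordinary in the setting of Section~\ref{sec: setting unlinked}, the relevant block is $B=\Stab$. The standing hypothesis that $F_1$ is $1/0$ spin-compatible is precisely what forces $\widetilde{\beta}_2=\beta_2$; the value of $\widetilde{\alpha}_2$ will be immaterial. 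Thus Corollary~\ref{cor: Cobordism with s2xs2 comapible torus} gives, relative to the boundary,
\[M_K\#(\Stab)\cong M\#(S^1\times S^3)_{\widetilde{\alpha}_2,\widetilde{\beta}_2}.\]

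It then remains to identify the right-hand factor with $\Stab$. Here $\widetilde{\alpha}_2$ is isotopic to the $S^1$-factor $S^1\times\{*\}\subseteq S^1\times S^3$, which carries $\pi_1$; loop surgery along it yields $S^4$, independently of the chosen framing --- this is why the ambiguity in $\widetilde{\alpha}_2$ does not matter. Under this surgery the curve $\widetilde{\beta}_2=\beta_2$, a Seifert-framed copy of $K$, becomes a circle lying in a $3$-ball of $S^4$, hence unknotted, and it bounds the surface obtained by pushing a Seifert surface of $K$ into $S^4$; since in $S^4$ the relative homology class of such a bounding surface is determined and the unknot's Seifert disk induces the product framing, $\widetilde{\beta}_2$ inherits the product framing of the unknot. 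Loop surgery along a product-framed unknotted circle in $S^4$ produces $\Stab$ by \cite[Proposition~5.2.4]{Gompf_Stipsicz_1999}, so $(S^1\times S^3)_{\widetilde{\alpha}_2,\widetilde{\beta}_2}\cong\Stab$ and the first diffeomorphism follows. (Had $\widetilde{\beta}_2$ been $(\beta_2)_{op}$, the framing would have been the non-product one and the output would be $\StabTwist$; this is the role of the $1/0$ spin-compatibility assumption.)

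For the refinement, note that $T_2$ and $\nu T_2$ lie inside $M\bslash\nu T_1$, disjoint from the region where knot surgery, the cobordism $W$ of Proposition~\ref{Baykur}, and the surgeries in the $S^1\times S^3$-summand all take place. So I would only need the disk-slide isotopies of Lemma~\ref{lem: ConSumOfLoop and Disk} used inside Corollary~\ref{cor: Cobordism with s2xs2 comapible torus} to be supported in $M\bslash(\nu T_1\cup\nu T_2)$: the meridian $\mu$ of $T_1$ is nullhomotopic there because $\pi_1(M\bslash(T_1\cup T_2))=1$, and the curves $\alpha_1,\beta_1\subseteq T_1$ bound compatibly-framed immersed disks there precisely because $F_1$, viewed as a framing of $T_1\subseteq M\bslash\nu T_2$, is $0/1$ and $1/0$ spin-compatible respectively. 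Hence the diffeomorphism may be taken to be the identity on $\nu T_2$. The main obstacle I anticipate is this framing bookkeeping: matching the abstract $q/p$ spin-compatibility conditions with the concrete knot-surgery gluing, confirming that $T_1$ is ordinary so that $B=\Stab$, and verifying that $\widetilde{\beta}_2$ really acquires the product framing after collapsing $S^1\times S^3$ to $S^4$, so that the output is $\Stab$ rather than $\StabTwist$.
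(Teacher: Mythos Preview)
Your overall architecture matches the paper's: apply Corollary~\ref{cor: Cobordism with s2xs2 comapible torus} to a fiber-sum description of $M_K$ and then identify the loop-surgered second factor with $\Stab$. The problem is in the very first step. In the generalized fiber sum of Proposition~\ref{Baykur} the normal-disk meridians of the two tori are necessarily identified; this is forced by the cobordism construction and is independent of the framings $F_i$, which only control how the curves \emph{on} the tori are matched. The meridian of $T_K=S^1\times K$ in $S^1\times S^3$ is the knot meridian $\mu_K$, so your fiber sum glues $\mu_{T_1}\leftrightarrow\mu_K$. But Fintushel--Stern knot surgery glues $\mu_{T_1}\leftrightarrow\lambda_K$. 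No choice of $F_K$ repairs this, and the resulting manifold is in general not $M_K$ (it is $M_K$ after first performing a $0$-log transform on $T_1$). So the identity $S^1\times(S^3\bslash\nu K)=(S^1\times S^3)\bslash\nu(S^1\times K)$ is correct but does \emph{not} exhibit $M_K$ as a fiber sum in the sense needed to invoke Corollary~\ref{cor: Cobordism with s2xs2 comapible torus}.

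The paper sidesteps this by taking $X_2=S^1\times S^3_0(K)$ with torus $S^1\times\mu_K$: since $\mu_K$ is isotopic to the core of the $0$-surgery solid torus, its meridian in $S^3_0(K)$ is exactly $\lambda_K$, so the fiber sum really is $M_K$. With this $X_2$ the curves are $\alpha_2=S^1\times\{*\}$ and $\beta_2=\{*\}\times\mu_K$; your direct ``surger the $S^1$, then surger $K$'' computation no longer applies, and the paper instead uses the Moishezon trick on $\beta_2$ (here the hypothesis that $F_1$ is $1/0$ spin-compatible is what guarantees $\widetilde{\beta}_2=\beta_2$ has the torus-induced framing required by Theorem~\ref{thm: mois}). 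A short surgery-diagram argument then shows the resulting $\widehat{X}$ is $S^1\times S^3$, and the remaining two loop surgeries yield $\Stab$ because the intersection form of $(S^1\times S^3_0(K))_{\widetilde{\alpha}_2,\beta_2}$ is even. Your treatment of the relative refinement is along the right lines and close to what the paper does.
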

\begin{oss}
    It is well known that a manifold produced by knot surgery is stabilized by  $\Stab$ or $\StabTwist$, see \cite{auckly2003families},\cite[Theorem 2.2]{akbulut2002variations},\cite{DissolvingKnotSurgery}. Our contribution is to show that it can be stabilized with $\Stab$ and relative to $\nu T_2$ under some condition.
\end{oss}
\begin{proof}
    The manifold $M_K$ can be described as the generalized fiber sum
    \[M_K=M\#_{T^2}(S^1\times S^3_0(K))\]
    along the framed 2-tori $T_1\subseteq M$ and $S^1\times\mu_K\subseteq S^1\times S^3_0(K)$, where $\mu_K$ is the meridian of the knot $K\subseteq S^3$ equipped with the 0-framing and $S^3_0(K)$ is the result of a 0-Dehn surgery along the knot $K\subseteq S^3$ \cite{KnotSurgery}. By Corollary \ref{cor: Cobordism with s2xs2 comapible torus}
    \[M_K\# B\cong M\# (S^1\times S^3_0(K))_{\widetilde{\alpha}_2,\beta_2},\]
    where the diffeomorphism is relative to $\nu T_2\subseteq M$ if  the framing of $T_1\subseteq M\bslash\nu T_2$ is $1/0$ spin-compatible. If $T_1\subseteq M\bslash\nu T_2$ is ordinary, then $B=\Stab$. If $T_1\subseteq M\bslash\nu T_2$ is characteristic, then $B=\StabTwist$, $M\bslash\nu T_2$ is not spin and simply connected. It follows that $M_K\# B$ is always diffeomorphic to $M\# (\Stab)$ rel. $\nu T_2$.
    Now we are going to imitate the proof of \cite[Lemma 6]{(Un)knotted} to show that $(S^1\times S^3_0(K))_{\widetilde{\alpha}_2,\beta_2}$ is diffeomorphic to $\Stab$.
    Since the framing of $\beta_2$ is the framing induced by $S^1\times \mu_K$, we can apply the Moishezon trick \cite{moishezon1977complex} (Theorem \ref{thm: mois}). So, we can replace the loop surgery in $(S^1\times S^3_0(K))_{\widetilde{\alpha}_2,\beta_2}$ along $\beta_2$, with a multiplicity $0$-log transform along the framed 2-torus $S^1\times \mu_K$ and a new loop surgery along a loop $m$. We call $\widehat{X}$ the manifold resulting from $S^1\times S^3_0(K)$ by this torus surgery. 
    Via  surgery presentation of $S^3_0(K)$ we can compute that $\widehat{X}$ is diffeomorphic to $S^1\times S^3$, see Figure \ref{fig: suegery pres of X}. This implies that $\widehat{X}$ after the surgery along $\widetilde{\alpha}_2$ is diffeomorphic to $S^4$, and diffeomorphic to $\Stab$ or $\StabTwist$ after the second surgery along $m$. It is $\Stab$ since $(S^1\times S^3_0(K))_{\widetilde{\alpha}_2,\beta_2}$  has even intersection form, isomorphic to the one of $S^1\times S^3_0(K)$.
\end{proof}
\begin{figure}
    \centering
    \includegraphics[width=0.2\textwidth]{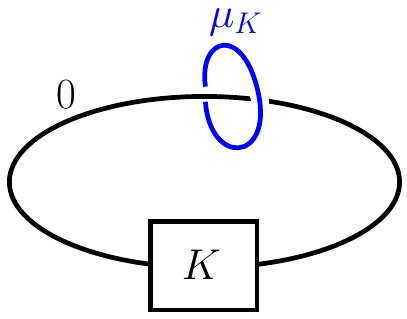}
    \caption{ Let $W$ be the 4-manifold obtained from $D^4$ by attaching a 2-handle along  the knot $K\subseteq S^3$ with the $0$-framing, $S^3_0(K)$ is the boundary of $W$. Let $\widehat{W}$ be the 4-manifold obtained from $W$ attaching a 2-handle along $\mu_K$ with the $0$-framing. The 3-manifold $\partial \widehat{W}$ is obtained from $S^3_0(K)$ by a $0$-Dehn surgery along $\mu_K$, therefore, the following equality holds $\widehat{X}=S^1\times\partial \widehat{W}$.
    One can replace the knot $K$ with the unknot by handles slides, showing that $\partial \widehat{W}1 S^3$.}
    \label{fig: suegery pres of X}
\end{figure}

\subsection{Internal stabilization of the 2-spheres $S_K$\label{sec: internal Unknotted}}
In this section we study the smooth isotopy class of the 2-spheres $S_K\subseteq M\#(\Stab)$ after one internal stabilization, the main result of this section is Theorem \ref{thm: Int unknotted 1}.
\subsubsection{Strategy\label{sec: strategy int unknotted}}
The 2-sphere $S_K$ is by definition $\phi_K(\Gamma_1)$, where $\phi_K:Z_{K,1}^*\to M\# (\Stab)$ is a diffeomorphism. Therefore, $S_K \subseteq M\# (\Stab)$ and $\Gamma_1\subseteq N^*\bslash\nu(x\times y)\subseteq Z_{K,1}^*$ are smoothly equivalent. In \cite{(Un)knotted} the authors prove that $N^*$ is diffeomorphic to $(S^1\times S^1\times S^2)\#(\Stab)$, so one would like to see where $\Gamma_1$ is sent by this diffeomorphism to possibly gain a nice representation of this surface to work with. Unfortunately this seems difficult to realize. If instead of $\Gamma_1$ we try to track the surface obtained by a stabilization $(\Gamma_1)_{\#T^2}$, then it is possible to explicitly find the corresponding surface in $(S^1\times S^1\times S^2)\#(\Stab)$. This is very close to what we are going to do. In particular, we will consider a chain of diffeomorphisms between the manifolds
\begin{equation}\label{eq: strategy internal Unknotted}
    N^*{\rightarrow} \widehat{N}_m\overset{f}{\rightarrow} (S^1\times S^1\times S^2)\#(\Stab)
\end{equation}
keeping track not only of $(\Gamma_1)_{\#T^2}$, but also of the 2-torus $x\times y\subseteq N^*$. The auxiliary manifold $\widehat{N}_m$ is obtained from $\widehat{N}$, defined in (\ref{eq: Nhat}), by performing loop surgery along a nullhomotopic loop $m$. The first diffeomorphism is given by the Moishezon trick \cite{moishezon1977complex} and since it can be realized by a sequence of handle slides in a Kirby diagram, we will be able to follow the surfaces. Moreover, we will see that $(\Gamma_1)_{\#T^2}$ is sent to $\widehat{x\times b}\subseteq \widehat{N}_m$, which is a 2-torus already existing in $\widehat{N}$ and disjoint from the loop $m$, see Lemma \ref{lem: internal step 1}. 

Since $\widehat{N}$ is diffeomorphic to $S^1\times S^1\times S^2$ we can track in this diffeomorphism the 2-tori $x\times y$ and $\widehat{x\times b}$ together with the loop $m$ to be able later to reconstruct upstairs the image of $\widehat{x\times b}$ in $(S^1\times S^1\times S^2)\#(\Stab)$, see Lemma \ref{lem: Identifing N hat}. 
We would like the loop $m$ to be  nullhomotopic in the complement of $\widehat{x\times b}\subseteq\widehat{N}$ to further simplify $f(\widehat{x\times b})$, but it is not.  To solve this issue we will perform a generalized fibersum  with $M_K$ along the framed 2-tori $T_2$ and the one corresponding to $x\times y$, see Lemma \ref{lem: trivializing u}. Lemma \ref{lem: internal final step: remove K} finally identifies a surface $\mathcal{T}\subseteq M\#(\Stab)$ smoothly equivalent to $(\Gamma_1)_{\# T^2}\subseteq Z_{K,1}^*$. Surprisingly, $\mathcal{T}$ is defined as in Construction \ref{const: nullhomologus tori} and it is contained in $\nu T_2\subseteq X\bslash D^4\subseteq M\#(\Stab)$. So we get the final result by applying Theorem \ref{thm: ExtStab nullhomologus 2-tori}.

\subsubsection{Following the surfaces}\label{subsec: follwing the surfaces}
In this section we will use $N$ to denote the Kodaira Thurston manifold (\ref{eq: Kodaira as T2 over T2}).
 The manifold $\widehat{N}$ is defined to be the result of a 0-log transform on the framed 2-torus $x\times b \subseteq N$. The manifold $\widehat{N}$ has an embedded 2-torus $\widehat{x\times b}$ that is core 2-torus of the surgery and it comes with a preferred framing.
 In the decomposition
\begin{equation}\label{eq: Nhat}
    \widehat{N}:=(N\bslash\nu (x\times b))\cup_{\varphi_0} (T^2\times D^2)
\end{equation}
the 2-torus $\widehat{x\times b}$ is the surface $T^2\times\{0\}\subseteq T^2\times D^2\subseteq\widehat{N}$ and the gluing map $\varphi_0:\partial\nu( x\times b)\to T^2\times \partial D^2$ is defined as in \cite[Section 3.2]{(Un)knotted}, i.e. it sends a push off of the loop $\{p\}\times b$ to $\{p\}\times\partial D^2$.\\
 Let $m$ be the meridian of the torus $x\times b$ contained in $\partial\nu(x\times b)$, this loop still defines an embedded loop $m$ in $\widehat{N}$. Also $m$ comes with a preferred framing induced by the 2-disk fiber of the disk bundle $\nu(x\times b)$.

The gluing map $\varphi_0$ can be chosen to agree with the product structure of $N=S^1\times Y$, so that the surgery becomes $1+3$ dimensional operation, where the 3-dimensional operation is a Dehn surgery on $Y$ producing a 3-manifold $\widehat{Y}$. Therefore,
\begin{equation}\label{eq: Nhat product}
    \widehat{N}=S^1\times \left((Y\bslash\nu(b))\cup_{\psi_0} (S^1\times D^2)\right)=S^1\times\widehat{Y},
\end{equation}
where the gluing map $\psi_0:\partial\nu(b)\to S^1\times \partial D^2$ sends the push off of $b$ to $\{p\}\times\partial D^2$.
This defines a core loop $\alpha=S^1\times\{0\}\subseteq S^1\times D^2\subseteq \widehat{Y}$ of the Dehn surgery and a preferred framing for it. 
\begin{oss}\label{oss: framing 2-torus logtranform}
    The 2-torus $\widehat{x\times b}$ can be identified with $S^1\times \alpha\subseteq S^1\times \widehat{Y}$ and the preferred framing of $\widehat{x\times b}$ is induced by a preferred framing of $\alpha$.
Moreover, the loop $m$ can be written as 
\[m=\{1\}\times \overline{\alpha}\subseteq S^1\times(S^1\times\partial D^2)\subseteq\widehat{N},\] where $\overline{\alpha}=S^1\times\{1\}\subseteq S^1\times \partial D^2\subseteq \widehat{Y}$. Equivalently, the framed curve $m$
is the push off of $\{1\}\times \alpha$ with respect to the framing of $\widehat{x\times b}$.
\end{oss}

\begin{lem} \label{lem: internal step 1}
Let $\widehat{N}_m$ be the 4-manifold obtained from $\widehat{N}$ by a loop surgery along the framed loop $m$, where the loop surgery operation is done away from $\widehat{x\times b}$ and $x\times y$, so that $\widehat{x\times b}$ and $x\times y$ still define embedded surfaces in $\widehat{N}_m$. Let $\Gamma_1\subseteq N^*$ be the belt 2-sphere of the loop surgery (\ref{eq: Gamma g}).\\
    There exists a diffeomorphism of triples
    \begin{equation}
        (N^*, x\times y, (\Gamma_1)_{\#T^2}))\to(\widehat{N}_m, x\times y, \widehat{x\times b}),
    \end{equation}
    which restricts to the identity on a neighborhood of $x\times y$ and where $(\Gamma_1)_{\#T^2}$ is a surface obtained from $\Gamma_1$ after an internal stabilization.
\end{lem}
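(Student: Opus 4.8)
The plan is to use the Moishezon trick (Theorem \ref{thm: mois}) to realize the loop surgery producing $N^*$ in two steps, and to track the relevant surfaces and curves through the resulting chain of Kirby moves. Recall that $N^* = (N\bslash\nu\mathcal{L})\cup(D^2\times S^2)$ is obtained from $N$ by loop surgery along a single framed loop (for $g=1$), and that the loop $\mathcal{L}$ is, in the notation of Section \ref{sec: long KT}, a framed meridian-type curve sitting in $\nu(x\times b)$; more precisely, by the description of $N$ as an $a\times b$-bundle over $x\times y$, the surgered loop can be taken to be $\beta = F^{-1}(\{*\}\times S^1\times\{0\})$ for the framing $F:\nu(x\times b)\to S^1\times S^1\times D^2$. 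First I would apply Theorem \ref{thm: mois} with $X = N$ and $T = x\times b$: this identifies $N^*$, rel. boundary, with $\widehat{N}_m$, where $\widehat{N}$ is the $0$-log transform of $N$ along $x\times b$ (as in (\ref{eq: Nhat})) and $m\subseteq\widehat{N}$ is the framed meridian of $x\times b$ with the disk-fiber framing. This is precisely the statement that the loop surgery along $\beta$ is replaced by a $0$-log transform plus a loop surgery along $m$. Since the log transform is supported in $\nu(x\times b)$, and $x\times y$ is disjoint from $x\times b$ in $N$, the diffeomorphism $N^*\to\widehat{N}_m$ can be arranged to restrict to the identity on a neighborhood of $x\times y$.

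The core of the argument is then to identify the image of $\Gamma_1$ under this diffeomorphism. Here $\Gamma_1 = \{0\}\times S^2$ is the belt sphere of the loop surgery along $\beta$, i.e. the cocore $2$-sphere of the $D^2\times S^2$ piece. The key observation — which I would verify directly on Gompf's Kirby diagrams for the three local pieces $\nu(x\times b)$, $(\nu(x\times b))^*$, and $\nu\widehat{(x\times b)}$ appearing in the proof of Theorem \ref{thm: mois} (Figures \ref{fig: Boro S0}–\ref{fig: Boro S10}) — is the following: in the chain of handle slides and cancellations realizing $(\nu(x\times b))^* \cong \nu\widehat{(x\times b)}_m$ rel. boundary, the belt sphere $\Gamma_1$ does not map to a $2$-sphere but rather, after one internal stabilization, to the core $2$-torus $\widehat{x\times b} = T^2\times\{0\}$ of the log transform. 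Concretely, the belt sphere of the surgery on $\beta$ is geometrically dual to the attaching circle; under the Moishezon handle moves this dual becomes part of the log-transform torus together with a tube (the stabilization band) running over the new loop $m$. So, tracking $(\Gamma_1)_{\#T^2}$ — the internal stabilization of $\Gamma_1$ along a suitable cord — through these moves lands it exactly on $\widehat{x\times b}\subseteq\widehat{N}_m$, which is an honest $2$-torus already present in $\widehat{N}$ and disjoint from $m$ (the loop $m$ is a push-off of a curve on $\widehat{x\times b}$ but can be isotoped off the torus; more to the point, the surgery on $m$ is performed away from $\widehat{x\times b}$, by the hypothesis in the statement, so $\widehat{x\times b}$ survives into $\widehat{N}_m$). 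Combining this with the observation that all these moves fix $x\times y$ pointwise gives the desired diffeomorphism of triples
\[
(N^*,\ x\times y,\ (\Gamma_1)_{\#T^2}) \;\longrightarrow\; (\widehat{N}_m,\ x\times y,\ \widehat{x\times b}),
\]
restricting to the identity near $x\times y$.

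The main obstacle — and the step that requires genuine care rather than routine checking — is the identification $\Gamma_1 \mapsto \widehat{x\times b}$ up to one internal stabilization. One must set up a precise correspondence between the arcs/bands appearing in the Moishezon Kirby diagrams and the surface pieces, keep track of which $2$-handles cap off which surface components (the belt sphere and the log-transform torus are each assembled from a visible piece in $S^3$ plus a disk sitting in the nearby $2$-handle), and check that the discrepancy between the genus-$0$ belt sphere and the genus-$1$ torus is accounted for by exactly one stabilizing tube running over the remaining loop $m$. I would carry this out by choosing explicit cores and cocores in Figures \ref{fig: Boro S1}–\ref{fig: Boro S10}, labeling the surface pieces in each frame, and verifying that each handle slide either leaves the surface pieces unchanged or merges two disk pieces along a band in the prescribed way; the Euler-characteristic bookkeeping (two disks of the belt sphere becoming an annulus-plus-disk, i.e. $\chi$ drops by $1$ under one $1$-handle attachment to the surface) then pins down that exactly one internal stabilization is needed. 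The compatibility with the product structure $\widehat{N} = S^1\times\widehat{Y}$ from (\ref{eq: Nhat product}) and Remark \ref{oss: framing 2-torus logtranform} will be used to make the final identification of framings on $\widehat{x\times b}$ and $m$ clean, which is what feeds into the subsequent lemmas.
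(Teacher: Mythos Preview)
Your proposal is correct and follows essentially the same approach as the paper: apply the Moishezon trick (Theorem \ref{thm: mois}) with $T=x\times b$ to get a diffeomorphism $N^*\cong\widehat{N}_m$ which is the identity on $N\bslash\nu(x\times b)$ (hence near $x\times y$), and then track the belt sphere $\Gamma_1$ and the core torus $\widehat{x\times b}$ through the explicit sequence of handle slides and cancellations in Figures \ref{fig: Boro S0}--\ref{fig: Boro S10} to see that they differ by one internal stabilization. The paper's proof is terser, simply pointing to Figure \ref{fig: Boro S} (with its caption doing the bookkeeping) and citing \cite{Auckly_Kim_Melvin_Ruberman_2015} for how to slide surfaces in Kirby diagrams, but the content is the same.
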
 
\begin{proof}
    The proof of this lemma lies on the Moishezon trick \cite{moishezon1977complex} (Theorem \ref{thm: mois}). From the explanation by  Gompf \cite[Proof of Lemma 3]{Gompf_1991} we get a diffeomorphism $\phi:\widehat{N}_m\to N^*$. This diffeomorphism is the identity on $N\bslash\nu(x\times b)$ and it is given by the sequence of handle slides and cancellation appearing in Figure \ref{fig: Boro S} between  $(\nu(\widehat{x\times b}))_m:=\widehat{N}_m\bslash(N\bslash\nu(x\times b))\subseteq \widehat{N}_m$ and $(\nu({x\times b}))^*:=N^*\bslash (N\bslash\nu(x\times b))\subseteq N^*$. It is left to check that $\phi(\widehat{x\times b})=(\Gamma_1)_{\# T^2}$ and this is done by following those surfaces in the slides, isotopies and cancellation of Figure  \ref{fig: Boro S}. In \cite[Section 2]{Auckly_Kim_Melvin_Ruberman_2015} it is explained how to slide surfaces in Kirby diagrams. 
\end{proof}

\begin{figure}[]
     \centering
     \hfill
     \begin{subfigure}[b]{0.2\textwidth}         \centering\includegraphics[width=\textwidth]{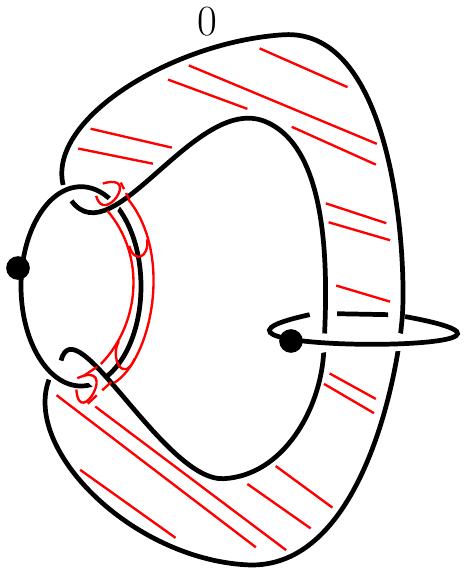}         \caption{\label{fig: Boro S0}}\end{subfigure}     \hfill 
     \begin{subfigure}[b]{0.25\textwidth}
         \centering
         \includegraphics[width=\textwidth]{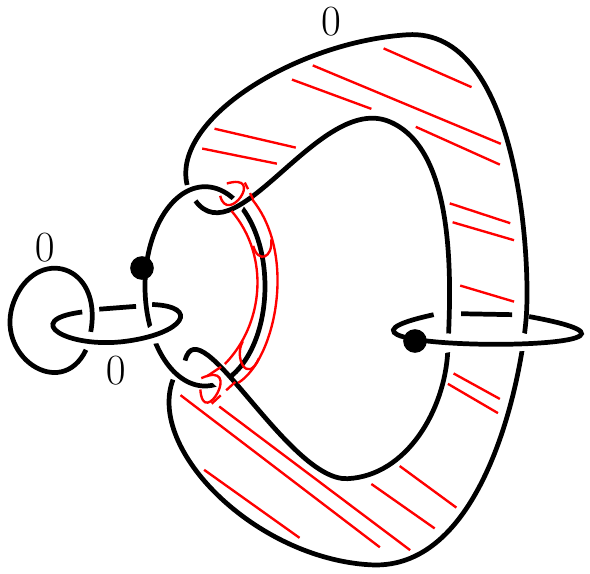}
         \caption{\label{fig: Boro S1}}
     \end{subfigure}
     \hfill
     \begin{subfigure}[b]{0.25\textwidth}
         \centering
         \includegraphics[width=\textwidth]{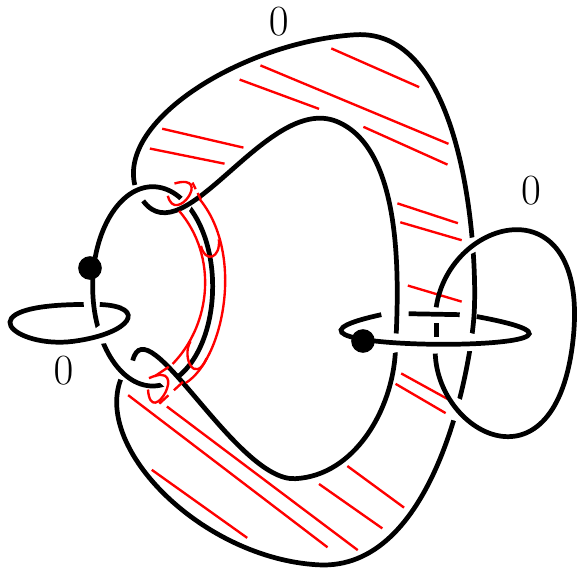}
         \caption{\label{fig: Boro S2}}
     \end{subfigure}
     \hfill
     \begin{subfigure}[b]{0.25\textwidth}
         \centering
         \includegraphics[width=\textwidth]{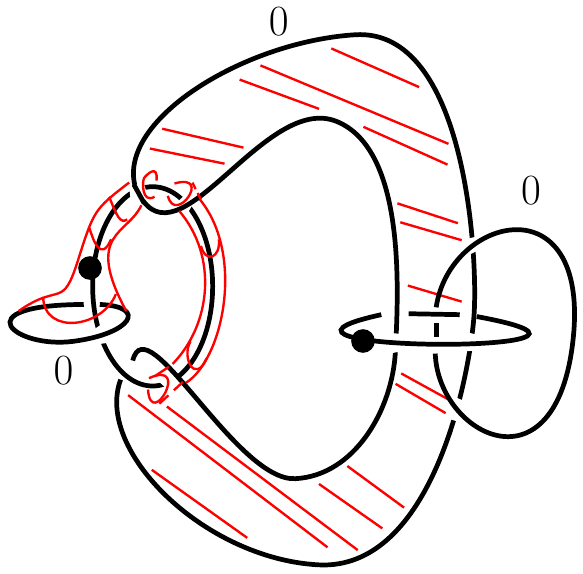}
         \caption{\label{fig: Boro S3}}
     \end{subfigure}
     \hfill
     \begin{subfigure}[b]{0.3\textwidth}
         \centering
         \includegraphics[width=\textwidth]{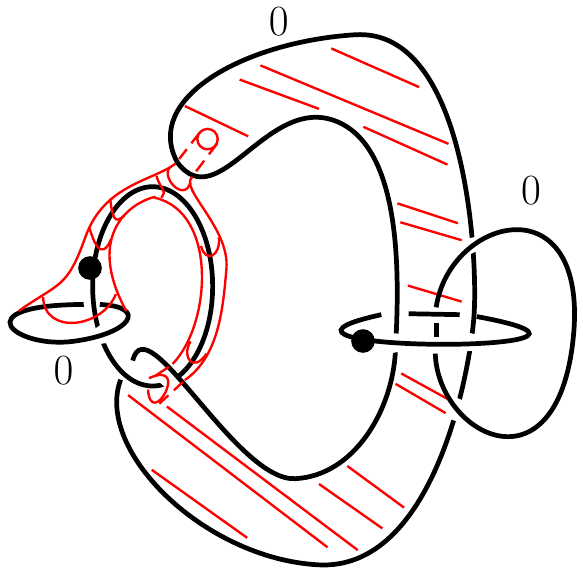}
         \caption{\label{fig: Boro S4}}
     \end{subfigure}
     \hfill
     \begin{subfigure}[b]{0.37\textwidth}
         \centering
         \includegraphics[width=\textwidth]{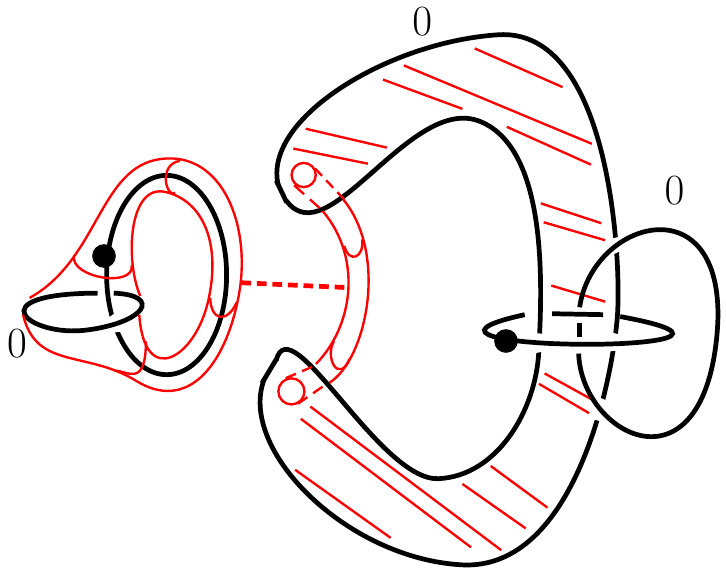}
         \caption{\label{fig: Boro S7}}
     \end{subfigure}
     \hfill
     \begin{subfigure}[b]{0.25\textwidth}         \centering        \includegraphics[width=\textwidth]{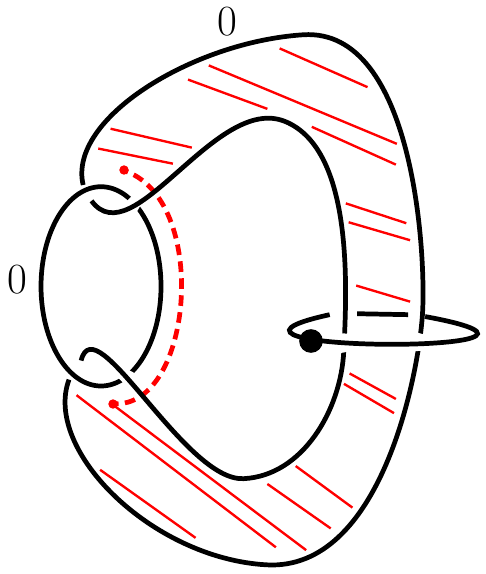}         \caption{\label{fig: Boro S10}}
     \end{subfigure}
     \caption{\label{fig: Boro S} Figure \ref{fig: Boro S0} is Kirby diagram of $T^2\times D^2\cong \nu(\widehat{x\times b})$ in $\widehat{N}$ and  the surface $\widehat{x\times b}$ is shaded in the picture (it is completed by a 2-disk in the 0-framed 2-handle).  Figure \ref{fig: Boro S1} depicts $(\nu(\widehat{x\times b}))_m$ a neighborhood of the surface $\widehat{x\times b}\subseteq \widehat{N}_m$. Figure \ref{fig: Boro S10} is Kirby diagram of $\nu(x\times b)^*\subseteq N^*$, shaded in this picture is the surface $\Gamma_1$ and the dotted arc is a chord that defines an internal stabilization for $\Gamma_1$.  The handle slide between Figures \ref{fig: Boro S2}-\ref{fig: Boro S3}  has  on the surface the effect of replacing a 2-disk, around the created intersection point between the preceding surface and the dotted 1-handle, with a tube connecting to the attaching circle of the 0-framed 2-handle plus a 2-disk in this 2-handle. An isotopy produces Figure \ref{fig: Boro S4}. Figure \ref{fig: Boro S7} is obtained repeating the previous steps for a new slide, symmetric to the preceding one, performed in the bottom left part of the diagram. Here the dotted arc denotes a tube, and it is showing how this surface can be viewed as a connected sum of the 2-sphere on the left and the 2-torus on the right.  The 2-sphere on the left is made up by a tube around the dotted 1-handle and two parallel 2-disk on the 0-framed 2-handle. This 2-sphere is smoothly unknotted (since it is a 2-sphere in a $S^3$, boundary of a $D^4$), the  1 and 2-handles cancel, leaving 
     Figure \ref{fig: Boro S10}.}
\end{figure}

\begin{lem} \label{lem: Identifing N hat}There exists a framed simple loop $\mu\subseteq S^1\times S^2$ and orientation preserving diffeomorphism of quadruples
    \begin{equation}
        \phi:(\widehat{N},x\times y,\widehat{x\times b},m)\to(S^1\times (S^1\times S^2), S^1\times(S^1\times\{1\}),S^1\times \mu, \{1\}\times \bar\mu),
    \end{equation}
    where  $\overline{\mu}\subseteq S^1\times S^2$ is a push off of $\mu$.
    The loop $\mu$ and its framing are described in Figure \ref{fig: mumu}. \\
    Moreover, the framings of $S^1\times(S^1\times\{1\}),S^1\times \mu$ and $\{1\}\times \bar\mu$, induced via $\phi$ from the corresponding framed submanifolds, are the product framings, i.e. they are the one induced by the product structure of $S^1\times (S^1\times S^2)$. If $b'$ is a push off of $b\subseteq Y$ and $x\times b'\subseteq \widehat N$, then $\phi(x\times b')$ is $S^1\times u$ where $u$ is meridian of $\mu$ (and $\bar\mu$) in $S^1\times S^2$. 
\end{lem}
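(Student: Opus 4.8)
The plan is to make everything explicit in Kirby calculus, carrying the relevant submanifolds through the diagram at each move. Recall from Equation~(\ref{eq: Nhat product}) that $\widehat N = S^1 \times \widehat Y$, where $\widehat Y$ is obtained from $Y$ by $0$-Dehn surgery along $b$, and that by Remark~\ref{oss: framing 2-torus logtranform} the 2-torus $\widehat{x\times b}$ is $S^1\times\alpha$ and $m = \{1\}\times\overline\alpha$ for $\alpha$ the core and $\overline\alpha$ the preferred push-off of the surgery solid torus in $\widehat Y$. Since all four objects (the two 2-tori, the loop $m$, and the loop $x\times b'$) are of the form $S^1\times(\cdot)$ for subsets of $\widehat Y$, it suffices to work entirely in dimension $3$: I would produce an orientation-preserving diffeomorphism $\widehat Y \to S^1\times S^2$ together with the images of $S^1\times\{1\}$ (coming from $x\times y$, whose $Y$-factor is the section $y$), the core $\alpha$, the push-off $\overline\alpha$, and the push-off $b'$ of $b$, and then take the product with $S^1$. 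The diffeomorphism $\widehat N \to S^1\times S^2 \to S^1\times(S^1\times S^2)$ — wait, the target is $S^1\times(S^1\times S^2)$, so actually the $3$-manifold to identify is $\widehat Y \cong S^1\times S^2$ is wrong; rather $\widehat N$ is $4$-dimensional and the claim is $\widehat N \cong S^1\times(S^1\times S^2)$. So I would instead identify $\widehat Y$ with $S^1\times S^2$ directly.

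First I would draw the Kirby diagram for $\widehat N = \nu(\widehat{x\times b}) \cup (N\bslash\nu(x\times b))$ as in Figure~\ref{fig:KT}: start from the Borromean-rings picture of $T^2\times D^2$ (two dotted $1$-handles and a $0$-framed $2$-handle), add the red $1$-framed $2$-handle that produces $Y$ on the boundary, and then record the effect of the $0$-log transform along $x\times b$, which replaces the dotted circle meridional to $b$ by a $0$-framed $2$-handle (this is exactly the surgery in Equation~(\ref{eq: Nhat})). In this diagram I would locate: the 2-torus $\widehat{x\times b}$ as $T^2\times\{0\}$ in the surgery torus (shaded, completed by a core disk of the $0$-framed $2$-handle); the loop $m$ as the meridian of that torus; the 2-torus $x\times y$ (the $S^1$-factor times the section $y$, i.e.\ the blue loop of Figure~\ref{fig:KT}); and the loop $x\times b'$, with $b'$ a parallel push-off of $b$. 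Then I would perform the standard sequence of handle slides and cancellations that dissolves $\widehat N$ to the trivial $S^1\times S^1\times S^2$ diagram (one dotted $1$-handle and one $0$-framed $2$-handle, or equivalently $0$-surgery on a Hopf-type configuration), all the while sliding the four marked submanifolds along — using the recipe of \cite[Section 2]{Auckly_Kim_Melvin_Ruberman_2015} for sliding surfaces in Kirby diagrams. The output of the computation is: $x\times y \mapsto S^1\times(S^1\times\{1\})$, $\widehat{x\times b}\mapsto S^1\times\mu$ with $\mu$ and its framing as in Figure~\ref{fig: mumu}, $m\mapsto\{1\}\times\overline\mu$, and $x\times b'\mapsto S^1\times u$ with $u$ a meridian of $\mu$; the framings come out to be the product framings because each marked object is literally of product type throughout the reduction and the slides are chosen to respect the $S^1$-factor.

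I expect the main obstacle to be bookkeeping rather than conceptual: one must be careful that each handle slide used to simplify $\widehat N$ can be performed without passing across the marked surfaces in a way that changes their isotopy class uncontrollably, and that the framings of $\mu$, $\overline\mu$, $x\times y$ are tracked correctly through the slides (a slide over a $2$-handle changes a framing by a linking number, so the claim that everything ends up with the \emph{product} framing requires checking that these contributions cancel — this is where Figure~\ref{fig: mumu} is doing real work). The fact that $m$ is the preferred push-off of $\{1\}\times\alpha$ (Remark~\ref{oss: framing 2-torus logtranform}) guarantees it stays disjoint from and parallel to $\widehat{x\times b}$ throughout, which is what forces $m\mapsto\{1\}\times\overline\mu$ with $\overline\mu$ a push-off of $\mu$ rather than some more complicated curve. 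Finally, checking that the resulting diffeomorphism is orientation-preserving is automatic once one fixes orientations compatibly at the start, since handle slides and cancellations preserve orientation. Taking the product of the $3$-dimensional picture with the ambient $S^1$-factor $x$ then yields the stated diffeomorphism of quadruples.
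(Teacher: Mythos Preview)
Your approach is essentially the same as the paper's: both reduce to a $3$-dimensional identification via $\widehat N = S^1\times\widehat Y$ and then track the marked curves through a Kirby-calculus simplification starting from Figure~\ref{fig:KT}. The paper commits cleanly to the $3$-dimensional reduction: it builds a $4$-manifold $\widehat W$ (Figure~\ref{fig:KT} plus a $0$-framed $2$-handle along $b$) whose boundary is $\widehat Y$, records $\alpha$ as the meridian of that new $2$-handle, performs one slide of $\alpha$ over a dotted $1$-handle, cancels that pair to reach Figure~\ref{fig: intermidiat step}, and finishes with one more cancellation to land on Figure~\ref{fig: mu}; then $\phi:=\mathrm{id}_{S^1}\times\psi$. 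Your plan oscillates between this and a $4$-dimensional Kirby diagram for $\widehat N$ itself, which would work but adds bookkeeping you don't need---once you have said ``it suffices to work entirely in dimension~$3$'' you should stay there, exactly as the paper does.
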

   \begin{figure}[h]
     \centering
       \begin{subfigure}[b]{0.4\textwidth}
         \centering
         \includegraphics[width=0.55\textwidth]{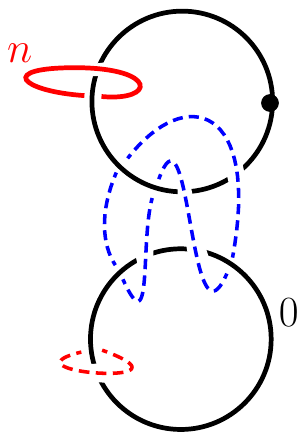}
         \caption{         \label{fig: intermidiat step}}  \end{subfigure}
         \hfill
     \begin{subfigure}[b]{0.4\textwidth}
         \centering
         \includegraphics[width=\textwidth]{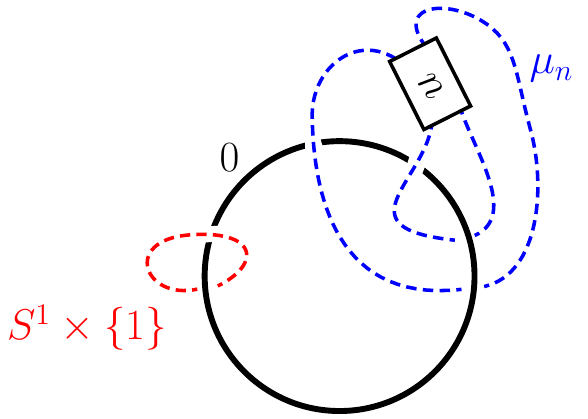}
         \caption{
         \label{fig: mu}}
     \end{subfigure}
     \caption{\label{fig: mumu}On the right we are identifying the manifold $S^1\times S^2$ as the boundary of a 4-manifold $D^2\times S^2$ depicted as a Kirby diagram with only one 0-handle and a 2-handle that is $0$-framed and unknotted. The curve $\mu_n$ is depicted in blue and framed with the $0$-framing. The box labelled $n$ indicates $n$ full twists. The curve $\mu$ is $\mu_1$ and it forms with the 2-handle a Whitehead link. The curve $\overline{\mu}$ is a longitude of the curve $\mu$  with $0$ linking number, and its framing is the $0$-framing. 
     The curve $\overline{\mu}$ is a longitude of the curve $\mu$  with $0$ linking number, and its framing is the $0$-framing.}
\end{figure}
\begin{proof}
    To prove this lemma is sufficient to prove that there exists a diffeomorphism of triples 
    \[\psi:(\widehat{Y},y,\alpha)\to(S^1\times S^2,S^1\times \{1\},\mu)\]
    for which the framings on $S^1\times \{1\}$ and $\mu$, induced by the framings of $y$ and $\alpha$, are the $0$-framings (with respect to Figure \ref{fig: mu}).
    Once the map $\psi$ is constructed, we can simply define $\phi=id_{S^1}\times \psi$ to obtain the desired diffeomorphism of quadruples. In particular, $\phi(m)$ is automatically sent to $\{1\}\times\overline{\mu}$, see Remark \ref{oss: framing 2-torus logtranform}. The loop $b'\subseteq \widehat{Y}$ is a meridian of the loop $\alpha$, therefore $\psi(b)$ is a meridian $u$ of the loop $\mu\subseteq S^1\times S^2$.\\
    The diffeomorphism $\psi$ is obtained via some  computations in Kirby calculus. The manifold $Y$ can be expressed as the boundary of a 4-manifold $W$, in Figure \ref{fig:KT}. We obtain a 4-manifold $\widehat{W}$ with boundary $\widehat{Y}$ by adding a 2-handle to $W$. The attaching circle of this new 2-handle is the loop $b$ with framing $0$ and $\alpha$ is its meridian equipped with the 0-framing. After sliding the loop $\alpha$ on dotted 1-handle on the right of Figure \ref{fig:KT}, we can cancel the same handle with the 2-handle which has $b$ as attaching circle. This produces Figure \ref{fig: intermidiat step}, in which there are depicted the 0-framed loops corresponding to the framed loops $y$ and $\alpha$. A final cancellation will produce Figure \ref{fig: mu}.
\end{proof}

\begin{lem}\label{lem: trivializing u}
Consider
\begin{equation}\label{eq: def torus tau}
    \mathcal{T}=\mathcal{T} (T_2):=(T_2)_{F_2,3_1}\subseteq \nu T_2\subseteq M_K,
\end{equation}
where $(T_2)_{F_2,3_1}$ denotes the 2-torus obtained by Construction \ref{const: nullhomologus tori} applied to the framed 2-torus $(T_2,F_2)\subseteq M_K$ and $3_1\subseteq S^3$ is the trefoil knot.
    There exists a diffeomorphism of triples
\begin{equation}\label{eq: diffe triples}
    (M_K\#_{T^2}\widehat{N},\widehat{x\times b},m)\cong(M_K\# S^4,\mathcal{T}, u),
\end{equation}
where the connected sum $M_K\# S^4$ is performed away from $\nu T_2$, and $u\subseteq S^4\bslash D^4$ is the unknot. The framing of $u$ induced by (\ref{eq: diffe triples}) is the spin framing. 
\end{lem}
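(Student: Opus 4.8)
The plan is to trace how the generalized fiber sum with $\widehat{N}$ interacts with the torus $T_2\subseteq M_K$, using the identification of $\widehat{N}$ from Lemma \ref{lem: Identifing N hat} together with the Moishezon trick and the cobordism machinery of Section \ref{Cobordism argument}. First I would recall that $\widehat{N}$ was obtained from $N$ by a $0$-log transform on $x\times b$, and that by Lemma \ref{lem: Identifing N hat} we have $\widehat{N}\cong S^1\times(S^1\times S^2)$ carrying the framed $2$-torus $\widehat{x\times b}=S^1\times\mu$ and the framed loop $m=\{1\}\times\bar\mu$, where $\mu\subseteq S^1\times S^2$ is the curve of Figure \ref{fig: mu} forming a Whitehead link with the $0$-framed $2$-handle, and $u$ is a meridian of $\mu$. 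The fiber sum $M_K\#_{T^2}\widehat{N}$ is performed along $T_2\subseteq M_K$ and the $2$-torus $x\times y=S^1\times(S^1\times\{1\})\subseteq\widehat N$, so after the identification this is a fiber sum of $M_K$ with $S^1\times(S^1\times S^2)$ along $S^1\times(S^1\times\{1\})$. Since the gluing torus $S^1\times(S^1\times\{1\})$ bounds on the $S^1\times S^2$ side --- it is a product with the trivially-embedded $S^1\times\{1\}$ --- this fiber sum has the effect on $M_K$ of a knot-surgery-like operation: the complement of a tubular neighborhood of $S^1\times\{1\}$ in $S^1\times S^2$ is $S^1\times(S^2\setminus\nu\{1\})=S^1\times D^1\times S^1$, i.e.\ a solid torus times $S^1$, so gluing it back to $M_K\setminus\nu T_2$ along the framing $F_2$ reproduces $M_K$ together with an $S^4$ summand once we account for the extra $S^2$-factor, but crucially it \emph{reglues the parametrization} so that the meridian $u$ of $\mu$ becomes visible inside $\nu T_2$.

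The key technical step is to recognize the image of $\widehat{x\times b}$ under this regluing as the null-homologous $2$-torus $\mathcal{T}=(T_2)_{F_2,3_1}$ of Construction \ref{const: nullhomologus tori} built from the trefoil. Here I would argue as in the proof of \cite[Lemma 6]{(Un)knotted}: the curve $\mu$ together with the $0$-framed $2$-handle forms a Whitehead link (Figure \ref{fig: mu}), and the Whitehead link is exactly the pattern that, under the identification of $S^3\setminus\nu(\text{unknot})$ with $S^1\times D^2$, reads off the curve $\widetilde l$ attached to the standard diagram of the trefoil with its (unique, up to the relevant symmetry) unknotting crossing --- indeed $1$-twisted Whitehead doubling / the relevant satellite pattern for the trefoil is precisely what Construction \ref{const: nullhomologus tori} produces when $K=3_1$ and the diagram $\mathcal D$ has a single crossing change to the unknot. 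So after pushing the whole $\widehat N$-side through $\nu T_2$ via $F_2$, the torus $S^1\times\mu$ becomes $F_2^{-1}(S^1\times\widetilde{l})=(T_2)_{F_2,3_1}=\mathcal{T}$, sitting in $\nu T_2\subseteq M_K$, while the ambient manifold becomes $M_K\# S^4$ (the extra $S^4$ coming from the complementary $S^2$-handle that, having no $2$-torus to attach along anymore, splits off). Simultaneously the meridian $u$ of $\mu$ goes to the unknot $u\subseteq S^4\setminus D^4$, and its framing --- the product framing recorded in Lemma \ref{lem: Identifing N hat}, induced by the $0$-framing of Figure \ref{fig: mu} --- is exactly the spin framing, because a meridian of any knot in a $0$-surgered solid torus bounds the obvious disk fiber.

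The main obstacle I anticipate is the bookkeeping of framings under the fiber sum: one must check that the framing $F_2$ used to identify $\nu T_2$ with $S^1\times S^1\times D^2$ matches the product framing coming out of Lemma \ref{lem: Identifing N hat} on the nose (or differs by a controlled twist that is absorbed into the definition of $\mathcal T$), and that the loop $m=\{1\}\times\bar\mu$ survives the fiber sum as a loop disjoint from $\nu T_2$ --- which it does, since $\bar\mu$ is a longitude of $\mu$ with $0$ linking number and hence lies in the $S^1\times S^2$-region away from the gluing torus. Once these compatibilities are in place, the diffeomorphism of triples $(M_K\#_{T^2}\widehat N,\widehat{x\times b},m)\cong(M_K\# S^4,\mathcal T,u)$ follows by assembling: the identification of $\widehat N$ (Lemma \ref{lem: Identifing N hat}), the standard description of a fiber sum along a trivially embedded product torus, and the reading of the Whitehead pattern as the trefoil's Construction \ref{const: nullhomologus tori} data; the statement about the spin framing of $u$ is then immediate from the $0$-framing in Figure \ref{fig: mu}. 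I would also double-check, using the Moishezon trick (Theorem \ref{thm: mois}), that replacing the $0$-log transform defining $\widehat N$ by the corresponding loop surgery is consistent with the way $m$ appears, since that is the mechanism that lets us later (in Lemma \ref{lem: internal final step: remove K}) undo the surgery along $m$ and recover a surface inside $\nu T_2$.
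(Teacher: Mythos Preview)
Your overall architecture is right and matches the paper's: apply Lemma \ref{lem: Identifing N hat} to turn $M_K\#_{T^2}\widehat{N}$ into $M_K\#_{T^2}(S^1\times S^1\times S^2)$ along $S^1\times S^1\times\{1\}$, observe that the complement $(S^1\times S^1\times S^2)\setminus\nu(S^1\times S^1\times\{1\})\cong S^1\times S^1\times D^2$ simply refills $\nu T_2$ via $F_2$ so that the ambient manifold is just $M_K$ again, and then recognize $F_2^{-1}(S^1\times\mu)$ as $(T_2)_{F_2,3_1}$ via the Whitehead pattern. That part is fine, and neither the cobordism argument nor the Moishezon trick is needed here.

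The genuine gap is in how you handle the loop $m$. You correctly record that $m$ is sent to $\{1\}\times\bar\mu$, the $0$-framed push off of $\mu$, but then you switch to talking about ``the meridian $u$ of $\mu$'' going to the unknot. That meridian is the image of $x\times b'$ (as in Lemma \ref{lem: Identifing N hat}), not of $m$; the curve you must trivialize is $\bar\mu$ itself, not its meridian. Relatedly, your claim that $m$ ``survives the fiber sum as a loop disjoint from $\nu T_2$'' is backwards: after the identification $(S^1\times S^2)\setminus\nu(S^1\times\{1\})\cong S^1\times D^2\xrightarrow{F_2^{-1}}\nu T_2$, the loop $\{1\}\times\bar\mu$ lands \emph{inside} $\nu T_2$, and there it is certainly not a priori an unknot --- it is a push off of the Whitehead curve $\mu$, which is homotopically nontrivial in $S^1\times D^2$.

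The missing step (and the actual content of the lemma) is this: inside $(S^1\times D^2)\setminus\mu$ the longitude $\bar\mu$ is a band sum of a small spin-framed unknot with two oppositely oriented copies of the core loop $S^1\times\{p\}\subseteq\partial(S^1\times D^2)$ (one reads this from the genus one Seifert surface of $\mu$ visible in Figure \ref{fig: mu}). Under $F_2^{-1}$ these two core loops become copies of $F_2^{-1}(\{p\}\times S^1\times\{p\})\subseteq\partial\nu T_2$, and those are nullhomotopic in $M_K\setminus\nu T_2$ precisely because $T_2$ has simply connected complement. Lemma \ref{lem: ConSumOfLoop and Disk} then lets you slide those loops off, isotoping $F_2^{-1}(\{1\}\times\bar\mu)$ to a spin-framed unknot $u$ in the complement of $\mathcal{T}$; the connected $S^4$ summand in the statement is just where this unknot sits. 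Without invoking $\pi_1(M_K\setminus T_2)=1$ you cannot trivialize $m$, and your current write-up never uses that hypothesis.
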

\begin{proof}
First, we use Lemma \ref{lem: Identifing N hat} to obtain the diffeomorphism of triples
\begin{equation}\label{eq: Lemma triv u first step}
    (M_K\#_{T^2}\widehat{N},\widehat{x\times b},m)\cong\left((M_K\#_{T^2}\left(S^1\times S^1\times S^2\right), S^1\times\mu, \{1\}\times\overline{\mu} \right),
\end{equation} 
where the fiber sum identify $T_2\subset M_K$ with $S^1\times S^1\times\{1\}\subseteq S^1\times S^1\times S^2$. 
We can use the identification given by $F_2:\nu T_2\cong S^1\times S^1\times D^2=(S^1\times S^1\times S^2)\bslash\nu (S^1\times S^1\times\{1\})$ to simplify the right side of \ref{eq: Lemma triv u first step};
\begin{equation}
    (M_K\#_{T^2}\widehat{N},\widehat{x\times b},m)\cong\left(M_K, F_2^{-1}(S^1\times\mu), F_2^{-1}(\{1\}\times\overline{\mu}) \right).
\end{equation}
The loop $\mu\subseteq S^1\times D^2=(S^1\times S^2)\bslash\nu (S^1\times\{1\})$ bounds a punctured genus one surface. We can find this surface in Figure \ref{fig: mu}; $\mu$ bounds a 2-disk with two intersection points with the attaching circle of the 2-handle,  the required surface is obtained connecting those points with a tube. 
By looking at that surface, we can see that the framed loop $\overline{\mu}\subseteq (S^1\times D^2)\bslash\mu$ is isotopic to the band sum of a (trivially framed) unknot and two (opposite oriented) copies of $S^1\times\{p\}\subseteq\partial (S^1\times D^2)$. 
Therefore, the framed loop $F_2^{-1}(\{1\}\times \overline{\mu})\subseteq \nu T_2\bslash F_2^{-1}(S^1\times\mu)$ is the band sum of (trivially framed) unknot $u$ and two (opposite oriented) copies of $F_2^{-1}(\{p\}\times (S^1\times\{p\}))\subseteq\partial \nu T_2$, which are nullhomotopic in $M_K\bslash\nu T_2$. By Lemma \ref{lem: ConSumOfLoop and Disk} the framed loop $F_2^{-1}(\{1\}\times \overline{\mu})$ is isotopic to a spin framed unknot $u$ with an isotopy supported in the complement of $F_2^{-1}(S^1\times\mu)$, therefore
\begin{equation}
    (M_K\#_{T^2}\widehat{N},\widehat{x\times b},m)\cong\left(M_K\#S^4, F_2^{-1}(S^1\times\mu), u \right).
\end{equation}
Finally, the 2-torus $F_2^{-1}(S^1\times\mu)$ can be identified with $(T_2)_{F_2,3_1}$, since it is defined as in Construction \ref{const: nullhomologus tori} applied to the 2-torus $T_2\subseteq M_K$ with its framing $F_2$ and using the trefoil knot $3_1\subseteq S^3$, see Figure \ref{fig: knot Kn} with $n=1$.
\end{proof}

\begin{lem}
\label{lem: internal final step: remove K}
    There exists a diffeomorphism of couples
\[(M_K\#_{T^2}\widehat{N}_m,\widehat{x\times b})\cong(M_K\# (S^2\times S^2),\mathcal{T}),\]
where the connected sum $M_K\# (\Stab)$ is performed away from $\mathcal{T}\subseteq\nu T_2$ defined as in (\ref{eq: def torus tau}). 
\end{lem}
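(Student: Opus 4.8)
The statement follows almost immediately from Lemma \ref{lem: trivializing u} once we observe that loop surgery is a local operation carried along by diffeomorphisms of triples. First I would record that $\widehat{N}_m$ is, by definition (see Lemma \ref{lem: internal step 1}), obtained from $\widehat{N}$ by loop surgery along the framed loop $m$, performed away from both $\widehat{x\times b}$ and $x\times y$. Since the generalized fiber sum $M_K\#_{T^2}\widehat{N}$ is performed along $T_2\subseteq M_K$ and $x\times y\subseteq\widehat{N}$, and $m$ is disjoint from a neighborhood of $x\times y$, the loop $m$ still defines an embedded framed loop in $M_K\#_{T^2}\widehat{N}$, disjoint from $\widehat{x\times b}$, and the manifold $M_K\#_{T^2}\widehat{N}_m$ is precisely the result of loop surgery along $m$ inside $M_K\#_{T^2}\widehat{N}$.

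Next I would invoke Lemma \ref{lem: trivializing u}, which gives a diffeomorphism of triples
\[(M_K\#_{T^2}\widehat{N},\widehat{x\times b},m)\cong(M_K\# S^4,\mathcal{T}, u),\]
with $u\subseteq S^4\bslash D^4$ the unknot equipped with the spin framing, the connected sum $M_K\# S^4$ being performed away from $\nu T_2\supseteq\mathcal{T}$. Because loop surgery depends only on (a neighborhood of) the framed loop, any diffeomorphism of triples carries the surgered manifold to the surgered manifold; hence this diffeomorphism restricts to a diffeomorphism of couples $(M_K\#_{T^2}\widehat{N}_m,\widehat{x\times b})\cong((M_K\# S^4)_u,\mathcal{T})$, where $(M_K\# S^4)_u$ denotes loop surgery along $u$ with the induced (spin) framing. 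Since $u$ is the unknotted loop in the $S^4$ summand and it is framed with the spin framing, loop surgery along it turns $S^4$ into $S^2\times S^2=\Stab$ (as in \cite[Proposition 5.2.4]{Gompf_Stipsicz_1999}), so $(M_K\# S^4)_u\cong M_K\#(\Stab)$. Moreover $\mathcal{T}$ lies in $\nu T_2$, which is disjoint from $\nu u$, so $\mathcal{T}$ is untouched by the surgery and the resulting connected sum $M_K\#(\Stab)$ is performed away from $\mathcal{T}$, giving exactly the claimed diffeomorphism of couples.

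The only point requiring genuine care — and the step I expect to be the main obstacle to make fully rigorous — is the bookkeeping of framings: one must verify that the framing of $m$ in $\widehat{N}$ (the one induced by the $2$-disk fibre of $\nu(x\times b)$, which is what makes $m$ nullhomotopic with the surgery yielding $\widehat{N}_m$) is genuinely transported by the diffeomorphism of Lemma \ref{lem: trivializing u} to the spin framing of $u$, not its opposite; if it were the opposite framing we would land in $M_K\#(\StabTwist)$ instead. This, however, is precisely the content of the last sentence of Lemma \ref{lem: trivializing u}, so the verification amounts to citing that lemma together with the observation that the surgery along $x\times y$ is performed away from $m$. The remaining checks (that loop surgery commutes with restriction of a diffeomorphism of triples, and that $\mathcal{T}$ and $u$ have disjoint tubular neighborhoods) are routine.
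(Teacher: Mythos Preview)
Your proposal is correct and follows essentially the same approach as the paper: invoke Lemma \ref{lem: trivializing u} to transport the loop surgery along $m$ to loop surgery along the spin-framed unknot $u\subseteq S^4$, and then identify $(S^4)^*$ with $\Stab$. The paper's proof is simply a terser version of what you wrote, and your extra care about the framing bookkeeping is exactly the point encoded in the last sentence of Lemma \ref{lem: trivializing u}.
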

\begin{proof}
By Lemma \ref{lem: trivializing u} we have the following diffeomorphisms of pairs:
\[(M_K\#_{T^2}\widehat{N}_m,\widehat{x\times b})\cong(M_K\# (S^4)^*,\mathcal{T} )\cong(M_K\# (\Stab),\mathcal{T} ),\] where $(S^4)^*$ is the manifold obtained from $S^4$ by doing of loop surgery along $u$ with respect to its framing. The framing of $u$ gives that $(S^4)^*$ is diffeomorphic to $\Stab$, and this explains the second diffeomorphism above.
\end{proof}
\begin{thm}\label{thm: Int unknotted 1}
Let $S_K=\Gamma_K(M,1,T_1,T_2,\phi_K)\subseteq M\# (\Stab)$ be the 2-sphere defined as in (\ref{eq spheres}) and $\mathcal{T}=\mathcal{T}(T_2)\subseteq\nu T_2\subseteq M_K\#(\Stab)$ be the 2-torus defined as in (\ref{eq: def torus tau}). 
    After a trivial internal stabilization the 2-sphere $S_K\subseteq M\# (\Stab)$ becomes smoothly equivalent to the 2-torus $\mathcal{T}\subseteq \nu T_2\subseteq M_K\bslash D^4\subseteq M_K\# (\Stab)$. 
    Moreover: 
    \begin{itemize}
        \item The 2-sphere $S_K$ becomes smoothly unknotted after two trivial internal stabilizations. 
        \item If the framing of $T_2\subseteq M_K$ is $0/1$ spin-compatible or $M$ is a non spin manifold, then $S_K$ becomes smoothly unknotted after one trivial internal stabilizations. 
        \item  If $T_1\subseteq M\bslash\nu T_2$ has a 1/0 spin-compatible framing, then after a trivial internal stabilization the 2-sphere $S_K\subseteq M\# (\Stab)$ becomes smoothly equivalent to the 2-torus $\mathcal{T}\subseteq \nu T_2\subseteq M\bslash D^4\subseteq M\# (\Stab)$. 
    \end{itemize}
    
\end{thm}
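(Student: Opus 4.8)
The plan is to deduce Theorem~\ref{thm: Int unknotted 1} from the four geometric lemmas (Lemmas~\ref{lem: internal step 1}--\ref{lem: internal final step: remove K}), which carry the whole technical weight, together with Theorem~\ref{thm: ExtStab nullhomologus 2-tori} on nullhomologous $2$-tori and Corollary~\ref{cor: stabilization rel boundary} on knot-surgery stabilization. As a preliminary reduction I would upgrade ``an'' internal stabilization to ``a trivial'' one. Since $S_K=\phi_K(\Gamma_1)$ is topologically unknotted and $M\#(\Stab)$ is simply connected, the fundamental group of its complement is infinite cyclic and normally generated by a meridian; hence the same holds for $\pi_1(Z_{K,1}^*\bslash\Gamma_1)$, and also $M_K$ is simply connected with $\pi_1(M_K\bslash T_2)$ trivial (a short van Kampen computation: knot surgery along $T_1$ kills both the $S^1$-factor and the meridian of $K$). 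For a $2$-sphere whose complement has infinite cyclic fundamental group generated by a meridian, any two orientation-preserving internal stabilizations along cords are smoothly isotopic, by the classification of internal stabilizations via cords (\cite[Section~2]{Boyle_1988}) together with \cite[Lemma~3]{InternalStabilizationBaykurSunukian} (compare Proposition~\ref{Prop repositioning arc}); in particular each such stabilization of $\Gamma_1$ is a trivial internal stabilization. It therefore suffices to identify the smooth equivalence class of the particular surface $(\Gamma_1)_{\#T^2}\subseteq Z_{K,1}^*$ produced in Lemma~\ref{lem: internal step 1}.

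The core of the argument is then a chain of diffeomorphisms. Writing $Z_{K,1}^*=(M_K\bslash\nu T_2)\cup(N^*\bslash\nu(x\times y))$, Lemma~\ref{lem: internal step 1} furnishes a diffeomorphism $(N^*,x\times y,(\Gamma_1)_{\#T^2})\cong(\widehat{N}_m,x\times y,\widehat{x\times b})$ that is the identity near $x\times y$, hence compatible with the gluing; cutting out $N^*\bslash\nu(x\times y)$ and regluing $\widehat{N}_m\bslash\nu(x\times y)$ in its place promotes this to a diffeomorphism of pairs $(Z_{K,1}^*,(\Gamma_1)_{\#T^2})\cong(M_K\#_{T^2}\widehat{N}_m,\widehat{x\times b})$, the generalized fiber sum being taken along the framed tori $T_2$ and $x\times y$. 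Feeding this into Lemma~\ref{lem: internal final step: remove K} gives $(M_K\#_{T^2}\widehat{N}_m,\widehat{x\times b})\cong(M_K\#(\Stab),\mathcal{T})$ with $\mathcal{T}\subseteq\nu T_2\subseteq M_K\bslash D^4$ and the connected sum performed away from $\mathcal{T}$. Composing with $\phi_K$ and the preliminary reduction proves the first assertion: a trivial internal stabilization turns $S_K$ into $\mathcal{T}=(T_2)_{F_2,3_1}\subseteq\nu T_2\subseteq M_K\bslash D^4\subseteq M_K\#(\Stab)$.

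The three further parts are deductions about the nullhomologous torus $\mathcal{T}$. Since $3_1$ has unknotting number $1$ and $\pi_1(M_K\bslash T_2)$ is trivial, $\mathcal{T}$ is a nullhomologous $2$-torus as in Construction~\ref{const: nullhomologus tori}, with infinite cyclic complement group. For the first part, $\mathcal{T}$ is internally $1$-stably isotopic to the unknotted $2$-torus (this is precisely the internal-stabilization input to Theorem~\ref{thm: ExtStab nullhomologus 2-tori}, coming from \cite[Section~3.6]{InternalStabilizationBaykurSunukian}), and the unknotted $2$-torus is a single trivial internal stabilization of the unknotted $2$-sphere, so $S_K$ after two trivial internal stabilizations is smoothly isotopic to the unknotted $2$-sphere after two trivial internal stabilizations. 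For the second part, Theorem~\ref{thm: ExtStab nullhomologus 2-tori} shows $\mathcal{T}$ is already smoothly unknotted in $M_K\#(\Stab)$ as soon as $\Stab\in\overline{\Ss}(\mathcal{T},\gamma)$; this is automatic when $M_K$ is non-spin (in particular when $M$ is non-spin, as knot surgery preserves the intersection form), where $\overline{\Ss}(\mathcal{T},\gamma)=\{\Stab,\StabTwist\}$, and it also holds when $M_K$ is spin and $F_2$ is $0/1$ spin-compatible, where $\overline{\Ss}(\mathcal{T},\gamma)=\Ss(T_2,(\gamma,f))\ni\Stab$, the curve $\gamma=F_2^{-1}(S^1\times\{1\}\times\{0\})$ being the $0/1$ curve in the sense of Definition~\ref{def: torus spin compatible framing}. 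For the third part, under the $1/0$ spin-compatibility hypothesis on $T_1$, Corollary~\ref{cor: stabilization rel boundary} provides a diffeomorphism $M_K\#(\Stab)\cong M\#(\Stab)$ restricting to the identity on $\nu T_2$, which carries the pair $(M_K\#(\Stab),\mathcal{T})$ to $(M\#(\Stab),\mathcal{T})$ with $\mathcal{T}\subseteq\nu T_2\subseteq M\bslash D^4$.

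The hard part is not this assembly but the lemmas feeding into it: Lemma~\ref{lem: internal step 1} demands carefully following the internally stabilized belt sphere through the Moishezon-trick sequence of handle slides and cancellations in the Borromean Kirby diagram of $\nu(x\times b)^*$, and Lemma~\ref{lem: trivializing u} demands recognizing $\widehat{x\times b}$, after the generalized fiber sum along $T_2$, as exactly the nullhomologous torus $(T_2)_{F_2,3_1}$ and trivializing the leftover surgery curve $u$. Within the assembly the only genuine care needed is matching each of the three itemized statements to the correct spin-compatibility or stabilization-set hypothesis, and checking that the fiber-sum identification in the second step respects the framings so that Lemma~\ref{lem: internal final step: remove K} applies verbatim.
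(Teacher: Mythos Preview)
Your proposal is correct and follows essentially the same route as the paper: apply Lemma~\ref{lem: internal step 1} (extended over the fiber sum with $M_K\bslash\nu T_2$ since it is the identity near $x\times y$) followed by Lemma~\ref{lem: internal final step: remove K} to identify the stabilized sphere with $\mathcal{T}$, justify triviality of the stabilization via \cite[Lemma~3]{InternalStabilizationBaykurSunukian} using the infinite cyclic complement group, and then deduce the three bullet points from \cite[Section~3.6]{InternalStabilizationBaykurSunukian}, Theorem~\ref{thm: ExtStab nullhomologus 2-tori}, and Corollary~\ref{cor: stabilization rel boundary} respectively. Your argument is in fact slightly more explicit than the paper's in spelling out why $\pi_1(M_K\bslash T_2)=1$ and in matching the $0/1$ spin-compatibility hypothesis to the curve $\gamma$ of~(\ref{eq: definition framing induced by the restriction}); the only superfluous step is the appeal to topological unknottedness of $S_K$, since $\pi_1(Z_{K,1}^*\bslash\Gamma_1)\cong\pi_1(Z_{K,1})\cong\Z$ follows directly from the construction.
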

\begin{proof}
     By Lemma \ref{lem: internal step 1} the 2-sphere $S_K$ in $M\# (\Stab)$ becomes smoothly equivalent to $\widehat{x\times b}$ in $M_K\#_{T^2} \widehat{N}_m$ after an internal stabilization. This stabilization is trivial since the fundamental group of the complement of $S_K$ is infinite cyclic, see \cite[Lemma 3]{InternalStabilizationBaykurSunukian}. Lemma \ref{lem: internal final step: remove K} gives the first conclusion. \\
     Since the 2-torus $\mathcal{T}$ becomes smoothly unknotted after a trivial internal stabilization \cite[Theorem 2 and Section 3.6]{InternalStabilizationBaykurSunukian}, then $S_K$ becomes smoothly unknotted with two.\\
     If the framing of $T_2\subseteq M_K$ is $0/1$ spin-compatible or if  $M_K$ is a non spin manifold, then  the 2-torus $\mathcal{T}\subseteq\nu T_2\subseteq M_K\bslash D^4 \subseteq M_K\#(\Stab)$ is smoothly unknotted by Theorem \ref{thm: ExtStab nullhomologus 2-tori}.\\
    If $T_1\subseteq M\bslash\nu T_2$ has a 1/0 spin-compatible framing, then there exists a diffeomorphism of couples
\[(M_K\# (\Stab),\mathcal{T} ){\cong}(M\# (\Stab),\mathcal{T} )\]
by Corollary \ref{cor: stabilization rel boundary} which restricts to the identity on $\nu T_2$.
\end{proof}
\subsection{External stabilization and brunnian 2-links}
In this section we will produce an infinite exotic family of $g$-component brunnian 2-links in a simply connected closed 4-manifold.
In particular, we will prove that the construction of \cite[Theorem B]{(Un)knotted}, under an additional hypothesis, produces a family of brunnian 2-links and not only a brunnianly exotic family \cite[Definition 2]{(Un)knotted}. So, we will partially answer positively at \cite[Question D]{(Un)knotted}.

The main theorem of the section is the following and its proof will be given in Section \ref{OneIsEnough}.
\begin{thm}\label{thm: (un)linked stabilization and bruniannity}
Let $\Gamma_K=\Gamma_K(M,g,T_1,T_2,\phi_K)\subseteq M\#g(\Stab)$ be defined as in Construction \ref{constr: brunnianl exotic 2-links}.
If the framing of $T_1\subseteq M$ is $1/0$ spin-compatible, $T_2\subseteq M_K$ is an ordinary surface and its framing is 1/0 spin-compatible, then $\Gamma_K\subseteq M\#g(\Stab)$ is a brunnian 2-link and it becomes smoothly unlinked in  $(M\#g(\Stab))\#(\Stab)$, where the connected sum is performed away from $\Gamma_K$.
\end{thm}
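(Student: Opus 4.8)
The plan is to iterate the identification carried out in Section~\ref{sec: internal Unknotted} across all $g$ components simultaneously, and then to leverage the diffeomorphisms from the cobordism argument (Section~\ref{Cobordism argument}) to replace each knot-surgered piece by a standard one. First I would recall that each component $S_j$ of $\Gamma_K=\phi_K(\Gamma_g)$ is, by construction, a belt $2$-sphere of a loop surgery along an element of the collection $\mathcal{L}\subseteq N_g$. Removing the $j$-th component corresponds to \emph{not} performing the $j$-th loop surgery, i.e. to considering the belt spheres of the remaining $g-1$ surgeries inside a slightly different ambient manifold. So the strategy is: fix $j$, and track $\Gamma_K\bslash S_j$ through the chain of diffeomorphisms analogous to \eqref{eq: strategy internal Unknotted}, but now applied to $N_g$ and with the hypotheses on the framings of $T_1$ and $T_2$ in force.

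The key steps, in order: (1) Use the hypotheses — $T_1\subseteq M$ is $1/0$ spin-compatible, $T_2\subseteq M_K$ ordinary with $1/0$ spin-compatible framing — to invoke Corollary~\ref{cor: stabilization rel boundary}, producing a diffeomorphism $M_K\#(\Stab)\cong M\#(\Stab)$ that is the identity on $\nu T_2$; this is what upgrades ``brunnianly exotic'' to genuinely unlinked after the external stabilization, since it removes the dependence on $K$ in the crucial summand. (2) Run the analogue of Lemmas~\ref{lem: internal step 1}–\ref{lem: internal final step: remove K} for $N_g$: by the Moishezon trick and the inductive product structure of $N_g$ as $S^1\times Y_g$, each removed-component link $\Gamma_K\bslash S_j$ becomes, after the single external $\Stab$-stabilization, a collection of $g-1$ parallel copies of $\mathcal{T}=\mathcal{T}(T_2)$ type surfaces — more precisely the belt tori — sitting inside $\nu T_2\subseteq M_K\bslash D^4$, hence inside $M\bslash D^4$ after step (1). (3) Apply Theorem~\ref{thm: ExtStab nullhomologus 2-tori} (or the internal-stabilization results of \cite{InternalStabilizationBaykurSunukian}) to conclude these surfaces are smoothly unknotted/unlinked in the larger manifold; because the framings are $1/0$ spin-compatible and the $T_i$ are ordinary, the stabilizing block in each instance is $\Stab$ rather than $\StabTwist$, and one checks a single additional $\Stab$ summand suffices to simultaneously unlink all $g-1$ components (they live in disjoint collar neighborhoods and the unknotting disk for each can be pushed into the common new summand, cf.\ \cite[Lemma 1]{norman1969dehn}).

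For the brunnian assertion itself, after step (2) the link $\Gamma_K\bslash S_j\subseteq M\#g(\Stab)$ is identified with a standard unlinked collection of belt $2$-spheres of loop surgeries on nullhomotopic loops — and here no external stabilization is needed, because the Moishezon-trick diffeomorphisms and the cobordism argument of Corollary~\ref{cor: Cobordism with s2xs2 comapible torus} can be run \emph{relative to} $\nu T_2$ and absorb the surgered pieces already inside $M\#g(\Stab)$, provided the $1/0$ spin-compatibility is used to ensure the band-sum loops $\widetilde\alpha_2,\widetilde\beta_2$ bound embedded (not merely immersed) disks away from the other components. This yields that $\Gamma_K\bslash S_j$ bounds $g-1$ disjoint smoothly embedded $3$-disks in $M\#g(\Stab)$ for every $j$, which is exactly brunnianity.

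The main obstacle I expect is step (2): making the multi-component bookkeeping honest. In the one-component case of Theorem~\ref{thm: Int unknotted 1} the surface $\Gamma_1$ was followed through one Kirby-calculus sequence; for $\Gamma_K\bslash S_j$ one must follow $g-1$ disjoint surfaces simultaneously through the iterated fiber sums defining $N_g$, verifying that the handle slides realizing the Moishezon trick along one summand do not drag the surfaces living in the other summands, and that the loops $m_i$ introduced stay nullhomotopic (and mutually unlinked) in the relevant complements. The framing hypotheses are precisely what is needed to keep each $m_i$ controllable via Lemma~\ref{lem: ConSumOfLoop and Disk}, so the crux is a careful, but in principle routine, disjointness-and-framing check performed inductively on $g$; once that is in place, the external stabilization result of Theorem~\ref{thm: ExtStab nullhomologus 2-tori} and Corollary~\ref{cor: stabilization rel boundary} finish the argument.
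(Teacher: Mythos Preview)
Your proposal has a genuine gap in step~(2), and it misses the central structural observations that make the paper's proof work.

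First, the identification you invoke from Lemmas~\ref{lem: internal step 1}--\ref{lem: internal final step: remove K} does \emph{not} turn the belt $2$-spheres $\Gamma_K\bslash S_j$ into $\mathcal{T}$-type tori. Those lemmas show that $(\Gamma_1)_{\#T^2}$ --- the sphere \emph{after} an internal stabilization --- becomes $\widehat{x\times b}$. Without the added handle there is a genus mismatch: you cannot identify a collection of $2$-spheres with a collection of $2$-tori by a diffeomorphism of the ambient manifold. So step~(2) as written simply fails, and steps~(3) and the brunnianity paragraph, which rest on it, inherit the problem. Your claim that a single $\Stab$ summand ``simultaneously unlinks all $g-1$ components'' via Theorem~\ref{thm: ExtStab nullhomologus 2-tori} is also unsupported: that theorem addresses one torus at a time and says nothing about disjoint families sharing a stabilizing summand.

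The paper's proof avoids surface-tracking altogether. The external-stabilization statement is reduced to a purely $4$-manifold question, namely Theorem~\ref{thm: stabilizaion of Z_kg}: $Z_{K,g}\#(\Stab)\cong M\#g(S^1\times S^3)\#(g+1)(\Stab)$. Once this is established (via Corollary~\ref{cor: Cobordism with s2xs2 comapible torus}, Proposition~\ref{prop: Ngxy diffeo}, and Corollary~\ref{cor: stabilization rel boundary}), one only needs to carry the $1$-link $\mathcal{L}$ to the standard $\mathcal{L}_U$; this is done by realizing the relevant automorphism of the free group $\pi_1$ by a diffeomorphism (Nielsen transformations) and then using that homotopic loops in a $4$-manifold are isotopic. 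No Kirby-diagram tracking of the $2$-spheres is required. For brunnianity, the paper uses a short but crucial observation you did not find: removing one component of $\Gamma_{K,g}\subseteq M\#g(\Stab)$ is already smoothly equivalent to $\Gamma_{K,g-1}\subseteq (M\#(g-1)(\Stab))\#(\Stab)$ (this is \cite[Proposition~19]{(Un)knotted}). In other words, deleting a component \emph{supplies} the extra $\Stab$ summand, so the just-proved external-stabilization result for $g-1$ components immediately gives unlinkedness of $\Gamma_{K,g}\bslash S_j$. Your attempt to argue brunnianity ``without external stabilization'' by controlling embedded disks for $\widetilde\alpha_2,\widetilde\beta_2$ is neither needed nor, as stated, justified.
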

Notice that if $T_i\subseteq M$ is an ordinary surface, combining Remark \ref{oss: torus framings and stabilization sets} and Proposition \ref{prop: Propriety of S} item \ref{prop: SS item 2} we get that $T_i$ admits a $1/0$ spin-compatible framing. If $M$ is a spin manifold, then the 2-torus $T_i$ is an ordinary surface.   If $T_2$ is an ordinary surface in $M$, then it is an ordinary surface in $M_K$. 
\begin{cor}\label{cor: unliked family bruniannity}
     Consider the exotic family of 2-links $\{\Gamma_K\}_{K\in\mathcal{K}}$
    defined as in \cite[Theorem A]{(Un)knotted} with 2-link group $G=F_g$ the free group with $g$ generators.
    If the framing of $T_1\subseteq M$ is $1/0$ spin-compatible, $T_2\subseteq M$ is an ordinary surface and its framing is 1/0 spin-compatible, then for every $K\in\mathcal{K}$ the 2-link $\Gamma_K$ is brunnian and it becomes smoothly unlinked in  $(M\#g(\Stab))\#(\Stab)$.
\end{cor}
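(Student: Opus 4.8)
The plan is to deduce Corollary~\ref{cor: unliked family bruniannity} directly from Theorem~\ref{thm: (un)linked stabilization and bruniannity}, applied to each $K\in\mathcal{K}$ separately; the only substance is to check that the family of \cite[Theorem A]{(Un)knotted} fits Construction~\ref{constr: brunnianl exotic 2-links} and that the hypotheses imposed in the corollary are, after the obvious translation, those required by the theorem.

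First I would recall the shape of the construction in \cite[Theorem A]{(Un)knotted}: one fixes a doubly toroidal 4-manifold $M$ together with framed 2-tori $T_1,T_2$, fixes $g\ge 1$ (here taken so that the 2-link group is the free group $F_g$), and then, for each knot $K$ in an infinite set $\mathcal{K}$ of knots with pairwise distinct Alexander polynomials, fixes a diffeomorphism $\phi_K\colon Z_{K,g}^{*}\to M\#g(\Stab)$ and sets $\Gamma_K:=\phi_K(\Gamma_g)$. Hence each $\Gamma_K$ is literally an instance $\Gamma_K(M,g,T_1,T_2,\phi_K)$ of Construction~\ref{constr: brunnianl exotic 2-links}, and \cite[Theorem A]{(Un)knotted} moreover guarantees that these 2-links are pairwise topologically ambient isotopic but smoothly inequivalent, so Theorem~\ref{thm: (un)linked stabilization and bruniannity} applies to each member individually.

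Next I would verify, for this fixed data, the hypotheses of Theorem~\ref{thm: (un)linked stabilization and bruniannity}. The assumption that the framing of $T_1\subseteq M$ is $1/0$ spin-compatible is word for word the corresponding hypothesis of the theorem. For $T_2$: knot surgery along $T_1$ is supported in $\nu T_1$, which is disjoint from $\nu T_2$, so $(T_2,F_2)$ sits unchanged inside $M_K$; by the discussion following Theorem~\ref{thm: (un)linked stabilization and bruniannity}, $T_2$ stays an ordinary surface in $M_K$, since knot surgery along $T_1$ preserves $H_2$ together with its intersection form and the class $[T_2]$. The one point that genuinely needs an argument is that $F_2$ remains $1/0$ spin-compatible as a framing of $T_2\subseteq M_K$: by Proposition~\ref{prop: equivalent definition} this amounts to exhibiting an immersed 2-disk in $M_K\bslash T_2$ that bounds the small push off $\overline{\beta_2}$ of $\beta_2=F_2^{-1}(\gamma_{1/0})$ and is compatible with the $F_2$-framing. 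Since $\overline{\beta_2}\subseteq\nu T_2$ is common to $M$ and $M_K$, I would start from the disk witnessing $1/0$ spin-compatibility in $M$, put it in general position with respect to $T_1$, and push it off $\nu T_1$ using that $M\bslash(T_1\cup T_2)$ is simply connected, keeping track of the induced framing through the machinery of Section~\ref{sec: proprieties of SS 2} (Lemma~\ref{lem: ConSumOfLoop and Disk}); the resulting disk lies in $M\bslash(\nu T_1\cup T_2)\subseteq M_K\bslash T_2$ and still realizes the $F_2$-framing. (When $M$ is spin these framing hypotheses are automatic, since then $T_1,T_2$ are ordinary and admit $1/0$ spin-compatible framings.)

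With the hypotheses checked, Theorem~\ref{thm: (un)linked stabilization and bruniannity} yields, for every $K\in\mathcal{K}$, that $\Gamma_K$ is a brunnian 2-link and becomes smoothly unlinked in $(M\#g(\Stab))\#(\Stab)$; combining this with the exoticness statement of \cite[Theorem A]{(Un)knotted} gives precisely the corollary. The main obstacle is the bookkeeping of the previous paragraph, namely confirming that ``ordinary'' and ``$1/0$ spin-compatible'' survive the passage from $M$ to $M_K$ for the specific framed torus $T_2$; everything else is a formal invocation of Theorem~\ref{thm: (un)linked stabilization and bruniannity}.
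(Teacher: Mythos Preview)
Your overall plan—apply Theorem~\ref{thm: (un)linked stabilization and bruniannity} to each $K\in\mathcal{K}$ separately—is exactly what the paper does. The difficulty is entirely in the hypothesis check for $T_2$, and here your argument diverges from the paper's and has a gap.

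You assume that each $\Gamma_K$ in the family of \cite[Theorem A]{(Un)knotted} is built using the \emph{fixed} framing $F_2$ of $T_2\subseteq M$, and then you try to show that this same $F_2$ remains $1/0$ spin-compatible for $T_2\subseteq M_K$ by pushing a bounding disk off $\nu T_1$. The paper's proof points out that this is not how the family is constructed: in \cite{(Un)knotted} the framing of $T_2$ used to form $Z_{K,g}$ actually depends on $K$. Concretely, there is a homeomorphism $h_K\colon M_K\to M$ that restricts to a diffeomorphism of pairs $(\nu T_2,T_2)\to(\nu T_2,T_2)$, and the framing entering Construction~\ref{constr: brunnianl exotic 2-links} is $F_K:=F\circ h_K|_{\nu T_2}$. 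The hypothesis of Theorem~\ref{thm: (un)linked stabilization and bruniannity} therefore concerns $F_K$ in $M_K$, not $F$ in $M_K$, and the paper transfers spin-compatibility directly via $h_K$: the framed curve $F_K^{-1}(\gamma_{1/0})\subseteq M_K$ is carried by $h_K$ to $F^{-1}(\gamma_{1/0})\subseteq M$, and spin-compatibility of a framing is preserved by homeomorphisms that are smooth near the torus.

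Even setting aside the $F$ versus $F_K$ issue, your disk-pushing argument is not complete. Removing the intersections of $D$ with $T_1$ by capping meridians of $T_1$ with immersed disks in $M\bslash(T_1\cup T_2)$ can change the induced framing on $\overline{\beta_2}$: each cap $\Delta$ together with the fiber disk of $\nu T_1$ forms an immersed sphere whose self-intersection parity governs the change, and you have not argued that these parities can be arranged to cancel. Invoking Lemma~\ref{lem: ConSumOfLoop and Disk} does not by itself control this. The paper's route through $h_K$ sidesteps this bookkeeping entirely.
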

\begin{proof}
    For any knot $K\in\mathcal{K}$ the 2-link $\Gamma_K\subseteq M\# g(\Stab)$ is defined as
    \[\Gamma_K:=\Gamma_K(M,g, T_1,(T_2,F_K), \phi_K),\] where we use the notation introduced in Construction \ref{constr: brunnianl exotic 2-links} and where $F_K$ denotes the framing of $T_2\subseteq M$ which depends on the chosen knot $K$. This framing is obtained starting from an initial framing $F:\nu T_2\to S^1\times S^1\times D^2$ of $T_2\subseteq M$. In \cite{(Un)knotted} the authors prove that there exists a homeomorphism $h_K: M_K\to M$ which restrict to a diffeomorphism of couples $(\nu T_2,T_2)\to(\nu T_2,T_2)$, and the framing $F_K$ is $F\circ h_K|_{\nu T_2}$. It follows that if the framing $F$ for $T_2\subseteq M$ is $1/0$ spin-compatible, then the framing $F_K$ for $T_2\subseteq M_K$ is $1/0$ spin-compatible. Now we are under the assumption of Theorem \ref{thm: (un)linked stabilization and bruniannity} and we can conclude the proof.
\end{proof}
\subsubsection{Strategy\label{sec: strategy external unlinked}}
The main obstruction to the pairwise smooth equivalence of collection 2-links $\{\Gamma_K\}_{K\subseteq S^3}$ arises from the diffeomorphism type of the collection manifolds $\{Z_{K,g}\}_{K\subseteq S^3}$. In particular, since our goal is to show that all these 2-links become unlinked after one stabilization with $S^2\times S^2$, we want to solve this obstruction and prove that $Z_{K,g}$ and $M\# g(S^1\times S^3)\# g(S^2\times S^2)$ are 1-stably diffeomorphic for any knot $K$. We will achieve this objective, with some restrictions, in Theorem \ref{thm: stabilizaion of Z_kg}. Solving this obstruction immediately leads to the 2-links $\Gamma_K$ being smoothly unknotted, as shown in Section \ref{OneIsEnough}. In Section \ref{sec: tecnical lemmas} we will give the details of the proof of Theorem \ref{thm: stabilizaion of Z_kg}.
The following is a schematic strategy for the proof of Theorem \ref{thm: stabilizaion of Z_kg}.\\
Starting from a doubly toroidal 4-manifold $M$, we perform the following steps:
\begin{enumerate}
    \item Prove that there exist two framed loops $x$ and $y$ in $N_g$ such that the 4-manifold $M_K\# (N_g)_{x,y}$ obtained from $M_K\# N_g$ by doing surgery along those loops is diffeomorphic to the manifold $Z_{K,g}\#(\Stab)$. This is archived in  Corollary \ref{cor: Cobordism with s2xs2 comapible torus}.
    \item Show that $(N_g)_{x,y}$ is diffeomorphic to $g(S^1\times S^3)\# (g+1) (S^2\times S^2)$. See Proposition \ref{prop: Ngxy diffeo}.
    \item Finally, show that $M_K\# (N_g)_{x,y}$ is diffeomorphic to $ M\# g(S^1\times S^3)\# (g+1)(S^2\times S^2)$.
\end{enumerate}
\subsubsection{One external stabilization is enough \label{OneIsEnough}}
The goal of this section is to prove our main result, Theorem \ref{thm: (un)linked stabilization and bruniannity},  as concisely as possible by postponing the proofs of some technical lemmas to the next section. The following  theorem is the only ingredient that we need to state before going into the proof of Theorem \ref{thm: (un)linked stabilization and bruniannity}.


\begin{thm}\label{thm: stabilizaion of Z_kg}
If the framing of $T_1\subseteq M$ is $1/0$ spin-compatible, $T_2\subseteq M_K$ is an ordinary surface and its framing is 1/0 spin-compatible, then the manifold $Z_{K,g}\#(\Stab)$ is orientation preserving diffeomorphic to $M\#g(S^1\times S^3)\#(g+1)(\Stab)$. 
\end{thm}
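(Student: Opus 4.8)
The plan is to follow the three-step strategy outlined in Section \ref{sec: strategy external unlinked}, carrying the $M_K$-factor along untouched at first and then replacing it with $M$ using the knot-surgery dissolution result.

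\textbf{Step 1: Reduce to a connected sum.} First I would recall from (\ref{eq X_K^Z}) that $Z_{K,g}=(M_K\bslash\nu T_2)\cup(N_g\bslash\nu(x\times y))$ is the generalized fiber sum of $M_K$ and $N_g$ along the framed $2$-tori $T_2\subseteq M_K$ and $(x\times y)\subseteq N_g$. Since by hypothesis $T_2\subseteq M_K$ is an ordinary surface with simply connected complement, I would apply Corollary \ref{cor: Cobordism with s2xs2 comapible torus} with $X_1:=M_K$, $T_1:=T_2$, $X_2:=N_g$. Because the framing of $T_2$ is $1/0$ spin-compatible, the loop $\widetilde{\beta}_2$ in the notation of that corollary equals $\beta_2$; I still get to replace $\widetilde{\alpha}_2$ by $\alpha_2$ or $(\alpha_2)_{op}$ depending on the $0/1$ spin-compatibility of $F_2$, but this does not matter for the diffeomorphism type of the resulting manifold. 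The conclusion is a diffeomorphism
\[
Z_{K,g}\#(\Stab)\cong M_K\#(N_g)_{\widetilde{\alpha}_2,\beta_2},
\]
where $(N_g)_{\widetilde{\alpha}_2,\beta_2}$ denotes $N_g$ surgered along two framed loops $\widetilde{\alpha}_2,\beta_2$ lying in $N_g$ (made disjoint by a small isotopy). This is precisely the first bullet of the strategy, with $(N_g)_{x,y}:=(N_g)_{\widetilde{\alpha}_2,\beta_2}$.

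\textbf{Step 2: Identify $(N_g)_{x,y}$.} Next I would invoke the promised Proposition \ref{prop: Ngxy diffeo} (i.e. ``$(N_g)_{x,y}$ is diffeomorphic to $g(S^1\times S^3)\#(g+1)(\Stab)$''), which is the structural computation inside the Kodaira--Thurston building block $N_g$. Assuming it, we obtain
\[
Z_{K,g}\#(\Stab)\cong M_K\#\bigl(g(S^1\times S^3)\#(g+1)(\Stab)\bigr).
\]
I would note that the $(g+1)$-th $\Stab$ summand is the one that was ``spent'' in Step 1, together with the $g$ summands produced by the surgeries inside $N_g$, which matches the bookkeeping in the statement.

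\textbf{Step 3: Dissolve the knot surgery.} Finally, since the framing of $T_1\subseteq M$ is $1/0$ spin-compatible, Corollary \ref{cor: stabilization rel boundary} gives a diffeomorphism $M_K\#(\Stab)\cong M\#(\Stab)$. Combining this with the decomposition from Step 2 — absorbing one of the $g+1$ copies of $\Stab$ to dissolve the $M_K$ into $M$ — yields
\[
Z_{K,g}\#(\Stab)\cong M\#g(S^1\times S^3)\#(g+1)(\Stab),
\]
which is the claim; all diffeomorphisms in the chain are orientation preserving, so the composite is too.

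\textbf{Main obstacle.} The genuinely hard part is Step 2, Proposition \ref{prop: Ngxy diffeo}: one must track the two surgery curves $\widetilde{\alpha}_2,\beta_2$ (essentially push-offs of $x\times\{1\}\times\{0\}$ and $\{1\}\times y\times\{0\}$ in $\nu(x\times y)$, up to orientation/framing twists) through the $g$-fold internal fiber sum structure of $N_g=\#^{g}_{T^2}N$ and a sequence of Kirby moves, using the Moishezon trick (Theorem \ref{thm: mois}) as in Corollary \ref{cor: stabilization rel boundary} to trade torus surgeries for loop surgeries. The parity/spin bookkeeping — ensuring the stabilizing blocks that appear are all $\Stab$ rather than $\StabTwist$ — will need the intersection form of $N_g$ (hence of the surgered manifold) to be even, which is where the hypotheses that $T_2$ be ordinary and $1/0$ spin-compatible are consumed. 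Steps 1 and 3 are then formal applications of results already established in the paper.
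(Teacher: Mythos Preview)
Your proposal is correct and follows exactly the three-step strategy the paper uses: apply Corollary \ref{cor: Cobordism with s2xs2 comapible torus} to replace the fiber sum by $M_K\#(N_g)_{x,y}$, invoke Proposition \ref{prop: Ngxy diffeo} to identify $(N_g)_{x,y}\cong g(S^1\times S^3)\#(g+1)(\Stab)$, and then use Corollary \ref{cor: stabilization rel boundary} to trade $M_K$ for $M$. Your identification of Step~2 as the substantive ingredient is also accurate; the only minor imprecision is in the ``Main obstacle'' commentary, where the hypotheses on $T_2$ are actually consumed in Step~1 (ordinariness gives $B=\Stab$, and $1/0$ spin-compatibility forces $\widetilde{\beta}_2=\beta_2$ so that the $y$-loop carries the framing required by Proposition \ref{prop: Ngxy diffeo}), rather than inside the proof of that proposition itself.
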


We will postpone the proof of Theorem \ref{thm: stabilizaion of Z_kg} to next sections and  we now see how to use this key ingredient to get our main result.

\begin{proof}[Proof of Theorem \ref{thm: (un)linked stabilization and bruniannity}]
We would like to prove first that $\Gamma_K\subseteq M\# g(\Stab)$ becomes smoothly unlinked in $(M\# g(\Stab))\#(\Stab)$, i.e. there exists a diffeomorphism of pairs
\begin{equation}
    (M\# g(\Stab)\#(\Stab),\Gamma_K)\cong(M\# g(\Stab)\#(\Stab),U_g),
\end{equation} where $U_g$ is the g-component unlink. This is equivalent to have the diffeomorphism of couples
\begin{equation}\label{eq: thm brunnian links objective}
(Z_{K,g}\#(\Stab),\mathcal{L})\cong(M\#g(S^1\times S^3)\# (g+1)(\Stab),\mathcal{L}_U),
\end{equation} where $\mathcal{L}_U$ is the disjoint union of $g$ loops of the form $S^1\times\{p\}\subseteq S^1\times S^3$ one in each $S^1\times S^3$ factor. This is the case since 2-link $\Gamma_K$ is by definition smoothly equivalent to the belt 2-spheres defined in the result of the surgery along the 1-link $\mathcal{L}$, meanwhile the unlink $U_g$ is the belt 2-spheres defined in the result of the surgery along $\mathcal{L}_U$ (independently of the framing of the components). 
By Theorem \ref{thm: stabilizaion of Z_kg} we have a diffeomorphism
\[\phi:Z_{K,g}\#(\Stab)\cong M\#g(S^1\times S^3)\# (g+1)(\Stab).\]
The two sets of loops $\mathcal{L}_U$ and $\phi(\mathcal{L})$ define two set of generators of $\pi_1$ after connecting each loop to a base point. Any element of the automorphism group of $\pi_1(g(S^1\#S^3))$ can be realized by a diffeomorphism, this holds since any Nielsen transformations can be realized and they generate the automorphism group \cite{nielsen1924isomorphismengruppe}. So, there exists a diffeomorphism \[\psi: (M\#g(S^1\times S^3)\# (g+1)(\Stab),\phi(\mathcal{L}))\to (M\#g(S^1\times S^3)\# (g+1)(\Stab),\mathcal{L}_U)\]
since homotopic loops are isotopic in any 4-manifold.
The diffeomorphism of pairs (\ref{eq: thm brunnian links objective}) is realized by the map $\psi\circ\phi$, and this concludes the first part of the proof.

We now prove that $\Gamma_{K,g}$ is a brunnian 2-link. The key fact here is that the g-component 2-link $\Gamma_{K,g}\subseteq M\#g(\Stab)$ minus a component is smoothly equivalent to $\Gamma_{K,g-1}\subseteq (M\#(g-1)(\Stab))\#(\Stab)$ (see the proof of \cite[Proposition 19]{(Un)knotted}) and we had just finished the proof that $\Gamma_{K,g-1}\subseteq (M\#(g-1)(\Stab))\#(\Stab)$ is smoothly unlinked.
\end{proof}

\subsubsection{Technical lemmas\label{sec: tecnical lemmas}}
The main objective of this section is to prove Theorem \ref{thm: stabilizaion of Z_kg}, i.e. that the manifolds $Z_{K,g}$ and $M\#g(S^1\times S^3)\#g(S^2\times S^2)$ are 1-stably diffeomorphic.
\begin{lem}\label{KT loop surgery}
    Let $N_y$ be the 4-manifold which is the result of loop surgery around the loop $\{p\}\times y$ in the Kodaira Thurston manifold $N$ with the framing as in Remark \ref{ossframing}.
    We have the following diffeomorphism of pairs 
\begin{equation}
    (N_{y},x\times y')\cong (S^1\times(S^1\times S^2),S^1\times u)\# (S^2\times S^2, S^2\times\{p\}),
\end{equation}
    where $y'$ is a parallel push off of $y\subseteq Y$ and $u$ denotes an unknot in $S^1\times S^3$. 
    Moreover, the framed loop $x\times\{p\}\subseteq N_y$ is sent to the  loop $S^1\times\{p\}\subseteq (S^1\times (S^1\times S^2))\bslash B^4\subseteq (S^1\times (S^1\times S^2))\#(\Stab)$ equipped with the product framing.
\end{lem}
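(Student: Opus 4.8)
The plan is to convert the loop surgery along $\{p\}\times y$ into a torus surgery by means of the Moishezon trick, and then to carry out the resulting identification in Kirby calculus while tracking the torus $x\times y'$ and the loop $x\times\{p\}$. The key starting observation is that, with the framing fixed in Remark~\ref{ossframing} and the identification $\nu(x\times y)\cong S^1\times S^1\times D^2$ coming from the product structure $N=S^1\times Y$, the framed loop $\{p\}\times y$ is precisely the curve $\{*\}\times S^1\times\{0\}$ equipped with the product framing (this is a short check on the $0$-framing of Figure~\ref{fig:KT}). Thus Theorem~\ref{thm: mois} applies to the framed $2$-torus $x\times y\subseteq N$ and yields a diffeomorphism
\[
N_y\;\cong\;\widehat N_m,
\]
where $\widehat N$ is the $0$-log transform of $N$ along $x\times y$ and $m\subseteq\widehat N$ is the framed meridian of $x\times y$, carrying its $2$-disk-fiber framing. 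As in the analogous situation treated in Lemma~\ref{lem: Identifing N hat}, this diffeomorphism is realized by Gompf's explicit sequence of handle slides and cancellations performed in a neighborhood of the surgery locus, so that $x\times y'$ and $x\times\{p\}$ can be followed through it.

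Next I would identify $\widehat N$. Since the log transform is carried out along $x\times y=S^1_x\times y$ with the product framing, it respects the splitting $N=S^1_x\times Y$ and is $S^1_x$ times the $3$-dimensional Dehn surgery on $Y$ along the section $y$ with the framing of Remark~\ref{ossframing}; write $\widehat Y$ for the resulting $3$-manifold, so that $\widehat N\cong S^1_x\times\widehat Y$. A computation of $\pi_1$ (the monodromy of $Y$ is a Dehn twist, so killing the class of the section also kills one of the two fiber generators) gives $\pi_1(\widehat Y)\cong\Z$, hence $\widehat Y$ is $S^1\times S^2$ and $\widehat N\cong S^1\times S^1\times S^2$; the same conclusion can be reached directly by Kirby calculus starting from the diagram of Figure~\ref{fig:KT}, in the spirit of the computation in the proof of Lemma~\ref{lem: Identifing N hat}. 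Under this identification the meridian $m$ becomes a nullhomologous loop carrying its spin framing, so loop surgery along $m$ produces
\[
\widehat N_m\;\cong\;(S^1\times S^1\times S^2)\#(\Stab),
\]
the summand being $\Stab$ rather than $\StabTwist$ precisely because $m$ is nullhomologous and spin-framed.

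It then remains to follow the two distinguished submanifolds through the whole procedure. Tracking $x\times y'$ and $x\times\{p\}$ through the handle moves realizing the Moishezon trick and through the identification $\widehat Y\cong S^1\times S^2$ — using \cite[Section~2]{Auckly_Kim_Melvin_Ruberman_2015} for the rules governing surfaces in Kirby diagrams — one checks that $x\times y'$ is carried to the standard nullhomologous torus appearing on the right-hand side of the statement and that $x\times\{p\}$ is carried to the product circle $S^1\times\{p\}$ with the product framing, which is the content of the final clause.

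The main obstacle is exactly this bookkeeping. One must simultaneously follow the surgery core $\{p\}\times y$, the torus $x\times y'$, the loop $x\times\{p\}$ and, after the Moishezon step, the meridian $m$ through a fairly long chain of slides, cancellations and isotopies, and one must be scrupulous with the various $0$-framings involved (those of Remark~\ref{ossframing} and those of the curves $y,a,b,b',\alpha$ in Figure~\ref{fig:KT}): it is exactly these framings that force the new summand to be $\Stab$ rather than its twist and that pin down the framing of $x\times\{p\}$, a datum which is used afterwards in Proposition~\ref{prop: Ngxy diffeo}. A secondary technical point, needed to justify the reduction, is checking that the $0$-log transform along $x\times y$ genuinely respects the product structure of $N$ and that the resulting $3$-manifold $\widehat Y$ is $S^1\times S^2$; both are routine once the diagram of Figure~\ref{fig:KT} is in hand.
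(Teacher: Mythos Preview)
Your overall architecture---Moishezon trick, identify the $0$-log transform, then track submanifolds---is sound and is also what the paper does. However, the paper takes a shortcut that you miss, and your proposal leaves the decisive step as an unjustified ``one checks''.

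\textbf{The paper's route versus yours.} The paper does \emph{not} apply the Moishezon trick to the torus $x\times y$. Instead it first invokes the $\pi$-rotation automorphism of $Y$ indicated in Figure~\ref{fig:KT}, which swaps the framed loops $y$ and $b$. This immediately gives a diffeomorphism of pairs $(N_y,x\times y')\cong(N^*,x\times b')$, where $N^*$ is loop surgery along $b$. At that point the Moishezon trick is applied to the torus $x\times b$, and the entire identification of $(\widehat N,x\times b',m)$ is already available from Lemma~\ref{lem: Identifing N hat}. The paper then only has to observe that $\bar\mu$ is isotopic (in the complement of $S^1\times u$) to a curve $\bar u$ forming a Hopf link with $u$; this exhibits $m$ as a meridian of the torus $S^1\times u$, so the pair splits as $(S^1\times(S^1\times S^2),S^1\times u)\#(S^4,S^2,\mu_{S^2})$, and surgery on the spin-framed meridian $\mu_{S^2}$ produces the summand $(S^2\times S^2,S^2\times\{p\})$. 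By contrast, you apply the Moishezon trick directly to $x\times y$. That is legitimate, but it obliges you to redo from scratch a Kirby computation analogous to Lemma~\ref{lem: Identifing N hat} for a different torus, with no pre-existing lemma to cite.

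\textbf{The gap.} The statement is about a \emph{pair}, and the surface on the right-hand side is specifically the connected sum $(S^1\times u)\#(S^2\times\{p\})$. Your proposal never establishes the key geometric fact that makes this work: that after your identification $\widehat N\cong S^1\times S^1\times S^2$, the meridian $m$ is isotopic, \emph{in the complement of $x\times y'$}, to a meridian of that torus. Without this, there is no reason the loop surgery along $m$ should interact with $x\times y'$ in the required way; a priori it could produce $(S^1\times u')\sqcup$-nothing in one summand and an unrelated surface in the other, or something not of connected-sum form at all. Saying ``one checks that $x\times y'$ is carried to the standard nullhomologous torus'' is precisely the content of the lemma, and in the paper's proof this is where the isotopy $\bar\mu\leadsto\bar u$ and the Hopf-link observation do the work. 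Two smaller points: the implication $\pi_1(\widehat Y)\cong\Z\Rightarrow\widehat Y\cong S^1\times S^2$ is true but is a nontrivial $3$-manifold fact you should cite (or bypass via the Kirby computation you allude to); and the assertion that $m$ carries the spin framing in $\widehat N$ needs an argument (in the paper this comes out of the explicit identification of $m$ with a disk-framed meridian of an unknotted $2$-sphere).
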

\begin{proof}
    We can use the automorphism of $Y$ described in Figure \ref{fig:KT} to exchange the framed loops  $y$ and $b$, so we get a diffeomorphism of pairs
    \[(N_y,x\times y')\cong(N^*,x\times b'),\] where $N^*$ is the manifold obtained from $N$ as result of loop surgery along $b$ and $b'$ is a push off of $b\subseteq Y$. We use \cite[Lemma 7]{(Un)knotted} to conclude the existence of a diffeomorphism between $N_y$ and $(S^1\times S^1\times S^2)\# (S^2\times S^2)$, but to see where the 2-torus $x\times y$ goes we have to take a closer look the diffeomorphism described in that proof.

    By the Moishezon trick \cite{moishezon1977complex} (Theorem \ref{thm: mois}), we have the diffeomorphism of couples 
    \[(N^*,x\times b')\cong(\widehat{N}_m,x\times b'),\] where  $\widehat{N}_m$ is the manifold obtained from $\widehat{N}$ as result of loop surgery along $m$ which are described in Section \ref{sec: internal Unknotted}. By Lemma \ref{lem: Identifing N hat} there is a diffeomorphism
    \begin{equation}\label{eq: proof triples 1}
        (\widehat{N}, x\times b', m)\cong(S^1\times (S^1\times S^2),S^1\times u,\{p\}\times \bar\mu),
    \end{equation} where $\bar\mu$ is described in Figure \ref{fig: mu} and $u$ is a meridian of $\bar\mu$. The loop $\mu$  can be isotoped to an unknot by a crossing change, so the loop $\{p\}\times\bar\mu\subseteq\widehat{N}$ is smoothly isotopic to an unknot $\{p\}\times\bar{u}\subseteq S^1\times(S^1\times S^2)$ where $u$ and $\bar {u}$ makes a Hopf link in $S^1\times S^2$. So, (\ref{eq: proof triples 1}) becomes a diffeomorphism of triples
    \begin{equation}\label{eq: proof triples 2}
        (\widehat{N}, x\times b', m)\cong(S^1\times (S^1\times S^2),S^1\times u,\{p\}\times \bar u).
    \end{equation}
    Noticing that $\{p\}\times \bar u$ is a meridian of $S^1\times u$, we get 
     \begin{equation}\label{eq: proof triples 3}
         (\widehat{N}, x\times b', m)\cong(S^1\times (S^1\times S^2),S^1\times u,\emptyset)\#(S^4,S^2,\mu_{S^2}),
     \end{equation} where $S^2\subseteq S^4 $ is the unknotted 2-sphere in 4-space and $\mu_{S^2}$ is its meridian (framed with the spin framing). It follows that
     \begin{equation}
       (\widehat{N}_m,x\times b')\cong ((S^1\times (S^1\times S^2),S^1\times u)\#(\Stab,S^2\times\{p\}),
     \end{equation}
     since $((S^4)^*,S^2)$, the result of loop surgery of the pair $(S^4, S^2)$ along $\mu_{S^2}$, is diffeomorphic to $(\Stab,S^2\times\{p\})$.

     The loop $x\times\{p\}$ is a push off of $x\times y'\subseteq N_y$, so it is sent to a push off a curve $S^1\times \{q\}\subseteq (S^1\times u)\# (S^2\times \{p\})$.
\end{proof}
   
\begin{prop}\label{prop: s1xs1xs2 to s1xs3 s2xs2}
Let $x'$ be the loop $S^1\times \{p\}\subseteq S^1\times (S^1\times S^2)$ with any framing and $u$ be an unknot in $S^1\times S^2$. If $(S^1\times (S^1\times S^2))_{x'}$ denotes the 4-manifold which is the result of loop surgery along the framed loop $x'$, then there exists a diffeomorphism of pairs
\begin{equation}
    (S^1\times (S^1\times S^2))_{x'}, S^1\times u)\cong \left((S^1\times S^3)\#(\Stab),\emptyset\right)\#(S^4, T^2),
\end{equation} where $T^2$ is an unknotted 2-torus in the $S^4$ factor. 
\end{prop}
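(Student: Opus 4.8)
The plan is to understand the loop surgery on $S^1 \times (S^1 \times S^2)$ along $x' = S^1 \times \{p\}$ explicitly, keeping track of the position of $S^1 \times u$ throughout. First I would observe that the framed loop $x'$ is disjoint from $S^1 \times u$ provided we pick the point $p \in S^1 \times S^2$ away from $u$; so the loop surgery can be performed without touching $S^1 \times u$. The key structural observation is that $x' = S^1 \times \{p\}$ is an $S^1$-factor of the product manifold, so the loop surgery splits as a $(1+3)$-dimensional operation only after we note that the loop $x'$ bounds an obvious immersed (indeed embedded) disk, namely $D^2 \times \{p\}$ where $D^2$ fills in the $S^1$ factor when we write $S^1 \times (S^1 \times S^2)$ inside $S^1 \times S^3$ appropriately — but more directly, I would use the standard fact that loop surgery along an $S^1$-factor of $S^1 \times Y^3$ yields $S^4$ connect-summed with a piece coming from $Y^3$. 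Concretely, the neighborhood $\nu x' \cong S^1 \times D^3$ with the product framing is excised and $D^2 \times S^2$ is glued in, and since $x'$ is a product circle in $S^1 \times (S^1 \times S^2)$, the result is diffeomorphic to $(S^1 \times S^2 \setminus \nu p) \cup_{\partial} (D^2 \times S^2)$ crossed appropriately — the cleanest way is to recognize $(S^1 \times (S^1 \times S^2))_{x'}$ as a fiber/connected-sum decomposition.

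The cleanest route, which I would pursue, is this. Write $M_0 := S^1 \times (S^1 \times S^2)$. Since $x' = S^1 \times \{p\}$ is a product circle, loop surgery along $x'$ with the product framing removes $S^1 \times D^3$ and replaces it by $D^2 \times S^2$; the complement $M_0 \setminus \nu x'$ is $S^1 \times ((S^1 \times S^2) \setminus \nu p) = S^1 \times (S^3 \setminus \nu q')$ for a suitable identification, because $(S^1 \times S^2) \setminus B^3$ and $S^3 \setminus B^3 = D^3$ differ only in how the $S^1 \times S^2$ sits. In fact $S^1 \times S^2$ with a ball removed is not $D^3$, so I cannot simplify that naively; instead I would split off the $S^1 \times S^2$-summand first. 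Observe $S^1 \times (S^1 \times S^2) = (S^1 \times S^3) \# (S^1 \times S^2 \times S^1)$? That is false too. The correct elementary identity is $S^1 \times (S^1 \times S^2) \cong (S^1 \times S^1 \times S^2)$ and this manifold can be written as $(S^1 \times S^3)$ modified, but the honest approach is via Kirby calculus: present $S^1 \times (S^1 \times S^2)$ by two dotted $1$-handles and a $0$-framed $2$-handle (the standard picture), identify the loop $x'$ as a meridian of one dotted circle, note that loop surgery on it turns that dotted $1$-handle into a $0$-framed $2$-handle, and then the resulting diagram for $(S^1 \times (S^1 \times S^2))_{x'}$ simplifies — a $1$-handle/$2$-handle cancellation (or a slide) produces the standard diagram of $(S^1 \times S^3) \# (\Stab)$, with an unknotted $T^2$ split off into an $S^4$-summand coming from where $S^1 \times u$ sits.

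The key step, and the one requiring care, is tracking the surface $S^1 \times u$ through these Kirby moves: I would shade it in each diagram, using the conventions of \cite[Section 2]{Auckly_Kim_Melvin_Ruberman_2015} for sliding surfaces, and verify that after the surgery $S^1 \times u$ becomes an unknotted $T^2$ lying in a ball, hence splits off as $(S^4, T^2)$. The reason this works is that $u$ is an unknot in $S^1 \times S^2$ bounding a disk, so $S^1 \times u$ bounds $S^1 \times D^2$ in $S^1 \times (S^1 \times S^2)$, and this solid-torus-bundle survives the surgery (being disjoint from $\nu x'$) to exhibit $S^1 \times u$ as an unknotted torus; the normal framing check confirms it sits in a $4$-ball. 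So the splitting $(S^4, T^2)$ follows, while the remaining $(S^1 \times S^3)\#(\Stab)$ is identified by the Kirby-calculus simplification just described.

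\textbf{Main obstacle.} The main difficulty will be the bookkeeping in the Kirby diagram: one must correctly identify the framing of $x'$ (it is stated to be arbitrary, so I would first argue that changing the framing of $x'$ by a meridian of $S^1 \times u$ does not change the diffeomorphism type of the pair, reducing to the product framing — this uses that $S^1 \times u$ has a geometrically dual sphere or that the relevant meridian is nullhomotopic in the complement), and then follow $S^1 \times u$ through a handle slide and cancellation without error. The framing-independence reduction is the subtle point; once the framing is normalized to the product framing, the remaining diffeomorphism is a routine sequence of moves analogous to \cite[Lemma 7]{(Un)knotted} and Lemma \ref{lem: Identifing N hat}.
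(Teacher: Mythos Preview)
Your proposal reaches the correct conclusion and contains the essential ingredient (that $S^1\times u$ bounds an embedded solid torus $S^1\times D^2$ surviving the surgery), but you take a more laborious route than necessary and leave one step vague.

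The paper's proof is much shorter because it decouples the two tasks. Rather than tracking $S^1\times u$ through a sequence of Kirby moves, the paper observes directly that the solid torus $S^1\times D^2$ has core circle $S^1\times\{\text{pt}\}$, which is parallel to $x'$ and therefore becomes nullhomotopic in $(S^1\times(S^1\times S^2))_{x'}$; hence the inclusion-induced map $\pi_1(S^1\times D^2)\to\pi_1\bigl((S^1\times(S^1\times S^2))_{x'}\bigr)$ is zero. This immediately implies that the solid torus, and with it the 2-torus $S^1\times u$, is smoothly unknotted and can be isotoped into a $4$-ball. So the $(S^4,T^2)$ summand splits off \emph{before} any Kirby calculus, and the remaining task is to identify the ambient manifold alone---a straightforward Kirby exercise (the paper cites Akbulut's handle picture of $T^2\times S^2$) with no surface to follow. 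Your phrase ``the normal framing check confirms it sits in a $4$-ball'' is exactly where this $\pi_1$ argument should go; as written it is the one step in your proposal that is not justified.

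Regarding your ``main obstacle'': the framing-independence concern is reasonable, but note that the paper does not address it in the proof either---it simply declares the ambient identification a Kirby-calculus exercise for any framing. Your proposed reduction to the product framing via a dual sphere is more delicate than necessary here; the Kirby computation itself handles both framings. In short, your plan would work, but the paper's separation of the $\pi_1$-based unknotting argument from the ambient Kirby calculus is both cleaner and avoids the surface-tracking bookkeeping you flagged as the main difficulty.
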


\begin{proof} 
 The 2-torus $S^1\times u$ is bounding an embedded $S^1\times D^2$ in $(S^1\times(S^1\times S^2))_{x'}$, moreover the map $\pi_1(S^1\times D^2)\to \pi_1((S^1\times(S^1\times S^2))_{x'}$ is the zero map. It follows that  $S^1\times u$ is smoothly unknotted, and that there exists a diffeomorphism of pairs 
 \[(S^1\times (S^1\times S^2))_{x'}, S^1\times u)\cong \left((S^1\times (S^1\times S^2))_{x'},\emptyset\right)\#(S^4, T^2).\]
 It is an exercise of Kirby calculus to prove that $(S^1\times (S^1\times S^2))_{x'}$ is diffeomorphic to $(S^1\times S^3)\#(\Stab)$, see \cite[Figure 3.2]{akbulut20164} for a handle diagram of $T^2\times S^2$.
\end{proof}

\begin{constr}[Standard 2-torus $(S^2\times\{p\})_{\#T^2}\subseteq\Stab$]
Let us define \emph{the standard 2-torus} $(S^2\times\{p\})_{\#T^2}\subseteq\Stab$ as $S^2\times\{p\}$ after one trivial stabilization, i.e. $(S^2\times\{p\}){\#T^2}\subseteq (\Stab)\# S^4$ where $T^2\subseteq S^4$ is the unknotted 2-torus.\end{constr}

\begin{prop}\label{prop: Ngxy diffeo}
Let $x\times y, x\times y'\subseteq N_g$ be defined as in Construction \ref{constr: long KT}.
Let $(N_g)_{x,y}$ be the 4-manifold obtained from $N_g$ as the result of two loop surgery along the framed loops $\widetilde{x},\{p\}\times y\subseteq N_g$, where $\widetilde{x}$ is the loop $x\times\{p\}$ framed with some framing. Notice that the framed 2-torus $x\times y'\subseteq N_g$ still defines a framed 2-torus $x\times y'$ in $(N_g)_{x,y}$.
    There exists a diffeomorphism of couples
     \begin{equation}\label{eq: Ngxy diffeo of couple}
        ((N_g)_{x,y},x\times y')\cong(g(S^1\times S^3)\#g(\Stab),\emptyset)\#(\Stab,S^2\times\{p\})_{\#T^2})
    \end{equation}
    where $(S^2\times\{p\})_{\#T^2}$ is the standard 2-torus contained in one of the $\Stab$ factors. Moreover, the induced framing of $(S^2\times\{p\})_{\#T^2}$ is $1/0$ spin-compatible.
\end{prop}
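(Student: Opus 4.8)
The strategy is an induction on $g$, using Lemma \ref{KT loop surgery} as the base case and Proposition \ref{prop: s1xs1xs2 to s1xs3 s2xs2} as the main bookkeeping tool for the inductive step. First I would establish the base case $g=1$: here $N_1 = N$ is the Kodaira Thurston manifold, and $(N_1)_{x,y}$ is obtained by two loop surgeries. One performs the surgery along $\{p\}\times y$ first; by Lemma \ref{KT loop surgery} we get that $(N_y, x\times y')$ is diffeomorphic to $(S^1\times(S^1\times S^2), S^1\times u)\#(\Stab, S^2\times\{p\})$, and moreover the framed loop $\widetilde{x} = x\times\{p\}$ is sent to $S^1\times\{p\}$ with the product framing inside the $S^1\times(S^1\times S^2)$ summand. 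Performing the second surgery along $\widetilde x$ then amounts to applying Proposition \ref{prop: s1xs1xs2 to s1xs3 s2xs2} to that summand, while the $\Stab$ summand containing $S^2\times\{p\}$ is untouched. This yields $((S^1\times S^3)\#(\Stab))\#(S^4, T^2)\#(\Stab, S^2\times\{p\})$; reassembling and observing that $S^2\times\{p\}$ together with the extra unknotted $T^2$ factor becomes the standard 2-torus $(S^2\times\{p\})_{\#T^2}$ in one $\Stab$ factor gives the claimed diffeomorphism of couples for $g=1$.

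For the inductive step, recall that $N_{g+1} = N_g \#_{T^2} N$ where the fiber sum identifies $(x\times y)\subseteq N_g$ with $(x\times y')\subseteq N$, and the framed 2-torus $(x\times y)\subseteq N_{g+1}$ is the one coming from the $N$-side. The two surgery loops $\widetilde x$ and $\{p\}\times y$ defining $(N_{g+1})_{x,y}$ can be taken to lie in the $N$-summand (away from the fiber sum region), so that the surgeries commute with the fiber sum operation on the $N_g$ side. Thus $(N_{g+1})_{x,y}$ is diffeomorphic to $N_g \#_{T^2} (N)_{x,y}$, where on the $N$ side we perform exactly the two surgeries from the base case but now fiber-summed along $x\times y' \subseteq N$. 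Using the base-case identification, $(N, x\times y')_{x,y}$ becomes (a $\Stab$-stabilized) $g'(S^1\times S^3)\#g'(\Stab)$ with the surviving torus $x\times y'$ realized as the standard $(S^2\times\{p\})_{\#T^2}$, which has $1/0$ spin-compatible framing. Now the fiber sum $N_g \#_{T^2} (\,\cdot\,)$ along a torus which — in the stabilized model — bounds and has an $S^2\times S^2$-compatible framing can be dissolved: a fiber sum along an unknotted torus sitting in a $\Stab$ summand, with the other side being $N_g$ (which by induction, after one stabilization, is $g(S^1\times S^3)\#(g+1)(\Stab)$ with the standard torus again $1/0$ spin-compatible), is diffeomorphic to the connected sum. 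Carefully, one invokes the inductive hypothesis on $(N_g)_{x,y}$ — which is where the standard $1/0$ spin-compatible torus appears — and matches the two standard tori across the fiber sum. Counting summands: $g$ copies of $S^1\times S^3$ and $g$ copies of $\Stab$ from the $N_g$-side plus one more $S^1\times S^3$ and one more $\Stab$ from the $N$-side, plus the one $\Stab$ carrying the surviving standard torus, gives the total $(g+1)(S^1\times S^3)\#(g+1)(\Stab)$ with the surviving torus in an extra $\Stab$ — matching \eqref{eq: Ngxy diffeo of couple} with $g+1$ in place of $g$.

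\textbf{Main obstacle.} The delicate point will be the inductive bookkeeping of \emph{which} torus survives and its framing: one must verify that, after the fiber sum, the torus coming from the $N$-side (and not the one internal to $N_g$) is exactly the standard 2-torus $(S^2\times\{p\})_{\#T^2}$ with $1/0$ spin-compatible framing, so that the inductive hypothesis can be re-applied at the next stage. This requires tracking the framed 2-torus through Lemma \ref{KT loop surgery}, Lemma \ref{lem: Identifing N hat}, and the Moishezon trick carefully, in the presence of the fiber sum, and checking that fiber-summing along a torus lying in a $\Stab$ summand (with compatible framing) genuinely dissolves to a connected sum rel.\ the surviving torus — a statement of the same flavor as the cobordism/Moishezon arguments in Section \ref{Cobordism argument}, but which must be applied with the surface being carried along. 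I would isolate this "dissolving a fiber sum over a spin-compatible unknotted torus in a $\Stab$ summand" as a standalone lemma before running the induction, proving it by the Kirby-calculus picture of $T^2\times S^2$ (as in \cite[Figure 3.2]{akbulut20164}) and the fact that a fiber sum along $T^2\times\{0\}\subseteq T^2\times S^2$ is trivial.
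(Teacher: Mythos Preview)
Your overall architecture---induction on $g$ with Lemma~\ref{KT loop surgery} plus Proposition~\ref{prop: s1xs1xs2 to s1xs3 s2xs2} for the base case---matches the paper exactly, and the base case is fine.

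The inductive step, however, has a real gap. After writing $(N_{g+1})_{x,y}\cong N_g\#_{T^2}(N_1)_{x,y}$ and applying the base case to the right-hand factor, you are left with a fiber sum of $N_g$ (no surgeries performed) with $(S^1\times S^3)\#2(\Stab)$ along the torus $x\times y\subseteq N_g$ and the standard torus in one $\Stab$. You then try to ``dissolve'' this fiber sum and invoke the inductive hypothesis on $(N_g)_{x,y}$, but you never explain how the two loop surgeries that define $(N_g)_{x,y}$ appear. Your parenthetical ``$N_g$ (which by induction, after one stabilization, is \ldots)'' conflates $(N_g)_{x,y}$ with a stabilization of $N_g$; these are different manifolds, and the inductive hypothesis says nothing about $N_g$ itself. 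Your proposed standalone lemma also has a slip: $\Stab$ is not $T^2\times S^2$, and the standard torus $(S^2\times\{p\})_{\#T^2}$ is not $T^2\times\{0\}$ in any such identification, so ``fiber sum along $T^2\times\{0\}\subseteq T^2\times S^2$ is trivial'' is not the relevant fact.

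The missing mechanism is precisely Corollary~\ref{cor: Cobordism with s2xs2 comapible torus}. Apply it with $X_1=\Stab$, $T_1=(S^2\times\{p\})_{\#T^2}$ (simply connected complement, ordinary, $1/0$ spin-compatible by the base case), and $X_2=N_g$, $T_2=x\times y$. The corollary converts $(N_g\#_{T^2}(\Stab))\#(\Stab)$ into $\Stab\#(N_g)_{\widetilde\alpha_2,\widetilde\beta_2}$, where $\widetilde\alpha_2,\widetilde\beta_2$ are exactly the framed loops $\widetilde x$ and $\{p\}\times y$ in $N_g$. This is $(N_g)_{x,y}$, to which the inductive hypothesis now applies. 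Tracking the surviving torus $x\times y'$ (which lives on the $N$-side, hence in the $\Stab$ factor untouched by the corollary) is then straightforward.
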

\begin{proof}
Notice first that from the diffeomorphism  (\ref{eq: Ngxy diffeo of couple}) it follows that $S^2\times\{p\}_{\# T^2}$ has an induced framing which is $1/0$ spin-compatible, since the framed 2-torus $x\times y'\subseteq(N_g)_{x,y}$ is $1/0$ spin-compatible.\\
    We will prove the existence of   (\ref{eq: Ngxy diffeo of couple}) by induction on $g$. 
    In the base case $g=1$ the diffeomorphism (\ref{eq: Ngxy diffeo of couple})  is obtained combining Lemma \ref{KT loop surgery} and Proposition \ref{prop: s1xs1xs2 to s1xs3 s2xs2}.\\
    Assume that (\ref{eq: Ngxy diffeo of couple}) holds for $g\in\N^+$ we will now prove that it holds for $g+1$. We have the following two diffeomorphisms
    \begin{equation}
     ((N_{g+1})_{x,y},x\times y')\cong\left(N_g\#_{T^2} (N_1)_{x,y},x\times y')\cong ((N_g\#_{T^2} \left((S^1\times S^3)\#2(\Stab)\right),(S^2\times\{p\})_{\#T^2})\right),   
    \end{equation}
    where the first one is  the definition of $N_{g+1}$ and the second is obtained using the diffeomorphism of the case $g=1$. The fiber sum on the right identifies $(x\times y)\subseteq N_g$ with a parallel copy of $(S^2\times\{p\})_{\#T^2})\subseteq\Stab$.
    We can move away the factor $S^1\times S^3$ and the $\Stab$ factor that does not contain the 2-torus $(S^2\times\{p\})_{\#T^2})$. Moreover, we can identify $(S^2\times\{p\})_{\#T^2})$ with a parallel copy $x\times y'$ of $(x\times y)\subseteq N_g$, i.e.
   \begin{equation}
     ((N_{g+1})_{x,y},x\times y')\cong (\left(N_g\#_{T^2} (\Stab)\right)\#(\Stab)\#(S^1\times S^3),x\times y').  
    \end{equation}
    
    We apply Corollary \ref{cor: Cobordism with s2xs2 comapible torus} to replace the generalized fiber sum and the $\Stab$ factor on the right with a connected sum and two loop surgeries
    \begin{equation}
     ((N_{g+1})_{x,y},x\times y')\cong ((N_g)_{x,y}\# (\Stab)\#(S^1\times S^3),x\times y')  
     \end{equation}
     and we can conclude from the inductive hypotheses.
\end{proof}

\begin{proof}[Proof of Theorem \ref{thm: stabilizaion of Z_kg}]
    By definition $Z_{K,g}$ is the fiber sum $M_K\#_{T^2} N_g$ performed along the framed 2-tori $T_2\subseteq M_K$ and $(x\times y)\subseteq N_g$, so, by Corollary \ref{cor: Cobordism with s2xs2 comapible torus} the manifold $Z_{K,g}\#(\Stab)$ is diffeomorphic to $M_K\# (N_g)_{x,y}$. It is sufficient now to apply Corollary \ref{cor: stabilization rel boundary}.
\end{proof}

\section{Nullhomotopic 2-tori and 2-spheres\label{sec: nullhomologus surfaces}}
In this section we will examine the stabilization behavior of the 2-sphere $\Sigma\subseteq\overline{K}$ implicitly described in \cite{fintushel1994fake}, as pointed out by \cite[end of Section 2]{ray2017four}, the 2-tori $T'_n\subseteq M$ and the 2-spheres $S'_n\subseteq M\#(\Stab)$ defined in \cite[Theorem A]{torres2020Unknotted}. All these surfaces are nullhomologus and they have fundamental group of the complement which is infinite cyclic and generated by the meridian. These surfaces are topologically unknotted and smoothly knotted under opportune conditions \cite{fintushel1994fake,torres2020Unknotted}.\\
Most of the result in this section are obtained as an application or a variation of the techniques of Section \ref{sec: unlinked}.
\subsection{External stabilization of $\Sigma\subseteq\overline{K}$\label{sec: stabilization FintStern}}
From the proof of Theorem \ref{thm: (un)linked stabilization and bruniannity} we get the following proposition.
\begin{prop}\label{prop: extstabilization nullhom spheres}
Let $\Sigma\subseteq X$ be a smooth 2-sphere with trivial normal bundle in a simply connected 4-manifold $X$. Let $B$ be a simply connected 4-manifold. The surface $\Sigma$ is smoothly unknotted in $X\#B $ if and only if the manifold $X(\Sigma)\#B$ has a smooth $S^1\times S^3$ summand, where $X(\Sigma)$ is obtained from $X$ as the result of 2-sphere surgery along $\Sigma$.
\end{prop}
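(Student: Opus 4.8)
The plan is to prove the equivalence by relating, in both directions, the operation of 2-sphere surgery on $\Sigma$ to the topology of a neighborhood of $\Sigma$. The key observation is that surgery on a 2-sphere $\Sigma\subseteq X$ with trivial normal bundle removes $\nu\Sigma\cong S^2\times D^2$ and glues in $D^3\times S^1$, while $\Sigma$ being smoothly unknotted means precisely that $\Sigma$ bounds an embedded $S^1\times D^3$ wait — for a 2-sphere, unknotted means it bounds an embedded $D^3$, equivalently its complement $X\bslash\nu\Sigma$ is diffeomorphic to $(X(\Sigma))\bslash(D^3\times S^1)$ for some such identification; the point is that unknottedness of $\Sigma$ in $X$ translates into a statement about the decomposition of $X(\Sigma)$.

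First I would set up notation: let $X(\Sigma)$ denote the result of surgery, so that $X=(X(\Sigma)\bslash(S^1\times D^3))\cup(D^2\times S^2)$ and conversely $X(\Sigma)=(X\bslash\nu\Sigma)\cup(S^1\times D^3)$, where $\nu\Sigma\cong D^2\times S^2$ with $\Sigma=\{0\}\times S^2$. The core 2-sphere $\{0\}\times S^2$ of the reglued $D^2\times S^2$ in the description of $X$ is isotopic to $\Sigma$. Now for the forward direction: assume $\Sigma$ is smoothly unknotted in $X\#B$. Then $\Sigma$ bounds an embedded $D^3$, so a tubular neighborhood of $\Sigma\cup D^3$ is a standard $S^2\times D^2\cup(\text{3-handle})$, and removing $\nu\Sigma$ and regluing $S^1\times D^3$ is, locally near this $D^3$, the standard operation turning $S^4$-neighborhood data into an $S^1\times S^3$ summand. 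More precisely, the complement of $\Sigma$ in a ball neighborhood of $\Sigma\cup D^3$ deformation retracts appropriately so that $(X\#B)(\Sigma)$ splits off an $S^1\times S^3$: one checks that surgery on an unknotted 2-sphere in a 4-manifold $Y$ yields $Y\#(S^1\times S^3)$ — this is a standard local computation since the unknotted $S^2$ lies in a ball $B^4\subseteq Y$ and surgery on the unknotted $S^2\subseteq S^4$ gives $S^1\times S^3$. Applying this with $Y=X\#B$ gives $(X\#B)(\Sigma)\cong (X\#B)\#(S^1\times S^3)=(X(\Sigma))\#B\#(S^1\times S^3)$ after commuting the connected sums, hence $X(\Sigma)\#B$ has an $S^1\times S^3$ summand.

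For the converse, suppose $X(\Sigma)\#B$ has a smooth $S^1\times S^3$ summand, say $X(\Sigma)\#B\cong W\#(S^1\times S^3)$. The 2-sphere $\Sigma$ in $X$ is obtained as the belt sphere (core 2-sphere) of the surgery torus, i.e. $\Sigma=\{0\}\times S^2\subseteq D^2\times S^2\subseteq X$. Undoing the surgery: $X=(X(\Sigma)\bslash(S^1\times D^3))\cup(D^2\times S^2)$, and $\Sigma$ is the belt sphere of the reglued $S^1\times D^3$, hence $\Sigma$ is the belt sphere of an $S^1\times D^3$ glued into $X(\Sigma)$ along $S^1\times S^2$. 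Now I would use that $X(\Sigma)\#B$ contains an $S^1\times S^3$ summand to argue that, after connect-summing with $B$, the loop $S^1\times\{p\}\subseteq S^1\times D^3\subseteq X(\Sigma)\#B$ is smoothly isotopic to the $S^1$-factor of the $S^1\times S^3$ summand — this is where one invokes that a nullhomotopic simple loop (here the dual loop, the meridian of $\Sigma$, which is nullhomotopic since $\pi_1(X\#B\bslash\Sigma)$ situation) can be standardized, or more directly that any embedded loop generating an $S^1\times S^3$ summand is isotopic to the standard one after the right stabilization. Then the belt sphere of surgery along that standard loop in $W\#(S^1\times S^3)$ is exactly the unknotted 2-sphere, and undoing the surgery recovers $\Sigma$ as this unknotted sphere.

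The main obstacle, and the delicate point, is the converse direction — specifically the claim that possession of an $S^1\times S^3$ summand in $X(\Sigma)\#B$ lets one locate the surgery loop as a \emph{standard} loop generating that summand, up to isotopy in $X(\Sigma)\#B$. The loop in question is the core loop $S^1\times\{0\}$ of the $S^1\times D^3$ glued in to recover $X$ from $X(\Sigma)$; one must show it is isotopic to a loop that bounds nothing obstructive, equivalently that it can be slid to lie in the $S^1\times S^3$ summand as the obvious generator. I expect this requires an argument along the lines of the cobordism/Moishezon techniques of Section~\ref{Cobordism argument} together with the fact that nullhomotopic loops in a $4$-manifold are isotopic (used repeatedly in Section~\ref{sec: unlinked}), possibly needing the hypothesis that the relevant fundamental groups are infinite cyclic so that the meridian of $\Sigma$ is the only generator — this matches the running hypotheses of Section~\ref{sec: nullhomologus surfaces}. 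I would structure the final write-up so that the forward direction is the short standard computation and the converse is reduced to an application of the already-developed machinery (in particular the proof of Theorem~\ref{thm: (un)linked stabilization and bruniannity}, from which this proposition is explicitly said to follow).
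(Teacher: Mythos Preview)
Your approach is the paper's: the proposition is meant to be read off from the $g=1$ case of the proof of Theorem~\ref{thm: (un)linked stabilization and bruniannity}, via the dictionary between $\Sigma\subseteq X\#B$ and the core loop $\ell$ of the glued-in $S^1\times D^3$ inside $X(\Sigma)\#B$.

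Two points. First, in the forward direction your chain $(X\#B)(\Sigma)\cong(X\#B)\#(S^1\times S^3)=X(\Sigma)\#B\#(S^1\times S^3)$ is a slip: the last equality would force $X\cong X(\Sigma)$. The correct line is simply
\[
X(\Sigma)\#B=(X\#B)(\Sigma)\cong(X\#B)\#(S^1\times S^3),
\]
which already exhibits the $S^1\times S^3$ summand.

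Second, for the converse you reach for the cobordism/Moishezon machinery of Section~\ref{Cobordism argument}, but that is not what is used and it would not help here. The actual mechanism, lifted verbatim from the proof of Theorem~\ref{thm: (un)linked stabilization and bruniannity}, is elementary: writing $X(\Sigma)\#B\cong W\#(S^1\times S^3)$, both $\ell$ and the standard loop $t=S^1\times\{p\}$ generate $\pi_1\cong\Z$, hence are freely homotopic, hence isotopic in a $4$-manifold, and the belt sphere of loop surgery on $t$ visibly bounds a $D^3$ coming from the $S^3$ fibre. Your instinct that one needs $\pi_1(X\setminus\Sigma)\cong\Z$ is exactly right and worth making explicit: without it one only knows that $\pi_1(W)*\Z$ is normally generated by a single element $\ell$, and concluding $\pi_1(W)=1$ from that alone is the Kervaire--Laudenbach conjecture, which is open for general finitely presented groups. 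In every application in Section~\ref{sec: nullhomologus surfaces} the complement of $\Sigma$ has infinite cyclic fundamental group, so the argument goes through cleanly; the proposition should be read with that standing hypothesis.
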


Now we will define and study   the 2-sphere $\Sigma\subseteq \overline{K}$  implicitly defined by \cite{fintushel1994fake}.
\begin{constr}\label{Constr fs: 2-sphere}
    Let $X$ be the 4-manifold described in \cite{fintushel1994fake}, it can be described as a \emph{self generalized fiber sum} of the $K3$ surface along two smoothly embedded and framed 2-tori $T_0,T_1\subseteq K3$ each one contained in a nucleus $N_2$, see \cite[Section 5]{fintushel1994fake}. 
That is
\begin{equation}\label{eq: fintushel stern fake}
    X\cong \left(K3\bslash(\nu T_0\cup\nu T_1)\right)\cup_g \left(T^2\times \partial D^2\times [0,1]\right),
\end{equation}
where the gluing map $g$ is defined by the framings of $T_0$ and $T_1$. This manifold has infinite cyclic fundamental group, the result of loop surgery along a generator of the fundamental group produces  the simply connected 4-manifold $\overline{K}$ described in \cite[Section 5]{fintushel1994fake}. 
Let $\Sigma\subseteq \overline{K}$ be the  belt 2-sphere of a loop surgery. This is a nullhomotopic 2-sphere with infinite cyclic fundamental group of the complement.
\end{constr}

The manifold $\overline{K}(\Sigma)$ of Proposition \ref{prop: extstabilization nullhom spheres} is the manifold $X$.
The cobordism argument presented in the proof of Proposition \ref{Baykur} can be applied to self generalized fiber sums, and implies that
\begin{equation}
    \overline{K}(\Sigma)\#(\Stab)\cong (K3\#(S^1\times S^3))_{\alpha,\beta},
\end{equation}
where $(K3\#(S^1\times S^3))_{\alpha,\beta}$ denotes the manifold obtained from $K3\#(S^1\times S^3)$ by performing loop surgery along some simple framed loops $\alpha,\beta$. Since the manifold $X(\Sigma)$ has infinite cyclic fundamental group \cite{fintushel1994fake}, then $\alpha,\beta$ are nullhomotopic and they can be isotoped to stay in $K3$. Therefore, 
\begin{equation}
    \overline{K}(\Sigma)\#(\Stab)\cong (K3_{\alpha,\beta})\#(S^1\times S^3),
\end{equation}
so the manifold $\overline{K}(\Sigma)\#(\Stab)$ has a smooth $S^1\times S^3$ summand.
We have proved the following theorem.
\begin{thm}\label{thm: External fintushel sphere}
    Let $\Sigma\subseteq \overline{K}$ be as in Construction \ref{Constr fs: 2-sphere}. The 2-sphere $\Sigma\subseteq \overline{K}\#(\Stab)$ is smoothly unknotted, where the connected sum is performed away from $\Sigma$.
\end{thm}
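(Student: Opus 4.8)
The strategy is to apply Proposition \ref{prop: extstabilization nullhom spheres} with $X=\overline{K}$, $\Sigma$ the belt $2$-sphere of the loop surgery, and $B=\Stab$. By that proposition, it suffices to show that $\overline{K}(\Sigma)\#(\Stab)$ has a smooth $S^1\times S^3$ summand, where $\overline{K}(\Sigma)$ is the result of $2$-sphere surgery on $\overline{K}$ along $\Sigma$. But by Construction \ref{Constr fs: 2-sphere}, undoing the loop surgery that produced $\overline{K}$ from $X$ (resp. doing $2$-sphere surgery on the resulting belt sphere) recovers $X$, i.e. $\overline{K}(\Sigma)=X$ --- here $X$ is the Fintushel--Stern manifold of \cite{fintushel1994fake}, the self generalized fiber sum of $K3$ along the two framed $2$-tori $T_0,T_1\subseteq K3$ as in equation (\ref{eq: fintushel stern fake}). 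So the whole problem reduces to proving that $X\#(\Stab)$ has an $S^1\times S^3$ summand.

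**Main step: the cobordism argument.** To analyze $X\#(\Stab)$ I would run the cobordism argument of Proposition \ref{Baykur}, adapted to a \emph{self} generalized fiber sum. Recall that $X$ is obtained from $K3$ by cutting out $\nu T_0\sqcup\nu T_1$ and regluing $T^2\times\partial D^2\times[0,1]$; this is the same as a generalized fiber sum of $K3$ with itself along $T_0$ and $T_1$. The proof of Proposition \ref{Baykur} builds a $5$-dimensional cobordism from $K3$ (here playing the role of $X_1\sqcup X_2$, but now a single connected manifold since we self-sum) to $X$ by attaching the handles coming from the standard handle decomposition of $T^2\times D^2\times[0,1]$: one $1$-handle, two $2$-handles, one $3$-handle. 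A middle slice of this cobordism shows that the fiber-summed manifold with one extra $\mu$-loop surgery is diffeomorphic to $K3\#(S^1\times S^3)$ with two further loop surgeries along framed band-sum loops $\alpha,\beta$ (the $1$-handle produces the $S^1\times S^3$ summand --- in the self-sum case it joins $K3$ to itself --- and the two $2$-handles produce the two loop surgeries). Since $T_0,T_1\subseteq K3$ each sit in a nucleus $N_2$ and hence have simply connected complement, and $\pi_1(X)\cong\Z$ by \cite{fintushel1994fake}, the meridian $\mu$ is already nullhomotopic, so the $\mu$-surgery turns $X$ into $X\#(\Stab)$ (or $X\#(\StabTwist)$, but the even intersection form of $K3\#(S^1\times S^3)$ forces $\Stab$). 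This yields
\begin{equation}
    \overline{K}(\Sigma)\#(\Stab)=X\#(\Stab)\cong (K3\#(S^1\times S^3))_{\alpha,\beta}.
\end{equation}

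**Finishing.** It remains to move the loop surgeries off the $S^1\times S^3$ factor. Because $X(\Sigma)=X$ has infinite cyclic fundamental group (\cite{fintushel1994fake}), the loops $\alpha,\beta$ that appear as attaching circles of the two $2$-handles are nullhomotopic in $K3\#(S^1\times S^3)$; as nullhomotopic loops in a $4$-manifold they bound immersed $2$-disks, and using these one isotopes $\alpha$ and $\beta$ into a ball disjoint from the $S^1\times S^3$ summand (this is exactly the kind of move encoded by Lemma \ref{lem: ConSumOfLoop and Disk}). Hence the two loop surgeries can be performed inside $K3$, giving
\begin{equation}
    \overline{K}(\Sigma)\#(\Stab)\cong (K3_{\alpha,\beta})\#(S^1\times S^3),
\end{equation}
which manifestly has a smooth $S^1\times S^3$ summand. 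Applying Proposition \ref{prop: extstabilization nullhom spheres} then gives that $\Sigma\subseteq\overline{K}\#(\Stab)$ is smoothly unknotted, with the connected sum away from $\Sigma$.

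**Expected main obstacle.** The routine parts (Proposition \ref{prop: extstabilization nullhom spheres}, the even-form bookkeeping, the $S^1\times S^3$ summand) are immediate. The delicate point is justifying that the cobordism argument of Proposition \ref{Baykur} --- stated there for a fiber sum of two \emph{distinct} manifolds --- goes through verbatim for a \emph{self} generalized fiber sum, in particular that the $1$-handle still contributes exactly one $S^1\times S^3$ summand (now a "self-connected-sum" $1$-handle within $K3$ rather than between two pieces) and that the framings of the band-sum loops $\alpha,\beta$ are unaffected; this is why the excerpt phrases it as "the cobordism argument presented in the proof of Proposition \ref{Baykur} can be applied to self generalized fiber sums". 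Once that adaptation is granted, everything else is mechanical.
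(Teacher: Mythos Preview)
Your proposal is correct and follows essentially the same route as the paper: reduce via Proposition~\ref{prop: extstabilization nullhom spheres} to showing $\overline{K}(\Sigma)\#(\Stab)=X\#(\Stab)$ has an $S^1\times S^3$ summand, apply the cobordism argument of Proposition~\ref{Baykur} adapted to a self fiber sum to get $X\#(\Stab)\cong(K3\#(S^1\times S^3))_{\alpha,\beta}$, and then use $\pi_1(X)\cong\Z$ to move the nullhomotopic loops $\alpha,\beta$ into $K3$. You add a bit more justification than the paper (the even-intersection-form check that the $\mu$-surgery yields $\Stab$ rather than $\StabTwist$, and explicit mention of the self-sum $1$-handle producing the $S^1\times S^3$ summand), but the argument is the same.
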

\begin{oss}\label{rem: sigma is smoothly knotted}
     Let $\Sigma\subseteq \overline{K}$ be as in Construction \ref{Constr fs: 2-sphere}.
    The 2-sphere $\Sigma\subseteq \overline{K}$ is topologically unknotted and it is not smoothly unknotted. 
\end{oss}
\begin{proof}
The 2-sphere $\Sigma\subseteq \overline{K}$ is topologically unknotted by \cite[Theorem 7.2]{SunukjianNathan}, since $\pi_1(\overline{K}\bslash \Sigma)$ is infinite cyclic and the inequality $b_2(\overline{K})\geq |\sigma(\overline{K})|+6$ holds.
The manifold $\overline{K}(\Sigma)$ does not contain a $S^1\times S^3$ summand \cite{fintushel1994fake}, therefore by Proposition \ref{prop: extstabilization nullhom spheres} the surface $\Sigma\subseteq\overline{K}$ is not smoothly unknotted. For the convenience of the reader we now present a different proof, due to Rafael Torres, to show that $\overline{K}(\Sigma)$ is irreducible. \\
The manifold $\overline{K}(\Sigma)$ is the \emph{self generalized fiber sum }of two 2-tori in $K3$. Those 2-tori are fibers of $K3$, and are Lagrangian (given a symplectic structure on $K3$). Since they are homologically essential, the symplectic structure can be perturbed so that both of them become symplettic submanifold (and symplettomorphic), see \cite{gompf1995new}. 
Gluing as in (\ref{eq: fintushel stern fake}) yields a spin 4-manifold with infinite cyclic fundamental group. It is symplectic by a result of Gompf \cite{gompf1995new} and \cite{mccarthy1994symplectic}.
It is minimal, since it is spin. It is irreducible by a result of Hamilton-Kotschick \cite{hamilton2006minimality}, since the infinite cyclic group is a residually finite group. 
This argument (and this 4-manifold) has already appeared in \cite{torres2011geography}.
\end{proof}

\subsection{Stabilization of the nullhomologus 2-tori $T_n$ and 2-spheres $S_n'$.\label{sec: Stab Torres unknotted}}
Since \cite{(Un)knotted} is based on Torres's work \cite{torres2020Unknotted}, the techniques and construction presented in Section \ref{sec: unlinked} can be adapted to cover the study of the stabilization problem of the 2-spheres $S_n$ and also the 2-tori $T_n'$.\\
The construction of the surfaces $S_n$ and $T_n'$ can be obtained from Section \ref{sec: setting unlinked} with the following modification:
\begin{itemize}
    \item The 4-manifold $M$ has an embedded 2-torus $T\subseteq M$ with trivial normal bundle, a chosen framing  and simply connected complement. It is not necessary that $M$ is doubly toroidal. In general, we will replace in any construction the 2-torus $T_2$ with $T$.
    \item No knot $K\subseteq S^3$ is involved in the construction, in particular, we do not use knot surgery.
    \item Here $g=1$, the Kodaira Thurston manifold $N=N_1$ is replaced with the infinite family $\{N(n)\}_{n\in\N}$. The manifold \begin{equation}
    N(n):=S^1\times Y(n),\end{equation}
where $Y(n)$ is the boundary of the 4-manifold described in Figure \ref{fig:KT}. The framed 2-torus $x\times y\subseteq N(n)$ can be defined as in Section \ref{sec: long KT} replacing $Y$ with $Y(n)$ and similarly for any other submanifolds.
\item The manifold $Z_{K,g}$ is replaced by $Z(n)$ which is  the generalized fiber sum between $M$ and the manifold $N(n)$ along the 2-tori $T\subset M$ and $x\times y \subset  N(n)$. 
\begin{equation}    Z(n):=(M\bslash\nu T)\cup (N(n)\bslash \nu(x\times y)).\end{equation}
\item The derived manifolds $\widehat{N(n)},N(n)^*,Z(n)^*$ are produced  with submanifolds $x\times y,\widehat{x\times b}\subseteq \widehat{N(n)}$ and $\Gamma(n)\subseteq N(n)^*\bslash\nu (x\times y)\subseteq Z(n)^*$.
\end{itemize}

\begin{constr}[Nullhomologus 2-spheres and 2-tori \cite{torres2020Unknotted}]\label{constr: nullhomo 2-sphere torres}\label{constr: nullhomo 2-tori torres}
Let $M$ be a  smooth 4-manifold and $T\subseteq M$ be a embedded 2-torus with trivial normal bundle, a chosen framing and simply connected complement.
The 2-sphere $S_n\subseteq M\#(\Stab)$ is defined as follows:
\begin{equation}
    S_n=S_n(M,T,n ,\phi_n):=\phi_n(\Gamma(n))\subseteq M\#(\Stab),
\end{equation}
    where $\phi_n: Z(n)^*\to M\#(\Stab)$ is a diffeomorphism. 
The 2-torus $T'_n\subseteq\nu T\subseteq M$ is defined as follows:
\begin{equation}
    T'_n=T_n'(M,T,n ,\phi_n):=\phi_n(\widehat{x\times b })\subseteq \nu T\subseteq M,
\end{equation}
    where $\phi_n: M\#_{T^2}\widehat{N(n)}\to M$ is a diffeomorphism which restrict to the identity on $M\bslash\nu T$ and $M\#_{T^2}\widehat{N(n)}$ denotes the 4-manifold obtained as generalized fiber sum along the framed 2-tori $T\subseteq M$ and $x\times y\subseteq\widehat{N(n)}$.
\end{constr}

\subsubsection{Stabilization of $Z(n)$ and the external stabilization of $S_n$}
It is possible to adapt the entirety of Section \ref{sec: tecnical lemmas} to the study of the manifolds $Z(n)$ instead of the manifolds $Z_{K,g}$. We should replace every appearance of $N$ with $N(n)$, $T_2$ with $T$, $M_K$ with $M$, $Z_{K,g}$ with $Z(n)$, $Y$ with $Y(n)$ and set $g=1$. By doing so we get the following theorem.
\begin{thm}\label{thm: External stab of Z(n)}
If $T\subseteq M$ is an ordinary surface and its framing is 1/0-spin-compatible, then the manifold $Z(n)\#(\Stab)$ is orientation preserving diffeomorphic to $M\#(S^1\times S^3)\#2(\Stab)$. 
\end{thm}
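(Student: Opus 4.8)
The plan is to mimic the proof of Theorem \ref{thm: stabilizaion of Z_kg} in the simpler setting where there is no knot surgery, only one fiber-sum summand ($g=1$), and the Kodaira--Thurston manifold $N$ is replaced by the family $N(n)$. The overall structure follows the three-step strategy of Section \ref{sec: strategy external unlinked}: first observe that $Z(n)$ is by construction the generalized fiber sum $M\#_{T^2}N(n)$ along the framed $2$-tori $T\subseteq M$ and $x\times y\subseteq N(n)$; then use the cobordism argument to trade the fiber sum (plus an extra $\Stab$) for a connected sum with two loop surgeries; and finally identify the resulting manifold.

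\medskip

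First I would apply Corollary \ref{cor: Cobordism with s2xs2 comapible torus} with $X_1:=M$, $T_1:=T$ and $X_2:=N(n)$, $T_2:=x\times y$. Since $T\subseteq M$ has simply connected complement and (by hypothesis) is an ordinary surface with a $1/0$ spin-compatible framing, the corollary gives a diffeomorphism
\[
Z(n)\#(\Stab)\;\cong\;M\#\bigl(N(n)\bigr)_{\widetilde{x},\,y},
\]
where $\bigl(N(n)\bigr)_{\widetilde{x},\,y}$ is obtained from $N(n)$ by loop surgery along (band sums of) the loops $\alpha_2,\beta_2$ coming from the framing of $x\times y$; the hypothesis on the framing of $T$ lets us take $\widetilde{\alpha}_2=\alpha_2$ without reversing orientation, and the extra summand is $\Stab$ rather than $\StabTwist$ precisely because $T$ is ordinary. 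The second step is to adapt Proposition \ref{prop: Ngxy diffeo} in the case $g=1$ (which is itself built from Lemma \ref{KT loop surgery} and Proposition \ref{prop: s1xs1xs2 to s1xs3 s2xs2}), replacing $N=N_1$ by $N(n)$ throughout. One checks that the entire Kirby-calculus chain goes through with $Y$ replaced by $Y(n)$: the automorphism of $Y(n)$ in Figure \ref{fig:KT} still exchanges the framed loops $y$ and $b$, and the Moishezon trick (Theorem \ref{thm: mois}) together with the identification of $\widehat{N(n)}$ still yields that $\bigl(N(n)\bigr)_{\widetilde{x},\,y}$ is diffeomorphic to $(S^1\times S^3)\#2(\Stab)$. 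Substituting into the previous display gives
\[
Z(n)\#(\Stab)\;\cong\;M\#(S^1\times S^3)\#2(\Stab),
\]
which is the claim.

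\medskip

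The main obstacle I expect is bookkeeping rather than conceptual novelty: one must verify that every lemma of Section \ref{sec: tecnical lemmas} used in the $g=1$ case of Proposition \ref{prop: Ngxy diffeo} is insensitive to the replacement $Y\rightsquigarrow Y(n)$. Concretely, the Kirby diagrams in Figures \ref{fig:KT}, \ref{fig: mumu}, \ref{fig: intermidiat step} were all drawn with a general framing parameter $n$ on the red $2$-handle (the paper writes $Y=Y(1)$ but the diagrams carry the box labelled $n$), so the computations identifying $\widehat{Y(n)}$ and the loops $y,\alpha,m$ are literally the same; the box labelled $n$ only affects the knot $\mu=\mu_n$ appearing in Figure \ref{fig: mu}, and since $\mu_n$ can still be unknotted by a crossing change, the argument of Lemma \ref{KT loop surgery} that $\{p\}\times\overline{\mu_n}$ becomes an unknotted meridian is unchanged. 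The only genuine check is that $T\subseteq M$ being ordinary with a $1/0$ spin-compatible framing is exactly what Corollary \ref{cor: Cobordism with s2xs2 comapible torus} needs to produce a $\Stab$ summand (rather than $\StabTwist$) and a diffeomorphism that does not require flipping the framings of $\alpha_2,\beta_2$; this is guaranteed by the hypotheses in the statement. Assembling the two identifications then completes the proof.

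\medskip

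As a remark on how this fits the rest of Section \ref{sec: Stab Torres unknotted}: once Theorem \ref{thm: External stab of Z(n)} is in hand, applying Proposition \ref{prop: extstabilization nullhom spheres} with $X=M\#(\Stab)$ and $X(S_n)=Z(n)$ shows that the nullhomologous $2$-sphere $S_n\subseteq M\#(\Stab)$ becomes smoothly unknotted after one further stabilization with $\Stab$, and the same input feeds the analogous statements for the $2$-tori $T'_n$ (Theorems \ref{thm: externa stab 2-sphers (un)knotted torres} and \ref{thm: identification Torres 2-tori}).
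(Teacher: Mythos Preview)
Your proposal is correct and matches the paper's own approach exactly: the paper states that Theorem \ref{thm: External stab of Z(n)} is obtained by adapting Section \ref{sec: tecnical lemmas} with the substitutions $N\rightsquigarrow N(n)$, $T_2\rightsquigarrow T$, $M_K\rightsquigarrow M$, $Z_{K,g}\rightsquigarrow Z(n)$, $Y\rightsquigarrow Y(n)$, $g=1$, and your write-up carries this out. One small slip: the $1/0$ spin-compatibility of $T$ gives $\widetilde{\beta}_2=\beta_2$ (i.e.\ the framing of $y$ is preserved), not $\widetilde{\alpha}_2=\alpha_2$ as you wrote; this is harmless since Proposition \ref{prop: Ngxy diffeo} allows $\widetilde{x}$ to carry an arbitrary framing, but it is worth getting the bookkeeping right because preserving the framing of $y$ is precisely what Lemma \ref{KT loop surgery} needs.
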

As a consequence, the Torres's 2-sphere $S_n$ is smoothly unknotted after one external stabilization with $\Stab$. 
\begin{thm}\label{thm: externa stab 2-sphers (un)knotted torres}
Let $S_n=S_n(M,T,n ,\phi_n)\subseteq M\#(\Stab)$ be defined as in Construction \ref{constr: nullhomo 2-sphere torres}.
If $T\subseteq M$ is an ordinary surface and its framing of  is 1/0 spin-compatible, then $S_n\subseteq M\#(\Stab)$ becomes smoothly unknotted in  $(M\#(\Stab))\#(\Stab)$, where the connected sum is performed away from $S_n$.
\end{thm}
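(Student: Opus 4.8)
The plan is to reduce the statement about $S_n$ to the previously established external stabilization theorem for nullhomologous $2$-tori (Theorem \ref{thm: ExtStab nullhomologus 2-tori}) exactly as in the proof of Theorem \ref{thm: Int unknotted 1}, with the simplification that now $g=1$, no knot surgery is performed, and we are allowed one extra external $S^2\times S^2$ factor. First I would recall that $S_n = \phi_n(\Gamma(n))$ with $\phi_n : Z(n)^* \to M\#(\Stab)$ a diffeomorphism, so $S_n\subseteq M\#(\Stab)$ and $\Gamma(n)\subseteq N(n)^*\bslash\nu(x\times y)\subseteq Z(n)^*$ are smoothly equivalent. I would then invoke the "following the surfaces" machinery (the analogues of Lemma \ref{lem: internal step 1}, Lemma \ref{lem: Identifing N hat}, Lemma \ref{lem: trivializing u} and Lemma \ref{lem: internal final step: remove K}, with $Y$ replaced by $Y(n)$, $N$ by $N(n)$, $T_2$ by $T$, $M_K$ by $M$, and the trefoil replaced by the appropriate $n$-dependent knot arising from the $n$-framed $2$-handle in Figure \ref{fig:KT}) to show that, after one trivial internal stabilization, $S_n\subseteq M\#(\Stab)$ becomes smoothly equivalent to a nullhomologous $2$-torus $\mathcal{T}(n)\subseteq\nu T\subseteq M\bslash D^4\subseteq M\#(\Stab)$ of the form $(T)_{F,K(n)}$ given by Construction \ref{const: nullhomologus tori}. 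This step already appears, essentially verbatim, in Section \ref{sec: internal Unknotted} for the case of $\Gamma_K$ and only needs to be transcribed with the dictionary above.

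Having identified $S_n$ (up to one internal stabilization) with a nullhomologous $2$-torus $\mathcal{T}(n)$ inside $\nu T$, I would then apply Theorem \ref{thm: ExtStab nullhomologus 2-tori}: since $T\subseteq M$ has simply connected complement, $\mathcal{T}(n)$ has infinite cyclic $\pi_1$ of the complement, so its extended stabilization set is nonempty, and $\mathcal{T}(n)$ becomes smoothly unknotted in $M\#B$ for $B$ in that set. The hypothesis that $T$ is an ordinary surface with a $1/0$ spin-compatible framing is exactly what guarantees that $B=\Stab$ works: combining Remark \ref{oss: torus framings and stabilization sets} with Proposition \ref{prop: Propriety of S}.\ref{prop: SS item 2} (or, alternatively, with the last clause of Theorem \ref{thm: ExtStab nullhomologus 2-tori} via Theorem \ref{thm: External stab of Z(n)} if $M$ is non-spin), one gets $\Stab\in\overline{\Ss}(\mathcal{T}(n),\gamma)$. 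Thus $\mathcal{T}(n)\subseteq M\bslash D^4\subseteq (M\#(\Stab))$ is smoothly unknotted once we add the $\Stab$ coming from the internal stabilization we performed — but here I must be careful to bookkeep the two copies of $\Stab$: the hypothesis gives an honest internal stabilization (which Theorem \ref{thm: ExtStab nullhomologus 2-tori} turns into an external one), and together with the ambient $\Stab$ in which $S_n$ already lives this yields unknotting in $(M\#(\Stab))\#(\Stab)$.

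Concretely, the sequence of implications is: $S_n\subseteq M\#(\Stab)$ becomes, after one trivial internal stabilization, smoothly equivalent to $\mathcal{T}(n)\subseteq\nu T\subseteq M\bslash D^4\subseteq M\#(\Stab)$ (this is the $g=1$, $K$-free analogue of Theorem \ref{thm: Int unknotted 1}); an internal stabilization of a surface contained in $X$ is, by Remark \ref{rem: loop is connected sum} and the discussion of Section \ref{sec: Internal->external}, an external $\Stab$ or $\StabTwist$ stabilization when the relevant push-off loop is nullhomotopic in the complement; and finally Theorem \ref{thm: ExtStab nullhomologus 2-tori} applied to $\mathcal{T}(n)$ together with $\Stab\in\overline{\Ss}(\mathcal{T}(n),\gamma)$ (forced by the ordinary, $1/0$-spin-compatible hypothesis on $T$) shows the stabilizing block can be taken to be $\Stab$. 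Assembling these gives a diffeomorphism of pairs $(M\#(\Stab)\#(\Stab), S_n)\cong(M\#(\Stab)\#(\Stab), U)$ where $U$ is the unknotted $2$-sphere, which is the assertion.

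I expect the main obstacle to be purely organizational rather than conceptual: one must verify that every lemma in Section \ref{sec: internal Unknotted}, in particular the Kirby-calculus identifications behind Lemma \ref{lem: Identifing N hat} and the Moishezon-trick step behind Lemma \ref{lem: internal step 1}, goes through after replacing $Y$ by $Y(n)$ and the trefoil by the $n$-twist knot $K(n)$ read off from the $n$-framed red $2$-handle in Figure \ref{fig:KT}, and that the surface being tracked is genuinely $(T)_{F,K(n)}$ rather than some other nullhomologous torus. The knot $K(n)$ has unknotting number one with a natural unknotting crossing (it is a twist knot), so the hypotheses of Construction \ref{const: nullhomologus tori} are met; but confirming this, and confirming that the framing of the push-off $\gamma\subseteq\mathcal{T}(n)$ induced by the chosen framing $F$ of $T$ lies in $\overline{\Ss}(\mathcal{T}(n),\gamma)$ with stabilizing block $\Stab$, is where the care is needed. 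Once that dictionary is checked, the theorem follows formally from the cited results with no new idea required.
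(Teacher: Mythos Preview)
Your approach has a genuine gap at the step where you try to convert the internal stabilization into an external one. What Section~\ref{sec: Internal->external} establishes (Proposition~\ref{Prop: Relation External-Internal}, Remark~\ref{rem: loop is connected sum}, Theorem~\ref{mainthm: int to ext}) is a \emph{comparative} statement: if two surfaces $\Sigma_1,\Sigma_2$ become isotopic after one internal stabilization each (relative to $\nu_{1/2}\gamma$), then they become isotopic after one external $B$-stabilization. It does \emph{not} say that a single internal stabilization of $\Sigma$ in $X$ is the same as placing $\Sigma$ in $X\#B$. Your chain of implications yields only that $(S_n)_{\#T^2}$ is an unknotted \emph{torus} in $M\#(\Stab)$ (this is exactly Theorem~\ref{thm: intern unknotte 2spheres torres}), and there is no further step that produces ``$S_n$ bounds a $D^3$ in $M\#2(\Stab)$'' from this. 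Your bookkeeping of the two $\Stab$ factors is off: the $\Stab$ that unknots $\mathcal{T}(n)$ via Theorem~\ref{thm: ExtStab nullhomologus 2-tori} is the \emph{same} ambient $\Stab$ in which $S_n$ already sits, not an additional one; no second copy of $\Stab$ ever enters your argument. There is also a secondary mismatch: the curve $\gamma$ relevant to Theorem~\ref{thm: ExtStab nullhomologus 2-tori} is $F^{-1}(S^1\times\{1\}\times\{0\})=\gamma_{0/1}$, whereas the hypothesis ``$1/0$ spin-compatible'' concerns $\gamma_{1/0}$, so even the claim $\Stab\in\overline{\Ss}(\mathcal{T}(n),\gamma)$ is not forced by the stated hypothesis when $M$ is spin.

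The paper's proof bypasses all of this by using the surgery criterion of Proposition~\ref{prop: extstabilization nullhom spheres}: $S_n$ is unknotted in $(M\#(\Stab))\#(\Stab)$ if and only if $Z(n)\#(\Stab)$ has a smooth $S^1\times S^3$ summand (since sphere surgery on $S_n$ recovers $Z(n)$). This is exactly what Theorem~\ref{thm: External stab of Z(n)} provides, via the cobordism argument of Section~\ref{Cobordism argument} adapted to $N(n)$ as described at the start of Section~\ref{sec: Stab Torres unknotted}. The $1/0$ spin-compatible hypothesis enters naturally there through Corollary~\ref{cor: Cobordism with s2xs2 comapible torus}. The ``following the surfaces'' machinery you invoke is the right tool for the \emph{internal} stabilization result (Theorem~\ref{thm: intern unknotte 2spheres torres}), but not for this external one.
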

\begin{proof}
It is sufficient to combine Proposition \ref{prop: extstabilization nullhom spheres} and Theorem \ref{thm: External stab of Z(n)}.
\end{proof}

\subsubsection{Identification of 2-tori $T'_n$ and stabilizations}
The following theorem shows how to identify the Torres's 2-tori $T_i'$ with the 2-tori of \cite{Hoffman_Sunukjian_2020}.
\begin{thm}\label{thm: identification Torres 2-tori}
    Let $T_n'=T_n'(M,T,n ,\phi_n)\subseteq \nu T\subseteq M$ be defined as in Construction \ref{constr: nullhomo 2-tori torres}.
    Let 
\begin{equation}
    \mathcal{T}_n=\mathcal{T}_n (T):=(T)_{F,K_n}\subseteq \nu T\subseteq M,
\end{equation}
where $(T)_{F,K_n}$ denotes the 2-torus obtained by Construction \ref{const: nullhomologus tori} applied to the framed 2-torus $(T,F)\subseteq M$ and knot $K_n\subseteq S^3$ depicted in Figure \ref{fig: knot Kn}. The 2-tori $T_n',\mathcal{T}_n\subseteq\nu T \subseteq M$ are smoothly equivalent relative to $M\bslash\nu T$.
In particular, the 2-torus $T_n'\subseteq M$ is smoothly unknotted after one trivial internal stabilization or one external stabilization with any manifold in the stabilization set $ \Ss(\mathcal{T}_n,\gamma)$, where $\gamma$ is the framed curve described in equation (\ref{eq: definition framing induced by the restriction}).
\end{thm}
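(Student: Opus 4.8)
The plan is to mimic, almost verbatim, the chain of diffeomorphisms used in Section \ref{sec: internal Unknotted} to identify $S_K$ after one internal stabilization, but now bookkeeping the 2-torus $\widehat{x\times b}$ directly rather than $(\Gamma_1)_{\#T^2}$. Since the construction of $T_n'$ in Construction \ref{constr: nullhomo 2-tori torres} uses the diffeomorphism $\phi_n\colon M\#_{T^2}\widehat{N(n)}\to M$ that is the identity on $M\bslash\nu T$, the surface $T_n'$ is by definition smoothly equivalent (rel $M\bslash\nu T$) to $\widehat{x\times b}\subseteq M\#_{T^2}\widehat{N(n)}$. So it suffices to produce a diffeomorphism of pairs $(M\#_{T^2}\widehat{N(n)},\widehat{x\times b})\cong(M,\mathcal{T}_n)$ which is the identity on $M\bslash\nu T$. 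First I would invoke the analogue of Lemma \ref{lem: Identifing N hat} for $Y(n)$ in place of $Y$: exactly the same Kirby moves on Figure \ref{fig:KT} (now with the box labelled $n$) identify $\widehat{Y(n)}$ with $S^1\times S^2$, carrying the framed loops $y$ and $\alpha$ to $S^1\times\{1\}$ and $\mu_n$ respectively, where $\mu_n$ is the curve of Figure \ref{fig: mu} with $n$ full twists. Crossing with $S^1$ gives $(\widehat{N(n)},x\times b',x\times y)\cong(S^1\times(S^1\times S^2),\,S^1\times u,\,S^1\times(S^1\times\{1\}))$ with the 2-torus $\widehat{x\times b}$ sent to $S^1\times\mu_n$ and the loop $m$ to $\{1\}\times\overline{\mu_n}$, all with product framings.

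Next I would run the analogue of Lemma \ref{lem: trivializing u} and Lemma \ref{lem: internal final step: remove K}, but \emph{without} the knot-surgery manifold $M_K$: form the generalized fiber sum $M\#_{T^2}\widehat{N(n)}$ identifying $T\subseteq M$ with $S^1\times S^1\times\{1\}$, use $F\colon\nu T\cong S^1\times S^1\times D^2=(S^1\times S^1\times S^2)\bslash\nu(S^1\times S^1\times\{1\})$ to simplify, and observe that the curve $\mu_n\subseteq S^1\times D^2$ bounds a punctured torus visible in the picture. Following that surface shows $F^{-1}(\{1\}\times\overline{\mu_n})$ is a band sum of a spin-framed unknot with nullhomotopic meridional curves of $\partial\nu T$, so by Lemma \ref{lem: ConSumOfLoop and Disk} it is isotopic rel the complement of $F^{-1}(S^1\times\mu_n)$ to a spin-framed unknot $u\subseteq S^4\bslash D^4$. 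This gives $(M\#_{T^2}\widehat{N(n)},\widehat{x\times b})\cong(M\# S^4,\,F^{-1}(S^1\times\mu_n))$, and $F^{-1}(S^1\times\mu_n)$ is, by inspection of Figure \ref{fig: knot Kn} (with $n$ full twists), precisely $(T)_{F,K_n}=\mathcal{T}_n$ defined via Construction \ref{const: nullhomologus tori}. Since the loop surgery along $u$ only affects a ball disjoint from $\mathcal{T}_n\subseteq\nu T$ and disjoint from $M\bslash\nu T$, the whole chain is the identity on $M\bslash\nu T$, yielding the first assertion. The "in particular" clauses then follow: the internal-stabilization statement from \cite[Theorem 2 and Section 3.6]{InternalStabilizationBaykurSunukian} applied to $\mathcal{T}_n$, and the external one from Theorem \ref{thm: ExtStab nullhomologus 2-tori} since $\mathcal{T}_n$ is of the form $T_{F,K}$ for an unknotting-number-one knot $K_n$.

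The main obstacle I anticipate is verifying that the Kirby-calculus identification of $\widehat{Y(n)}$ genuinely produces the curve $\mu_n$ with the claimed framing for \emph{all} $n$, i.e.\ that the box labelled $n$ in Figure \ref{fig:KT} translates into exactly $n$ full twists of the blue curve relative to the $0$-framed $2$-handle in Figure \ref{fig: mu}; a sign error or an off-by-one here would change which knot $K_n$ appears. This is routine but must be done carefully by following the handle slide of $\alpha$ over the dotted $1$-handle and the subsequent cancellation, exactly as in the proof of Lemma \ref{lem: Identifing N hat}, keeping track of the twisting box throughout. A secondary, milder point is confirming that $K_n$ as drawn in Figure \ref{fig: knot Kn} does have unknotting number one with the indicated crossing, so that Theorem \ref{thm: ExtStab nullhomologus 2-tori} applies; this is immediate from the diagram.
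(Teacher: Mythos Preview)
Your approach is essentially the same as the paper's, and the core identification is correct: you invoke the $N(n)$-analogue of Lemma \ref{lem: Identifing N hat} to get $(\widehat{N(n)},x\times y,\widehat{x\times b})\cong(S^1\times(S^1\times S^2),\,S^1\times(S^1\times\{1\}),\,S^1\times\mu_n)$, fiber-sum with $M$ along $T$, and read off $F^{-1}(S^1\times\mu_n)=\mathcal{T}_n$. The paper does exactly this and nothing more.

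Where you take an unnecessary detour is in tracking the loop $m$ and invoking the analogues of Lemmas \ref{lem: trivializing u} and \ref{lem: internal final step: remove K}. Those lemmas were needed in Section \ref{sec: internal Unknotted} because $S_K$ lives in $Z_{K,1}^*$, which involves the loop surgery along $b$ (equivalently, via Moishezon, along $m$); one had to know where $m$ went in order to compute the effect of that surgery. But $T_n'$ is defined directly via $\phi_n\colon M\#_{T^2}\widehat{N(n)}\to M$ with \emph{no} loop surgery present, so there is nothing to trivialize. Once you know $(\widehat{N(n)},x\times y)\cong(S^1\times S^1\times S^2,\,S^1\times S^1\times\{1\})$ with the product framing, the fiber sum $M\#_{T^2}\widehat{N(n)}$ is literally $(M\bslash\nu T)\cup_{F|_\partial}(S^1\times S^1\times D^2)=M$, identity on $M\bslash\nu T$, and you are done. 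Your sentence ``since the loop surgery along $u$ only affects a ball\ldots'' is therefore a red herring: no loop surgery occurs, and the relative-to-$M\bslash\nu T$ claim follows simply because every diffeomorphism in the chain is supported in $\nu T$. Dropping the $m$-bookkeeping streamlines your argument to match the paper's two-line proof.
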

\begin{figure}
    \centering
    \includegraphics[width=0.2\linewidth]{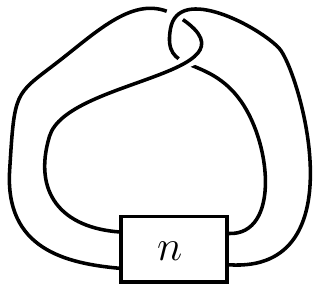}
    \caption{A knot diagram for the family of knots $K_n\subseteq S^3$, where the box labelled $n$ indicates $n$ full twist.}
    \label{fig: knot Kn}
\end{figure}
\begin{proof}[Sketch of proof] 
    Replacing every occurrence of $N$ with $N(n)$ in the proof of Lemma \ref{lem: Identifing N hat}, we obtain a diffeomorphism of triples 
    \begin{equation}
          \phi:(\widehat{N(n)},x\times y,\widehat{x\times b})\to (S^1\times(S^1\times S^2)), S^1\times (S^1\times \{1\}), S^1\times\mu_n),  \end{equation} 
          where $\mu_n$ is loop depicted in Figure \ref{fig: mumu} and the framing induced by $\phi$ on the 2-torus $S^1\times (S^1\times \{1\})$ is given by the product framing.
    This gives the diffeomorphism of couples
    \begin{equation}\label{eq: diffeo in the proof as in lemma}(M\#_{T^2}\widehat{N(n)},\widehat{x\times b})\cong(M\bslash\nu T)\cup_{F|_\partial} (S^1\times (S^1\times D^2)), S^1\times \mu_n)\cong (M,F^{-1}(S^1\times \mu_n)),\end{equation} which restricts to the identity on $M\bslash\nu T$. Lastly, notice that $F^{-1}(S^1\times \mu_n)$ is exactly $\mathcal{T}_n$, see construction \ref{const: nullhomologus tori}.
    The last part of the statement is implied by \cite[Theorem 2]{InternalStabilizationBaykurSunukian} and Theorem \ref{thm: ExtStab nullhomologus 2-tori}.
\end{proof}
\subsubsection{Internal stabilization of the 2-spheres $S_n$}

It is possible to adapt the entirety of Section \ref{subsec: follwing the surfaces} to the study of the 2-spheres $S_n\subseteq M \#(\Stab)$. We should replace every appearance of $N$ with $N(n)$, $T_2$ with $T$, $M_K$ with $M$, $\mu$ with $\mu_n$ and $\mathcal{T}$ with $\mathcal{T}_n$. 
By doing so we get the following theorem.

\begin{thm}\label{thm: intern unknotte 2spheres torres}
Let $S_n=S_n(M,T,n,\phi_n)\subseteq M\# (\Stab)$ be the 2-sphere defined as in Construction \ref{constr: nullhomo 2-sphere torres} and $\mathcal{T}_n=\mathcal{T}_n(T)\subseteq\nu T\subseteq M\#(\Stab)$ be the 2-torus defined in Theorem \ref{thm: identification Torres 2-tori}. 
    After a trivial internal stabilization the 2-sphere $S_n\subseteq M\# (\Stab)$ becomes smoothly equivalent to the 2-torus $\mathcal{T}_n\subseteq \nu T\subseteq M\bslash D^4\subseteq M\# (\Stab)$. 
    Moreover: 
    \begin{itemize}
        \item the 2-sphere $S_n\subseteq M\# (\Stab)$ becomes smoothly unknotted after two trivial internal stabilizations. 
        \item if the framing of $T\subseteq M$ is $0/1$ spin-compatible or $M$ is a non spin manifold, then $S_n\subseteq M\# (\Stab)$ becomes smoothly unknotted after one trivial internal stabilizations.
    \end{itemize}
    \end{thm}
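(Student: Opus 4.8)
The plan is to run the proof of Theorem~\ref{thm: Int unknotted 1} after performing the substitutions announced in the paragraph preceding the statement ($N\rightsquigarrow N(n)$, $Y\rightsquigarrow Y(n)$, $T_2\rightsquigarrow T$, $M_K\rightsquigarrow M$, $\mu\rightsquigarrow\mu_n$, $\mathcal{T}\rightsquigarrow\mathcal{T}_n$), observing that since no knot surgery is involved here we have $M_K=M$ throughout, so that the first conclusion already places $\mathcal{T}_n$ inside $M\#(\Stab)$ and there is no analogue of the last bullet of Theorem~\ref{thm: Int unknotted 1} to prove.

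First I would record the adapted versions of Lemmas~\ref{lem: internal step 1}, \ref{lem: Identifing N hat}, \ref{lem: trivializing u} and \ref{lem: internal final step: remove K}. The adapted Lemma~\ref{lem: internal step 1} is obtained exactly as in its original proof: the Moishezon trick (Theorem~\ref{thm: mois}) yields a diffeomorphism $\widehat{N(n)}_m\to N(n)^*$ which is the identity off $\nu(x\times b)$ and is realized by the handle slides of Figure~\ref{fig: Boro S}, and tracking the surfaces along those slides produces a diffeomorphism of triples $(N(n)^*,x\times y,(\Gamma(n))_{\#T^2})\cong(\widehat{N(n)}_m,x\times y,\widehat{x\times b})$ that is the identity near $x\times y$, where $(\Gamma(n))_{\#T^2}$ is an internal stabilization of $\Gamma(n)$. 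The adapted Lemma~\ref{lem: Identifing N hat} is precisely the Kirby-calculus computation already carried out in the proof of Theorem~\ref{thm: identification Torres 2-tori}: cancelling handles in Figure~\ref{fig:KT} gives a diffeomorphism of quadruples $(\widehat{N(n)},x\times y,\widehat{x\times b},m)\cong(S^1\times(S^1\times S^2),S^1\times(S^1\times\{1\}),S^1\times\mu_n,\{1\}\times\overline{\mu_n})$ with product framings, where $\mu_n$ is the loop of Figure~\ref{fig: mumu}. Feeding this into the argument of Lemma~\ref{lem: trivializing u} (now with the knot $K_n$ of Figure~\ref{fig: knot Kn} in place of the trefoil, which is the case $n=1$) and then into Lemma~\ref{lem: internal final step: remove K} produces a diffeomorphism of couples $(M\#_{T^2}\widehat{N(n)}_m,\widehat{x\times b})\cong(M\#(\Stab),\mathcal{T}_n)$, the connected sum being performed away from $\mathcal{T}_n\subseteq\nu T$.

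With these lemmas in place the proof is immediate. By the adapted Lemma~\ref{lem: internal step 1}, $S_n\subseteq M\#(\Stab)$ becomes smoothly equivalent to $\widehat{x\times b}\subseteq M\#_{T^2}\widehat{N(n)}_m$ after one internal stabilization; this stabilization is trivial because $\pi_1((M\#(\Stab))\bslash S_n)$ is infinite cyclic, so \cite[Lemma~3]{InternalStabilizationBaykurSunukian} applies. Combining with the adapted Lemma~\ref{lem: internal final step: remove K} gives the first assertion. For the remaining assertions: by \cite[Theorem~2 and Section~3.6]{InternalStabilizationBaykurSunukian} (equivalently, by the last line of Theorem~\ref{thm: identification Torres 2-tori}) the 2-torus $\mathcal{T}_n$ becomes smoothly unknotted after one further trivial internal stabilization, which gives the two-stabilization statement; and if the framing of $T\subseteq M$ is $0/1$ spin-compatible or $M$ is non-spin, then Theorem~\ref{thm: ExtStab nullhomologus 2-tori} already asserts that $\mathcal{T}_n\subseteq\nu T\subseteq M\bslash D^4\subseteq M\#(\Stab)$ is smoothly unknotted, so a single internal stabilization suffices.

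The only real obstacle is bookkeeping: one must check that none of the diagrammatic moves in Section~\ref{subsec: follwing the surfaces}, in particular the handle slides of Figure~\ref{fig: Boro S} and the cancellations taking Figure~\ref{fig:KT} to Figure~\ref{fig: mu}, interact with the red $n$-framed $2$-handle in an $n$-dependent way. The point is that both the loop $m$ and the surface $\widehat{x\times b}$ sit in the portion of the diagram coming from the $T^2\times D^2$ factor (the black handles), disjoint from the $n$-framed handle, so every diffeomorphism of triples in the chain goes through verbatim and the only $n$-dependence resurfaces at the end as the knot $K_n$ in $\mathcal{T}_n=(T)_{F,K_n}$. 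Since Theorem~\ref{thm: identification Torres 2-tori} has already performed this identification, the remaining work is to thread $(\Gamma(n))_{\#T^2}$ through the same pictures, just as was done for $S_K$ in the proof of Theorem~\ref{thm: Int unknotted 1}.
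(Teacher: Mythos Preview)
Your proposal is correct and follows exactly the approach the paper prescribes: the paper's proof amounts to the instruction ``adapt Section~\ref{subsec: follwing the surfaces} by replacing $N$ with $N(n)$, $T_2$ with $T$, $M_K$ with $M$, $\mu$ with $\mu_n$, $\mathcal{T}$ with $\mathcal{T}_n$'', and you have carried out precisely this adaptation, correctly noting that since $M_K=M$ here the last bullet of Theorem~\ref{thm: Int unknotted 1} has no analogue.
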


\bibliographystyle{mynum2}
\bibliography{sample}
\end{document}